\numberwithin{equation}{section}
\tikzset{>=latex}
\newtheorem{theo}{Theorem}[section]
\newtheorem{prop}[theo]{Proposition}
\newtheorem{coro}[theo]{Corollary}
\newtheorem{lemm}[theo]{Lemma}
\newtheorem{rmk}[theo]{Remark}
\theoremstyle{definition}
\newtheorem*{question}{Question}
\newcommand{\N}{\mathbb{N}}
\newcommand{\R}{\mathbb{R}}
\newcommand{\C}{\mathcal{C}}
\newcommand{\Z}{\mathbb{Z}}
\newcommand{\Prob}{\mathbb{P}}
\newcommand{\E}{\mathbb{E}}
\newcommand{\e}{\mathcal{E}}
\newcommand{\T}{\mathbb{T}}
\newcommand{\s}{\mathcal{S}}
\renewcommand{\epsilon}{\varepsilon}
\newcommand\xqed[1]{%
  \leavevmode\unskip\penalty9999 \hbox{}\nobreak\hfill
  \quad\hbox{#1}}
\newcommand\demo{\xqed{$\blacksquare$}}
\title{Fluctuations and correlations in weakly asymmetric simple exclusion on a ring subject to an atypical current}
\author{Benoit Dagallier\footnote{Courant Institute of Mathematical Sciences, New York University. E-mail: {\tt bd2543@nyu.edu}}}
\date{}
\begin{document}
\maketitle

\begin{abstract}
We consider the weakly asymmetric simple exclusion process on a ring, 
driven out of equilibrium by tilting the dynamics so as to enforce a macroscopic current of particles on a large time interval. 
In this current-biased dynamics, the tilt by the current makes the dynamics non-local, non homogeneous and induces long-range correlations. 
We compute the correlation structure in the large time, large size limit for a certain range of asymmetry and current strength, 
recovering heuristic results of Bodineau et al.~\cite{Bodineau2008}. 
In addition, in this range of parameters, 
we characterise the full dynamics of fluctuations around the optimal density profile in the current-biased dynamics. 
The key ingredient at the microscopic scale is a precise relative entropy estimate at the level of correlations. 
We also discuss how to remove the (technical) restriction on the range of parameters. 
\end{abstract}

\section{Introduction}
Consider a large number $N$ of particles in a box, 
evolving in time and interacting locally. 
Assume that the dynamics of these particles can be modelled by a Markov chain $\Prob^N$ on a finite, but large state space $\Omega_N$ 
Consider also an observable $\mathcal O_T$ of the trajectory of the Markov chain on a time interval $[0,T]$, say of the following form:
if $(\eta_t)_{t\leq T}$ denotes a trajectory,
\begin{equation}
\mathcal O_T((\eta_t)_{t\leq T})
=
\int_0^T f(\eta_t)\, dt 
+
\sum_{t\leq T}g(\eta_{t_-},\eta_t)
.
\label{eq_def_observable_O_T}
\end{equation}
Above, $f:\Omega_N\to\R$ is a test function that could for instance be the density of particles in a box.  
The function $g:\Omega_N^2\to\R$ counts changes in the chain and could for instance correspond to the activity or current. 

Suppose also that we have some information on the observable $\mathcal O_T$ in the sense of a large deviation principle, 
written informally as follows:
\begin{equation}
\frac{1}{T}\log\Prob^N(\mathcal O_T \approx Tx) 
\underset{T\to\infty}{=} 
-I^N_{f,g}(x)
,\qquad 
x\in\R
,
\label{eq_large_devs_observable_intro}
\end{equation}
where $I^N_{f,g}\geq 0$ is a rate function that may be known e.g. through results of Donsker and Varadhan~\cite{Donsker1975}. 
The broad, informal question we wish to discuss is the following.\\
 
\begin{question}
Can one describe trajectories in the rare event $\{\mathcal O_T\approx Tx\}$ in the large size, long time limits?
\label{question}
\end{question} 
\subsection{Biased dynamics}
The description of trajectories in a rare dynamical event has received much attention in the physics community in recent years, see~\cite{BodineauDistributionCurrentNonequilibrium2005,
Bodineau2008,JackLargeDeviationsEnsembles2010,
Chetrite2015,Chetrite2015a,
BaekDynamicalSymmetryBreaking2017,
AgranovTricriticalBehaviorDynamical2023} for a non exhaustive list of references as well as the review~\cite{JackErgodicityLargeDeviations2020}. 
Studying trajectories in the rare event $\big\{\mathcal O_T\approx Tx\big\}$ when $N,T\gg 1$ boils down, 
following a standard large deviation idea, 
to studying typical trajectories under a tilted dynamics of the form (recall the expression~\eqref{eq_def_observable_O_T} of $\mathcal O_T$):
\begin{equation}
\mathrm{d}\Prob^{N,f,g}_{\lambda,T}\big((\eta_t)_{t\leq T}\big)
=
\frac{ \exp\big[ \lambda \mathcal O_T\big] }{\E^N\big[ \exp\big[ \lambda \mathcal O_T\big] \big]} \,\mathrm{d}\Prob^N\big((\eta(t))_{t\leq T}\big)
.
\label{eq_tilted_dynamics_general}
\end{equation}
If $\lambda\in\R$ is chosen appropriately as a function of $x$, 
then typical trajectories under~\eqref{eq_tilted_dynamics_general} when $N,T$ are large indeed correspond to trajectories in $\big\{\mathcal O_T\approx Tx\big\}$. 
We refer to~\cite{ChetriteNonequilibriumMicrocanonicalCanonical2013} for a discussion of the choice of $\lambda$. 
A schematic description of trajectories in~\eqref{eq_tilted_dynamics_general} is presented on Figure~\ref{fig_regions}, see also~\cite{Derrida_LD2019} for a more in-depth discussion.\\

Although the original dynamics $\Prob^N$ is a homogeneous Markov chain with local jump rates, 
the tilt by the observable $\mathcal O_T$ of~\eqref{eq_def_observable_O_T} 
makes the dynamics~\eqref{eq_tilted_dynamics_general} much more complicated: its jump rates are not explicit, now time-dependent and non-local. 
In addition, depending on the tilt strength $\lambda$, 
typical trajectories under the biased dynamics~\eqref{eq_tilted_dynamics_general} may have qualitatively different behaviour than under the original dynamics $\Prob^N$, 
for instance featuring dynamical phase transitions. 
Let us illustrate this point and more generally explain how the biased dynamics~\eqref{eq_tilted_dynamics_general} can be studied by considering the example of the Weakly Asymmetric Simple Exclusion Process (WASEP for short) on a ring, 
tilted by the current. 
This is the dynamics that we will focus on in this article. 
\subsection{Macroscopic fluctuation theory and driven process}
The WASEP on a ring with weak asymmetry $E\in\R$ is an interacting particle system where particles follow nearest-neighbour random walks on a discrete torus with $N$ sites, 
jumping to the right/left with rate proportional to $1+E/N$ and $1-E/N$ respectively.  
The only interaction between the particles comes from an exclusion rule: there can be at most one particle per site (see~\eqref{eq_def_generateur_SSEP} for a precise definition). 

Scaling limits of this model for both the density and current of particles have been studied in detail, see e.g.~\cite[Chapter 10]{Kipnis1999},~\cite{Bertini2007} and more recently~\cite{BertiniConcurrentDonskerVaradhan2023} for joint large deviations in the large time, large $N$ limits. 
In particular, it is known that there is typically a macroscopic current of particles on any time interval $[0,T]$. 
Also and interestingly, the dynamics is conjectured to undergo a dynamical phase transition.  
To state what this means, consider $\lfloor \rho N\rfloor$ particles in the system ($\rho\in(0,1)$) and the event:
\begin{equation}
\text{the time-integrated, space averaged current }Q_T\text{ is equal to }qT,\quad q\in\R
.
\label{eq_event_large_current}
\end{equation}
One may ask about the typical density profile in this event in the large $N$, large $T$ limit. 
The conjecture, formulated e.g. in~\cite{BertiniCurrentFluctuationsStochastic2005,
BodineauDistributionCurrentNonequilibrium2005} and supported by numerical simulations~\cite{EspigaresDynamicalPhaseTransition2013}, 
states that the result depends on the relationship between the tilt strength $\lambda$ and the asymmetry $E$:
\begin{align}	
	&\bullet\ \text{if }\frac{q^2}{\rho(1-\rho)}-E^2\rho(1-\rho)>-\pi^2\text{, then the optimal density profile is homogeneous in space}
	\nonumber\\
	&\hspace{0.58cm}\ \text{and time.}\label{eq_phiT}\\
	&\bullet \ \text{if }\frac{q^2}{\rho(1-\rho)}-E^2\rho(1-\rho)<-\pi^2\text{, then the optimal density profile is a travelling wave,}
	\nonumber\\
	&\quad \ \hspace{0.2cm}\text{inhomogeneous in time and/or space.}\nonumber
\end{align}
In~\cite{BodineauDistributionCurrentNonequilibrium2005} the conjecture~\eqref{eq_phiT} was formulated assuming no discontinuous phase transition take place.
Such transitions were later heuristically shown not to happen~\cite{BaekDynamicalSymmetryBreaking2017}. 
Note that in the symmetric simple exclusion process ($E=0$), 
~\eqref{eq_phiT} predicts that the optimal density profile is flat for any value of $\lambda$. 
This is known to be true, consistent with the fact that this model does not have a dynamical phase transition~\cite{Bertini2007}.
The conjecture~\eqref{eq_phiT} has not been established rigorously in either region. 
It is only known that far above the threshold the flat density profile is optimal, and that far below the threshold a travelling wave profile is better~\cite{Bertini2007} (but optimality of travelling waves is unknown). 

A second interesting property of trajectories in the event~\eqref{eq_event_large_current} is that one expects the current constraint to create a long-range spatial correlation structure. 
The two-point correlation structure in particular was computed heuristically in~\cite{Bodineau2008}, but nothing is known rigorously. \\

Let us now present different approaches employed in the literature to study trajectories in a rare event that can in particular be used to study the conjecture~\eqref{eq_phiT}. 
These approaches are based on the study of WASEP dynamics tilted by the time-integrated, space-averaged current $Q_T$; 
a particular example of~\eqref{eq_tilted_dynamics_general}.
This dynamics, referred to as the \emph{current-biased dynamics} throughout this article (see~
\eqref{eq_def_dynamique_conditionnee_courant} for a precise definition), reads:
\begin{equation}
\mathrm{d}\Prob^{curr,N}_{\lambda,E,T}
=
\frac{1}{\E^N_E\big[e^{\lambda NQ_T}\big]}\, e^{\lambda NQ_T}\mathrm{d}\Prob^N_E 
.
\label{eq_current_biased_dynamics_intro}
\end{equation}
Above, $\Prob^N_E,\E^N_E$ are the probability/expectations associated with the WASEP dynamics. 
In the exponential, the macroscopic current $Q_T$ is multiplied by a factor $N$. 
In view of large deviations for the current in the WASEP~\cite{Bertini2007}, 
this ensures that~\eqref{eq_current_biased_dynamics_intro} concentrates on trajectories in which the macroscopic current $Q_T$ takes different values from the one under $\Prob^N_E$.\\

A major difficulty in studying the current-biased dynamics~\eqref{eq_current_biased_dynamics_intro} comes from the fact that the state space is very large when $N\gg 1$. 
The first approach that can be used to study~\eqref{eq_current_biased_dynamics_intro} completely bypasses the large state space difficulty through the formalism of the Macroscopic Fluctuation Theory (MFT), 
see the review~\cite{Bertini2015}. 
There, one starts from a coarse grained, low-dimensional description of the microscopic dynamics, 
the details of which enter only through the choice of a diffusion coefficient and mobility. 
The long-time behaviour of the tilted dynamics is then directly studied at the level of the dynamical large deviation functional, 
see e.g.~\cite{BertiniCurrentFluctuationsStochastic2005,
BertiniNonEquilibriumCurrent2006,
ZarfatyStatisticsLargeCurrents2016} and the recent~\cite{AgranovTricriticalBehaviorDynamical2023}.  
Information on small fluctuations and correlations under~\eqref{eq_tilted_dynamics_general} can be deduced from formal expansions of the large deviation functional. 
This is in particular how two-point correlations were computed in the current-biased dynamics~\cite{Bodineau2008}. 
Note that in this approach, one first takes the large $N$ limit, then the large time limit.

The validity of the MFT has been established for a number of microscopic models (see the review~\cite{Bertini2015}, 
the book~\cite{Kipnis1999} as well as~\cite{Bertini2003,Bertini2007,Bertini2009LD_WASEP,
Farfan2011,
Bodineau2012}), 
by proving dynamical large deviations for the density and/or current of particles. 
Thus, at the level of density large deviations, the measure~\eqref{eq_current_biased_dynamics_intro} is well understood. 
In particular, for these microscopic models, 
dealing with the large size of the microscopic state seems to be a technical problem only, 
since the effective MFT description captures the physical features of the models. This should be the general picture.

On the other hand, the rigorous large deviation results do not give information on fluctuations, 
which require control of the dynamics on a finer scale. 
In particular and as already mentioned, 
there are no rigorous result on the correlation structure under the current-biased dynamics. 

\medskip

Another approach that has received a lot of attention in the physics literature consists in looking for an effective, but still microscopic description of the dynamics in the large time limit at fixed $N$. 
The macroscopic, $N\gg 1$ limit in this approach is thus taken second, after the long time limit.

Let us describe this approach in more detail. 
At each fixed $N$ but in the long time limit, 
typical trajectories in the rare event $\{Q_T\approx Tq\}$ are expected to feature an instantaneous current approximately equal to $q$ for all but a small portion of times in $[0,T]$. 
When the time $T$ is large, one then expects the dynamics~\eqref{eq_current_biased_dynamics_intro} to accurately be described in terms of a time-homogeneous Markovian dynamics with jump rates mimicking the effect of a constant current $q$. 
This homogeneous process, the \emph{driven process} in the language of~\cite{Chetrite2015} (see Figure~\ref{fig_regions}), 
can be built rigorously at each $N$, for general dynamics of the form~\eqref{eq_tilted_dynamics_general}, 
in terms of spectral elements of an explicit operator built from the initial dynamics and the observable by which the dynamics is biased.
\begin{figure}
\begin{center}
\includegraphics[width=16cm]{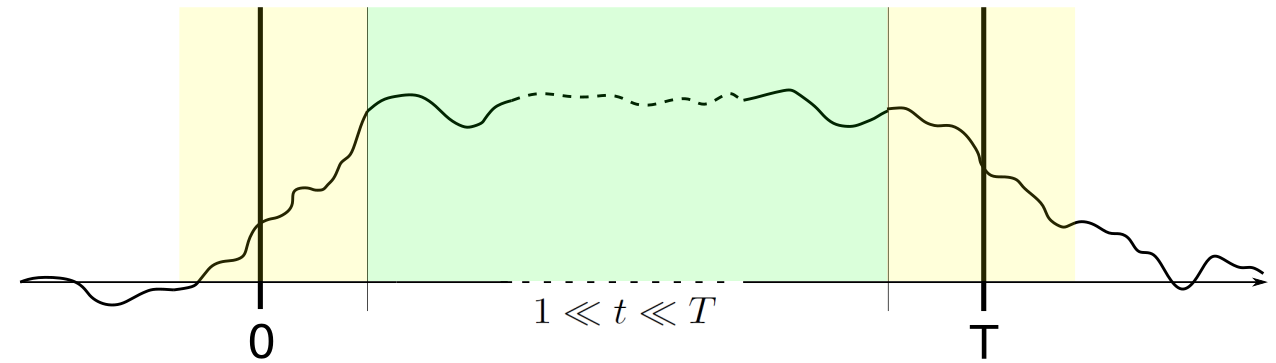} 
\caption{Schematic representation of a trajectory of the current-biased dynamics~\eqref{eq_current_biased_dynamics_intro}. 
At times far before $0$ and far after $T$, the influence of the current bias is not felt. 
In the yellow regions, the trajectory moves towards/away from a region of the state space with atypical current. It stays in this state (green region) for all but a negligible fraction of the time interval $[0,T]$ when $T\gg 1$.  \label{fig_regions}}
\end{center}
\end{figure}
The driven process approach was expounded in detail in~\cite{Chetrite2015,Chetrite2015a} and also~\cite{JackLargeDeviationsEnsembles2010,
ChetriteNonequilibriumMicrocanonicalCanonical2013,
JackEffectiveInteractionsLarge2015}, 
following earlier work, see e.g.~\cite{EvansRulesTransitionRates2004}.

One advantage of the driven process framework compared to the MFT is that it can be defined directly at the microscopic level.  
Taking the large $N$ limit is however difficult, 
as defining the driven process for $N$ large requires solving a high-dimensional spectral problem, 
the limit of which must then be studied. 
Results have mostly bypassed this difficulty by studying the driven process in low-dimensional systems (i.e. at the macroscopic level)~\cite{TsobgniLargeDeviationsCurrent2016,
TizonEffectiveDrivenDynamics2019,Derrida_fluct2019}. 
The recent paper~\cite{Derrida_LD2019} however managed to characterise the driven process in the large $N$ limit for two interacting particle systems: 
independent particles, 
and the symmetric simple exclusion process connected with boundary reservoirs. 
In this latter case, the reservoir densities are assumed to be small, 
which together with the symmetry of the original dynamics enabled the authors to solve the spectral problem defining the driven process in a perturbative fashion. 
In this perturbative regime, the scaling limit of the driven process is identified in~\cite{Derrida_LD2019} 
and it is shown that the macroscopic fluctuation theory and driven process approach are consistent. \\

While one expects long time and scaling limits to commute at both the large deviation and fluctuation level, consistent with the validity of the MFT picture;
proving the commutation is not an easy problem.  
It has been achieved only very recently, 
at the large deviation level and 
with a different motivation, 
for models such as the weakly asymmetric simple exclusion process on the on the torus in any dimension~\cite{BertiniConcurrentDonskerVaradhan2023} or the one-dimensional simple exclusion with boundary reservoirs~\cite{Correlations2022} 
(see also~\cite{BertiniLargeDeviationsDiffusions2022} for diffusions with small noise).  
It is not clear whether these commutation results are strong enough to apply also at the level of fluctuations.

\subsection{Approximate driven process}\label{sec_approx_driven_process_intro}
In this paper, 
we study small fluctuations around the typical density profile in the current-biased dynamics~\eqref{eq_current_biased_dynamics_intro}, 
which cannot be deduced from existing large deviation results.  
In a portion of the sub-critical regime~\eqref{eq_phiT} (i.e. before the dynamical phase transition), 
we obtain a full description of the non-local dynamics of density fluctuations at times $1\ll t\ll T$ (the green region of Figure~\ref{fig_regions}) and compute the two-point correlation structure in this same region. 
The expression of two-point correlations that we find agrees with the one derived in~\cite{Bodineau2008} through a heuristic expansion of the large deviation functional.  \\

Our approach builds on the driven process framework as in~\cite{Derrida_fluct2019,Derrida_LD2019}, 
but with a different flavour. 

The main difficulty in the driven process approach is to solve a high-dimensional spectral problem and study its large $N$ limit. 
We bypass this difficulty by building an \emph{approximate  driven process}, 
that is not the correct one for each fixed $N$ but has the correct macroscopic behaviour. 
This is done by using information on the macroscopic behaviour of the dynamics, so we take the large $N$ limit first before the large $T$ limit.  
The idea is that observables of interest, such as the density the current of particles or correlations, 
are determined at the macroscopic level by low-dimensional equations. 
These macroscopic equations are much easier to solve than the high-dimensional microscopic problem.  
One can then use the macroscopic data to build an informed microscopic approximation of the current-biased dynamics, 
the above mentioned approximate driven process (see Section~\ref{sec_driven_process} for more details).

\medskip

Compared to the genuine driven process, 
the downside is that our approximate driven process has less structure.  
We therefore need additional stability estimates at the microscopic level compared to~\cite{Derrida_LD2019} to ensure that our approximate driven process indeed describes the microscopic dynamics to the desired precision. 
These estimates turn out to be quite involved and unfortunately restrict the range of asymmetries $E$ and of the parameter $\lambda$ in~\eqref{eq_current_biased_dynamics_intro} that we can treat (extensions are discussed in Section~\ref{sec_perspectives}).

The main technical tool to prove that the approximate driven process is close to the microscopic dynamics is a very precise relative entropy estimate.   
The relative entropy method was initially introduced by Yau~\cite{Yau1991} to study hydrodynamic limits.  
It consists in comparing the law of the dynamics to a simple measure that only retains macroscopic information on the observable under study, 
usually the density of particles. 
Jara and Menezes~\cite{Jara2018,Jara2020} considerably improved the method, 
obtaining good enough estimates to study density fluctuations. 
In a recent paper~\cite{Correlations2022}, 
Bodineau and the author introduced a conceptual refinement of the relative entropy method, building on the techniques of Jara and Menezes. 
The main, though simple idea is that one can get better relative entropy bounds (meaning better control on the law of the dynamics) by adding more macroscopic information on the measure used for comparison, 
in particular by adding a correlation structure. 
A similar decomposition of the law of the dynamics was already suggested in~\cite{DemasiSmallDeviationsLocal1982}. 
The resulting better relative entropy bounds can be used in several contexts. 
They were for instance used in~\cite{Correlations2022} to obtain long time large deviations for two-point correlations, 
whereas in the present paper they are key to proving that our approximate driven process indeed accurately describes fluctuations under the current-biased dynamics. 
\\

The article is structured as follows. 
Section~\ref{sec_def} contains definitions and results, 
with Section~\ref{sec_structure_proof} detailing the structure of the proof of the main result, 
the characterisation of the density fluctuation process under the current-biased dynamics. 
Extensions of the result are discussed in Section~\ref{sec_perspectives}. 
Section~\ref{sec_fluc_correl_prob_h_lambda} contains a collection of preliminary results useful from Section~\ref{sec_driven_process_candidate} onwards where we start the proof of the main result. 
In particular, we study in Section~\ref{sec_fluc_correl_prob_h_lambda} fluctuations for a family of dynamics the approximate driven process is later shown to belong to.  
The main technical input, the relative entropy estimate, is also established in this section. 
Section~\ref{sec_driven_process_candidate} identifies a candidate for the approximate driven process and provides a stability estimate to ensure that this candidate indeed contains all information on fluctuations under the current biased dynamics. 
At this point the proof of our main can be reduced to estimates involving only the limiting ($N\gg 1$) fluctuations under the approximate driven process. 
Section~\ref{sec_time_decorrelation} contains the long-time analysis of these fluctuations and concludes the proof of the main result.
The two-point correlations structure is computed in Section~\ref{sec_comparison} and compared with~\cite{Bodineau2008}. 
Useful concentration of measure estimates and other technical results are gathered in the appendix.

\section{Definition and results}\label{sec_def}
\subsection{Model and notations}

\noindent \textbf{The WASEP.} For $N\in\N_{\geq 1}$, 
Let $\Omega_N = \{0,1\}^{\T_N}$ denote the state space of the system, with $\T_N:=\Z/N\Z$ the discrete torus with $N$ sites. 
A point $i\in\T_N$ is called a site, while elements $\eta = (\eta_i)_{i\in\T_N}\in\Omega_N$ are called (particle) configurations, 
with $\eta_i = 1$ if there is a particle at site $i\in\T_N$, and $\eta_i=0$ otherwise. 
The quantity $\eta_i$ is the occupation number at site $i\in\T_N$. 
For $\eta\in\Omega_N$ and $i,j\in \T_N$, define the configuration $\eta^{i,j}\in\Omega_N$ with exchanged occupation numbers at sites $i,j$:
\begin{equation}
\eta^{i,j}(k) = \begin{cases}
\eta(\ell)\quad &\text{if }\ell\notin\{i,j\},\\
\eta(j)\quad &\text{if }\ell=i,\\
\eta(i)\quad &\text{if }\ell=j.
\end{cases}
\end{equation}
The WASEP with asymmetry $E\in\R$ is the dynamics with generator $N^2L_E$, acting on $\phi:\Omega_N\rightarrow\R$ according to:
\begin{equation}
N^2L_E\phi(\eta) 
= 
N^2\sum_{i\in\T_N}c_E(\eta,\eta^{i,i+1})\big[\phi(\eta^{i,i+1}) - \phi(\eta)\big]
,
\label{eq_def_generateur_SSEP}
\end{equation}
with:
\begin{equation}
c_E(\eta,\eta^{i,i+1}) 
= 
c(\eta,\eta^{i,i+1})e^{-(\eta_{i+1}-\eta_i)E/N}
,\quad 
c(\eta,\eta^{i,i+1})
:=
\eta_i(1-\eta_{i+1}) + \eta_{i+1}(1-\eta_i)
.
\end{equation}
Note that the number of particles $\sum_{i\in\T_N}\eta_i$ is conserved by the dynamics~\eqref{eq_def_generateur_SSEP}. 
The factor $N^2$ in front of the generator \eqref{eq_def_generateur_SSEP} corresponds to a diffusive rescaling of time. 
Trajectories $\eta(\cdot) = (\eta(t))_{t\geq 0}$ belong to the Skorokhod space $\mathcal D(\R_+,\Omega_N)$ of left-continuous, right-limited $\Omega_N$-valued functions, 
and we write $\Prob^{\mu^N}_E,\E^{\mu^N}_E$ for the probability/expectation of the dynamics starting from an initial probability distribution $\mu^N$ on $\Omega_N$.\\

The invariant measures of the WASEP are well-understood (see e.g.~\cite{Kipnis1999}, Chapter 3-4). 
For $\rho\in[0,1]$, let $\nu^N_\rho$ denote the Bernoulli product measure with parameter $\rho$:
\begin{equation}
\nu^N_\rho 
:= 
\bigotimes_{i\in\T_N} \text{Ber}(\rho)
,\qquad 
\text{Ber}(\rho) : x\in\{0,1\} \mapsto \rho x + (1-\rho)(1-x) 
.
\label{eq_def_nu_N_rho}
\end{equation}
Then $\nu^N_\rho$ is invariant for the WASEP:
\begin{equation}
\forall i\in\T_N,\forall \eta\in\Omega_N,\qquad 
c_E(\eta,\eta^{i,i+1})\nu^N_\rho(\eta) 
= 
\sum_{j\in\T_N}c_E(\eta^{j,j+1},\eta )\nu^N_\rho(\eta^{j,j+1})
.
\end{equation}
In particular, for each $\rho\in[0,1]$, correlations under the product measure $\nu^N_\rho$ are trivial. \\

\noindent\textbf{The current.} 
Let $Q_T$ denote the space averaged, time-integrated current on $[0,T]$ ($T>0$): 
if $N_T(i\to j)$ is the number of jumps from site $i\in\T_N$ to site $j\in\T_N$ on $[0,T]$,
\begin{equation}
\forall \eta(\cdot)\in \mathcal  D(\R_+,\Omega_N),\qquad 
Q_T(\eta(\cdot)) 
:= 
\frac{1}{N^2}\sum_{i\in \T_N}\big[N_T(i\to i+1) - N_T(i+1\to i)\big]
.
\end{equation}
The additional factor of $N^{-1}$ reflects the diffusive scaling in time: if there is a macroscopic current of particles, i.e. if the net number of particles having gone through a bond $(i,i+1),i\in\T_N$ up to time $T$ is of order $N$, then $Q_T$ is of order $1$ in $N$. \\

\noindent\textbf{The current-biased dynamics.} 
We now define the dynamics of interest in this work. 
For a time $T>0$, $\lambda\in\R$ and an initial probability distribution $\mu^N$ of particles,  
define the \emph{current-biased dynamics} $\Prob^{curr,\mu^N}_{\lambda,E,T}$ on $\mathcal D(\R_+,\Omega_N)$ as:
\begin{equation}
\Prob^{curr,\mu^N}_{\lambda,E,T}(\cdot) 
= 
\frac{\E^{\mu^N}_E\big[{\bf 1}_{\cdot}e^{\lambda N Q_T}\big]}{\E^{\mu^N}_E\big[e^{\lambda NQ_T}\big]}.\label{eq_def_dynamique_conditionnee_courant}
\end{equation}
Under~\eqref{eq_def_dynamique_conditionnee_courant}, 
by~\cite{Bertini2007}, 
the probability of observing a macroscopic current $q=\rho(1-\rho)(\lambda+E)$ of particles on $[0,T]$ goes to $1$ when $N$ becomes large for a wide range of initial conditions enforcing a density $\rho\in(0,1)$ of particles.  
The sub-critical range~\eqref{eq_phiT} of parameters $\lambda,E$ for which there is no dynamical phase transition can then be rewritten as:
\begin{equation}
\rho(1-\rho)\lambda(\lambda+2E)
>
-\pi^2
.
\label{eq_phiT_rewritten}
\end{equation}
We stress again that, in contrast to the WASEP dynamics, 
the tilt by the current in~\eqref{eq_def_dynamique_conditionnee_courant} makes the dynamics non-local, inhomogeneous in time and induces long-range correlations (see~\cite{Bodineau2008} for heuristics and~\cite{Chetrite2015} for a general discussion of properties of tilted dynamics). 
Note also that we consider an "annealed" version of the dynamics, where both numerator and denominator in~\eqref{eq_def_dynamique_conditionnee_courant} are averaged on the initial condition. 
Consequences of this assumption and "quenched" dynamics are discussed in Section~\ref{sec_perspectives}. \\

\noindent\textbf{Initial condition, fluctuations and correlations.} 
In this article, we aim to describe the fluctuations and correlations of the dynamics \eqref{eq_def_dynamique_conditionnee_courant} when $N$, 
then $T$ are large, for sub-critical parameters $\lambda,E$ 
(i.e. satisfying~\eqref{eq_phiT_rewritten}) and in the intermediate regime where there is a macroscopic instantaneous current (the green region of Figure~\ref{fig_regions}). 
In the text, we refer to $0,T$ as \emph{time boundaries} and may refer to times $1\ll t \ll T$ as being far away from the time boundaries. 
For simplicity we will work at density $1/2$, with 
initial condition in~\eqref{eq_def_dynamique_conditionnee_courant} given by:
\begin{equation}
\mu^N(\eta) 
=
\nu^N_{1/2}(\eta)
:=
\bigotimes_{i\in\T_N}\text{Ber}(1/2)(\eta_i)
,\qquad 
\eta\in\Omega_N
.
\end{equation}
Other initial conditions are discussed in Section~\ref{sec_perspectives}. 
Since we focus on the sub-critical regime, 
the density profile under the current-biased dynamics is typically going to be constant equal to $1/2$. 
The fluctuation and correlation fields of interest are then defined as follows. 
Write: 
\begin{equation}
\bar\eta_i 
:=
\eta_i - \frac{1}{2}
,\qquad
\eta\in\Omega_N, i\in\T_N
.
\end{equation}
The fluctuation field $Y^N$ is a distribution, acting on test functions $f:\T\rightarrow\R$ according to:
\begin{equation}
Y^N(f) 
=  
Y^N(f)(\eta) 
= 
\frac{1}{N^{1/2}}\sum_{i\in\T_N}\bar\eta_i f(i/N)
\qquad
\eta\in\Omega_N
.
\label{eq_def_fluctuations}
\end{equation}
The correlation field $\Pi^N$ is defined as the "square" of $Y^N$ in the following sense: 
if $f_1,f_2:\T\rightarrow\R$, 
write $f_1\otimes f_2(x,y) = f_1(x)f_2(y)$ for $x,y\in\T$. 
Then:
\begin{equation}
\Pi^N(f_1\otimes f_2) 
= 
\frac{1}{4}Y^N(f_1)Y^N(f_2) - \frac{1}{4N}\sum_{i\in\T_N}(\bar\eta_i)^2 f_1(i/N)f_2(i/N)
.
\label{eq_lien_Pi_et_Y_dans_intro}
\end{equation}
The factor $1/4$ is a convenient normalisation. 
More generally, for $\phi:\T^2\rightarrow\R$, we define:
\begin{equation}
\Pi^N(\phi) 
= 
\frac{1}{4N}\sum_{i\neq j\in\T_N}\bar\eta_i\bar\eta_j \phi(i/N,j/N)
\qquad
\eta\in\Omega_N
.
\label{eq_def_correlations}
\end{equation}
In the text, 
"correlations" always refers to two-point correlations of the form $\bar\eta_i\bar\eta_j,i\neq j\in\T_N$ unless otherwise mentioned. 
More generally, $n$-point correlations ($n\in\N_{\geq 1}$) refer to any product of the form $\bar\eta_{i_1}...\bar\eta_{i_n}$. 
By convention, if $\phi:\T^n\rightarrow\R$, 
we write $\phi_{i_1,...,i_n}$ for $\phi(i_1/N,...,i_n/N)$, $i_1,...,i_n\in\T_N^n$. 
Letters $i,j,\ell$ are reserved for discrete indices, 
while $x,y,z$ denote continuous variables.\\

\noindent\textbf{Spaces of test functions.} 
Let $\s'(\T),\s'(\T^2)$ denote the space of distributions on $\T,\T^2$ respectively. 
The fluctuation field $Y^N$ is viewed as a random element in $\s'(\T)$. 
To study of the correlation field $\Pi^N$, 
we will have to consider test functions which are continuous, but have discontinuous normal derivatives across the diagonal $D = \{(x,x):x\in\T\}$ of $\T^2$. 
This is related to the discontinuity of out of equilibrium correlations across the diagonal $D$, 
see~\cite{Spohn1983,Correlations2022}, 
and in particular would not be needed to study correlations correlations at equilibrium~\cite{Goncalves2019}. 
Write for short $\{x\leq y\}:=\{(x,y)\in\T^2:x\leq y\}$, 
$\{x\geq y\}:=\{(x,y)\in\T^2:x\geq y\}$. 
The set of test functions $C^\infty_D(\T^2)$ is given by all continuous functions on $\T^2$ such that their restrictions on $\{x\leq y\}$ and $\{x\geq y\}$ are $C^\infty$ (the derivatives of the extensions on the diagonal $D$ need not coincide)
\begin{align}
C^\infty_D(\T^2) 
= 
\big\{\phi \in C^0(\T^2): \phi_{| \{x\leq y\}}\in C^\infty(\{x\leq y\}), 
\phi_{| \{x\geq y\}}\in C^\infty(\{x\geq y\}) 
\big\}
.
\label{eq_def_set_c_infty_D}
\end{align}
The correlation field $\Pi^N$ is seen as a random distribution in the set $\s'_D(\T^2)$ of bounded linear forms on $C^\infty_D(\T^2)$. 

A trajectory $\eta(\cdot)\in\mathcal D(\R_+,\Omega_N)$ induces a process $Y^N_\cdot=(Y^N_t)_{t\geq 0}$, which we refer to as the \emph{fluctuation process}. 
It is a random element of the Skorokhod space $\mathcal D(\R_+,\s'(\T))$, 
equipped with its usual topology, see e.g. \cite{Billingsley1999}. 
The \emph{correlation process} similarly denotes the random element $\Pi^N_\cdot=(\Pi^N_t)_{t\geq 0}$ of $\mathcal D(\R_+,\s'_D(\T^2))$. 
\subsection{Goal and approach: the approximate driven process}
\label{sec_driven_process}
In this section, we define what we mean by \emph{approximate driven process} as mentioned in Section~\ref{sec_approx_driven_process_intro}. 
We seek to characterise fluctuations and correlations for the current-biased dynamics~\eqref{eq_def_dynamique_conditionnee_courant}, 
in the large $N$, then large $T$ limits; 
for intermediate times far away from the time boundaries $0,T$ (corresponding to the green region in Figure~\ref{fig_regions}). 
In this regime, 
for sub-critical values~\eqref{eq_phiT_rewritten} of $\lambda,E$ for which the best way to realise the current constraint is by having a constant density profile, 
one expects the current-biased dynamics to be well approximated by a stationary Markov process as explained in the introduction. 
Our aim, then, is to find a Markov process $\tilde \Prob^{N}$   
which describes fluctuations and correlations under~\eqref{eq_def_dynamique_conditionnee_courant} in this intermediate time range. 
For this Markov process $\tilde \Prob^{N}$ to describe the fluctuations under the current biased-dynamics it should satisfy, 
for a good choice of initial condition $\mu^N$, 
any bounded continuous $F:\mathcal D(\R_+,\s'(\T))\rightarrow\R$ and each $t,T,\tau>0$ with $t+\tau\leq T$:
\begin{equation}
\int F\big((Y^N)_{t\leq s\leq t+\tau}\big)\, \mathrm{d}\Prob^{curr,\mu^N}_{\lambda,E,T} 
= 
\int F\big((Y^N_s)_{0\leq s\leq \tau}\big)\, \mathrm{d}\tilde \Prob^{N} +\epsilon^N_{\tau,t,T}(F)
,
\label{eq_def_driven_process}
\end{equation}
with $\epsilon^N_{\tau,t,T}(F)$ a term that must be small, 
in the large $N$ limit, 
for times away from the time boundaries:
\begin{equation}
\lim_{t\rightarrow\infty}\limsup_{T\rightarrow\infty}\limsup_{N\rightarrow\infty}\big|\epsilon^N_{\tau,t,T}(F)\big| 
= 0
.
\label{eq_error_in_def_driven_process}
\end{equation}
We call any $\tilde \Prob^N$ satisfying~\eqref{eq_def_driven_process}--\eqref{eq_error_in_def_driven_process} an \emph{approximate driven process} in analogy with the driven process of~\cite{Chetrite2015}. \\

Let us now give more details on the construction of an approximate driven process. 
To find such a process, 
we build the jump rates of $\tilde \Prob^N$ explicitly,
according to the following considerations. 
\begin{itemize}
	\item For any $T>0$ and $\lambda\neq 0$, 
	the typical current $Q_T$ under the current-biased dynamics~\eqref{eq_def_dynamique_conditionnee_courant} is different from the current under the WASEP $\Prob^N_E$.
	One expects the current-biased dynamics to be better approximated by a dynamics that has the correct macroscopic current. 
	The simplest way to modify $\Prob^N_E$ into a dynamics that has the correct current is to change the asymmetry from $E$ to $\lambda+E$, with jump rates:
	\begin{equation}
c_{\lambda+E}(\eta,\eta^{i,i+1}) 
:= 
c_{E}(\eta,\eta^{i,i+1})e^{-N^{-1}(\eta_{i+1}-\eta_i)\lambda},
\qquad i\in\T_N,\eta\in\Omega_N
.
\label{eq_def_jump_rates_lambda}
\end{equation}
	The resulting WASEP dynamics $\Prob^N_{\lambda+E}$ is, 
	by construction, a good approximation of the current-biased dynamics as far as the density and current of particles are concerned. 
	However, it does not have the correct correlation structure (recall that product Bernoulli measures with constant parameter are invariant for the WASEP). 
	It thus does not have the same density fluctuations and therefore cannot serve as an approximate driven process as defined in~\eqref{eq_def_driven_process}.
	\item We then tune observables on the next finer scale than the current, that is we tune the jump rates to obtain a dynamics that has the same two-point correlation structure as the current-biased dynamics. 
	We expect these correlations to be non-local, 
	so we need non-local jump rates. 
	An effective way to tune correlations without changing the density/current in the large $N$ limit~\cite{Correlations2022} is to consider, 
	for a symmetric function $h:\T^2\rightarrow\R$ that we call a bias below, the jump rates:
	\begin{equation}
c_{h,\lambda+E}(\eta,\eta^{i,i+1}) 
:= 
c_{\lambda+E}(\eta,\eta^{i,i+1})e^{\Pi^N(h)(\eta^{i,i+1}) - \Pi^N(h)(\eta)},
\qquad i\in\T_N,\eta\in\Omega_N
.
\label{eq_def_jump_rates_h_and_lambda}
\end{equation}
	Let $\Prob^N_{h,\lambda+E}$ be the corresponding dynamics. 
	We then optimise the bias $h$ so that $\Prob^N_{h,\lambda+E}$ and the current-biased dynamics are, loosely speaking, as close to each other as possible. 
\end{itemize}
As stated in the next section, tuning the current/density and two-point correlations is enough and $\Prob^N_{h,\lambda+E}$ is an approximate driven process in the sense of~\eqref{eq_def_driven_process} for a suitable bias $h$.
\subsection{Results}\label{sec_results}
A first result of this work is the computation of the correlation structure under the current-biased dynamics in the green region of Figure~\ref{fig_regions}, in Proposition~\ref{prop_correl_sec_results}.  
The correlation structure is related to the optimal bias $h$ that one must choose in order for $\Prob^N_{h,\lambda+E}$ to be an approximate driven process. 
This bias is characterised next. 
We start with some notations. 
Any $\phi\in C^\infty_D(\T^2)$ is identified with the kernel operator $\phi f = \int_{\T}\phi(x,\cdot)f(\cdot)\, dx$ for $f\in\mathbb L^2(\T)$. 
We say that $\phi$ is a positive kernel if:
\begin{equation}
\forall f\in \mathbb L^2(\T),
\qquad 
\int_{\T^2} f(x) \phi(x,y)f(y)\, dx\, dy 
\geq 
0
.
\end{equation}
A negative kernel corresponds to the opposite sign. 
Define also:
\begin{equation}\label{eq_def_sigma_intro}
\sigma 
:= 
\rho(1-\rho)|_{\rho=1/2}
= 
\frac{1}{4}
.
\end{equation}
For $\lambda,E\in\R$, 
consider the following  
differential equation with unknown 
$h:\T\rightarrow\R$:
\begin{equation}
\begin{cases}
h''(x) - \frac{\sigma}{2}\int_{\T} h'(x-z)h'(z)\, dz = 0\quad\text{for }x\in(0,1),\\
h'(0^+)-h'(1^-) = 2\lambda(\lambda+2E)
.
\end{cases}
\label{eq_ODE_sur_h}
\end{equation}
Solutions to~\eqref{eq_ODE_sur_h} are continuous periodic functions with derivative having a jump. 
\begin{prop}\label{prop_solving_PDE_h}
Let $\lambda,E\in\R$ be sub-critical parameters at density $1/2$ as in~\eqref{eq_phiT_rewritten}:
\begin{equation}
\sigma\lambda(\lambda+2E) 
>
-\pi^2
.
\label{eq_phiT_in_prop}
\end{equation}
There is then a unique family $(h_{\lambda,E})_{\lambda,E\text{ sub-critical}}$ of solutions in $C^0(\T)\cap C^\infty([0,1])$ of~\eqref{eq_ODE_sur_h} such that $\int_{\T}h_{\lambda,E}(x)\, dx =0$ and:
\begin{equation}
(\lambda,E)\mapsto \|h_{\lambda,E}\|_{2}\text{ is a continuous function vanishing on the line }\lambda=0
.
\label{eq_continuity_assumption_h}
\end{equation}
The function $h_{\lambda,E}$ is explicitly given by:
\begin{equation}
\forall x\in\T,\qquad 
h_{\lambda,E}(x) 
= 
\frac{\sqrt{2}}{\sigma} \sum_{\ell\geq 1}\Big(1-\Big[1+\frac{\sigma\lambda(\lambda+2E)}{\pi^2\ell^2}\Big]^{1/2}\Big)\sqrt{2}\cos(2\pi\ell x) 
.
\end{equation}
The kernel $(x,y)\in\T^2\mapsto h_{\lambda,E}(x-y)$, 
still denoted $h_{\lambda,E}$, 
is a symmetric kernel in $C^\infty_D(\T^2)$ with eigenvalues given by the Fourier coefficients of $h_{\lambda,E}$. 
In particular, $h_{\lambda,E}$ is a negative kernel if $\lambda(\lambda+2E)\geq 0$, 
a positive kernel if $\lambda(\lambda+2E)\leq 0$ 
and the critical line can be expressed in terms of the behaviour of the eigenvalues of $h_{\lambda,E}$:
\begin{equation}
\sigma\lambda(\lambda+2E)>-\pi^2
\quad \Leftrightarrow\quad
\sigma h_{\lambda,E}\text{ has leading eigenvalue strictly smaller than }1
.
\label{eq_critical_line_as_eigenvalue}
\end{equation}
%

%
%
\end{prop}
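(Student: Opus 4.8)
\medskip
\noindent\emph{Proof plan.} The natural approach is to diagonalise in Fourier. Since the unknown $h$ is required to be continuous, periodic and of zero mean, I would look for $h(x)=\sum_{\ell\in\Z}\hat h_\ell e^{2\pi i\ell x}$ with $\hat h_0=0$, and seek an even solution $\hat h_\ell=\hat h_{-\ell}\in\R$. The key point is that the first line of \eqref{eq_ODE_sur_h}, together with the corner condition in the second line, amounts to a single identity for distributions on $\T$: the distributional second derivative of a continuous function with a corner at the origin equals its classical second derivative away from $0$ plus $\big(h'(0^+)-h'(1^-)\big)\delta_0$. Using that $h'\ast h'$ has Fourier coefficients $\big(2\pi i\ell\hat h_\ell\big)^2=-4\pi^2\ell^2\hat h_\ell^2$ while $h''$ has coefficients $-4\pi^2\ell^2\hat h_\ell$, this identity decouples completely over modes: for each $\ell\ge 1$ it reduces to the scalar quadratic
\[
\sigma\pi^2\ell^2\,\hat h_\ell^2-2\pi^2\ell^2\,\hat h_\ell=\lambda(\lambda+2E),
\]
whose discriminant is proportional to $\pi^2\ell^2+\sigma\lambda(\lambda+2E)$. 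Under the subcriticality assumption \eqref{eq_phiT_in_prop} this is strictly positive for every $\ell\ge 1$ (the worst case being $\ell=1$), so each mode admits exactly the two real roots $\hat h_\ell=\sigma^{-1}\big(1\pm[1+\sigma\lambda(\lambda+2E)/(\pi^2\ell^2)]^{1/2}\big)$.

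Next I would select the branch and establish uniqueness. The ``$+$'' root stays bounded away from $0$ (it converges to $2/\sigma$ as $\ell\to\infty$, and is $\ge 1/\sigma$ for every $\ell$), so choosing it in infinitely many modes destroys square-summability and choosing it in even one mode keeps $\|h_{\lambda,E}\|_2$ bounded below by a positive constant; since at $\lambda=0$ the only zero-mean solution with $\|h_{0,E}\|_2=0$ is the all-``$-$'' one (which is $0$), the continuity-and-vanishing requirement \eqref{eq_continuity_assumption_h} singles out the all-``$-$'' choice throughout, and this is exactly the stated formula. This simultaneously yields existence of a solution with the listed properties and uniqueness within the family, with $\hat h_\ell=O(\ell^{-2})$, so the series converges uniformly and $h_{\lambda,E}\in C^0(\T)$.

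For the regularity $h_{\lambda,E}\in C^\infty([0,1])$ and the honest verification of \eqref{eq_ODE_sur_h} (rather than merely of the distributional identity), I would expand $[1+\sigma\lambda(\lambda+2E)/(\pi^2\ell^2)]^{1/2}$ into powers of $\ell^{-2}$ for $\ell$ past a threshold, the finitely many remaining modes being smooth trigonometric polynomials. This writes $h_{\lambda,E}$ on $[0,1]$ as a finite linear combination of the functions $x\mapsto\sum_{\ell\ge1}\ell^{-2k}\cos(2\pi\ell x)=c_kB_{2k}(x)$, with $B_{2k}$ the Bernoulli polynomials (smooth on $[0,1]$), plus a remainder whose coefficients decay faster than any power and which is therefore smooth up to the boundary. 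Differentiating term by term on $(0,1)$ then recovers the equation, while the corner is produced entirely by the slowest head through $\sum_{\ell\ge1}\ell^{-1}\sin(2\pi\ell x)=\tfrac{\pi}{2}(1-2x)$ on $(0,1)$, giving the jump $h'(0^+)-h'(1^-)=2\lambda(\lambda+2E)$.

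Finally the kernel statements are bookkeeping once the explicit $h_{\lambda,E}$ is in hand. Evenness of $h_{\lambda,E}$ gives symmetry of $(x,y)\mapsto h_{\lambda,E}(x-y)$, and $h_{\lambda,E}\in C^\infty([0,1])$ with a corner only across $D=\{x=y\}$ places this kernel in $C^\infty_D(\T^2)$. Being translation-invariant it is diagonalised by the Fourier basis with eigenvalues the coefficients $\hat h_\ell=\sigma^{-1}\big(1-[1+\sigma\lambda(\lambda+2E)/(\pi^2\ell^2)]^{1/2}\big)$, so $\int_{\T^2}f(x)h_{\lambda,E}(x-y)f(y)\,dx\,dy=\sum_\ell\hat h_\ell|\hat f_\ell|^2$ has a fixed sign: negative when $\lambda(\lambda+2E)\ge0$ (all $\hat h_\ell\le0$), positive when $\lambda(\lambda+2E)\le0$; and the leading eigenvalue is $\hat h_1$, with $\sigma\hat h_1=1-[1+\sigma\lambda(\lambda+2E)/\pi^2]^{1/2}<1$ precisely when $1+\sigma\lambda(\lambda+2E)/\pi^2>0$, which is \eqref{eq_critical_line_as_eigenvalue}. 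I expect the main obstacle to be the third paragraph: turning the mode-by-mode solution back into a genuine classical solution of the non-local ODE on $(0,1)$ with the exact corner behaviour requires care with the conditionally convergent series, and it is there that the precise interpretation of \eqref{eq_ODE_sur_h} — in particular the Dirac mass hidden in the corner condition — must be pinned down.
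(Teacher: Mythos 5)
Your proposal follows essentially the same route as the paper: pass to the weak/Fourier formulation so that~\eqref{eq_ODE_sur_h} decouples into a scalar quadratic per mode, solve the quadratic, and select the branch by requiring $L^2$-membership and vanishing at $\lambda=0$. The paper does exactly this (Appendix~C, Proposition~\ref{prop_existence_regularity_h}): it integrates~\eqref{eq_ODE_sur_h} against $e^{-2\iota\pi\ell\cdot}$ to obtain $-(2\pi\ell)^2 c_\ell(h)-2\lambda(\lambda+2E)+\frac{\sigma}{2}(2\pi\ell)^2 c_\ell(h)^2=0$, which is your mode quadratic, and the branch-selection argument is the same as yours (the ``$+$'' root is bounded below by $\sigma^{-1}$, ruling it out by square-summability if used infinitely often and by the vanishing requirement if used at all).

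Where you diverge is the regularity $h\in C^\infty([0,1])$. You propose an explicit asymptotic expansion of $[1+\sigma\lambda(\lambda+2E)/(\pi^2\ell^2)]^{1/2}$ in powers of $\ell^{-2}$ and a rearrangement into Bernoulli polynomials (with a small imprecision: a single truncation at order $K$ leaves a tail of size $O(\ell^{-2K-2})$, not one decaying faster than any power; one has to take $K$ arbitrarily large, gaining one finite degree of smoothness at a time). This can be made to work, but it requires some care with the interchange of sums and, as you note, with the distributional interpretation of the corner. The paper avoids both issues by elliptic bootstrapping: having established $h\in\mathbb H^1(\T)\subset C^0(\T)$ from $c_\ell(h)=o(\ell^{-2})$, it \emph{defines} $h''$ on $(0,1)$ via the equation itself, notes $h''\in\mathbb H^1((0,1))$ because the convolution term $\int h'(\cdot-y)h'(y)\,dy$ is smooth, and iterates to get $h\in\mathbb H^n((0,1))$ for all $n$, hence $h\in C^\infty([0,1])$ by Sobolev embedding. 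This sidesteps the conditionally convergent series entirely and makes the ``honest verification'' you were worried about automatic, since the weak formulation is taken as the definition from the outset. Both routes are legitimate; the paper's is shorter and more robust, yours is more explicit and pins down the coefficients concretely. The kernel/eigenvalue bookkeeping at the end is identical in both.
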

\begin{rmk}
The family $(h_{\lambda,E})$ is not unique without the continuity requirement~\eqref{eq_continuity_assumption_h}, see Appendix~\ref{app_BFP}.  
This requirement is however physically meaningful.  
Indeed, the current-biased dynamics with $\lambda=0$ case corresponds to the WASEP $\Prob^N_E$ for which there is no correlation in the steady state.  
Proposition~\ref{prop_correl_sec_results} shows that $h$ determines correlations under the current-biased dynamics which in particular depend continuously on $\lambda$.  \demo
\end{rmk}
In the following, the subscripts $\lambda,E$ will be dropped and we simply write $h$ for $h_{\lambda,E}$. 
\begin{prop}[Correlations under the current biased dynamics at times $1\ll t\ll T$]\label{prop_correl_sec_results}
Let $\lambda,E\in\R$. Assume that $\lambda(\lambda+2E)\geq 0$ and is small enough (in particular $\lambda,E$ are sub-critical~\eqref{eq_phiT_in_prop}). 
Then, for any $f_1,f_2\in C^\infty(\T)$:
\begin{equation}
\lim_{T\to\infty}\lim_{N\to\infty}\E^{curr,\,\nu^N_{1/2}}_{\lambda,E,T}[Y^N_{T/2}(f_1)Y^N_{T/2}(f_2)]
=
\frac{1}{4}\int_{\T^2} f_1(x) \big((\sigma\text{id} +k_{curr})f_2\big)(x)\, dx
,
\end{equation}
with $k_{curr}\in C^\infty([0,1])\cap C^0(\T)$ identified with the kernel $(x,y)\in\T^2\mapsto k_{curr}(x-y)$ and given explicitly by:
\begin{equation}
k_{curr}(x)
=
\sqrt{2}\sigma\sum_{\ell\geq 1}\bigg[\frac{1}{\sqrt{1+\frac{\lambda(\lambda+2E)\sigma}{\pi^2\ell^2}}} -1\bigg]\sqrt{2}\cos(2\pi\ell x)
.
\label{eq_kcurr_prop_sec2}
\end{equation}
In addition, $k_{curr}$ is related to $h$ through:
\begin{equation}
(\sigma^{-1}\text{id}-h)^{-1}
=
\sigma\text{id} + k_{curr}
.
\end{equation}
\end{prop}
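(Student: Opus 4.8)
The plan is to derive the correlation structure by combining the general machinery behind the approximate driven process (established in the later sections of the paper) with the explicit form of the bias $h$ from Proposition~\ref{prop_solving_PDE_h}. First I would invoke the fact, announced at the end of Section~\ref{sec_driven_process}, that $\Prob^N_{h,\lambda+E}$ is an approximate driven process in the sense of~\eqref{eq_def_driven_process}--\eqref{eq_error_in_def_driven_process} for the optimal bias $h=h_{\lambda,E}$, provided $\lambda(\lambda+2E)\geq 0$ is small enough so that the required stability estimates hold. This reduces the computation of $\lim_{T\to\infty}\lim_{N\to\infty}\E^{curr,\nu^N_{1/2}}_{\lambda,E,T}[Y^N_{T/2}(f_1)Y^N_{T/2}(f_2)]$ to computing the equal-time two-point correlation of the stationary fluctuation field under $\Prob^N_{h,\lambda+E}$, in the large $N$ and then large time limit. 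By the defining relation~\eqref{eq_lien_Pi_et_Y_dans_intro} between $Y^N$ and $\Pi^N$, and since $(\bar\eta_i)^2\equiv 1/4=\sigma$ pointwise, the $Y\otimes Y$ correlation is $4\Pi^N(f_1\otimes f_2)$ plus the diagonal term $\sigma\int_\T f_1 f_2$. So the key quantity is the limiting value of $\E[\Pi^N_t(\phi)]$ for the dynamics $\Prob^N_{h,\lambda+E}$ in its steady state.

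The second step is to identify the stationary two-point correlation kernel $k_{curr}$ for the family $\Prob^N_{h,\lambda+E}$. Here I would use the analysis of fluctuations and correlations for this family carried out in Section~\ref{sec_fluc_correl_prob_h_lambda}: the correlation field satisfies a closed linear evolution equation in the large $N$ limit, obtained from the generator~\eqref{eq_def_jump_rates_h_and_lambda} via the relative entropy estimate, whose drift involves the Laplacian plus a term linear in the bias $h$. The stationary correlation kernel is then the fixed point of this equation. Working in Fourier modes — legitimate since everything is translation invariant at density $1/2$ and $h$ is a function of $x-y$ with Fourier coefficients $\hat h_\ell = \frac{\sqrt 2}{\sigma}(1-[1+\tfrac{\sigma\lambda(\lambda+2E)}{\pi^2\ell^2}]^{1/2})$ — the stationarity condition decouples mode by mode into a scalar equation whose solution is exactly $\hat k_{curr,\ell} = \sigma\big((1+\tfrac{\lambda(\lambda+2E)\sigma}{\pi^2\ell^2})^{-1/2}-1\big)$, matching~\eqref{eq_kcurr_prop_sec2}. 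The algebraic identity $(\sigma^{-1}\mathrm{id}-h)^{-1}=\sigma\mathrm{id}+k_{curr}$ is then a one-line check on Fourier coefficients: $(\sigma^{-1}-\hat h_\ell)^{-1} = (\sigma^{-1}\cdot[1+\tfrac{\sigma\lambda(\lambda+2E)}{\pi^2\ell^2}]^{1/2})^{-1} = \sigma[1+\tfrac{\sigma\lambda(\lambda+2E)}{\pi^2\ell^2}]^{-1/2} = \sigma+\hat k_{curr,\ell}$.

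The third step is to justify passing to the limit. One must show that the steady-state correlation of $\Prob^N_{h,\lambda+E}$ is reached in the large-$T$ limit starting from the product measure $\nu^N_{1/2}$ (which has zero correlations): this is the content of the long-time analysis in Section~\ref{sec_time_decorrelation}, where the linear correlation evolution is shown to relax to its fixed point exponentially fast because the relevant operator has spectral gap bounded below — precisely the condition $\sigma h_{\lambda,E}$ has leading eigenvalue $<1$ from~\eqref{eq_critical_line_as_eigenvalue}, which is guaranteed by sub-criticality. Combining this with the error control~\eqref{eq_error_in_def_driven_process} of the approximate driven process (taking $t=T/2$, which goes to infinity, and $\tau$ fixed or zero for the equal-time statistic) yields the claim. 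Finally I would note that the hypothesis $\lambda(\lambda+2E)\geq 0$ makes $h$ a negative kernel, so $\sigma^{-1}\mathrm{id}-h$ is a positive operator bounded below by $\sigma^{-1}\mathrm{id}$ and its inverse is well-defined and bounded, making all manipulations rigorous.

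The main obstacle I expect is not the Fourier computation, which is routine, but rather the input from the later sections: establishing that $\Prob^N_{h,\lambda+E}$ with \emph{this particular} $h$ is genuinely an approximate driven process, i.e. that the relative entropy estimate is strong enough to control the discrepancy $\epsilon^N_{\tau,t,T}(F)$ uniformly down to the scale of two-point correlations, and that this forces $h$ to solve the self-consistent equation~\eqref{eq_ODE_sur_h}. That is where the "smallness" assumption on $\lambda(\lambda+2E)$ enters and where the real work of the paper lies; the present proposition is then essentially a corollary, extracting the correlation kernel from the already-identified optimal bias.
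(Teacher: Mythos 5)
Your plan follows essentially the same route as the paper's proof (Section~\ref{sec_comparison}, Proposition~\ref{prop_correlations_current_app}): invoke Theorem~\ref{theo_main_result} to transfer the question to the stationary covariance of the approximate driven process $\Prob^N_{h,\lambda+E}$, which by Theorem~\ref{theo_fluct_correl_driven_process} is $C_h = (\sigma^{-1}\text{id}-h)^{-1}$, and then read off the Fourier coefficients. Your one-line Fourier check of the identity $(\sigma^{-1}\text{id}-h)^{-1}=\sigma\text{id}+k_{curr}$ is indeed the cleanest way to arrive at~\eqref{eq_kcurr_prop_sec2}; the paper reaches the same formula slightly more indirectly, applying It\^o's formula to $Y_t(f_1)Y_t(f_2)$ together with stationarity, which amounts to solving exactly the mode-by-mode Lyapunov fixed-point equation you describe.

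There is, however, one genuine gap to close: Theorem~\ref{theo_main_result} asserts~\eqref{eq_driven_process_in_theorem} only for \emph{bounded} continuous $F$, whereas $Y^N_{T/2}(f_1)Y^N_{T/2}(f_2)$ is a quadratic form in the occupation variables and is unbounded in $N$ (of order $N$ in the worst case). To transfer the convergence to this observable you need a uniform-integrability input, which the paper supplies via the moment bound~\eqref{eq_moment_bound} of Proposition~\ref{prop_entropy_estimate_general}, combined with the identity~\eqref{eq_lien_Pi_et_Y_dans_intro} expressing $Y^N(f_1)Y^N(f_2)$ in terms of $\Pi^N(f_1\otimes f_2)$ plus a deterministic Riemann sum. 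Once this is added, your observation that Proposition~\ref{prop_time_decorrel} carries over verbatim to test functions with bounded moments is exactly the step the paper takes, and the rest of your computation goes through. Your remarks about the role of sub-criticality (spectral gap for $\mathcal L_h$) and the negativity of $h$ as a kernel when $\lambda(\lambda+2E)\geq 0$ are correct and match the paper's use of them.
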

The expression~\eqref{eq_kcurr_prop_sec2} of $k_{curr}$ agrees with the one derived in~\cite{Bodineau2008}. 
Yet another equivalent formulation of the sub-critical region~\eqref{eq_phiT_in_prop} is then as the region in which $k_{curr}$ has bounded largest eigenvalue.\\

Proposition~\ref{prop_correl_sec_results} is obtained as a corollary of the next theorem, 
the main result of this work, 
which characterises not just the spatial correlation structure but the full dynamics of fluctuations under the current-biased dynamics:  
we show that $\Prob^{N}_{h,\lambda+E}$ is an approximate driven process in the sense of \eqref{eq_def_driven_process}. 
To state the result precisely, 
introduce a discrete approximation $\nu^N_h$ of a Gaussian measure:
\begin{equation}
\nu^N_h
:=
\frac{e^{2\Pi^N(h)}}{\mathcal Z^N_h}\ \nu^N_{1/2}
\label{eq_def_nu_rho_g}
.
\end{equation}
Above, $\mathcal Z^N_h$ is a normalisation factor. Properties of this measure are analysed in Appendix~\ref{app_BFP}, 
in particular it converges to a Gaussian field with covariance $\sigma{\text{id}} + k_{curr}$.  
It is technical but convenient to work with the dynamics $\Prob^N_{h,\lambda+E}$ started from $\nu^N_h$ as it is a good approximation of its invariant measure, see Theorems~\ref{theo_fluct_correl_driven_process}--\ref{theo_size_entropy} below. 
\begin{theo}\label{theo_main_result}
Let $\lambda,E\in\R$. 
Assume that $\lambda(\lambda+2E)\geq 0$ and is small enough (in particular~\eqref{eq_phiT_in_prop} holds). 
Then $\Prob^{\nu^N_h}_{h,\lambda+E}$ is an approximate driven process for the current-biased dynamics $\Prob^{curr,\nu_{1/2}^N}_{\lambda,E,T}$. 
That is, for any bounded continuous $F:\mathcal D(\R_+,\s'(\T))\rightarrow\R$ and $t,T, \tau>0$ with $t+\tau\leq T$:
\begin{equation}
\E^{curr,\nu^N_{1/2}}_{\lambda,E,T}\Big[F\big((Y^N)_{t\leq s\leq t+\tau}\big)\Big]
= 
\E^{\nu^N_h}_{h,\lambda+E}\Big[F\big((Y^N_s)_{0\leq s\leq \tau}\big)\Big]+\epsilon^{N,T}_{[t,t+\tau]}(F)
,
\label{eq_driven_process_in_theorem}
\end{equation}
with:
\begin{equation}
\limsup_{t\rightarrow\infty}\limsup_{T\rightarrow\infty}\limsup_{N\rightarrow\infty}\big|\epsilon^{N,T}_{[t,t+\tau]}(F)\big|
=
0
.
\end{equation}
In~\eqref{eq_driven_process_in_theorem}, 
$t,\tau$ can also be taken to diverge with $T$ provided $T-\tau$ diverges as well. 
\end{theo}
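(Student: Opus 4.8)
\textbf{Proof strategy for Theorem~\ref{theo_main_result}.}

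The plan is to establish the identity~\eqref{eq_driven_process_in_theorem} through a three-step comparison: first replace the current-biased dynamics $\Prob^{curr,\nu^N_{1/2}}_{\lambda,E,T}$ by the candidate dynamics $\Prob^N_{h,\lambda+E}$ up to a controlled error, then analyse the long-time behaviour of $\Prob^N_{h,\lambda+E}$ itself, and finally take the large $N$ limit of the fluctuation process. The overarching idea is that since $F$ depends only on the fluctuation field $Y^N$ on a bounded time window $[t,t+\tau]$, and since (for sub-critical $\lambda,E$) the optimal profile is flat, the only macroscopic data that matters is the density (hence current) and the two-point correlation structure; the bias $h$ of Proposition~\ref{prop_solving_PDE_h} is precisely tuned so that $\Prob^N_{h,\lambda+E}$ matches both.

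First I would set up the comparison between $\Prob^{curr,\nu^N_{1/2}}_{\lambda,E,T}$ and $\Prob^N_{h,\lambda+E}$. The natural tool is a relative entropy / change of measure estimate: writing the Radon--Nikodym derivative of the current-biased measure with respect to $\Prob^N_{h,\lambda+E}$ (or a suitable reference), one shows via the refined relative entropy method of~\cite{Correlations2022}---which tracks not just the density but the correlation field $\Pi^N(h)$---that the entropy per unit volume of the current-biased law with respect to $\nu^N_h$ stays bounded, and in fact the two laws agree on test functions of $Y^N$ up to $o_N(1)$ errors at times far from the boundaries $0,T$. This is where the precise relative entropy estimate referred to in the introduction and established in Section~\ref{sec_fluc_correl_prob_h_lambda} enters; combined with the stability estimate of Section~\ref{sec_driven_process_candidate}, it reduces~\eqref{eq_driven_process_in_theorem} to a statement purely about $\Prob^N_{h,\lambda+E}$ started from $\nu^N_h$.

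Next I would analyse $\Prob^{\nu^N_h}_{h,\lambda+E}$ in the $N\to\infty$ then $t,\tau$ regime. One first shows that $\nu^N_h$ is close (in relative entropy per site, and at the level of the fluctuation/correlation fields) to the invariant measure of $\Prob^N_{h,\lambda+E}$, so that the process is approximately stationary; this is the content of Theorems~\ref{theo_fluct_correl_driven_process}--\ref{theo_size_entropy} that the excerpt forward-references. Then the fluctuation field $Y^N_\cdot$ under this dynamics should converge, as $N\to\infty$, to a stationary generalised Ornstein--Uhlenbeck process---a linear SPDE whose drift is the linearisation of the hydrodynamic equation for $\Prob^N_{h,\lambda+E}$ around the flat profile (so involving both the Laplacian from the symmetric part and a non-local term coming from the $h$-bias) and whose invariant Gaussian law has covariance $\sigma\,\mathrm{id}+k_{curr}$. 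Proving this convergence is a Holley--Stroock/Chang--Yau-type argument: tightness of $(Y^N_\cdot)$ in $\mathcal D(\R_+,\s'(\T))$, identification of limit points via a martingale problem (computing $N^2L_{h,\lambda+E}$ applied to $Y^N(f)$ and to $Y^N(f)^2$, controlling the non-closed correlation terms using the a priori bound on $\Pi^N$), and uniqueness of the martingale problem solution. The role of the long-time limit is to wash out the dependence on the precise (non-invariant) initial condition and on the residual discrepancy between $\Prob^{curr}$ and $\Prob^N_{h,\lambda+E}$ near $t$; here the spectral gap of the limiting OU process, which is positive exactly because $\sigma h$ has leading eigenvalue $<1$ by~\eqref{eq_critical_line_as_eigenvalue}, gives exponential relaxation and hence the iterated $\limsup$ vanishing.

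The main obstacle, as the authors themselves flag, is the first step: controlling the error between the genuinely non-local, time-inhomogeneous current-biased dynamics and the candidate $\Prob^N_{h,\lambda+E}$ to a precision fine enough to see fluctuations (not just hydrodynamics). This requires the relative entropy estimate to be sharp at the level of correlations---one needs the entropy production estimate to close at order $N$ rather than $N^2$, which forces careful handling of the non-local jump rates~\eqref{eq_def_jump_rates_h_and_lambda}, of cutoff/Boltzmann--Gibbs replacements for $\Pi^N$, and of the fact that $h$ only approximately solves the relevant fixed-point relation at finite $N$. This is precisely what restricts the argument to $\lambda(\lambda+2E)\geq 0$ small; the smallness is used to ensure various perturbative expansions (of $\nu^N_h$, of the corrected dynamics) converge and that the non-local correction stays a lower-order perturbation of the Laplacian. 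The remaining steps---tightness, martingale problem, OU convergence, exponential mixing---are technically demanding but follow established lines once the fine comparison is in hand.
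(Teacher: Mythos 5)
Your high-level three-stage architecture (replace the current-biased dynamics by the candidate $\Prob^N_{h,\lambda+E}$; characterise fluctuations under the candidate as a generalised Ornstein--Uhlenbeck process; use a spectral gap to decorrelate from the time boundaries via Holley--Stroock finite-dimensional approximations) does match the paper. But the description of Step 1, which is the heart of the matter, misidentifies the mechanism and contains a genuine gap.

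You propose to control ``the entropy per unit volume of the current-biased law with respect to $\nu^N_h$''. This is not what the paper does, and it would not suffice at fluctuation scale: ``per unit volume'' gives $O(N)$ entropy, which only closes hydrodynamics. The relative entropy estimate that the paper actually proves (Theorem~\ref{theo_size_entropy}, yielding ${\bf H}^N(f_t)=O(N^{-1/2})$) concerns the law of the Markov dynamics $\Prob^{\nu^N_h}_{h,\lambda+E}$ at a fixed time relative to $\nu^N_h$ --- \emph{not} the current-biased law. The current-biased dynamics is handled differently: via Lemma~\ref{lemm_RD_Prob_lambda} and Proposition~\ref{prop_choice_of_h_sec4} one computes the Radon--Nikodym derivative $\mathrm d\Prob^{curr}_{\lambda,E,T}/\mathrm d\Prob^{\nu^N_h}_{h,\lambda+E}$ explicitly and finds it equals $\exp\bigl(-\Pi^N_T(h)-\Pi^N_0(h)+\int_0^T\epsilon^N_{h,\lambda,E}\bigr)$ modulo normalisation, precisely because the bias $h$ of Proposition~\ref{prop_solving_PDE_h} is chosen to kill the bulk two-point contribution $\Pi^N_t(F_{\lambda,E}(h))$. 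The genuine difficulty --- which your sketch glosses over with the phrase ``stability estimate'' --- is that the entropy estimate only controls these terms in expectation, not in exponential moment, so it does not directly control a Radon--Nikodym derivative. The paper closes this gap with the domination bound of Proposition~\ref{prop_domination_by_P_h}: $\limsup_N\Prob^{curr}(\mathcal O_N)\le C\,\limsup_N\Prob^{\nu^N_h}_{h,\lambda+E}(\mathcal O_N)^\kappa$, proven via a Feynman--Kac bound, an integration-by-parts replacement and the log-Sobolev inequality for the canonical SSEP (Lemma~\ref{lemm_LSI_SSEP}). It is exactly here that the restriction $\lambda(\lambda+2E)\geq 0$ small enters --- not, as you suggest, to make ``perturbative expansions of $\nu^N_h$ or the corrected dynamics converge'' (those hold throughout the sub-critical regime) but to make the Feynman--Kac variational bound in Proposition~\ref{prop_domination_by_P_lambda} close against the carré du champ; $\lambda(\lambda+2E)\geq 0$ also serves to keep the $\nu^N_{1/2}$ initial condition from being swamped by the $0$- or $N$-particle sectors (Section~\ref{sec_perspectives}). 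Without singling out this domination bound, the argument does not go through, because nothing in your Step~1 prevents the exponential tilt from concentrating on rare configurations where $|\Pi^N_0(h)|$ or $|\Pi^N_T(h)|$ is of order $N$.
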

We expect the claim of Theorem~\ref{theo_main_result} to be valid throughout the sub-critical regime~\eqref{eq_phiT_in_prop}, 
with some care required about the initial condition. 
This and extensions of Theorem~\ref{theo_main_result} are discussed in Section~\ref{sec_perspectives}. 
Let us however already mention that Theorem~\ref{theo_main_result} is the only theorem for which we need to restrict to small $\lambda(\lambda+2E)\geq 0$. 
All other Theorems are valid in the entire sub-critical regime~\eqref{eq_phiT_in_prop} as we shall see.

Equation~\eqref{eq_driven_process_in_theorem} implies that the fluctuation process under the current biased dynamics converges to the limiting fluctuations under $\Prob^N_{h,\lambda+E}$. 
These fluctuations can be characterised precisely, as stated next in Theorem~\ref{theo_fluct_correl_driven_process} 
where a more general study of dynamics of the form $\Prob^N_{\tilde h,\tilde\lambda}$ is carried out for $\tilde\lambda\in\R$ and $\tilde h\in C^\infty_D(\T^2)$ 
(i.e. not just for $\tilde h=h$ or $\tilde\lambda=\lambda+E$). 
\begin{theo}\label{theo_fluct_correl_driven_process}
Let $\tilde\lambda\in\R$ and 
let $\tilde h \in C^\infty_D(\T^2)$ be symmetric and such that $\sigma\tilde h$ has leading eigenvalue strictly below $1$. Define $\nu^N_{\tilde h}$ as in~\eqref{eq_def_nu_rho_g}.

The law of $(Y^N_\cdot,\Pi^N_\cdot)$ under $\Prob^{\nu^N_{\tilde h}}_{\tilde h,\tilde \lambda}$ then converges weakly in $\mathcal D(\R_+,\s'(\T)\times \s'_D(\T^2))$ to the law of a couple $(Y_\cdot,\Pi_\cdot)$ satisfying:
\begin{enumerate}
	\item The law $\nu^\infty_{\tilde h}$ of $Y_0$ is a Gaussian field on $\s'(\T)$, 
	with covariance $C_{\tilde h} := (\sigma^{-1}\text{id}-\tilde h)^{-1}$. Moreover, $Y_\cdot$ and $\Pi_\cdot$ are stationary processes.
	\item For any $\phi \in C^\infty_D(\T^2)$, 
	if $\phi^s$ denotes the symmetric part of $\phi$, 
	then $\Pi_\cdot(\phi) = \Pi_\cdot(\phi^s)$.
	\item $Y_\cdot$ is uniquely characterised by the following martingale problem. 
	For each test function $f\in C^{1}(\R_+,C^\infty(\T))$, 
	the processes $M_\cdot(f),\mathfrak M_\cdot(f)$ are continuous martingales with respect to the canonical filtration $(\mathcal F_t)_{t\geq0}$ generated by $Y_\cdot$: 
	\begin{align}
\forall t\geq 0,\qquad 
M_t(f) 
&= 
Y_t(f_t)-Y_0(f_t) -\int_0^t Y_s\Big(\partial_s f_s+ \mathcal L_{\tilde h}f_s\Big)ds
\nonumber\\
\mathfrak M_t(f) 
&= 
\big(M_t(f)\big)^2- 2\sigma\int_0^t\|\partial_x f_s\|^2_{2}\,  dx
,
\label{eq_martingale_problem_theo}
\end{align}
with $\mathcal L_{\tilde h}$ the operator acting on $f\in C^\infty(\T)$ according to:
\begin{equation}
\mathcal L_{\tilde h}f(x) 
=
f''(x) +\sigma \int_{\T} \partial_2 \tilde h(x,z)f'(z)\, dz
=
(\text{id}-\sigma\tilde h)\Delta f(x) ,\qquad 
x\in\T
.
\label{eq_def_L_star_theo_fluct}
\end{equation}
\item The processes $Y_\cdot,\Pi_\cdot$ are related as follows:
\begin{equation}
\forall (f_1,f_2)\in C^\infty(\T)^2,
\qquad \Pi_\cdot(f_1\otimes f_2) 
=
 \frac{1}{4}Y_\cdot(f_1)Y_\cdot(f_2) - \frac{\sigma}{4}\int_{\T}f_1(x)f_2(x)\, dx
.
\label{eq_lien_Pi_et_Y_dans_theo}
\end{equation}
\item (Non-smooth test functions and bound on correlations) 
The process $\Pi_\cdot$ admits a unique extension to a process in $\mathcal D([0,+\infty),\s'_D(\T^2))$ (i.e. also defined on test functions that may have discontinuities on the diagonal).   
If $p\in\N$ and $t_1,...,t_p\geq 0$, 
$\phi_1,...,\phi_p\in\C^\infty_D(\T^2)$, 
then the vector $\big(\Pi^N_{t_1}(\phi_1),...,\Pi^N_{t_p}(\phi_p)\big)$ converges weakly to $\big(\Pi_{t_1}(\phi_1),...,\Pi_{t_p}(\phi_p)\big)$. 
In addition, 
there is $C(\tilde h,\tilde\lambda)>0$ such that, for each $\epsilon>0$:
\begin{equation}
{\bf P}^{\nu^\infty_{\tilde h}}_{\tilde h,\tilde\lambda}\Big(|\Pi_t(\phi_1)\big|>\epsilon\Big)
\leq 
\frac{C(\tilde h,\tilde \lambda)\|\phi_1\|_\infty}{\epsilon}
,\qquad 
t\geq 0
.
\end{equation}
\end{enumerate}
\end{theo}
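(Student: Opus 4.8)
The plan is the standard two-step route for fluctuation limits: first prove tightness of $(Y^N_\cdot,\Pi^N_\cdot)$ in $\mathcal D(\R_+,\s'(\T)\times\s'_D(\T^2))$, then show every subsequential limit solves the martingale problem~\eqref{eq_martingale_problem_theo} together with items (1)--(5), and finally invoke uniqueness of that problem. All inputs on the reference measure are taken from Appendix~\ref{app_BFP}: the condition that $\sigma\tilde h$ has leading eigenvalue below $1$ makes $\nu^N_{\tilde h}$ (see~\eqref{eq_def_nu_rho_g}) a well-defined probability measure, a small perturbation of $\nu^N_{1/2}$, under which $Y^N_0$ converges to the Gaussian field $\nu^\infty_{\tilde h}$ of covariance $C_{\tilde h}=(\sigma^{-1}\text{id}-\tilde h)^{-1}$, with uniform bounds on the moments of $Y^N$ and on two- and four-point correlations. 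The core is then the generator computation. For $f\in C^1(\R_+,C^\infty(\T))$, Dynkin's formula gives that
\begin{equation}
M^N_t(f)=Y^N_t(f_t)-Y^N_0(f_0)-\int_0^t\big(\partial_s+N^2L_{\tilde h,\tilde\lambda}\big)Y^N_s(f_s)\,ds
\end{equation}
is a martingale, with predictable bracket $\int_0^t\Gamma^N(Y^N_s(f_s))\,ds$. Using that $Y^N(f)(\eta^{i,i+1})-Y^N(f)(\eta)=N^{-1/2}(\eta_{i+1}-\eta_i)(f_i-f_{i+1})$, one expands the two exponential factors in the rates~\eqref{eq_def_jump_rates_h_and_lambda}. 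The symmetric part yields, after summation by parts, the discrete Laplacian and converges to $Y_s(f_s'')$. The weak asymmetry $\tilde\lambda$ produces a term proportional to $N^{-1/2}\sum_i(\tfrac12-2\bar\eta_i\bar\eta_{i+1})f_i'$, which vanishes in the limit at density $1/2$ once $\bar\eta_i\bar\eta_{i+1}$ is replaced by its (order $1/N$) equilibrium value. The bias factor satisfies $\Pi^N(\tilde h)(\eta^{i,i+1})-\Pi^N(\tilde h)(\eta)=-(2N^{3/2})^{-1}(\bar\eta_{i+1}-\bar\eta_i)\,Y^N(\partial_1\tilde h(i/N,\cdot))+O(N^{-2})$; carrying this through and replacing $(\bar\eta_{i+1}-\bar\eta_i)^2$ by its equilibrium mean $\tfrac12$, the bias drift converges to $\sigma\int_\T f_s'(x)\,Y_s(\partial_1\tilde h(x,\cdot))\,dx$, which by symmetry of $\tilde h$ equals $\sigma Y_s\big(\int_\T\partial_2\tilde h(\cdot,z)f_s'(z)\,dz\big)$; adding the two drifts reproduces exactly $Y_s(\mathcal L_{\tilde h}f_s)$ with $\mathcal L_{\tilde h}$ as in~\eqref{eq_def_L_star_theo_fluct}. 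The analogous computation for $\Gamma^N$, using $(\bar\eta_{i+1}-\bar\eta_i)^2=\tfrac12-2\bar\eta_i\bar\eta_{i+1}$, gives the limiting bracket $2\sigma\int_0^t\|\partial_x f_s\|_2^2\,ds$.

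Justifying the replacements of $\bar\eta_i\bar\eta_{i+1}$ above requires a Boltzmann--Gibbs-type estimate. When the weight is a smooth profile (the $\tilde\lambda$-term and the bracket) this is classical and follows from the density being close to local equilibrium. For the bias drift, however, the weight is itself a fluctuation field $Y^N(\partial_1\tilde h(i/N,\cdot))$, so one must control space-time averages of $\tfrac1N\sum_i\big(\bar\eta_i\bar\eta_{i+1}-\E_{\nu^N_{\tilde h}}[\bar\eta_i\bar\eta_{i+1}]\big)Y^N(w_i)$, i.e. of three-point quantities of the form $N^{-3/2}\sum_{i,k}\bar\eta_i\bar\eta_{i+1}\bar\eta_k\,w_{i,k}$. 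This is precisely where the sharp relative entropy estimate of Section~\ref{sec_fluc_correl_prob_h_lambda} enters: it controls the law of $\Prob^N_{\tilde h,\tilde\lambda}$ not only at the level of the density but also at the level of two-point correlations, which is exactly what makes these nonlocal replacements go through with an $o(1)$ error after the time integral.

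With these estimates in hand, tightness of $Y^N_\cdot$ follows from Mitoma's criterion and an Aldous condition, both consequences of the bracket bound and the uniform moments of $Y^N$ (propagated along the dynamics by Doob's inequality). For $\Pi^N_\cdot$ one uses on product test functions the exact microscopic identity $\Pi^N(f_1\otimes f_2)=\tfrac14Y^N(f_1)Y^N(f_2)-\tfrac{\sigma}{4N}\sum_i f_1(i/N)f_2(i/N)$ (coming from $\bar\eta_i^2=\sigma$), which already yields item (4) in the limit by joint convergence and uniform integrability; and on a general $\phi\in C^\infty_D(\T^2)$ the uniform bound $\E_{\nu^N_{\tilde h}}[\Pi^N(\phi)^2]\le C\|\phi\|_\infty^2$ obtained from the four-point function of $\nu^N_{\tilde h}$: since $C^\infty_D(\T^2)$ is uniformly approximated by finite sums of products (Stone--Weierstrass), $\Pi^N(\phi)$ is a uniform-in-$N$ $L^2$-limit of expressions built from $Y^N$, giving the weak convergence of the finite-dimensional marginals $\big(\Pi^N_{t_1}(\phi_1),\dots,\Pi^N_{t_p}(\phi_p)\big)$ and, passing the first-moment bound $\E|\Pi_t(\phi)|\le C\|\phi\|_\infty$ to the limit, item (5); item (2) is inherited from the symmetry of~\eqref{eq_def_correlations} and the relation~\eqref{eq_lien_Pi_et_Y_dans_theo} from item (4) by polarisation. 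Any subsequential limit thus solves~\eqref{eq_martingale_problem_theo} with $Y_0\sim\nu^\infty_{\tilde h}$. Uniqueness of this martingale problem is classical: $\mathcal L_{\tilde h}=(\text{id}-\sigma\tilde h)\Delta$ is dissipative in the inner product weighted by $(\text{id}-\sigma\tilde h)^{-1}$ (where it acts as $\Delta$), hence generates a smoothing semigroup, and choosing the time-dependent test function $f_s=e^{(t-s)\mathcal L_{\tilde h}}g$ identifies all finite-dimensional marginals as Gaussians determined by $\mathcal L_{\tilde h}$ and the bracket. Finally $\nu^\infty_{\tilde h}$ is invariant for this process: its covariance $C_{\tilde h}$ solves the Lyapunov identity $C_{\tilde h}\mathcal L_{\tilde h}+\mathcal L_{\tilde h}^*C_{\tilde h}=2\sigma\Delta$, which holds simply because $C_{\tilde h}\mathcal L_{\tilde h}=(\sigma^{-1}\text{id}-\tilde h)^{-1}(\text{id}-\sigma\tilde h)\Delta=\sigma\Delta$; since $Y_0\sim\nu^\infty_{\tilde h}$ and the solution is unique, $Y_\cdot$ is stationary, and then so is $\Pi_\cdot$ by item (4).

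The main obstacle is the replacement step of the second paragraph: the nonlocality of the bias in the jump rates~\eqref{eq_def_jump_rates_h_and_lambda} forces a Boltzmann--Gibbs principle at the level of two-point correlations rather than of the density alone, so the classical density-level argument does not suffice and one must invoke the sharp relative entropy estimate; verifying its hypotheses -- in particular that the relative entropy of the law of $\Prob^N_{\tilde h,\tilde\lambda}$ with respect to $\nu^N_{\tilde h}$ stays $o(N)$, which relies on $\nu^N_{\tilde h}$ being a good approximation of the invariant measure -- is the crux of the argument.
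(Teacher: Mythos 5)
Your sketch follows the same route as the paper: Mitoma/Aldous tightness from relative‐entropy moment control, Dynkin plus a Boltzmann--Gibbs replacement for the nonlocal bias drift, uniqueness via the semigroup test function $f_s=e^{(t-s)\mathcal L_{\tilde h}}g$, and the relation $\Pi=\tfrac14 Y\otimes Y-\tfrac\sigma4\langle\cdot,\cdot\rangle$ plus Stone--Weierstrass to handle $C^\infty_D$ test functions. The one place you diverge is stationarity, where you verify the Lyapunov identity $C_{\tilde h}\mathcal L_{\tilde h}+\mathcal L_{\tilde h}^*C_{\tilde h}=2\sigma\Delta$ algebraically using $C_{\tilde h}\mathcal L_{\tilde h}=\sigma\Delta$, while the paper checks $\partial_t\,{\bf E}[Y_t(f)^2]=0$ by a direct time integration by parts; these are the same identity, and your phrasing is arguably cleaner. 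One small imprecision worth flagging: for this theorem the paper emphasises that the $O_N(1)$ entropy bound (already in Jara--Menezes, adapted to $\nu^N_{\tilde h}$) suffices for the Boltzmann--Gibbs replacements, while the sharper $o(N^{-1/2})$ estimate of Theorem~\ref{theo_size_entropy} is only needed later for Theorem~\ref{theo_main_result}; your description of the replacement of $\bar\eta_i\bar\eta_{i+1}$ in the bias drift and the resulting three-point control is otherwise exactly the crux, and the $O_N(1)$ bound together with the carr\'e-du-champ renormalisation of Lemma~\ref{lemm_estimate_threepoint} is what makes it go through.
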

\begin{rmk}
\begin{itemize}
\item In the special case where $\tilde h$ commutes with the Laplacian, the operator $\mathcal L_{\tilde h}$ is self-adjoint and the associated fluctuations are reversible. 
This is the case for translation-invariant $\tilde h$, 
in particular the fluctuations under the current-biased dynamics are reversible as in that case $\tilde h=h$.
\item Note that the parameter $\tilde\lambda$ does not appear in the martingale problem of Theorem~\eqref{theo_fluct_correl_driven_process}. 
This is due to the fact that we consider fluctuations around density $1/2$. 
At density $\rho\neq 1/2$, there would be a term proportional to $\tilde\lambda \sigma'(\rho)\int_0^t Y_s(\partial_x f_s)\, ds$ in $M_t(f)$ in~\eqref{eq_martingale_problem_theo}, 
with $\sigma(\rho) = \rho(1-\rho)$ (and in particular even for translation-invariant $\tilde h$ the resulting fluctuations would not be reversible). 

\end{itemize}
\end{rmk}
Item 3 of Theorem \ref{theo_fluct_correl_driven_process} is essentially proven in~\cite{Jara2018}. 
The key ingredient there is a refinement of the relative entropy method of Yau~\cite{Yau1991}. 
This refinement has been used in \cite{Jara2018}--\cite{Jara2020} to characterise out of equilibrium fluctuations in dimension $d<4$. 
Putting Theorem~\ref{theo_main_result} and Theorem~\ref{theo_fluct_correl_driven_process} together, 
one obtains that the fluctuations under the current-biased dynamics in the intermediate regime $1\ll t\ll T$ (the green region of Figure~\ref{fig_regions}) are given by the only solution of the infinite-dimensional Ornstein-Uhlenbeck process:
\begin{equation}
\mathrm{d}Y_t 
=
\mathcal L^*_{h}Y_t\, \mathrm{d}t + \sqrt{2\sigma}\, \nabla {\mathrm d}W_t
,
\end{equation}
with $\mathrm{d}W_\cdot$ a space-time white noise and $\mathcal L^*_h=\mathcal L_h$ the non-local, self-adjoint operator defined in~\eqref{eq_def_L_star_theo_fluct}, 
acting on $Y_t$ by $\mathcal L_hY_t(f) = Y_t(\mathcal L_hf)$ ($f\in C^\infty(\T)$). 
The non-locality of the drift term $\mathcal L_h$ is responsible for the non-local correlation structure of the current-biased dynamics. \\

Although Theorem~\ref{theo_fluct_correl_driven_process} can be proven using the relative entropy bounds of~\cite{Jara2018}, 
proving that $\Prob^{\nu^N_h}_{h,\lambda+E}$ is an approximate driven process for the current biased dynamics as in~\eqref{eq_def_driven_process} requires yet finer bounds. 
These bounds, stated next, are the main technical ingredient. 
Like Theorem~\ref{theo_fluct_correl_driven_process}, 
they hold in the whole sub-critical regime~\eqref{eq_phiT_in_prop}. 
\begin{theo}\label{theo_size_entropy}
Let $\lambda,E\in\R$ be sub-critical, i.e. satisfy~\eqref{eq_phiT_in_prop}.  
Let $h$ be given by Proposition \ref{prop_solving_PDE_h}. 
For $t\geq 0$, 
let ${\bf H}^N(f_t)$ denote the relative entropy of the law of the dynamics $f_t \nu^N_h$ with respect to $\nu^N_h$:
\begin{equation}
{\bf H}^N(f_t)
:=
\nu^N_h(f_t\log f_t)
=
\sum_{\eta\in\Omega_N}f_t(\eta)\log f_t(\eta)\nu^N_h(\eta)
.
\end{equation}
There are then $\gamma,C>0$ depending only on $\lambda,E$ such that:
\begin{align}
\forall t> 0,\qquad 
&\partial_t{\bf H}^N(f_t)
\leq 
\frac{{\bf H}^N(f_t)}{\gamma} + \frac{C}{N^{1/2}}
\nonumber\\
&\hspace{2cm}\Rightarrow\quad 
{\bf H}^N(f_t)\leq \frac{\gamma C e^{t/\gamma}}{N^{1/2}},
\quad t\geq 0
.
\end{align}
\end{theo}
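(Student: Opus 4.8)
\begin{preu}

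Since $\Prob^N_{h,\lambda+E}$ is started from $\nu^N_h$, we have $f_0\equiv 1$ and ${\bf H}^N(f_0)=0$; so once the differential inequality $\partial_t{\bf H}^N(f_t)\le {\bf H}^N(f_t)/\gamma + CN^{-1/2}$ is proved for a.e.\ $t>0$, a Gronwall argument gives ${\bf H}^N(f_t)\le \gamma C(e^{t/\gamma}-1)N^{-1/2}\le \gamma C e^{t/\gamma}N^{-1/2}$. The content is thus the differential inequality, which I would obtain through a sharp version of the relative entropy method of Yau~\cite{Yau1991} in the form refined by~\cite{Jara2018,Jara2020,Correlations2022}. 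Throughout, sub-criticality~\eqref{eq_phiT_in_prop} is used via~\eqref{eq_critical_line_as_eigenvalue}: $\sigma h$ has leading eigenvalue strictly below $1$, which makes $\nu^N_h$ a weak enough quadratic perturbation of $\nu^N_{1/2}$ to inherit from it a uniform spectral-gap/log-Sobolev inequality and good equivalence-of-ensembles properties (cf.\ Appendix~\ref{app_BFP}).

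\emph{Step 1 (Yau's inequality).} Writing $\mathcal G^N=N^2L_{h,\lambda+E}$ for the generator, the law $f_t\,\nu^N_h$ solves the forward equation, so $\partial_tf_t=(\mathcal G^N)^\dagger f_t$ with $(\mathcal G^N)^\dagger$ the adjoint in $\mathbb L^2(\nu^N_h)$, whence $\partial_t{\bf H}^N(f_t)=\int f_t\,\mathcal G^N\!\log f_t\,d\nu^N_h$. Using $a\log(b/a)\le 2\sqrt a(\sqrt b-\sqrt a)=-(\sqrt b-\sqrt a)^2+(b-a)$ bond by bond yields
\begin{equation*}
\partial_t{\bf H}^N(f_t)\ \le\ -\,\mathfrak D^N_h(\sqrt{f_t})\ +\ \int \mathcal V^N_h\, f_t\, d\nu^N_h ,
\end{equation*}
with $\mathfrak D^N_h(\sqrt{f_t})\ge 0$ a Dirichlet form of order $N^2$ built from the $\nu^N_h$-symmetrised rates and $\mathcal V^N_h:=(\mathcal G^N)^\dagger\mathbb 1$ the \emph{correction field}, which vanishes identically iff $\nu^N_h$ is invariant for $\mathcal G^N$.

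\emph{Step 2 (expansion of the correction field; role of $h$).} Using $c_{h,\lambda+E}=c_{\lambda+E}\,e^{\Pi^N(h)(\eta^{i,i+1})-\Pi^N(h)(\eta)}$, $\nu^N_h\propto e^{2\Pi^N(h)}\nu^N_{1/2}$, the invariance of $\nu^N_{1/2}$ for $L_{\lambda+E}$ and the telescoping identity $\sum_i(\eta_{i+1}-\eta_i)=0$ on the ring, the correction field reads
\begin{equation*}
\mathcal V^N_h(\eta)=2N^2\sinh\!\big(\tfrac{\lambda+E}{N}\big)\sum_{i\in\T_N}\big(e^{\Delta_i}-1\big)(\eta_{i+1}-\eta_i),\qquad \Delta_i:=\Pi^N(h)(\eta^{i,i+1})-\Pi^N(h)(\eta),
\end{equation*}
with $\Delta_i=O(N^{-1/2})$ and $\Delta_i\simeq N^{-1}(\bar\eta_{i+1}-\bar\eta_i)\sum_k\bar\eta_k\,h_{i,k}$ to leading order by~\eqref{eq_def_correlations}. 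Expanding $e^{\Delta_i}-1$ in powers of $N^{-1}$ splits $\mathcal V^N_h$ into exact discrete gradients (absorbed into $\mathfrak D^N_h$ at a cost $\le\epsilon\,\mathfrak D^N_h(\sqrt{f_t})+O(N^{-1})$), polynomials of degree two and three in $\bar\eta$ tested against smooth (or $C^\infty_D$) kernels, and a deterministic remainder that is $O(N^{-1/2})$ by the fast Fourier decay of $h$. The ODE~\eqref{eq_ODE_sur_h} defining $h$ — the nonlocal quadratic term $\tfrac\sigma2\int h'(x-z)h'(z)\,dz$ and the jump condition $h'(0^+)-h'(1^-)=2\lambda(\lambda+2E)$ — is exactly the solvability condition under which the $O(1)$ contributions that survive the replacement step of Step 3 cancel; the same identity endows $\nu^N_h$ with the two-point correlation kernel $\sigma\,\mathrm{id}+k_{curr}$ of the current-biased dynamics (Proposition~\ref{prop_correl_sec_results}).

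\emph{Step 3 (closing the loop) and the main obstacle.} It remains to bound $\int\Psi^N f_t\,d\nu^N_h$ for $\Psi^N$ a quadratic or cubic polynomial in $\bar\eta$ tested against a smooth kernel. Combining the entropy inequality $\int\Psi^N f_t\,d\nu^N_h\le\delta^{-1}\log\nu^N_h(e^{\delta\Psi^N})+\delta^{-1}{\bf H}^N(f_t)$ with the multiscale toolbox for $\nu^N_h$ (one- and two-block moving-particle estimates, equivalence of ensembles, uniform spectral-gap/log-Sobolev), one replaces microscopic local functions by mesoscopic averages over boxes of size $\ell$, with replacement errors $\le\epsilon\,\mathfrak D^N_h(\sqrt{f_t})+C(\epsilon,\ell)N^{-1/2}$; the resulting smoothed quadratic fluctuations are bounded, using the near-Gaussianity of $\nu^N_h$ and its conditionings, by $\kappa\,\ell^{-1}\mathfrak D^N_h(\sqrt{f_t})+\gamma^{-1}{\bf H}^N(f_t)+CN^{-1/2}$. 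Taking $\ell$ large with $\ell\ll N$ and $\epsilon$ small makes all Dirichlet-form terms strictly smaller than the $-\mathfrak D^N_h(\sqrt{f_t})$ of Step 1, which absorbs them, leaving $\partial_t{\bf H}^N(f_t)\le\gamma^{-1}{\bf H}^N(f_t)+CN^{-1/2}$. The main obstacle is the \emph{precision}: standard relative entropy estimates for hydrodynamics tolerate errors $o(N)$, whereas here everything must be driven down to $O(N^{-1/2})$. This forces one, on one hand, to keep and control the cubic term in $\mathcal V^N_h$ — of typical size $O(N)$ and hence too rough for the crude entropy inequality, requiring the multiscale machinery applied to three-point functions plus an a priori bound on $n$-point correlations under $f_t\nu^N_h$ obtained by bootstrapping the correlation structure of $\nu^N_h$ — and, on the other, to prove equivalence-of-ensembles and local-CLT statements for the \emph{non-product} measure $\nu^N_h$ and its conditionings with errors of the required order. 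It is precisely the leading-order cancellation enforced by~\eqref{eq_ODE_sur_h} that removes the otherwise $O(1)$ obstruction and makes the $O(N^{-1/2})$ bound reachable. \demo

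\end{preu}
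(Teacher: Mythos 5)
Your Steps 1 and 3 match the paper's structure (Yau's inequality via Lemma~\ref{lemm_derivative_entropy}, expansion of the correction field, a renormalisation/replacement argument for the cubic term, entropy inequality, Gronwall), and your closed-form rewriting of the correction field as $\mathcal V^N_h = 2N^2\sinh((\lambda+E)/N)\sum_i(\eta_{i+1}-\eta_i)(e^{\Delta_i}-1)$ is correct and in fact a clean way to organise the computation the paper does order by order in~\eqref{eq_L_star_is_L_1_plus_L_2}--\eqref{eq_final_estimate_L_star_before_choice_g}. But the central claim of your Step 2 is a genuine misattribution: you assert that the ODE~\eqref{eq_ODE_sur_h} on $h$ is ``exactly the solvability condition'' that cancels the $O(1)$ two-point contributions in the correction field. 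This is not what happens, and if you tried to carry out that cancellation you would find the ODE plays no role in it. The cancellation in $\mathcal V^N_h$ is purely algebraic and follows from two facts: (i) the reference measure $\nu^N_g$ is taken with $g=\tilde h$, i.e.\ the same kernel that parametrises the jump rates, so all terms in the expansion~\eqref{eq_final_estimate_L_star_before_choice_g} proportional to $g-\tilde h$ vanish identically; and (ii) the density is $1/2$, so that $c(\eta,\eta^{i,i+1})=2\sigma-2\bar\eta_i\bar\eta_{i+1}$ has no linear-in-$\bar\eta$ part, which is why $\sum_i(\bar\eta_{i+1}-\bar\eta_i)\Delta_i$ produces only a linear term ($-\frac{2\sigma}{N}\sum_iB^h_i$, integrable by parts to $N^{-1/2}W^{1,\{0\}}_\psi$) and a genuinely cubic term ($\e_h$), with no surviving quadratic one. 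This is why the paper's Proposition~\ref{prop_entropy_estimate_general} --- which the present theorem is a special case of --- holds for \emph{any} symmetric $\tilde h\in C^\infty_D(\T^2)$ with $\sigma\tilde h$ of leading eigenvalue below $1$, not just the solution of~\eqref{eq_ODE_sur_h}. The ODE on $h$ enters only in Section~\ref{sec_Radon_Nikodym_derivative} (Proposition~\ref{prop_choice_of_h_sec4}), where it is chosen to kill the time-integrated two-point term $\int_0^T\Pi^N_t(F_{\lambda,E}(\tilde h))\,dt$ in the Radon--Nikodym derivative --- a different object serving a different purpose (Step~1 of Section~\ref{sec_structure_proof}, not the entropy estimate).

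Two further, smaller issues. First, $\Delta_i=-N^{-1}(\bar\eta_{i+1}-\bar\eta_i)B^h_i=O(N^{-1})$, not $O(N^{-1/2})$ as you write; what is typically $O(N^{-1/2})$ after re-summation is the field $W^{1,\{0\}}_\psi$ obtained by integrating $\sum_iB^h_i$ by parts, not $\Delta_i$ itself. Second, your Step 3 invokes a toolbox (equivalence of ensembles, uniform spectral-gap/log-Sobolev, local-CLT for $\nu^N_h$ and its conditionings, a priori bounds on $n$-point correlations of $f_t\nu^N_h$) that the paper's proof does not use and partly cannot: the multiscale replacement of Lemma~\ref{lemm_estimate_threepoint} relies on the explicit integration-by-parts formula of Lemma~\ref{lemm_IPP} (which trades a bond gradient for the Dirichlet form plus controllable error), on the exponential concentration estimates of Proposition~\ref{prop_concentration_exponentielle} for $n$-point fields under $\nu^N_h$ (themselves built from Lemma~\ref{lemm_bound_correlations} via the eigenvalue bound on $\sigma h$, not from a log-Sobolev inequality for $\nu^N_h$), and on the entropy inequality. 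A log-Sobolev inequality for canonical conditionings of $\nu^N_h$ is precisely the missing ingredient flagged in Section~\ref{sec_perspectives} for removing the smallness assumption in Theorem~\ref{theo_main_result}; it is not available and not needed here.
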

Theorem~\ref{theo_size_entropy} is proven in Section~\ref{sec_entropy_estimate}. 
Such a relative entropy estimate provides a precise control on the dynamics that can be used in other contexts. 
In~\cite{Correlations2022}, 
they allowed for the study of the probability of observing long time anomalous correlations in the symmetric simple exclusion process in contact with reservoirs at different densities.
\subsection{Heuristics and structure of the proof}\label{sec_structure_proof}
Let us now sketch the proof of Theorem~\ref{theo_main_result}, 
postponing technical estimates to later sections. \\

\textbf{Step 1: Construction of a candidate for the approximate driven process.} To obtain a candidate approximate driven process, 
we carry out the procedure outlined in Section~\ref{sec_driven_process}, 
successively modifying the jump rates to first obtain a dynamics with the right macroscopic current, 
then with the right two-point correlations. 
This is the content of Proposition~\ref{prop_choice_of_h_sec4}, 
which states that, for any symmetric, translation invariant bias $\tilde h\in C^\infty_D(\T^2)$, 
the current-biased dynamics can be rewritten in terms of a tilted dynamics:
\begin{align}
\Prob^{curr,\nu^N_{1/2}}_{\lambda,E,T}(\cdot)
=
\frac{\E^{\nu^N_{\tilde h}}_{\tilde h,\lambda+E}\Big[{\bf 1}_{\cdot} \exp\Big(-\Pi^N_T(\tilde h)-\Pi^N_0(\tilde h) + \int_0^T \Pi^N_t( F_{\lambda,E}(\tilde h))\, dt + \int_0^T \epsilon^N_{\tilde h,\lambda,E}(\eta_t)\, dt\Big)\Big]}{\E^{\nu^N_{\tilde h}}_{\tilde h,\lambda+E}\Big[ \exp\Big(-\Pi^N_T(\tilde h)-\Pi^N_0(\tilde h) + \int_0^T \Pi^N_t( F_{\lambda,E}(\tilde h))\, dt + \int_0^T \epsilon^N_{\tilde h,\lambda,E}(\eta_t)\, dt\Big)\Big]}
.
\label{eq_heuristique_radon_nikodym}
\end{align}
Above, $F_{\lambda,E}$ is a differential operator while the random variable $\epsilon^N_{\tilde h,\lambda,E}$ involves three-point correlations and higher that are expected to be negligible compared to two-point correlations, which one expects to be of order $1$ in $N$.  

In order for $\Prob^N_{\tilde h,\lambda+E}$ to be an approximate driven process in the sense of~\eqref{eq_def_driven_process}, 
we need the exponential in~\eqref{eq_heuristique_radon_nikodym} to be bounded with $N,T$ when $N$, then $T$ large. 
This suggests to take $\tilde h$ with $F_{\lambda,E}(\tilde h)=0$, 
which is equivalent to $\tilde h$ being the bias $h$ of Proposition~\ref{prop_solving_PDE_h}.

The error terms $\int_0^T\epsilon^N_{h,\lambda,E}(\eta_t)\, dt,\Pi^N_T(h),\Pi^N_0(h)$, however, appear inside exponentials. 
We check in Section~\ref{sec_fluc_correl_prob_h_lambda} that they have are well-controlled in expectation under the dynamics $\Prob^{\nu^N_{h}}_{h,\lambda+E}$, 
using the relative entropy estimate of Theorem~\ref{theo_size_entropy} that is also proven there. 
Still, it could be that their exponential moment blows up with $N,T$ on a rare event. 
This would mean that $\Prob^{\nu^N_{h}}_{h,\lambda+E}$ is in fact not an approximate driven process. \\

\textbf{Step 2: estimate of error terms. }
The key Proposition~\ref{prop_domination_by_P_h} provides a control of the exponential terms in~\eqref{eq_heuristique_radon_nikodym}, 
proving that they indeed remain bounded when $N$, then $T$ are large.  
This is achieved through a domination bound of the following form: 
for any sequence $(\mathcal O_N)$ of measurable events that involve the dynamics up to time $T$ at most,
\begin{equation}
\limsup_{N\rightarrow\infty}\Prob^{curr,\nu^N_{1/2}}_{\lambda,E,T}(\mathcal O_N)
\leq 
C(\lambda,E)\limsup_{N\to\infty}\Prob^{\nu^N_h}_{h,\lambda+E}(\mathcal O_N)^{1/2}
.
\label{eq_domination_bound_sec2}
\end{equation}
This bound is the only claim for which we need the (technical) restriction that $\lambda(\lambda+2E)\geq 0$ is small enough. 

Equipped with the domination bound~\eqref{eq_domination_bound_sec2}, 
it is not hard to prove (see Proposition~\ref{prop_prob_curr_as_prob_driven_with_bounded_terms}) that the current-biased dynamics has the same fluctuations at times $1\ll t\ll T$ when $N$, then $T$ are large as the dynamics:
\begin{equation}
\frac{\E^{\nu^N_h}_{h,\lambda+E}\big[{\bf 1}_{\cdot }H_1(Y^N_0)H_2(Y^N_T)\big]}{\E^{\nu^N_h}_{h,\lambda+E}\big[H_1(Y^N_0) H_2(Y^N_T)\big]}
.
\label{eq_approx_dyn_sec_heuristics}
\end{equation}
Above, $H_1,H_2$ are suitable bounded functions of the fluctuation field. \\

\textbf{Step 3: fluctuations under $\E^{\nu^N_{h}}_{h,\lambda+E}$ and time decorrelation.} 
Fluctuations under~\eqref{eq_approx_dyn_sec_heuristics} are then analysed in two stages. 
The limiting law ${\bf P}^{\nu^\infty_h}_{h,\lambda+E}$ of the fluctuation process under $\Prob^{\nu^N_{h}}_{h,\lambda+E}$ is first characterised in Section~\ref{sec_fluc_correl_prob_h_lambda}, in particular proving Theorem~\ref{theo_fluct_correl_driven_process}. 
We prove that,
for any bounded $F:\mathcal D(\R_+,\s(\T))\rightarrow\R$ and suitable bounded functions $H_1,H_2$,  
\begin{equation}
\lim_{N\rightarrow\infty}
\frac{\E^{\nu^N_h}_{h,\lambda+E}\big[F\big((Y^N_s)_{s\leq T}\big)H_1(Y^N_0) H_2(Y^N_T)\big]}{\E^{\nu^N_h}_{h,\lambda+E}\big[H_1(Y^N_0) H_2(Y^N_T)\big]}
=
\frac{{\bf E}^{\nu^\infty_h}_{h,\lambda+E}\big[F\big((Y_s)_{s\leq T}\big)H_1(Y_0) H_2(Y_T)\big]}{{\bf E}^{\nu^\infty_h}_{h,\lambda+E}\big[H_1(Y_0) H_2(Y_T)\big]}
.
\label{eq_structure_proof_limiting_fluctuations}
\end{equation}
We conclude the proof of Theorem~\ref{theo_main_result} in Section~\ref{sec_time_decorrelation}, 
showing that the limiting fluctuation process decorrelates in time in the sense that, for each $t\geq 0$ and each $A>0$ with $t+A\leq T$:
\begin{equation}
\frac{{\bf E}^{\nu^\infty_h}_{h,\lambda+E}\big[F\big((Y_s)_{t\leq s\leq t+A}\big)H_1(Y_0) H_2(Y_T)\big]}{{\bf E}_{h,\lambda+E}\big[H_1(Y_0) H_2(Y_T)\big]}
= 
{\bf E}^{\nu^\infty_h}_{h,\lambda+E}\big[F\big((Y_s)_{s\leq T}\big)\big] 
+C_{t,T}(F,H_1,H_2)
,
\label{eq_decorrelation_bound_sec_heuristics}
\end{equation}
with:
\begin{equation}
\lim_{t\rightarrow\infty}\limsup_{T\rightarrow\infty}|C_{t,T}(F,H_1,H_2)|
=
0
.
\end{equation}
This decorrelation in time boils down to Gaussian computations on a family of finite-dimensional diffusions approximating the limiting fluctuation process, 
constructed as in~\cite{Holley1978}.
\subsection{Discussion of the results and perspectives}\label{sec_perspectives}
In this article, the large $N$ density fluctuations of the current-biased dynamics at intermediate times $1\ll t \ll T$ are characterised. 
In this time regime,  
the current biased dynamics are shown to be well-approximated by the explicit, non-local and homogeneous Markov dynamics $\Prob^{\nu^N_h}_{h,\lambda+E}$. 
Moreover, correlations at times $1\ll t\ll T$ under the current-biased dynamics are shown to agree with the expression predicted in~\cite{Bodineau2008}.
Let us discuss a few limitations and possible extensions.

\paragraph{Dynamics for times around $0,T$.} 
It is also possible to investigate the behaviour of the current-biased dynamics~\eqref{eq_def_dynamique_conditionnee_courant} at times $t,T-t$ with $t$ of order $1$. 
This would require no change to the microscopic study, 
only work on the limiting fluctuation process. 
Indeed, for such times the limiting fluctuations can still be expressed in terms of ${\bf P}^{\nu_h}_{h,\lambda+E}$ but the time decorrelation~\eqref{eq_decorrelation_bound_sec_heuristics} does not happen, reflecting the fact that initial/final conditions still influence the dynamics.  
The corresponding fluctuation process therefore becomes inhomogeneous in time.

\paragraph{Annealed and quenched dynamics.}
The current-biased dynamics~\eqref{eq_current_biased_dynamics_intro} is defined in an "annealed" way, in the sense that it is a ratio of quantities averaged on the initial condition. 
One could also ask about results on a "quenched" version of the dynamics. 
Several variants of quenched dynamics can be envisioned.  
A fully quenched version, so to speak, would be:
\begin{equation}
\E^{q,\mu^N}_{\lambda,E,T}(\cdot)
=
\sum_{\eta\in\Omega_N}\mu^N(\eta)\frac{\E^\eta_E[{\bf 1}_{\cdot}e^{\lambda NQ_T}]}{\E^{\eta}_E[e^{\lambda NQ_T}]}
\qquad\text{for an initial condition }\mu^N
.
\label{eq_def_quenched_dynamics}
\end{equation}
If e.g. $\mu^N=\nu^N_{1/2}$, 
the only missing ingredient to prove Theorem~\ref{theo_main_result} for the dynamics~\eqref{eq_def_quenched_dynamics} is an equivalent of the domination bound~\eqref{eq_domination_bound_sec2}. 
It is unclear whether such a bound should hold. 
If it does, then the above quenched dynamics would have the same fluctuations as the "annealed" dynamics~\eqref{eq_def_dynamique_conditionnee_courant}.

Another possibility is to average on the initial condition while keeping the number of particles fixed, 
as in the following dynamics started from the uniform distribution $U_m$ on configurations with $m\in\{0,...,N\}$ particles:
\begin{equation}
\E^{curr,U_{m}}_{\lambda,E,T}(\cdot)
=
\frac{\E^{U_m}_E[{\bf 1}_{\cdot}e^{\lambda NQ_T}]}{\E^{U_m}_E[e^{\lambda NQ_T}]}
.
\label{eq_current_biased_dynamics_with_unif_measure}
\end{equation}
For this dynamics we expect Theorem~\ref{theo_main_result} to hold without much change to the proof. 
This would be very interesting to prove as the initial condition $U_m$ has an advantage over $\nu^N_{1/2}$ as discussed in the next paragraph.

\paragraph{Influence of the initial condition.} 
We now come back to the annealed current-biased dynamics~\eqref{eq_def_dynamique_conditionnee_courant} considered in the paper and discuss initial conditions more in depth.

Our main result, Theorem~\ref{theo_main_result}, 
is a statement about the current-biased dynamics in the intermediate time interval $1\ll t\ll T$. 
Part of its proof consists in showing a time decorrelation result according to which the effect of the initial condition is not relevant. 
One therefore expects the statement to be independent of the specific choice $\nu^N_{1/2}$ of initial condition (and for instance Theorem~\ref{theo_main_result} also holds starting from a class of discrete Gaussian measures of the form~\eqref{eq_def_nu_rho_g}, with the same proof).

On one aspect, however, the choice of $\nu^N_{1/2}$ does matter. 
That is because this measure allows all possible number of particles, including $0$ or $N$ particles where there is no current. 
Since the number of particles is conserved by the dynamics, this has important consequences. 
Indeed, start the WASEP dynamics from the uniform measure $U_m$ on configurations with $m=\lfloor \rho N\rfloor$ particles ($\rho\in(0,1)$). 
A direct computation using~\cite{BertiniConcurrentDonskerVaradhan2023} shows that the denominator in~\eqref{eq_dyn_courant_en_terme_dyn_lambda} is informally given to leading order in $N,T$ by:
\begin{equation}
\E^{U_m}_E\Big[e^{\lambda NQ_T}\Big] 
\approx 
e^{NT\rho(1-\rho)\lambda(\lambda+2E)}
.
\label{eq_value_denom_phiT}
\end{equation}
As all densities are allowed under $\nu^N_{1/2}$, 
as soon as $\lambda(\lambda+2E)<0$ the denominator $\E^{\nu^N_{1/2}}_{E}[e^{\lambda NQ_T}]$ in the definition~\eqref{eq_def_dynamique_conditionnee_courant} is therefore dominated by the contributions at $0,N$ particles when $T\gg 1$ even though the typical density under $\nu^N_{1/2}$ is $1/2$:
\begin{align}
\E^{\nu^N_{1/2}}_E\Big[e^{\lambda NQ_T}\Big] 
&=
\frac{1}{2^N}\sum_{m=0}^N\binom{N}{m} \E^{U_m}_E\Big[e^{\lambda NQ_T}\Big] 
\nonumber\\
&\approx 
\frac{1}{2^{N-1}}\Big(\E^{U_0}_E\Big[e^{\lambda NQ_T}\Big]+ o_T(1) \Big)
=
\frac{1}{2^{N-1}}\big(1+o_T(1)\big)
,
\end{align}
where the $o_T(1)$ is uniform in $N$. 
Thus for $\lambda(\lambda+2E)<0$ the "annealed" current-biased dynamics~\eqref{eq_def_dynamique_conditionnee_courant} started from $\nu^N_{1/2}$ does not capture any interesting dynamical phenomenon when $N,T$ are large. 
This is the reason for the restriction $\lambda(\lambda+2E)\geq 0$ in Theorem~\ref{theo_main_result}. 
In particular, recalling the definition~\eqref{eq_phiT_in_prop} of the conjectured critical line, 
the restriction $\lambda(\lambda+2E)\geq 0$ means that only $\lambda,E$ far from the critical line can be considered. 

A natural way to bypass this problem would be to consider an initial condition with a fixed number of particles as in~\eqref{eq_current_biased_dynamics_with_unif_measure}. 

Extending Theorem~\ref{theo_main_result} to that case should require no new ideas and would allow for $\lambda,E$ with $\lambda(\lambda+2E)\leq 0$. 
Indeed, the structure of the proof of Theorem~\ref{theo_main_result} 
for $|\lambda(\lambda+2E)|$ small enough would be identical: 
the candidate for the driven process and the time-decorrelation result of the limiting fluctuation fields are the same. 
The key microscopic ingredient, the relative entropy estimate, 
can also be set up at fixed density without change. 
To get bounds similar to those of Theorem~\ref{theo_size_entropy}, 
however, 
we would need an additional input: 
an equivalent of Appendix~\ref{app_concentration} for the canonical measure:
\begin{equation}
U_{\lfloor N/2\rfloor,h} 
:=
\nu^N_h\Big(\cdot\Big| \sum_i \eta_i = \lfloor N/2\rfloor\Big)
.
\label{eq_def_Umh}
\end{equation}
In particular exponential concentration bounds for $n$-point correlations under this measure when $\lambda(\lambda+2E)\leq 0$, 
corresponding to $h$ being a positive kernel (recall Proposition~\ref{prop_solving_PDE_h}) would be required. 
We would also need to prove a central limit theorem for density fluctuations, 
which should be accessible, perhaps using the classical results on mean-field models of~\cite{kusuokaGibbsMeasuresMean1984}. 
As these estimates are fairly technical and would still not allow to cover the full range of sub-critical $\lambda,E$, we did not pursue that generalisation.

\paragraph{The non-perturbative case.}
Let us now discuss how to remove the smallness assumption on $\lambda(\lambda+2E)$ in Theorem~\ref{theo_main_result}. 

The requirement that $\lambda(\lambda+2E)\geq 0$ be small comes from a single, but central technical estimate: 
the domination bound~\eqref{eq_domination_bound_sec2}. 
We claim that this bound can be proven for any $\lambda,E$ with $\lambda(\lambda+2E)\geq 0$ provided the following estimate holds. 

Write $U_{m,h}$ for the measure $\nu^N_h$ conditioned to having $0\leq m\leq N$ particles.  
Assume that, for any $\lambda,E$ in the sub-critical regime of Proposition~\ref{prop_solving_PDE_h}:
\begin{align}
&U_{m,h} \text{ satisfies a logarihmic Sobolev inequality }
\nonumber\\
&\hspace{3cm}\text{with constant }c(\lambda,E)N^2 \text{ uniform in }m\in \{0,...,N\}
.
\end{align}
In other words, assume that there is $c(\lambda,E)$ independent of $N$ and $m\in\{0,...,N\}$ such that, 
for any density $f:\Omega_{N,m}\to\R_+$ for $U_{m,h}$:
\begin{equation}
U_{m,h}(f\log f)
\leq 
c(\lambda,E)N^2 
U_{m,h}\Big(\sum_{i\in\T_N} \big[\sqrt{f(\eta^{i,i+1})} -\sqrt{f(\eta)}\big]^2\Big)
.
\label{eq_LSI_sec_perspectives}
\end{equation}
Then Theorem~\ref{theo_main_result} with the current-biased dynamics started from $\nu^N_{1/2}$ holds for any $\lambda(\lambda+2E)\geq 0$. 
Alternatively, assuming the above log-Sobolev inequality~\eqref{eq_LSI_sec_perspectives} for $m=\lfloor N/2\rfloor$ only as well as a CLT under the measure $U_{\lfloor N/2\rfloor, h}$ of~\eqref{eq_def_Umh}, 
the result of Theorem~\ref{theo_main_result} for 
the current-biased dynamics started from $U_{\lfloor N/2\rfloor}$ would similarly hold in the whole sub-critical regime~\eqref{eq_phiT_in_prop}.

The proof of such a logarithmic Sobolev inequality is currently under investigation.

\section{Preliminaries: fluctuations and correlations under $\Prob^{N}_{h,\lambda+E}$ and relative entropy estimate}\label{sec_fluc_correl_prob_h_lambda}
In this section we establish preliminary results needed in the proof of the main result, Theorem~\ref{theo_main_result}. 
We prove Theorem~\ref{theo_fluct_correl_driven_process} characterising fluctuations and correlations under the approximate driven process $\Prob^N_{h,\lambda+E}$ 
(Sections~\ref{sec_CLT_fluctuations} to~\ref{sec_stationarity}) 
and the relative entropy estimate of Theorem~\ref{theo_size_entropy} (Section~\ref{sec_entropy_estimate}).  

Throughout, we consider the following more general case: rather than $\Prob^N_{h,\lambda+E}$, 
we study fluctuation and correlation processes for a dynamics of the form $\Prob^{N}_{\tilde h,\tilde \lambda}$, 
where $\tilde \lambda\in\R$ and $\tilde h\in C^\infty_D(\T^2)$ is symmetric 
(this set is defined in~\eqref{eq_def_set_c_infty_D}). 
We assume throughout that $\sigma\tilde h$ has leading eigenvalue strictly below $1$: 
for some parameter $\delta\in(0,1)$:
\begin{equation}
\sup_{f\in\mathbb L^2(\T):\|f\|_2=1}\int_{\T^2}f(x) \sigma \tilde h(x,y)f(y)\, dx\, dy
\leq 
(1-\delta)
,\qquad
\sigma := (1/2)(1-1/2) = 1/4
.
\label{eq_assumption_size_tilde_h}
\end{equation}
\begin{rmk}
In the case $\tilde h=h$ with $\tilde\lambda = \lambda+E$, 
the assumption~\eqref{eq_assumption_size_tilde_h} covers the full sub-critical regime, recall Proposition~\ref{prop_solving_PDE_h}. 
In particular and as mentioned above, no limitation on $\lambda,E$ come from Theorems~\ref{theo_fluct_correl_driven_process}--\ref{theo_size_entropy}.
\demo
\end{rmk}
The dynamics $\Prob^N_{\tilde h,\tilde \lambda}$ is started from the discrete Gaussian measure $\nu^N_{\tilde h}$ defined as in~\eqref{eq_def_nu_rho_g} with $\tilde h$ instead of $h$ there. 
The fluctuations are studied as follows.
\begin{itemize}
	\item Limit theorems for the law of the fluctuation process $Y^N_\cdot$ under a local dynamics ($\tilde h=0$) have been obtained in~\cite{Jara2018} using $O_N(1)$ estimates on the relative entropy of the law of the dynamics at each time compared to a suitable product measure. 
	The extension to $\tilde h\neq 0$ does not require new arguments provided a similar relative entropy estimate holds.   
	We therefore only provide a sketch of proof, in Section~\ref{sec_CLT_fluctuations} and refer to~\cite{Jara2018} for details.
	\item The correlation process $\Pi^N_\cdot$ is then expressed in terms of $Y^N_\cdot$ when acting on smooth test functions. 
	This and an approximation argument characterise its limit uniquely when acting also on test functions in $C^\infty_D(\T^2)$ (recall~\eqref{eq_def_set_c_infty_D}) that are smooth up to singularities on the diagonal. 
	Stationarity of the limiting processes $Y_\cdot,\Pi_\cdot$ follows from an explicit formula for the law of $Y_t$ ($t\geq 0$).
\end{itemize}
The following proposition contains the entropy estimate, 
and includes Theorem~\ref{theo_size_entropy} as a special case. 
The $o_N(1)$ bound stated below is stronger than required for the proof of Theorem~\ref{theo_fluct_correl_driven_process}. 
Note that the law of the dynamics is compared to the correlated measure $\nu^N_{\tilde h}$ rather than a product measure as in~\cite{Jara2018}. 
This is essential for the $o_N(1)$ bound and does not change induce any change to the proof of Theorem~\ref{theo_fluct_correl_driven_process} (where an $O_N(1)$ bound is enough) compared to the proof in~\cite{Jara2018}. 
\begin{prop}\label{prop_entropy_estimate_general}
Let $f_t\nu^N_{\tilde h}$ denote the law of the dynamics $\Prob^{\nu^{N}_{\tilde h}}_{\tilde h,\tilde \lambda}$ at time $t\geq 0$. 
There are $\gamma,C>0$ independent of $\tilde h$ such that, for each $T>0$:
\begin{equation}
\forall N\in\N_{\geq 1},\qquad 
\sup_{t\leq T}{\bf H}^N(f_t) 
= 
\sup_{t\leq T}
\leq 
\frac{\gamma C e^{T/\gamma}}{N^{1/2}}
,\qquad\quad
{\bf H}^N(f_t) 
:= 
\nu^N_{\tilde h}(f_t\log f_t)
.
\label{eq_entropy_estimate_prop_sec3}
\end{equation}
This bound implies that, 
for each bounded $\phi:\T^2\rightarrow\R$:
\begin{equation}
\sup_N \sup_{t\leq T} \E^{\nu^N_{\tilde h}}_{\tilde h,\tilde \lambda}\Big[\big|\Pi^N_t(\phi)\big|^{3/2}\Big] 
\leq 
C(T,\phi)
.
\label{eq_moment_bound}
\end{equation}
Moreover, Pinsker's inequalities yields the following useful consequence.
For any function $F_N:\Omega_N\rightarrow\R$ and any $T\geq 0$,
\begin{equation}
\sup_{t\leq T}
\Big|\E^{\nu^N_{\tilde h}}_{\tilde h,\tilde \lambda}\big[F_N(\eta(t))\big] 
- \nu^N_{\tilde h}(F_N)\Big|
\leq 
\|F_N\|_\infty \sup_{t\leq T}\int |f_t-1|\, d\nu^N_{\tilde h}
\leq 
\frac{\|F_N\|_\infty\sqrt{2\gamma Ce^{T/\gamma}}}{N^{1/4}}
.\label{eq_pinsker}
\end{equation}
\end{prop}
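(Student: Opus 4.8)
The proof has three independent components: (i) the entropy bound~\eqref{eq_entropy_estimate_prop_sec3}, (ii) the moment bound~\eqref{eq_moment_bound} as a consequence, and (iii) the Pinsker consequence~\eqref{eq_pinsker}. Components (ii) and (iii) are soft once (i) is in hand, so the real work is (i), which is exactly the content of Theorem~\ref{theo_size_entropy}: the point is to establish the differential inequality $\partial_t \mathbf H^N(f_t)\leq \gamma^{-1}\mathbf H^N(f_t) + C N^{-1/2}$ and then integrate it via Grönwall to get $\mathbf H^N(f_t)\leq \gamma C e^{t/\gamma}N^{-1/2}$ uniformly on $[0,T]$.

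\textbf{Step 1 (entropy production inequality).} I would start from the standard identity for the time derivative of relative entropy along a Markov dynamics: writing $f_t$ for the density of the law at time $t$ with respect to the reference measure $\nu^N_{\tilde h}$, one has
\begin{equation}
\partial_t \mathbf H^N(f_t) = \int N^2 L^*_{\tilde h,\tilde\lambda}(\log f_t)\, f_t\, d\nu^N_{\tilde h},
\end{equation}
where $L^*_{\tilde h,\tilde\lambda}$ is the adjoint of the generator in $\mathbb L^2(\nu^N_{\tilde h})$. Splitting the generator into its symmetric part (a Dirichlet form, which produces $-N^2 D_N(\sqrt{f_t})$ with $D_N$ the usual exclusion Dirichlet form, giving a negative contribution to absorb errors) and an antisymmetric drift part, the drift part produces a term that one rewrites, after discrete integration by parts and using the specific form of the jump rates $c_{\tilde h,\tilde\lambda}$ in~\eqref{eq_def_jump_rates_h_and_lambda}, as $\nu^N_{\tilde h}$-expectations of local functions tested against $f_t$. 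The key is that $\nu^N_{\tilde h}$ has been chosen precisely so that the leading-order ($O(N)$) drift terms — those involving two-point correlations — cancel against the correlation structure built into $\nu^N_{\tilde h}$, leaving only (a) genuinely smaller terms, coming from three-point and higher correlations and from the discrete-vs-continuous discrepancy, which should be $O(N^{1/2})$ after the concentration estimates of Appendix~\ref{app_concentration}, and (b) terms that can be bounded by $\epsilon N^2 D_N(\sqrt{f_t}) + \epsilon^{-1}\mathbf H^N(f_t)$ via the entropy inequality and the logarithmic Sobolev/spectral gap of $\nu^N_{\tilde h}$ (which has gap of order $N^2$ uniformly in $\tilde h$ under~\eqref{eq_assumption_size_tilde_h}). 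Choosing $\epsilon$ small to absorb the Dirichlet form into the negative term yields the claimed inequality with $\gamma, C$ independent of $\tilde h$; the $\tilde h$-uniformity comes from~\eqref{eq_assumption_size_tilde_h} and the fact that $\|\tilde h\|_{C^\infty_D}$ enters only through a bounded factor $(1-\delta)^{-1}$ and derivatives of the fixed smooth kernel. Integrating via Grönwall gives~\eqref{eq_entropy_estimate_prop_sec3}.

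\textbf{Steps 2 and 3 (consequences).} For~\eqref{eq_moment_bound}, I would use the entropy inequality $\mathbb E[|\Pi^N_t(\phi)|^{3/2}] \leq \lambda^{-1}\big(\log \nu^N_{\tilde h}(e^{\lambda|\Pi^N_t(\phi)|^{3/2}}) + \mathbf H^N(f_t)\big)$ for suitable $\lambda>0$, together with the sub-Gaussian-type concentration of $\Pi^N(\phi)$ under $\nu^N_{\tilde h}$ (Appendix~\ref{app_concentration} or Appendix~\ref{app_BFP}), which makes the exponential moment finite uniformly in $N$, while $\mathbf H^N(f_t)$ is bounded by~\eqref{eq_entropy_estimate_prop_sec3}; this gives $C(T,\phi)$ depending on $\|\phi\|_\infty$. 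For~\eqref{eq_pinsker}, the first inequality is just $|\int F_N(f_t-1)\,d\nu^N_{\tilde h}|\leq \|F_N\|_\infty \int|f_t-1|\,d\nu^N_{\tilde h}$, and the second is Pinsker's inequality $\int|f_t-1|\,d\nu^N_{\tilde h}\leq \sqrt{2\mathbf H^N(f_t)}$ combined with~\eqref{eq_entropy_estimate_prop_sec3}.

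\textbf{Main obstacle.} The crux is Step 1, specifically showing that after the cancellation engineered by $\nu^N_{\tilde h}$ the \emph{remaining} drift terms are genuinely $O(N^{1/2})$ rather than $O(1)$ or worse. This requires: (a) an exact computation of how $\Pi^N(\tilde h)(\eta^{i,i+1})-\Pi^N(\tilde h)(\eta)$ interacts with the carré-du-champ of the dynamics and with the Radon–Nikodym derivative defining $\nu^N_{\tilde h}$, keeping careful track of boundary/diagonal terms since $\tilde h\in C^\infty_D(\T^2)$ is only piecewise smooth; (b) sharp concentration estimates for three-point correlations $\bar\eta_i\bar\eta_j\bar\eta_k$ under $\nu^N_{\tilde h}$, uniform in $N$ and in $\tilde h$ subject to~\eqref{eq_assumption_size_tilde_h}, to control the error random variable $\epsilon^N_{\tilde h,\tilde\lambda}$; and (c) verifying that the uniform-in-$\tilde h$ logarithmic Sobolev constant of $\nu^N_{\tilde h}$ is indeed of order $N^{-2}$, which is where assumption~\eqref{eq_assumption_size_tilde_h} is essential (it ensures $\nu^N_{\tilde h}$ is a bounded perturbation of the product measure, so a Holley–Stroock-type argument applies). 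The $o_N(1)$ (in fact $O(N^{-1/2})$) strength of the bound — stronger than the $O_N(1)$ needed for Theorem~\ref{theo_fluct_correl_driven_process} — hinges entirely on getting (a)–(b) sharp; a cruder analysis comparing to a product measure would only give $O_N(1)$.
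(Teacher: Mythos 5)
Your overall architecture is right — Lemma~\ref{lemm_derivative_entropy} plus a bound on the adjoint term, then Gr\"onwall, then entropy/Pinsker inequalities for the consequences — but your Step~1 rests on a mechanism that does not work. You propose to absorb the non-symmetric drift contributions via ``$\epsilon N^2 D_N(\sqrt{f_t}) + \epsilon^{-1}\mathbf H^N(f_t)$'' using ``the logarithmic Sobolev/spectral gap of $\nu^N_{\tilde h}$'' obtained by ``a Holley--Stroock-type argument.'' This fails: the Radon--Nikodym derivative of $\nu^N_{\tilde h}$ with respect to $\nu^N_{1/2}$ is $e^{2\Pi^N(\tilde h)}/\mathcal Z^N_{\tilde h}$, and $2\Pi^N(\tilde h) = \frac{1}{2N}\sum_{i\neq j}\bar\eta_i\bar\eta_j \tilde h_{i,j}$ is of order $N$ on $\Omega_N$, so $\nu^N_{\tilde h}$ is not a bounded perturbation of the product measure and Holley--Stroock does not apply; whether $\nu^N_{\tilde h}$ (let alone its canonical conditionings) satisfies a uniform $N^2$ log-Sobolev is precisely the open problem discussed in Section~\ref{sec_perspectives}. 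The paper's Section~\ref{sec_entropy_estimate} bypasses log-Sobolev entirely: the three-point term $\e_g$ of~\eqref{eq_def_W_3}, which has only an $O_N(1)$ exponential moment under $\nu^N_{\tilde h}$ and so cannot be handled by a direct entropy inequality, is instead estimated in Lemma~\ref{lemm_estimate_threepoint} by writing $\bar\eta_{i+1}-\bar\eta^{+,aN}_{i+1}$ as a telescoping sum of discrete gradients, applying the integration-by-parts identity of Lemma~\ref{lemm_IPP} to trade each $\bar\eta_p-\bar\eta_{p+1}$ directly for a piece of $\Gamma_{\tilde h,\tilde\lambda}(\sqrt f)$, and observing that the renormalised observable $W^{+,aN}$ has the genuinely better $O(N^{-1/2})$ concentration of a generic three-point average $W^{3,\{0\}}_{\phi_3}$. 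That mesoscopic averaging step — the analogue of the two-block estimate — is the missing idea; your write-up correctly identifies ``sharp concentration for three-point correlations'' as the crux but does not say how to achieve it, and the route you gesture at would not.

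A second, smaller issue is your proof of~\eqref{eq_moment_bound}. You propose the entropy inequality with $F = \lambda|\Pi^N_t(\phi)|^{3/2}$, which requires $\nu^N_{\tilde h}(e^{\lambda|\Pi^N(\phi)|^{3/2}})$ to be finite uniformly in $N$; but $\Pi^N(\phi)$ is a quadratic form in near-Gaussian variables and Proposition~\ref{prop_concentration_exponentielle} only gives sub-\emph{exponential} tails, $\nu^N_{\tilde h}(|\Pi^N(\phi)|>r)\lesssim e^{-cr}$, which does not dominate $e^{\lambda r^{3/2}}$. The paper's Corollary~\ref{coro_moment_bound} instead writes $\E[X^{3/2}]\leq 1+\tfrac32\int_1^\infty r^{1/2}\Prob(X>r)\,dr$, applies the entropy inequality with a radius-dependent constant $\gamma=cr$ to the indicator $\mathbf 1_{|\Pi^N(\phi)|>r}$, and uses the sub-exponential tail to make $r\mapsto r^{-1/2}e^{-cr/2}$ integrable. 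Your Pinsker step~\eqref{eq_pinsker} is correct as stated.
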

The proof of Proposition~\ref{prop_entropy_estimate_general} is quite technical, 
and therefore postponed to Section~\ref{sec_entropy_estimate} where we also discuss practical applications of this bound. 
\subsection{The limiting fluctuation process}
\label{sec_CLT_fluctuations}
Here we sketch the proof of item $3$ in Theorem~\ref{theo_fluct_correl_driven_process} and characterise the law of the fluctuations at time $0$ (part of item $1$). 
The proof of item 3 is standard: we prove tightness of $(Y^N_{t})_{t\geq 0}$ in $\mathcal D([0,\infty),\s'(\T))$, 
then prove that limit points satisfy a martingale problem that has a unique solution.
\subsubsection{Tightness}\label{sec_tightness}
By Mitoma's criterion~\cite[Theorem 3.1]{Mitoma1983} (formulated for trajectories on bounded time intervals, 
but immediately extending to unbounded intervals as in~\cite[Theorem 16.4]{Billingsley1999}), 
tightness of $(Y^N_{t})_{t\geq 0}$ in $\mathcal D([0,\infty),\s'(\T))$ follows from tightness of all projections $(Y^N_{t}(f))_{t\geq 0}$ in $\mathcal D([0,\infty),\R)$ ($f\in C^\infty(\T)$). 
By \cite[Theorems 16.8--16.10]{Billingsley1999}, 
tightness of $(Y^N_{t}(f))_{t\geq 0}$ follows from the following two estimates:
\begin{enumerate}
	\item[(i)] For each $t\geq 0$, 
	\begin{equation}
	\lim_{a\to\infty}\limsup_{N\to\infty}\Prob^{\nu^N_{\tilde h}}_{\tilde h,\tilde\lambda}\Big( \big|Y^N_t(f)\big|\geq a\Big)
	=
	0
	.
	\end{equation}
	\item[(ii)] (Aldous criterion) For $t\geq 0$, let $\mathcal I_{t}$ denote the set of stopping times for $(Y^N_s(f))_{s\geq 0}$ bounded by $t\geq 0$. 
	For each $t\geq 0$ and $\epsilon>0$, 
	\begin{equation}
	\lim_{\delta\to0}\sup_{\tau\in\mathcal I_t}\limsup_{N\to\infty}\Prob^{\nu^N_{\tilde h}}_{\tilde h,\tilde\lambda}\Big( \big|Y^N_{\tau+\delta}(f)-Y^N_{\tau}(f)\big|\geq \epsilon\Big)
	=
	0
	.
	\end{equation}
\end{enumerate}
Both items follow from $O_N(1)$ bounds on the relative entropy ${\bf H}^N(f_t)$ uniform on compact time intervals as we now show. 
Recall the entropy inequality:
\begin{equation}
\E^{\nu^N_{\tilde h}}_{\tilde h,\tilde\lambda}[F(\eta_t)]
\leq 
\frac{{\bf H}^N(f_t)}{\gamma} 
+
\frac{1}{\gamma}\log \nu^N_{\tilde h}\Big[e^{\gamma F}\Big]
,\qquad
\gamma>0, F:\Omega_N\to\R
.
\label{eq_entropy_inequality_sec3}
\end{equation}
Let $t\geq0$ and $a>0$. 
For point (i), Markov- and the entropy inequalities give:
\begin{equation}
\Prob^{\nu^N_{\tilde h}}_{\tilde h,\tilde\lambda}\Big(\big|Y^N_t(f)\big|\geq a\Big)
\leq 
\frac{1}{a}\E^{\nu^N_{\tilde h}}_{\tilde h,\tilde\lambda}\Big[\big|Y^N_t(f)\big|\Big]
\leq
\frac{{\bf H}^N(f_t)}{a} 
+
\frac{1}{a}\log \nu^N_{\tilde h}\Big[\exp\big|Y^N(f)\big|\Big]
.
\label{eq_using_entropy_inequality}
\end{equation}
Proposition~
\ref{prop_concentration_exponentielle} shows that the last exponential moment is bounded uniformly in $N$, 
which yields the claim as the entropy ${\bf H}^N(f_t)$ is also bounded.

For point (ii), the starting point is the semi-martingale decomposition:
\begin{equation}
Y^N_t(f)
=
Y^N_{0}(f) 
+\int_0^T N^2L_{\tilde h,\tilde\lambda}Y^N_s(f)\, ds 
+M^{N}_t(f)
,
\end{equation}
with $(M^{N}_t(f))_{t\geq 0}$ a martingale independent from $Y^N_0$ and starting at $0$.  
Let $t\geq 0$, $\epsilon,\delta>0$ and $\tau\in\mathcal I_t$ denote a stopping time. 
Then Markov property gives, 
recalling that $f_\tau\nu^N_{\tilde h}$ stands for the law of the dynamics at time $\tau$:
\begin{align}
\E^{\nu^N_{\tilde h}}_{\tilde h,\tilde\lambda}\Big[ \big|Y^N_{\tau+\delta}(f)-Y^N_{\tau}(f)\big|\Big]
&=
\E^{\nu^N_{\tilde h}}_{\tilde h,\tilde \lambda}\bigg[ \E^{f_\tau \nu^N_{\tilde h}}_{\tilde h,\tilde \lambda}\Big[\big|Y^N_{\delta}(f)-Y^N_{0}(f)\big|\Big]\bigg]
\nonumber\\
&\hspace{-1cm}\leq 
\E^{\nu^N_{\tilde h}}_{\tilde h,\tilde \lambda}\bigg[ \E^{f_\tau \nu^N_{\tilde h}}_{\tilde h,\tilde \lambda}\Big[\Big|\int_0^\delta N^2L_{\tilde h,\tilde \lambda}Y^N_s(f)\, ds\Big|\Big] + \E^{f_\tau \nu^N_{\tilde h}}_{\tilde h,\tilde \lambda}\Big[\big|M^{N}_{\delta}(f)\big|\Big]\bigg]
.
\label{eq_Aldous}
\end{align}
Note that, by Cauchy-Schwarz inequality, 
the estimate of the martingale term boils down to an estimate on its quadratic variation:
\begin{equation}
\big<M^{N}_{\delta}(f)\big>
=
\int_0^\delta N^2\sum_{i\in\T_N}c_{\tilde h,\tilde\lambda}(\eta_{t'},\eta_{t'}^{i,i+1}) \Big[\Pi^N(f)(\eta_{t'}^{i,i+1}) - \Pi^N(f)(\eta_{t'})\Big]^2\, d{t'}
.
\label{eq_quadratic_variation}
\end{equation}
Using the entropy inequality as in~\eqref{eq_using_entropy_inequality} and noticing that $\tau\leq t$ and that the relative entropy is bounded with $N$ uniformly on $[0,t+\delta]$, 
item (ii) is therefore proven if we can prove that the integrands in~\eqref{eq_Aldous}--\eqref{eq_quadratic_variation} have bounded exponential moment under $\nu^N_{\tilde h}$.  
This is indeed the case. 
As computations similar to that of the action of the generator on $Y^N(f)$ and to~\eqref{eq_quadratic_variation} are carried out to prove Proposition~\ref{prop_entropy_estimate_general}, we omit them here and conclude the proof of tightness. 
\subsubsection{Martingale problem}
\label{sec_martingale_problem}
We now sketch the proof that limit points of $(\Prob^{\nu^N_{\tilde h}}_{\tilde h,\tilde\lambda})$ satisfy the martingale problem of Theorem~\ref{theo_fluct_correl_driven_process}. 

The key argument is the following:
\begin{equation}
Y^N_t(f_t) 
=
Y^N_0(f_0) + \int_0^t Y^N_s(\partial_s f_s +\mathcal L_{\tilde h} f_s)\, ds 
+ 
M^{N}_t(f) + \int_0^t\epsilon^{N}_s(f)\, ds
,
\label{eq_Ito_time_dependent}
\end{equation}
where $f\in C^1(\R_+,C^\infty(\T))$, 
$\mathcal L_{\tilde h}$ is the operator appearing in Theorem~\ref{theo_fluct_correl_driven_process} and $\epsilon^{N}(f)$ is an error term in the sense:
\begin{equation}
\lim_{N\to\infty}\E^{\tilde \nu^N_h}_{\tilde h,\tilde \lambda}\Big[\Big|\int_0^t\epsilon^{N}_s(f)\, ds\Big|\Big]
=
0
.
\label{eq_Boltzmann_Gibbs_sketch}
\end{equation}
The above estimate is an instance of the so-called Boltzmann-Gibbs principle.   
The expression of $\epsilon^N(f)$ is obtained through computations similar to those carried out in the proof of Proposition~\ref{prop_entropy_estimate_general}, 
so we give no more detail. 
Estimates such as~\eqref{eq_Boltzmann_Gibbs_sketch} are proven in Corollary~\ref{coro_bound_error_terms} below, 
see also~\cite{Jara2018} for a different presentation. \\

Equation~\eqref{eq_Ito_time_dependent} defines a limiting process $(M_t(f))_{t\geq 0}$ for each limit point $(Y_t)_{t\geq 0}$ of $(Y^N_t)_{t\geq 0}$. 
This process is $(Y_s)_{s\leq t}$-measurable, 
independent from $Y_0$ and starts at $0$ by construction. 
It is also continuous as $(Y_t)_{t\geq 0}$ itself is a continuous process due to $(Y^N_t(f))_{t\geq 0}$ having jumps of amplitude bounded by $\|f\|_\infty N^{-1/2}$.

The continuous process $(\mathfrak M_t(f))_{t\geq 0} = \big( M_t(f)^2-\int_0^t\|\partial_x f_s\|_2\, ds\big)_{t\geq 0}$ can be constructed similarly.
To show that $(M_t(f))_{t\geq 0}, (\mathfrak M_t(f))_{t\geq 0}$ are martingales, 
it is enough to show that $(M^{N}_t(f)^{2+\epsilon})_{N\in\N_{\geq 1}}$ is uniformly integrable for some $\epsilon>0$~\cite[Propositions IX.1.4 and IX.1.12]{Jacod2003}. 
This again follows from explicit computations of the quadratic variation of $M^N_t(f)$ and $O_N(1)$ relative entropy bounds, see~\cite[(6.7) and Corollary 2.3]{Jara2018} for a similar argument. \\

The fact that $(M_t(f))_{t\geq 0},(\mathfrak M_t(f))_{t\geq 0}$ are martingales characterises $(M_t(f))_{t\geq 0}$ by a result of Dubins-Schwarz: for each $t\geq 0$,
\begin{equation}
M_t(f)
=
B^f_{2\sigma\int_0^t\|\partial_x f_s\|_2^2\, ds}
,\qquad
(B^f_t)_{t\geq 0}\text{ a standard Brownian motion}. 
.
\label{eq_M_as_Brownian_motion}
\end{equation}
For $t\geq 0$, 
taking as test function $f_{s,t} = e^{(t-s)\mathcal L_{\tilde h}}f\in C^1([0,t],C^\infty(\T))$ for $f\in C^\infty(\T)$ (the regularity is proven in Appendix~\ref{app_BFP}), 
we find that any limit point $(Y_t)_{t\geq 0}$ of the $(Y^N_t)_{t\geq 0}$ satisfies:
\begin{equation}
Y_t(f) 
=
Y_0\big(f_t\big) 
+
B^{f_{\cdot,t}}_{2\sigma\int_0^t\|\partial_x f_{s,t}\|_2^2\, ds}
.
\label{eq_Y_t_as_Y_0_plus_mart}
\end{equation}
It follows that the law of $(Y_t)_{t\geq 0}$ is uniquely determined as soon as the law of the initial condition $Y^N_0$ converges, which we check next. 
\subsubsection{Initial condition}
\label{sec_initial_condition}
\begin{prop}[Law at the initial time]\label{prop_limit_nu_g}
The weak limit $\nu^\infty_{\tilde h}$ of the law of $Y^N_0$ under $\Prob^{\nu^N_{\tilde h}}_{\tilde h}$ is a centred Gaussian field on $\s'(\T)$, 
with covariance given for $f_1,f_2\in C^\infty(\T)$ by:
\begin{equation}
\E_{\nu^\infty_{\tilde h}}\Big[Y_0(f_1)Y_0(f_2)\Big] 
= 
\int_{\T} f_1(x)\big(C_{\tilde h}f_2\big)(x)dx,
\quad 
C_{\tilde h} 
:= 
\big(\sigma^{-1}\text{id} - \tilde h\big)^{-1}
. 
\end{equation}
Above, the function $\tilde h$ is identified with the kernel operator $\tilde h f_2(\cdot) = \int_{\T}\tilde h(\cdot,y)f_2(y)dy$.
\end{prop}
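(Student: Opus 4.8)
The plan is to compute the Laplace transform of $Y^N_0$ under $\nu^N_{\tilde h}$ directly, exploiting the explicit density $e^{2\Pi^N(\tilde h)}/\mathcal Z^N_{\tilde h}$ with respect to the product Bernoulli measure $\nu^N_{1/2}$, and show it converges to the Laplace transform of a centred Gaussian with the claimed covariance. For $f\in C^\infty(\T)$, write $\langle Y^N_0,f\rangle = N^{-1/2}\sum_i \bar\eta_i f(i/N)$, so that
\begin{equation}
\E_{\nu^N_{\tilde h}}\big[e^{i\theta Y^N_0(f)}\big]
=
\frac{1}{\mathcal Z^N_{\tilde h}}\,\E_{\nu^N_{1/2}}\Big[\exp\big(i\theta Y^N_0(f) + 2\Pi^N(\tilde h)\big)\Big].
\end{equation}
The term $2\Pi^N(\tilde h) = \tfrac{1}{2N}\sum_{i\neq j}\bar\eta_i\bar\eta_j\tilde h_{i,j}$ is a quadratic form in the centred variables $(\bar\eta_i)$; since under $\nu^N_{1/2}$ the $\bar\eta_i$ are i.i.d., symmetric, bounded, with variance $\sigma=1/4$, the field $Y^N_0$ converges to a white-noise-type Gaussian $W$ with covariance $\sigma\,\mathrm{id}$ under $\nu^N_{1/2}$, and the quadratic functional $2\Pi^N(\tilde h)$ converges (after the usual diagonal-removal / second-order-chaos bookkeeping) to $\tfrac12 \langle W,\tilde h W\rangle$ in the sense of joint convergence with the linear functional. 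Thus the limit of the ratio is a Gaussian integral over the field $W$:
\begin{equation}
\lim_{N\to\infty}\E_{\nu^N_{\tilde h}}\big[e^{i\theta Y^N_0(f)}\big]
=
\frac{\E_W\big[e^{i\theta\langle W,f\rangle + \frac12\langle W,\tilde h W\rangle}\big]}{\E_W\big[e^{\frac12\langle W,\tilde h W\rangle}\big]},
\end{equation}
which is a Gaussian tilt of a Gaussian and hence itself Gaussian, with covariance operator $(\sigma^{-1}\mathrm{id}-\tilde h)^{-1} = C_{\tilde h}$ by the standard completion-of-the-square identity for Gaussian measures (this is exactly where the spectral assumption~\eqref{eq_assumption_size_tilde_h}, i.e. $\sigma\tilde h$ has leading eigenvalue below $1$, enters, guaranteeing that $\sigma^{-1}\mathrm{id}-\tilde h$ is positive and invertible so the tilted Gaussian is well-defined and normalisable).

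Concretely I would carry this out in the following steps. First, establish the central limit theorem for $Y^N_0$ and the joint convergence with the quadratic functional $\Pi^N(\tilde h)$ under the product measure $\nu^N_{1/2}$; this is a routine fourth-moment / Lindeberg computation for quadratic forms of i.i.d.\ variables, and finiteness of moments of $\Pi^N(\tilde h)$ (needed for uniform integrability when passing the ratio to the limit) follows from the concentration estimates in Appendix~\ref{app_concentration} or direct combinatorial bounds using $\tilde h\in C^\infty_D(\T^2)$ bounded. Second, justify exchanging limit and expectation in both numerator and denominator: the denominator $\mathcal Z^N_{\tilde h}=\E_{\nu^N_{1/2}}[e^{2\Pi^N(\tilde h)}]$ converges to $\E_W[e^{\frac12\langle W,\tilde h W\rangle}]<\infty$ (again using~\eqref{eq_assumption_size_tilde_h}), and a uniform integrability bound — obtained by a Hölder argument together with a uniform exponential moment estimate $\sup_N \E_{\nu^N_{1/2}}[e^{p\Pi^N(\tilde h)}]<\infty$ valid for $p$ slightly above $1$ when the spectral gap condition holds — upgrades convergence in distribution to convergence of the Laplace/Fourier transforms. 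Third, identify the limiting Gaussian: complete the square in $\frac12\langle W,(\sigma^{-1}\mathrm{id}-\tilde h)W\rangle - i\theta\langle W,f\rangle$ against the reference measure of $W$ (which has inverse-covariance $\sigma^{-1}\mathrm{id}$) to read off that the tilted field has covariance $C_{\tilde h}=(\sigma^{-1}\mathrm{id}-\tilde h)^{-1}$ and mean zero. Finally, centredness is immediate from the evenness of $\nu^N_{1/2}$ under $\eta\mapsto 1-\eta$, which flips $\bar\eta_i\mapsto-\bar\eta_i$, leaves $\Pi^N(\tilde h)$ invariant, and sends $Y^N_0(f)\mapsto -Y^N_0(f)$.

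The main obstacle is the uniform integrability / exponential-moment control needed to pass the \emph{ratio} to the limit: while convergence in distribution of $(Y^N_0(f),\Pi^N(\tilde h))$ under $\nu^N_{1/2}$ is elementary, one must control $\E_{\nu^N_{1/2}}[e^{2\Pi^N(\tilde h)}\cdot(\text{polynomial in }Y^N_0)]$ uniformly in $N$, and the exponential of a second-order chaos is borderline integrable — the requisite moment bound holds precisely because the largest eigenvalue of $2\cdot 2\sigma\tilde h = \sigma\tilde h$ (with the right combinatorial factor) is below $1$, so there is a small room $\delta$ from~\eqref{eq_assumption_size_tilde_h} giving an $L^p$ bound for some $p>1$. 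This is exactly the kind of estimate established in Appendix~\ref{app_BFP} on the measure $\nu^N_{\tilde h}$, so in practice I would cite those properties rather than redo the chaos estimates, and the proof of the proposition reduces to the completion-of-the-square computation plus a soft convergence argument.
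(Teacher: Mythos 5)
Your approach is correct in spirit but genuinely different from the paper's. The paper proves the proposition by the method of moments: it observes that odd moments vanish by the $\eta\mapsto 1-\eta$ symmetry, and establishes an approximate Wick theorem directly under $\nu^N_{\tilde h}$ (Proposition~\ref{prop_Wick_theorem}: $\nu^N_{\tilde h}(\prod_{j\in J}\bar\eta_j)$ equals a sum over pairings of products of entries of $(\sigma^{-1}I_N - N^{-1}\tilde h^N)^{-1}$, to error $O(N^{-n-1})$), then shows that the discrete inverse matrix converges, entry by entry in a suitable sense (Lemma~\ref{lemm_LU}), to the continuous operator $C_{\tilde h}$. This stays entirely within finite-dimensional linear algebra and explicit combinatorics. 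You instead compute the characteristic function of $Y^N_0$ under $\nu^N_{\tilde h}$ by unfolding the density $e^{2\Pi^N(\tilde h)}/\mathcal Z^N_{\tilde h}$ against $\nu^N_{1/2}$, invoke a joint CLT for the linear form $Y^N_0(f)$ and the second-order chaos $\Pi^N(\tilde h)$, and then read off the answer from the finite-to-infinite-dimensional Gaussian tilting identity (completion of squares). This is conceptually cleaner — the Gaussian structure and the formula $C_{\tilde h}=(\sigma^{-1}\mathrm{id}-\tilde h)^{-1}$ become transparent — but it displaces the technical burden onto (i) a joint CLT for a linear plus a quadratic form of i.i.d.\ Bernoulli variables, (ii) uniform integrability of $e^{2\Pi^N(\tilde h)}$ (a genuine concern for exponentials of second chaos, handled only thanks to the spectral gap), and (iii) an infinite-dimensional completion-of-squares argument in which $\tfrac12\langle W,\tilde h W\rangle$ must be understood as the Wick-ordered chaos $\tfrac12\,{:}\langle W,\tilde h W\rangle{:}$ — you gesture at the diagonal removal but should make it explicit, since the unordered expression against white noise is a divergent trace. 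None of these is a gap in principle — the exponential moment bounds you would need are precisely those of Lemma~\ref{lemm_bound_correlations} and Proposition~\ref{prop_concentration_exponentielle} in the appendix — but the paper's route avoids infinite-dimensional Gaussian machinery entirely, which is arguably safer in this discrete-to-continuum setting where the covariance of the pre-limit field is not exactly $\sigma\,\mathrm{id}$.
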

\begin{proof}
Let us prove that the law $\nu^N_{\tilde h}\circ (Y^N)^{-1}$ of $Y_0^N$ under $\Prob^{\nu^N_{\tilde h}}_{\tilde h,\tilde \lambda}$ converges to a Gaussian field with covariance $C_{\tilde h}$. 
To do so, we compute moments of all order. 
Let $n\in\N$ and $f^1,...,f^{2n+1} \in C^\infty(\T)$. 
It is an immediate consequence of the invariance of $\nu_{\tilde h}$ under the mapping $\eta\mapsto (1-\eta_i)_{i\in\T_N}$ that odd moments of $Y^N$ vanish:
\begin{equation}
\nu^N_{\tilde h}\Big[\prod_{i=1}^{2n+1} Y^N(f^i)\Big] 
= 
0
.
\end{equation}
Above and in the following, we use $\nu^N_{\tilde h}(F)$ or $\nu^N_{\tilde h}[F]$ to denote expectation of $F:\Omega_N\to\R$ under $\nu^N_{\tilde h}$. 
Proposition \ref{prop_Wick_theorem} provides the following estimate of even moments. 
Let $\tilde h^N$ denote the matrix:
\begin{equation}
\tilde h^N(i,j)
:=
\tilde h_{i,j}
=
\tilde h\Big(\frac{i}{N},\frac{j}{N}\Big)
,\qquad
\tilde h^N(i,i)
=
0,
\qquad 
i\neq j\in\T_N
.
\end{equation}
The following approximate Wick formula then holds: 
there is $e^n_N = O(N^{-n-1})$ such that, 
with $P_{2n}$ the set of pairings of $2n$ elements and $I_N$ the $N\times N$ identity matrix:
\begin{equation}
\sup_{J\subset \Lambda_N : |J|=2n}
\Big|\nu^N_{\tilde h}\Big(\prod_{j\in J}\bar\eta_j\Big) 
- \sum_{p\in P_{2n}} \prod_{j=1}^n \big(\sigma^{-1}I_N-N^{-1}\tilde h^N)^{-1}(j,p(j))\Big|
\leq 
e^{n}_N
.
\label{eq_local_Wick_sec3}
\end{equation}
The inverse matrix above is well defined for large enough $N$, see the discussion around~\eqref{eq_matrix_inverse_exists}. 
Using~\eqref{eq_local_Wick_sec3} for each set $J$ with even cardinal $|J|\leq 2n$ as well as:
\begin{equation}
\bar\eta^2_i
=
\sigma 
\quad (i\in\T_N),
\qquad
\sup_{j\in\T_N}\Big|\big(\sigma^{-1}I_N-N^{-1}\tilde h^N)^{-1}(j,j)
-\sigma\Big|
= 
O(N^{-1})
,
\end{equation} 
we find:
\begin{align}
\nu^N_{\tilde h}\Big[\prod_{i=1}^{2n} Y^N(f^i)\Big] 
&= 
\frac{1}{N^n}\sum_{j_1,...,j_{2n}\in\T_N}\nu^N_{\tilde h}\Big[\prod_{i=1}^{2n} \bar\eta_{j_i}f^{i}_{j_i}\Big] 
\\
&= 
\sum_{p\in P_{2n}} \prod_{i=1}^n\bigg(\frac{1}{N}\sum_{a,b\in\T_N} f^i_{a} f^{p(i)}_{b} \big(\sigma^{-1}I_N-N^{-1}\tilde h^N\big)^{-1}(a,b)\bigg)
+e'_N(f^1,...,f^p)
,
\nonumber
\end{align}
where $e'_N(f_1,...,f_p) = O(N^{-1})$. 
To conclude, it remains to prove that, for any $f^1,f^2\in C^\infty(\T)$:
\begin{equation}
\lim_{N\to\infty}\frac{1}{N}\sum_{a,b\in\T_N} f^1_{a} f^{2}_{b} \big(\sigma^{-1}I_N-N^{-1}\tilde h^N\big)^{-1}(a,b) 
=
\int_{\T} f^1(x)C_{\tilde h}f^2(x)\, dx
.
\label{eq_to_prove_WIck_thm_sec3}
\end{equation}
Let $f\in C^\infty(\T)$ and, 
for an $N\times N$ matrix $M$ define, define for each $i\in\T_N$:
\begin{equation}
Mf(i)
:=
\sum_{j\in\T_N} M(i,j) f_j
.
\end{equation}
From $\tilde h\in C^\infty_D(\T^2)\subset C^0(\T^2)$ we get:
\begin{equation}
\sup_{x\in\T}\Big|(\sigma^{-1}\text{id}-\tilde h)f(x) - (\sigma^{-1}I_N-N^{-1}\tilde h^N)f\big(\lfloor x/N\rfloor/N\big)\Big|
\underset{N\to\infty}{\longrightarrow}
0
.
\label{eq_convergence_discrete_to_C_inverse}
\end{equation}
In addition, we prove in Lemma~\ref{lemm_LU} that $(\sigma^{-1}I_N-N^{-1}\tilde h^N)^{-1}$ satisfies, for some $c>0$:
\begin{equation}
\sup_{N\geq 1}\max_{i\in\T_N}\big|(\sigma^{-1}I_N-N^{-1}\tilde h^N)^{-1}(i,i)\big| 
\leq 
c,
\qquad
\sup_{N\geq 1}\max_{i\neq j\in\T_N}\big|(\sigma^{-1}I_N-N^{-1}\tilde h^N)^{-1}(i,j)\big| 
\leq 
\frac{c}{N}
.
\end{equation}
Using these bounds with~\eqref{eq_convergence_discrete_to_C_inverse} applied to $C_{\tilde h}f$, 
identified with the vector $\big((C_{\tilde h}f)_i\big)_{i\in\T_N}$, we find that the vector:
\begin{align}
(\sigma^{-1}I_N-N^{-1}\tilde h^N)^{-1}
-C_{\tilde h}
=
(\sigma^{-1}I_N-N^{-1}\tilde h^N)^{-1}\Big[f -(\sigma^{-1}I_N-N^{-1}\tilde h^N)C_{\tilde h}f\Big]
\end{align}
has largest entry in absolute value that vanishes with $N$. 
This implies~\eqref{eq_to_prove_WIck_thm_sec3} and concludes the proof.
\end{proof}
Putting together the results of Sections~\ref{sec_tightness} to~\ref{sec_initial_condition} proves item 3 and part of item 1 of Theorem~\ref{theo_fluct_correl_driven_process}. 
In the next section, we move on to characterise limiting correlations, proving items 2, 4 and 5. 
The stationarity claim of item 1 is proven in Section~\ref{sec_stationarity}.  
\subsection{The limiting correlation process}
Here, we prove that the correlation process $\Pi^N_\cdot\in \mathcal D([0,+\infty),\s'_D(\T^2))$ under $\Prob^{\nu^N_{\tilde h}}_{\tilde h,\tilde\lambda}$ has a unique weak limit, 
as written out in Theorem~\ref{theo_fluct_correl_driven_process}, and characterise its properties. 
This is first done by expressing $\Pi_\cdot$ in terms of $Y_\cdot$ for smooth test functions. 
An approximation argument then gives a characterisation of the correlation process also including test functions that are not smooth on the diagonal.  
Let $(e_\ell)_{\ell\geq 0}$ be an orthonormal basis of eigenvectors of the Laplacian on the torus: $e_0 =1$, and:
\begin{equation}
\forall \ell\in\N_{\geq 1},\forall x\in\T,\qquad e_{2\ell-1}(x) = \sqrt{2}\sin(2\pi \ell x),\quad e_{2\ell}(x) = \sqrt{2}\cos(2\pi \ell x).\label{eq_eigenbasis_laplacien}
\end{equation}
\begin{prop}\label{prop_lien_Y_Pi}
\begin{enumerate}
\item (Smooth test functions). 
Consider $\Pi^N_\cdot$ as an element of the set $\mathcal D([0,+\infty),\s'(\T^2))$ of distributions tested against smooth test functions only. 
Then the couple $(Y^N_\cdot,\Pi^N_\cdot)$ converges in law to a measure ${\bf P}^{\nu^\infty_{\tilde h}}_{\tilde h,\tilde\lambda}$ on $\mathcal D([0,+\infty),\s'(\T)\times\s'(\T^2))$. 
Under this measure, for each $(f_1,f_2)\in C^\infty(\T)^2$, with probability $1$:
\begin{equation}
\Pi_t(f_1\otimes f_2) 
= 
\frac{1}{4}Y_t(f_1)Y_t(f_2) - \frac{\sigma}{4}\int_{\T}f_1(x)f_2(x)\, dx
,\qquad 
t\geq 0
.
\label{eq_lien_Y_Pi_continu}
\end{equation}
\item (Non-smooth test functions). 
For each $t\geq 0$, $\Pi_t$ admits a unique continuous extension to $\s'_D(\T^2)$ and each vector $(\Pi^N_{t_1}(\phi_1),...,\Pi^N_{t_p}(\phi_p))$ converges in law to $(\Pi_{t_1}(\phi_1),...,\Pi_{t_p}(\phi_p))$ for $p\in\N$ and $t_1,...,t_p\geq 0$, $\phi_1,...,\phi_p\in C^\infty_D(\T^2)$.
Moreover, there is $C(\tilde h,\tilde\lambda)>0$ such that, for each $\epsilon>0$:
\begin{equation}
{\bf P}^{\nu^\infty_{\tilde h}}_{\tilde h,\tilde\lambda}\Big(|\Pi_t(\phi_1)\big|>\epsilon\Big)
\leq 
\frac{C(\tilde h,\tilde \lambda)\|\phi_1\|_\infty}{\epsilon}
,\qquad 
t\geq 0
.
\label{eq_continuity_Pi}
\end{equation}
\end{enumerate}
\end{prop}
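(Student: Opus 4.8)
The starting point for item~1 is the pathwise identity obtained from~\eqref{eq_lien_Pi_et_Y_dans_intro} and the elementary fact $\bar\eta_i^2=\sigma$ (valid since $\eta_i\in\{0,1\}$): for all $f_1,f_2\in C^\infty(\T)$ and all $t\ge0$,
\[
\Pi^N_t(f_1\otimes f_2)
=
\frac14 Y^N_t(f_1)Y^N_t(f_2)-\frac{\sigma}{4N}\sum_{i\in\T_N}f_1(i/N)f_2(i/N),
\]
the last term being deterministic and converging to $\frac{\sigma}{4}\int_\T f_1f_2$. I will also use that the proof of Proposition~\ref{prop_entropy_estimate_general} in Section~\ref{sec_entropy_estimate} in fact yields the moment bound~\eqref{eq_moment_bound} in the sharper form $\sup_N\sup_{t\le T}\E^{\nu^N_{\tilde h}}_{\tilde h,\tilde\lambda}[|\Pi^N_t(\psi)|^{3/2}]\le C(\tilde h,\tilde\lambda,T)\|\psi\|_\infty^{3/2}$ for any bounded $\psi:\T^2\to\R$. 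The first task is joint tightness of $(Y^N_\cdot,\Pi^N_\cdot)$ in $\mathcal D([0,\infty),\s'(\T)\times\s'(\T^2))$. Tightness of $Y^N_\cdot$ is Section~\ref{sec_tightness}; by Mitoma's criterion (in its product form, the joint-jump issue being absent since all jumps vanish) it remains to show, for each $\phi\in C^\infty(\T^2)$, tightness of $(\Pi^N_t(\phi))_{t\ge0}$ in $\mathcal D([0,\infty),\R)$. Compact containment is immediate from the moment bound, and the Aldous condition follows from the semimartingale decomposition of $(\Pi^N_t(\phi))_{t\ge0}$ — with drift $N^2L_{\tilde h,\tilde\lambda}\Pi^N(\phi)$ and a martingale part whose quadratic variation is the obvious analogue of~\eqref{eq_quadratic_variation} — exactly as in Section~\ref{sec_tightness}, using that these quantities have bounded exponential moments under $\nu^N_{\tilde h}$ together with the $O_N(1)$ entropy bound of Proposition~\ref{prop_entropy_estimate_general}. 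These generator computations are of the same type as those behind Proposition~\ref{prop_entropy_estimate_general}, so I would only sketch them.

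Given tightness, I identify the limit. Along any convergent subsequence, pass to a common probability space by Skorokhod's representation theorem. Since a single jump of the dynamics changes $\Pi^N_t(\phi)$ by $O(\|\partial\phi\|_\infty N^{-1})$ (and $Y^N_t(f)$ by $O(\|f\|_\infty N^{-1/2})$), every subsequential limit of $(Y^N_\cdot,\Pi^N_\cdot)$ is a.s.\ continuous, so every fixed time $t$ is a continuity point of the limit; continuity of multiplication together with the displayed identity then gives, for each fixed $t$ and each $f_1,f_2$, that almost surely $\Pi_t(f_1\otimes f_2)=\frac14 Y_t(f_1)Y_t(f_2)-\frac{\sigma}{4}\int_\T f_1f_2$. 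By continuity in $t$ and separability this holds simultaneously in $t,f_1,f_2$; expanding a general $\phi\in C^\infty(\T^2)$ in the eigenbasis~\eqref{eq_eigenbasis_laplacien} and bounding the Fourier tail with the sharpened moment bound then identifies $\Pi_\cdot(\phi)$ as a measurable functional of $Y_\cdot$. Since $Y^N_\cdot$ has the unique limit of Section~\ref{sec_CLT_fluctuations}, the limit of $(Y^N_\cdot,\Pi^N_\cdot)$ is unique, the whole sequence converges, and I name its law ${\bf P}^{\nu^\infty_{\tilde h}}_{\tilde h,\tilde\lambda}$; with Proposition~\ref{prop_limit_nu_g} this proves item~1 and the relation~\eqref{eq_lien_Y_Pi_continu}.

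For item~2, fix $\phi\in C^\infty_D(\T^2)$. As $\phi$ is continuous on $\T^2$ it is a uniform limit of smooth functions $\phi_n$ (mollification), so the sharpened moment bound applied to $\psi=\phi-\phi_n$ gives $\sup_N\E^{\nu^N_{\tilde h}}_{\tilde h,\tilde\lambda}[|\Pi^N_t(\phi)-\Pi^N_t(\phi_n)|]\le C(\tilde h,\tilde\lambda,T)\|\phi-\phi_n\|_\infty\to0$. Combined with the convergence of item~1 for the smooth $\phi_n$, a $3\varepsilon$ argument yields the stated convergence in law of $(\Pi^N_{t_1}(\phi_1),\dots,\Pi^N_{t_p}(\phi_p))$ to $(\Pi_{t_1}(\phi_1),\dots,\Pi_{t_p}(\phi_p))$ for $\phi_1,\dots,\phi_p\in C^\infty_D(\T^2)$; letting $n\to\infty$ it also shows $(\Pi_t(\phi_n))_n$ is Cauchy in $L^1$, so its limit defines $\Pi_t(\phi)$, independent of the approximating sequence and satisfying $\E[|\Pi_t(\phi)|]\le C(\tilde h,\tilde\lambda,T)\|\phi\|_\infty$; this is the unique $\|\cdot\|_\infty$-bounded extension of $\Pi_t$ from $\s'(\T^2)$ to $\s'_D(\T^2)$. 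Markov's inequality then gives~\eqref{eq_continuity_Pi}, and the constant there can be taken $t$-independent a posteriori: once stationarity of $\Pi_\cdot$ is known (item~1 of Theorem~\ref{theo_fluct_correl_driven_process}, proven in Section~\ref{sec_stationarity}), $\Pi_t$ has the same law as $\Pi_0$, which is controlled by $\E[|\Pi_0(\psi)|]\le C(\tilde h)\|\psi\|_\infty$ via a Gaussian computation with the bounded covariance $C_{\tilde h}$.

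The principal obstacle is the joint tightness — more precisely the Aldous modulus-of-continuity estimate for the \emph{quadratic} functional $\Pi^N_\cdot(\phi)$: the naive bound $|\bar\eta_i\bar\eta_j|\le\sigma$ overshoots by a factor $N$, so one must genuinely exploit the relative entropy estimate of Proposition~\ref{prop_entropy_estimate_general} and the explicit action of the generator on $\Pi^N(\phi)$. A more routine but still delicate point is the bookkeeping involved in passing the microscopic identity to the limit along Skorokhod representations, which relies on the vanishing of the jump sizes to guarantee that the limiting paths are continuous.
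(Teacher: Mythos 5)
Your proof is correct, but for item~1 you take a genuinely different — and noticeably more laborious — route to tightness than the paper. Having recorded the pathwise identity
\(\Pi^N_t(f_1\otimes f_2)=\tfrac14 Y^N_t(f_1)Y^N_t(f_2)-\tfrac{\sigma}{4N}\sum_i f_1(i/N)f_2(i/N)\),
you then set it aside and attack tightness of $(\Pi^N_\cdot(\phi))_{t\geq0}$ head on, via a semimartingale decomposition of $\Pi^N_\cdot(\phi)$, a generator computation of the type done in Section~\ref{sec_entropy_estimate}, exponential moment bounds for $N^2L_{\tilde h,\tilde\lambda}\Pi^N(\phi)$ and for the quadratic variation, and the $O_N(1)$ entropy estimate. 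You single this Aldous estimate out as the ``principal obstacle.'' The paper does none of this: it observes that the very identity you just wrote makes $\Pi^N_\cdot(f_1\otimes f_2)$ a \emph{continuous function} of $\bigl(Y^N_\cdot(f_1),Y^N_\cdot(f_2)\bigr)$ up to a deterministic Riemann sum, so tightness (in fact convergence) of $\Pi^N_\cdot(f_1\otimes f_2)$ is inherited for free from tightness of $Y^N_\cdot$ established in Section~\ref{sec_tightness} together with the vanishing-jump-size observation you also make; general $\phi\in C^\infty(\T^2)$ is then reached by uniform approximation by linear combinations of tensor products, controlled by the moment bound~\eqref{eq_moment_bound}. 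In other words, the ``obstacle'' you flag is precisely what the algebraic relation is there to sidestep: no new generator computation or Aldous estimate for $\Pi^N$ is needed at all. Your direct route should still go through — the required exponential moment bounds on $N^2L_{\tilde h,\tilde\lambda}\Pi^N(\phi)$ and its quadratic variation are of the same type as those in Section~\ref{sec_entropy_estimate}, and your Skorokhod-representation passage to the limit is fine — but it duplicates a substantial amount of work. What your approach buys is explicitness: you get the semimartingale structure of $\Pi^N_\cdot(\phi)$ as a by-product, which the paper never writes down; what the paper's approach buys is brevity and the conceptual point that the correlation field carries no dynamical information beyond the fluctuation field.

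For item~2 your argument is essentially the one in the paper: uniform approximation of $\phi\in C^\infty_D(\T^2)$ by smooth $\phi_n$, the uniform-in-$N$ $L^1$ bound proportional to $\|\phi-\phi_n\|_\infty$ obtained from the relative entropy estimate and the concentration inequality, a three-$\varepsilon$ argument for the convergence of finite-dimensional marginals, and Markov's inequality for~\eqref{eq_continuity_Pi}. One small difference: to make the constant in~\eqref{eq_continuity_Pi} time-independent you invoke stationarity of the limit process (proved later, in Section~\ref{sec_stationarity}), whereas the paper gets time-independence directly because the relative entropy ${\bf H}^N(f_t)$ is $o_N(1)$ for each fixed $t$, so the entropy contribution to~\eqref{eq_interm_proof_continuity_Pi} disappears in the limit $N\to\infty$ before $t$ ever enters the constant. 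Both are legitimate; the paper's avoids any forward reference.
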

\begin{proof}
1. Take $f_1,f_2\in C^\infty(\T)$ and $N\in\N_{\geq 1}$. 
From the definition of $\Pi^N$ in~\eqref{eq_def_correlations}, 
one has with probability one:
\begin{equation}
\forall t\geq 0,\qquad 
\Pi^N_t(f_1\otimes f_2) = \frac{1}{4}Y^N_t(f_1)Y^N_t(f_2)- \frac{\sigma}{4 N}\sum_{i\in\T_N}(f_1)_i(f_2)_i.\label{eq_Pi_as_fct_Y_limit}
\end{equation}
In view of the characterisation of tightness of a process in Section~\ref{sec_tightness} and since all test functions in $C^\infty(\T^2)$ can be approximated uniformly by linear combinations of tensor products of functions of one variable, 
it follows that the sequence of laws of $(Y^N_\cdot,\Pi^N_\cdot)$ is tight as probability measures on $\mathcal D([0,+\infty),\s'(\T)\times\s'(\T^2))$. 
As the sequence of laws of $Y^N_\cdot$ converges and as $\Pi^N_t(f_1\otimes f_2)$ is a continuous function of $Y^N_t(f_1),Y^N_t(f_2)$ for each $t\geq 0$, 
the sequence of laws of $(Y^N_\cdot,\Pi^N_\cdot)$ converges to a measure ${\bf P}^{\nu^\infty_{\tilde h}}_{\tilde h,\tilde\lambda}$. 
The validity of~\eqref{eq_Pi_as_fct_Y_limit} for each $N$ and the fact that the second term in the right-hand side of~\eqref{eq_Pi_as_fct_Y_limit} is a (non-random) Riemann sum for $\sigma\int_{\T}f_1(x)f_2(x)\, dx$ conclude the proof of 1.

\noindent 2. Let $t\geq 0$. 
As $\Pi_t$ is a bounded linear form, it is a uniformly continuous function on the complete metric space $C^\infty(\T^2)$ that is dense in $C^0(\T^2)\supset C^\infty_D(\T^2)$ for the topology of uniform convergence. 
It therefore admits a unique bounded extension on $C^0(\T^2)'\supset \s'_D(\T^2)$. 
Fix now $p\in\N$, $t_1,...,t_p\geq 0$, $\phi_1,...,\phi_p\in C^\infty_D(\T^2)$ and sequences $(\phi^n_i)_{n}\in C^\infty(\T^2)^{\N}$ converging uniformly to $\phi_i$ ($1\leq i\leq p$). 
We do the proof for $p=2$, the general case being the same. 
Let $F:\R^2\to\R$ be a bounded Lipschitz function and write: 
\begin{align}
F\big(\Pi_{t_1}(\phi_1)&,\Pi_{t_2}(\phi_2)\big) 
- F\big(\Pi^N_{t_1}(\phi_1),\Pi^N_{t_2}(\phi_2)\big) 
\nonumber\\
&=
F\big(\Pi_{t_1}(\phi_1),\Pi_{t_2}(\phi_2)\big)
- F\big(\Pi_{t_1}(\phi^n_1),\Pi_{t_2}(\phi^n_2)\big) 
\nonumber\\
&\quad 
+ F\big(\Pi_{t_1}(\phi^n_1),\Pi_{t_2}(\phi^n_2) \big)
-F\big(\Pi^N_{t_1}(\phi^n_1),\Pi^N_{t_2}(\phi^n_2) \big)
\nonumber\\
&\quad 
+ F\big(\Pi^N_{t_1}(\phi^n_1),\Pi^N_{t_2}(\phi^n_2) \big)
- F\big(\Pi^N_{t_1}(\phi_1),\Pi^N_{t_2}(\phi_2) \big)
.
\end{align}
The middle line vanishes with $N$ at fixed $n$ when integrating on the laws of $\Pi_\cdot,\Pi^N_\cdot$ by weak convergence for smooth test functions. 
The average of the first line under ${\bf P}^{\nu^\infty_{\tilde h}}_{\tilde h,\tilde \lambda}$ vanishes with $n$ by definition of the extension of $\Pi$ and the dominated convergence theorem. 
Lastly, the average of the third line vanishes with $n$ uniformly in $N$. 
Indeed, as $F$ is Lipschitz (say with respect to the norm $\|(x,y)\|_1= |x|+|y|$), it is enough to control times $t_1,t_2$ separately. 
E.g. for $t_1$, the entropy inequality gives, 
for each $\gamma>0$:
\begin{equation}
\E^{\nu^N_{\tilde h}}_{\tilde h,\tilde \lambda}\Big[\big|\Pi^N_{t_1}(\phi^n_1 - \phi_1)\big|\Big]
\leq 
\frac{\|\phi^n_1-\phi_1\|_\infty}{\gamma}\Big[{\bf H}^N(f_{t_1})
+ \log \nu^N_{\tilde h}\Big(\exp\Big|\frac{\gamma \Pi^N(\phi^n_1 - \phi_1)}{\|\phi^n_1-\phi_1\|_\infty}\Big|\Big)\Big]
.
\label{eq_interm_proof_continuity_Pi}
\end{equation}
The entropy is bounded uniformly in $N$ by Proposition~\ref{prop_entropy_estimate_general} and Proposition~\ref{prop_concentration_exponentielle} yields the existence of $\gamma,C>0$ independent of $N,\phi^n_1,\phi_1$ such that the exponential moment is bounded by $C$. 
This concludes the proof of 2. except for~\eqref{eq_continuity_Pi} that we now prove. 
Let $t\geq 0$ and $\epsilon>0$. 
Since $(\Pi^N_t(\phi_1))_N$ converges weakly to $\Pi_t(\phi_1)$, 
\begin{equation}
{\bf P}^{\nu^\infty_{\tilde h}}_{\tilde h,\tilde\lambda}\big(|\Pi_t(\phi_1)|>\epsilon\big)
\leq 
\liminf_{N\to\infty}\Prob^{\nu^N_{\tilde h}}_{\tilde h,\tilde\lambda}\big(|\Pi^N_t(\phi_1)|>\epsilon\big)
.
\end{equation}
Markov- and the entropy inequality give the claim as in~\eqref{eq_interm_proof_continuity_Pi}.
\end{proof}
\subsection{Stationarity}
\label{sec_stationarity}
The last unproven claim of Theorem~\ref{theo_fluct_correl_driven_process} is the stationarity of the processes $(Y_t)_{t\geq 0}$, $(\Pi_t)_{t\geq 0}$. \\

In view of Proposition~\ref{prop_lien_Y_Pi}, 
it is enough to prove that $(Y_t)_{t\geq 0}$ is stationary, 
i.e. that the Gaussian field $\nu^\infty_{\tilde h}$ on $\s'(\T)$ with covariance $C_{\tilde h} = (\sigma^{-1}\text{id} - \tilde h)^{-1}$ is invariant. 
It is enough to check that the law of $(Y_t(f_1),...,Y_t(f_n))$ ($t\geq 0$) does not depend on time for each $f_1,...,f_n\in C^\infty(\T)$ and $n\in\N_{\geq 1}$. 
By Lévy's characteristic function theorem and the linearity of $Y_t$, 
establishing the $n=1$ case is sufficient. 

Fix $f\in C^\infty(\T)$. 
Equations~\eqref{eq_M_as_Brownian_motion}--\eqref{eq_Y_t_as_Y_0_plus_mart}, 
the fact that $Y_0$ is a centred Gaussian field with covariance $C_{\tilde h}:= (\sigma^{-1}\text{id}-h)^{-1}$ and the independence of $Y_0$ and $M_t(f)$ imply that 
$Y_t(f)$ is a centred Gaussian random variable with variance:
\begin{equation}
{\bf E}^{\nu^\infty_{\tilde h}}_{\tilde h,\tilde \lambda}\big[Y_t(f)^2\big]
=
\int_{\T}f_t(x) (C_{\tilde h} f_t)(x)\, dx + 2\sigma\int_0^t\|\partial_x f_s\|_2^2\, ds
.
\label{eq_sum_variances_stationarity}
\end{equation}
Let us prove that this variance is just $\int_{\T}f(x)(C_{\tilde h}f)(x)\, dx$, which will suffice. 
Recall the expression~\eqref{eq_def_L_star_theo_fluct} of $\mathcal L_{\tilde h}$ and
let $t\mapsto f_t = e^{t\mathcal L_{\tilde h}}f\in C^\infty(\R_+\times\T)$ (see Appendix~\ref{app_BFP} for the regularity). 
Then:
\begin{equation}
\partial_t f_t 
=
\mathcal L_{\tilde h}f_t 
=
(\text{id}-\sigma\tilde h)\Delta f_t 
=
\sigma C_{\tilde h}^{-1}\Delta f_t
,
\end{equation}
where we used $C_{\tilde h}=\sigma(\text{id}-\sigma\tilde h)^{-1}$ in the last line.  
Integrating the second term in the right-hand side of~\eqref{eq_sum_variances_stationarity} by parts and writing the first term as its $t=0$ value plus the integral of its time derivative, 
we find that the variance of $Y_t(f)$ is indeed $\int_{\T}f(x)(C_{\tilde h}f)(x)\, dx$, concluding the proof:
\begin{align}
\E^{\nu^\infty_{\tilde h}}_{\tilde h,\tilde \lambda}\big[Y_t(f)^2\big]
&=
\int_{\T}f(x)(C_{\tilde h}f)(x)\, dx
+2\int_0^t\int_{\T}\big[f_sC_{\tilde h}\partial_s f_s -\sigma f_s\Delta f_s\big](x)\, dx\, ds
\nonumber\\
&=
\int_{\T}f(x)(C_{\tilde h}f)(x)\, dx
.
\end{align}

\subsection{The relative entropy estimate}\label{sec_entropy_estimate}
In this section, we prove the $o_N(1)$ relative entropy estimate of Proposition~\ref{prop_entropy_estimate_general}, 
which includes Theorem~\ref{theo_size_entropy} as a special case. 

The main idea is the following: the more information we put on the measure approximating the law of the dynamics $\Prob^N_{\tilde h,\tilde\lambda}$ at each time, 
the better the resulting estimate. 
To obtain information on the two-point correlation structure of the law of the dynamics, 
one needs to tune the density profile as well as two-point correlations of the approximation measure.  
One way to do so is to compare the dynamics with a measure of the form $\nu^N_g$ defined as in~\eqref{eq_def_nu_rho_g} for a symmetric $g\in C^\infty_D(\T^2)$, 
and optimise on $g$ to get the correct correlation structure of the dynamics.

For the dynamics we study here the optimal choice $g=\tilde h$. 
This is not true in general and only valid here due to working at density $1/2$. 
We choose to nonetheless work with a general $g\in C^\infty_D(\T^2)$ in the following. 
This is to showcase that the optimal $g$ is in general the solution of a partial differential equation involving $\tilde h$ (see~\cite{Correlations2022}). 
Moreover, the computations for general $g$ are very similar to those needed later in Section~\ref{sec_Radon_Nikodym_derivative}.\\

\noindent\textbf{Notations:} 
A function $g\in C^\infty_D(\T^2)$ is henceforth fixed such that $\sigma g$ has leading eigenvalue eigenvalue strictly below $1$. 
We write $\nu^N_g$ for the associated measure as in~\eqref{eq_def_nu_rho_g}. 
If $f$ is a density for $\nu^N_g$, the relative entropy of $f\nu^N_g$ with respect to $\nu^N_g$ is denoted by:
\begin{equation}
{\bf H}^N_g(f) 
:=
\nu^N_g(f\log f)
,
\end{equation}
where $\nu^N_g(\cdot)$ (or $\nu^N_g[\cdot]$) denotes expectation under $\nu^N_g$. 
In the special case $g=\tilde h$ we simply write ${\bf H}^N_{\tilde h}(f) = {\bf H}^N(f)$. \\

To prove Proposition~\ref{prop_entropy_estimate_general}, 
the following lemma is the key starting point.
\begin{lemm}[Lemmas A.1-A.2 in~\cite{Jara2018}]\label{lemm_derivative_entropy}
For $t\geq 0$, 
recall that ${\bf H}^N_g(f_t) = \nu^N_{g}(f_t\log f_t)$ denotes the relative entropy of the law of the dynamics $f_t \nu^N_{g}$ at time $t$ with respect to $\nu^N_{g}$. 
Then: 
\begin{equation}
\partial_t{\bf H}^N_g(f_t)
\leq 
- N^2\nu^N_{g}\big(\Gamma_{\tilde h,\tilde \lambda}(\sqrt{f_t})\big)
+ N^2\nu^N_{g}\big(f_t L^*_{g,\tilde h,\tilde\lambda}{\bf 1}\big)
.
\label{eq_derivative_entropy}
\end{equation}
Above, $\Gamma_{\tilde h,\tilde\lambda}$ is the carre du champ associated with the dynamics (recall the definition~\eqref{eq_def_jump_rates_lambda}--\eqref{eq_def_jump_rates_h_and_lambda} of the jump rates): for each function $F:\Omega_N\rightarrow\R$,
\begin{equation}
\Gamma_{\tilde h,\tilde\lambda}(F)(\eta) 
:=
\sum_{i\in\R_N} c_{\tilde h,\tilde\lambda}(\eta,\eta^{i,i+1}) \big[F(\eta^{i,i+1})- F(\eta)\big]^2
.
\end{equation}
The operator $N^2L^*_{g,\tilde h,\tilde\lambda}$ is the adjoint of the generator $N^2L_{\tilde h,\tilde\lambda}$ of the dynamics in $\mathbb L^2(d\nu^N_{\tilde h})$: 
\begin{equation}
N^2L^*_{g,\tilde h,\tilde\lambda}F (\eta)
:=
N^2\sum_{i\in\T_N} \Big[c_{\tilde h,\tilde \lambda}(\eta,\eta^{i,i+1})F(\eta^{i,i+1})\frac{\nu^N_{ g}(\eta^{i,i+1})}{\nu^N_{ g}(\eta)} - c_{\tilde h,\tilde \lambda}(\eta,\eta^{i,i+1})F(\eta)\Big]
.
\end{equation}
A similar expression arises in the bound of exponential moments of time-integrated observables using the Feynman-Kac inequality:
\begin{align}
&\frac{1}{T}\log \E^{\nu^N_{g}}_{\tilde h,\tilde\lambda}\Big[\exp\Big[\int_0^T F(\eta_t)\, dt\Big]\Big] 
\nonumber\\
&\hspace{2cm}\leq
\sup_{f\geq 0:\nu^N_{g}(f) = 1}\Big\{\nu^N_{g}(fF) + \frac{N^2}{2}\nu^N_{g}(fL^*_{g,\tilde h,\tilde\lambda}{\bf 1}) - \frac{N^2}{2}\nu^N_{g}\big(\Gamma_{\tilde h,\tilde\lambda}(\sqrt{f})\big)
\Big\}
.
\end{align}
\end{lemm}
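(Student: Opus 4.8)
The plan is to carry out Yau's entropy--production computation, in the form recorded in \cite[Appendix A]{Jara2018}. Write $L:=L_{\tilde h,\tilde\lambda}$ for the generator divided by $N^2$, and recall that the density $f_t$ of the law of $\Prob^{\nu^N_g}_{\tilde h,\tilde\lambda}$ at time $t$ with respect to the \emph{fixed} reference measure $\nu^N_g$ solves the forward equation $\partial_t f_t=N^2 L^*_{g,\tilde h,\tilde\lambda}f_t$, where $L^*_{g,\tilde h,\tilde\lambda}$ is the adjoint of $L$ in $\mathbb L^2(d\nu^N_g)$; its edgewise expression, with the ratio $\nu^N_g(\eta^{i,i+1})/\nu^N_g(\eta)$, is obtained by the change of variables $\eta\mapsto\eta^{i,i+1}$ in the sum defining $L$. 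Note that $L^*_{g,\tilde h,\tilde\lambda}{\bf 1}\neq 0$ in general, since $\nu^N_g$ is not invariant for the dynamics.

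First I would differentiate the entropy. Because $\nu^N_g(\partial_t f_t)=\partial_t\nu^N_g(f_t)=0$, the ``$+1$'' in $\partial_t(f_t\log f_t)=(\partial_t f_t)(1+\log f_t)$ drops, so
\[
\partial_t{\bf H}^N_g(f_t)=N^2\,\nu^N_g\big(\log f_t\cdot L^*_{g,\tilde h,\tilde\lambda}f_t\big)=N^2\,\nu^N_g\big(f_t\cdot L\log f_t\big),
\]
the last equality being the duality defining $L^*_{g,\tilde h,\tilde\lambda}$. Then I would apply the elementary bound $a\log(b/a)\le-(\sqrt a-\sqrt b)^2+(b-a)$ for $a,b>0$ (a consequence of $\log u\le u-1$) with $a=f_t(\eta)$, $b=f_t(\eta^{i,i+1})$ on each edge, and sum against the rates $c_{\tilde h,\tilde\lambda}(\eta,\eta^{i,i+1})$; this gives the pointwise estimate $f_t\,L\log f_t\le-\Gamma_{\tilde h,\tilde\lambda}(\sqrt{f_t})+Lf_t$. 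Integrating against $\nu^N_g$ and using $\nu^N_g(Lf_t)=\nu^N_g(f_t\,L^*_{g,\tilde h,\tilde\lambda}{\bf 1})$ yields \eqref{eq_derivative_entropy}.

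For the Feynman--Kac bound I would run the classical argument (see e.g.\ \cite[Appendix~1, Lemma~7.2]{Kipnis1999}). Setting $u_t(\eta)=\E^{\eta}_{\tilde h,\tilde\lambda}\big[\exp\int_0^t F(\eta_s)\,ds\big]$, the Feynman--Kac equation $\partial_t u_t=N^2Lu_t+Fu_t$ together with the fact that only the symmetric part of $L$ contributes to the quadratic form $\langle u_t,N^2Lu_t\rangle_{\nu^N_g}$ gives $\partial_t\|u_t\|^2_{\nu^N_g}\le 2\Lambda_F\|u_t\|^2_{\nu^N_g}$, where $\Lambda_F$ is the supremum of $\nu^N_g(Ff^2)+\nu^N_g(f\,N^2Lf)$ over $\nu^N_g$-normalised $f$. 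Grönwall's lemma and Cauchy--Schwarz ($\nu^N_g(u_T)\le\|u_T\|_{\nu^N_g}$) then give $\tfrac1T\log\E^{\nu^N_g}_{\tilde h,\tilde\lambda}[\exp\int_0^T F]\le\Lambda_F$, and writing $f^2$ as a density $f'$ and invoking the same edgewise algebra as above, now in the form
\[
\nu^N_g\big(\sqrt{f'}\,N^2L\sqrt{f'}\big)=-\tfrac{N^2}{2}\nu^N_g\big(\Gamma_{\tilde h,\tilde\lambda}(\sqrt{f'})\big)+\tfrac{N^2}{2}\nu^N_g\big(f'\,L^*_{g,\tilde h,\tilde\lambda}{\bf 1}\big),
\]
recasts $\Lambda_F$ as the asserted variational quantity.

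I do not expect a genuine obstacle in this particular lemma; the one point to handle with care is the \emph{non-reversibility} of $\Prob^N_{\tilde h,\tilde\lambda}$ with respect to $\nu^N_g$ (which persists even for $g=\tilde h$ unless $\tilde h$ commutes with the Laplacian): one cannot replace $L^*_{g,\tilde h,\tilde\lambda}$ by $L$, and the antisymmetric part of $L$ is exactly what produces the surviving term $\nu^N_g(f\,L^*_{g,\tilde h,\tilde\lambda}{\bf 1})$. Controlling that term --- showing it is $o_N(1)$ after the optimal choice $g=\tilde h$ and a careful second-order expansion around density $1/2$ --- is where the real effort of Section~\ref{sec_entropy_estimate} lies; the present lemma is merely the bookkeeping that isolates it.
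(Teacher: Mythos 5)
Your proof is correct and reproduces the standard Yau / Jara--Menezes computation that the paper simply cites (\cite{Jara2018}, Lemmas A.1--A.2) without reproving. In particular the entropy-production step (duality to move the time derivative onto $L\log f_t$, then the edgewise inequality $a\log(b/a)\le-(\sqrt a-\sqrt b)^2+(b-a)$ summed against the rates) and the Feynman--Kac step (Gr\"onwall on $\|u_t\|^2_{\nu^N_g}$, Cauchy--Schwarz, and the algebraic identity $\nu^N_g(f\,N^2Lf)=-\tfrac{N^2}{2}\nu^N_g(\Gamma(f))+\tfrac{N^2}{2}\nu^N_g(f^2\,L^*_{g,\tilde h,\tilde\lambda}{\bf 1})$ coming from $Lf^2=\Gamma(f)+2fLf$) are exactly the intended arguments, and your emphasis that $L^*_{g,\tilde h,\tilde\lambda}{\bf 1}\neq0$ is precisely the point the rest of Section~\ref{sec_entropy_estimate} is organised around.
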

Using Lemma~\ref{lemm_derivative_entropy}, 
the entropy estimate~\eqref{eq_entropy_estimate_prop_sec3} in Proposition~\ref{prop_entropy_estimate_general} follows from the following claim as proven next.
\begin{lemm}\label{lemm_bound_adjoint}
Take $g=\tilde h$ and write ${\bf H}^N := {\bf H}^N_{\tilde h}$ for the associated relative entropy. 
For each $N\in\N_{\geq 1}$, 
there is a function $\mathcal R_{\tilde h,\tilde\lambda}:\Omega_N\rightarrow\R$ such that, 
for any density $f$ for $\nu^N_{\tilde h}$:
\begin{equation}
\nu^N_{\tilde h}\big(fN^2L^*_{\tilde h, \tilde h,\tilde\lambda}{\bf 1}\big) 
\leq 
\frac{N^2}{2}\nu^N_{\tilde h}\big(\Gamma_{\tilde h,\tilde\lambda}(\sqrt{f})\big) + \nu^N_{\tilde h}\big(f\mathcal R_{\tilde h,\tilde\lambda}\big)
.
\end{equation}
Moreover, there are $\gamma,C>0$ such that:
\begin{equation}
\forall N\in\N_{\geq 1},\qquad
\frac{1}{\gamma}\log\nu^N_{\tilde h}\Big(\exp\big[\gamma|\mathcal R_{\tilde h,\tilde\lambda}|\big]\Big)
\leq
\frac{C}{N^{1/2}}
.
\label{eq_exp_moment_R_N_h}
\end{equation}
\end{lemm}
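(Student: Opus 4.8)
The plan is to compute $N^2 L^*_{\tilde h,\tilde h,\tilde\lambda}\mathbf{1}$ in closed form, to split it into a ``current-like'' piece that is absorbed by the carré du champ term $\tfrac{N^2}{2}\nu^N_{\tilde h}\big(\Gamma_{\tilde h,\tilde\lambda}(\sqrt f)\big)$ and a remainder $\mathcal R_{\tilde h,\tilde\lambda}$, and finally to bound the exponential moments of $\mathcal R_{\tilde h,\tilde\lambda}$ using the concentration estimates of Appendix~\ref{app_concentration}. For the first step, observe that $\nu^N_{1/2}$ is uniform, so $\nu^N_{\tilde h}(\eta^{i,i+1})/\nu^N_{\tilde h}(\eta)=e^{2\Delta_i\Pi}$ with $\Delta_i\Pi:=\Pi^N(\tilde h)(\eta^{i,i+1})-\Pi^N(\tilde h)(\eta)$; inserting this and the rates~\eqref{eq_def_jump_rates_lambda}--\eqref{eq_def_jump_rates_h_and_lambda} into the expression for the adjoint in Lemma~\ref{lemm_derivative_entropy}, the $\tilde h$-dependence of the reference measure and of the jump rates cancel, leaving
\begin{equation}
N^2 L^*_{\tilde h,\tilde h,\tilde\lambda}\mathbf{1}(\eta)
=
2N^2\sinh(\tilde\lambda/N)\sum_{i\in\T_N}(\bar\eta_{i+1}-\bar\eta_i)\,e^{\Delta_i\Pi}
=
2N^2\sinh(\tilde\lambda/N)\sum_{i\in\T_N}(\bar\eta_{i+1}-\bar\eta_i)\big(e^{\Delta_i\Pi}-1\big),
\end{equation}
the last step using $\sum_i(\bar\eta_{i+1}-\bar\eta_i)=0$; here $\Delta_i\Pi=\tfrac{\bar\eta_{i+1}-\bar\eta_i}{2N}\sum_{k\neq i,i+1}\bar\eta_k(\tilde h_{i,k}-\tilde h_{i+1,k})$ is an explicit polynomial. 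Equivalently, the failure of detailed balance of $\Prob^N_{\tilde h,\tilde\lambda}$ with respect to $\nu^N_{\tilde h}$ reduces to the single factor $e^{2(\eta_{i+1}-\eta_i)\tilde\lambda/N}=1+O(N^{-1})$ per bond: this is why $\nu^N_{\tilde h}$ is the optimal reference measure at density $1/2$, and why one can hope for an $o_N(1)$ rather than an $O_N(1)$ bound as in~\cite{Jara2018} (for a general reference measure $\nu^N_g$, or at density $\rho\neq 1/2$, an extra term governed by a PDE in $g$ would survive, cf.~\cite{Correlations2022}).

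Next I would Taylor-expand $e^{\Delta_i\Pi}-1=\Delta_i\Pi+O((\Delta_i\Pi)^2)$, use the elementary identities $(\bar\eta_{i+1}-\bar\eta_i)^2=\mathbf{1}\{\eta_i\neq\eta_{i+1}\}$ (the base simple-exclusion rate) and $\bar\eta_i\bar\eta_{i+1}=\tfrac14-\tfrac12\mathbf{1}\{\eta_i\neq\eta_{i+1}\}$, together with the discrete Taylor expansion $\tilde h_{i,k}-\tilde h_{i+1,k}=-N^{-1}\partial_1\tilde h(i/N,k/N)+O(N^{-2})$, valid for $k\neq i,i+1$ since $\tilde h\in C^\infty_D(\T^2)$. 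This rewrites the leading part of $N^2 L^*_{\tilde h,\tilde h,\tilde\lambda}\mathbf{1}$ as $\tilde\lambda\sum_i \mathbf{1}\{\eta_i\neq\eta_{i+1}\}\,X_i$, where $X_i$ is a mesoscopic average of $\bar\eta$ against a smooth kernel depending only on sites outside $\{i,i+1\}$ (of typical size $N^{-1/2}$), plus lower-order contributions. The term proportional to $\tfrac1N\sum_k\bar\eta_k\sum_i(\tilde h_{i,k}-\tilde h_{i+1,k})$ is already $o_N(1)$ with Gaussian tails because $\sum_i(\tilde h_{i,k}-\tilde h_{i+1,k})=O(N^{-1})$ (the periodic telescoping sum vanishes, up to a defect coming from the diagonal and controlled by the continuity of $\tilde h$ across it). For the current-like piece $\tilde\lambda\sum_i\mathbf{1}\{\eta_i\neq\eta_{i+1}\}\,X_i$ I would use discrete integration by parts in the measure --- symmetrisation over the exchanges $\eta\leftrightarrow\eta^{i,i+1}$ combined with Cauchy--Schwarz and a Boltzmann--Gibbs replacement of $\mathbf{1}\{\eta_i\neq\eta_{i+1}\}$ by its local average $\tfrac12$ (the correction $\mathbf{1}\{\eta_i\neq\eta_{i+1}\}-\tfrac12=-2\bar\eta_i\bar\eta_{i+1}$ has vanishing mean at every density, hence is a genuine fluctuation) --- to move it into $\tfrac{N^2}{2}\nu^N_{\tilde h}\big(\Gamma_{\tilde h,\tilde\lambda}(\sqrt f)\big)$, the near-reversibility factor above being used to pay for the mismatch of $\nu^N_{\tilde h}(\eta)\,c_{\tilde h,\tilde\lambda}(\eta,\eta^{i,i+1})$ under the exchange, at the price of adding a small correction to $\mathcal R_{\tilde h,\tilde\lambda}$. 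Collecting this correction, the $o_N(1)$ telescoping term, and the higher-order remainders from the expansion of the exponential yields the function $\mathcal R_{\tilde h,\tilde\lambda}$ and the first assertion of the Lemma.

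It then remains to prove~\eqref{eq_exp_moment_R_N_h}. By construction $\mathcal R_{\tilde h,\tilde\lambda}$ is, up to bounded prefactors, a finite sum of polynomial statistics of $\bar\eta$: linear statistics of $Y^N$-type, quadratic statistics of $\Pi^N$-type, cubic ``mixed'' sums $N^{-1}\sum_i\bar\eta_i\bar\eta_{i+1}\sum_k\bar\eta_k\psi(i/N,k/N)$, and the squares of mesoscopic currents produced by the Cauchy--Schwarz step, all tested against bounded or mildly singular kernels. I would estimate $\nu^N_{\tilde h}\big(e^{\gamma|\mathcal R_{\tilde h,\tilde\lambda}|}\big)$ by combining the exponential concentration bounds for such statistics under $\nu^N_{\tilde h}$ from Appendix~\ref{app_concentration} (in particular Proposition~\ref{prop_concentration_exponentielle}) with the approximate Wick formula of Proposition~\ref{prop_Wick_theorem}, the latter used to bound $\nu^N_{\tilde h}(\mathcal R_{\tilde h,\tilde\lambda})$ itself, which is $O(N^{-1/2})$ thanks to the cancellations identified in the previous step (odd moments vanish by the $\eta\mapsto(1-\eta_i)_{i}$ symmetry, and the surviving even-moment contributions carry an explicit $N^{-1/2}$). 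This gives $\tfrac1\gamma\log\nu^N_{\tilde h}\big(e^{\gamma|\mathcal R_{\tilde h,\tilde\lambda}|}\big)\leq CN^{-1/2}$ for $\gamma>0$ small enough.

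The main obstacle is the interplay in the second step: one must show that, after the Boltzmann--Gibbs / integration-by-parts reduction, what is left really does have an $N^{-1/2}$-small exponential moment, not merely $O_N(1)$. This forces one to (i) track precisely the cancellations that keep $\nu^N_{\tilde h}(\mathcal R_{\tilde h,\tilde\lambda})$ of order $N^{-1/2}$ --- in particular the diagonal-singularity corrections to the telescoping identity for $\sum_i(\tilde h_{i,k}-\tilde h_{i+1,k})$, which are sensitive to the fact that $\tilde h$ is only piecewise smooth --- and (ii) have sharp enough exponential concentration for the quadratic and cubic statistics of the correlated, non-product measure $\nu^N_{\tilde h}$, which is the technical content of Appendix~\ref{app_concentration}. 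A secondary difficulty is the bookkeeping: the constant in front of $\Gamma_{\tilde h,\tilde\lambda}(\sqrt f)$ must come out exactly $\leq\tfrac12$ while the multiplier of the squared mesoscopic currents in $\mathcal R_{\tilde h,\tilde\lambda}$ stays $N$-independent, which forces an $N$-uniform choice of the Cauchy--Schwarz parameter.
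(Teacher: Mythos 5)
Your proposal is correct and follows essentially the same route as the paper's proof: specialise $g=\tilde h$ so that the two-point contributions to $N^2L^*_{\tilde h,\tilde h,\tilde\lambda}\mathbf 1$ cancel, identify the cubic term $\mathcal e_{\tilde h}=\frac{2\tilde\lambda}{N}\sum_i\bar\eta_i\bar\eta_{i+1}\sum_{j}\bar\eta_j\partial^N_1\tilde h_{i,j}$ as the dangerous remainder, renormalise it using a Jara--Menezes integration by parts against the carr\'e du champ (the paper's Lemma~\ref{lemm_estimate_threepoint} and Lemma~\ref{lemm_IPP}), and bound the exponential moment of everything left over via the concentration estimates of Appendix~\ref{app_concentration}. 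Your closed form
\begin{equation*}
N^2L^*_{\tilde h,\tilde h,\tilde\lambda}\mathbf 1(\eta)=2N^2\sinh(\tilde\lambda/N)\sum_i(\bar\eta_{i+1}-\bar\eta_i)\big(e^{\Delta_i\Pi}-1\big)
\end{equation*}
is a genuinely cleaner entry point than what the paper writes: the paper carries the expansion of $N^2L^*_{g,\tilde h,\tilde\lambda}\mathbf 1$ for an arbitrary bias $g$ (since Section~\ref{sec_Radon_Nikodym_derivative} reuses the general-$g$ computation) and only sees the cancellation of two-point correlations after organising terms by order, whereas your formula makes the cancellation at $g=\tilde h$ manifest at the outset and also explains \emph{why} $\nu^N_{\tilde h}$ is the right reference measure at density $1/2$ (the $\tilde h$-dependences of the reference measure and the jump rates cancel, leaving a single near-reversibility factor $O(N^{-1})$ per bond). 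One thing to tighten: your description of the treatment of the current-like piece --- a ``Boltzmann--Gibbs replacement of $\mathbf 1\{\eta_i\neq\eta_{i+1}\}$ by $\tfrac12$'' --- only identifies the fluctuation $-2\bar\eta_i\bar\eta_{i+1}X_i$ as what needs to be estimated; it does not itself control it. The actual mechanism in Lemma~\ref{lemm_estimate_threepoint} replaces one factor $\bar\eta_{i+1}$ in the cubic product by a \emph{mesoscopic} average $\bar\eta_{i+1}^{+,aN}$ over $aN$ sites (so the resulting statistic is a genuine three-point sum with well-separated indices, hence $O(N^{-1/2})$ exponential moments by Proposition~\ref{prop_concentration_exponentielle}) and bounds the replacement cost bond by bond with Lemma~\ref{lemm_IPP}; the scale $a\in(0,1/2)$ and the $N$-uniform Cauchy--Schwarz parameter are precisely the bookkeeping you flag at the end. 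Your sketch points in the right direction but should be phrased in these terms if you want the $N^{-1/2}$ rate rather than merely $O_N(1)$.
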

Let us prove the entropy estimate in Proposition~\ref{prop_entropy_estimate_general} assuming Lemma~\ref{lemm_bound_adjoint}.  
The remaining part of Proposition~\ref{prop_entropy_estimate_general}, 
the moment bound on $\Pi^N_t$, 
is proven in Corollary~\ref{coro_moment_bound} below. 
Recall that the entropy inequality (see Appendix A.8 in~\cite{Kipnis1999}) states: 
for any density $f$ for $\nu^N_{\tilde h}$, 
any $F:\Omega_N\rightarrow\R$ and $\gamma>0$,
\begin{equation}
\nu^N_{\tilde h}\big(f F\big)
\leq 
\frac{{\bf H}^N(f)}{\gamma} + \frac{1}{\gamma} \log \nu^N_{\tilde h}\Big(e^{\gamma F}\Big)
.
\label{eq_entropy_ineq}
\end{equation}
Using Lemma~\ref{lemm_bound_adjoint} and the entropy inequality, 
the bound~\eqref{eq_derivative_entropy} on $\partial_t{\bf H}^N(f_t)$ becomes:
\begin{align}
\partial_t {\bf H}^N(f_t)
&\leq 
- \frac{N^2}{2}\nu^N_{\tilde h}\big(\Gamma_{\tilde h,\tilde \lambda}(\sqrt{f_t})\big)
+ \nu^N_{\tilde h}\big(\mathcal R_{\tilde h,\tilde \lambda}\big)
\nonumber\\
&\leq 
\frac{{\bf H}^N(f_t)}{\gamma} +\frac{1}{\gamma}\log \nu^N_{\tilde h}\Big[ e^{\gamma \mathcal R_{\tilde h,\tilde\lambda} }\Big]
\leq 
\frac{{\bf H}^N(f_t)}{\gamma} + \frac{C}{N^{1/2}}
.
\end{align}
Gronwall inequality and ${\bf H}^N(f_0)=0$ then give ${\bf H}^N(f_t)\leq \gamma C e^{t/\gamma} N^{-1/2}$ as claimed..\\

The proof of Lemma~\ref{lemm_bound_adjoint} spans the next two sections, 
with useful concentration estimates (such as~\eqref{eq_exp_moment_R_N_h}) established in Appendix~\ref{app_concentration}. 

Before we start proving Lemma~\ref{lemm_bound_adjoint}, 
we give a useful corollary. 
It states that the relative entropy estimate implies the so-called Boltzmann-Gibbs principle, 
or replacement lemma at the level of fluctuations,
as already observed in~\cite{Jara2018}. 
Corollary~\ref{coro_bound_error_terms} was already used in Section~\ref{sec_martingale_problem} to identify the limiting martingale process. 
\begin{coro}[Bounding error terms]\label{coro_bound_error_terms}
Assume $\theta^N:\Omega_N\rightarrow\R$, $N\in\N_{\geq 1}$ satisfies the following conditions: 
there are functions $\zeta^N_{\delta}$ ($N\in\N_{\geq 1}$, $\delta\in(0,1/2)$) such that,
for any $f$ for $\nu^N_{\tilde h}$:
\begin{equation}
\nu^N_{\tilde h}\big(f(\pm \theta^N)\big) 
\leq 
\delta N^2 \nu^N_{\tilde h}\big(\Gamma(\sqrt{f})\big) + \nu^N_{\tilde h}\big(f\zeta^N_\delta\big)
, 
\label{eq_def_zeta_N_corollary}
\end{equation}
and for some $\gamma>0$ depending on $\delta$:
\begin{equation}
\limsup_{N\rightarrow\infty}\log \nu^N_{\tilde h}\Big[\exp\big(\gamma|\zeta^N_\delta|\big)\Big] 
=
0
.
\end{equation}
Then:
\begin{equation}
\limsup_{N\rightarrow\infty}
\E^{\nu^N_{\tilde h}}_{\tilde h,\tilde \lambda}\Big[\Big|
\int_0^T\theta^N(\eta_t)\, dt\Big|\Big]
=
0
.
\end{equation}
\end{coro}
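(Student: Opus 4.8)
The plan is to obtain the $\mathbb L^1$ bound from control of the exponential moments of $\int_0^T\theta^N(\eta_t)\,dt$ under $\Prob^{\nu^N_{\tilde h}}_{\tilde h,\tilde\lambda}$. Fix $\delta\in(0,1/2)$ and set $a:=(8\delta)^{-1}$. By Jensen's inequality and $e^{a|x|}\le e^{ax}+e^{-ax}$,
\[
a\,\E^{\nu^N_{\tilde h}}_{\tilde h,\tilde\lambda}\Big[\Big|\int_0^T\theta^N(\eta_t)\,dt\Big|\Big]
\le\log\Big(\E^{\nu^N_{\tilde h}}_{\tilde h,\tilde\lambda}\big[e^{a\int_0^T\theta^N\,dt}\big]+\E^{\nu^N_{\tilde h}}_{\tilde h,\tilde\lambda}\big[e^{-a\int_0^T\theta^N\,dt}\big]\Big),
\]
so it is enough to show that, for each fixed $\delta$, $\limsup_N\E^{\nu^N_{\tilde h}}_{\tilde h,\tilde\lambda}[e^{\pm a\int_0^T\theta^N\,dt}]\le1$: the right-hand side is then $\le\log2+o_N(1)$, whence $\limsup_N\E^{\nu^N_{\tilde h}}_{\tilde h,\tilde\lambda}[|\int_0^T\theta^N\,dt|]\le 8\delta\log2$, and letting $\delta\downarrow0$ (the hypotheses holding for every $\delta$, with a $\delta$-dependent $\zeta^N_\delta$) concludes. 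The matching lower bound $\liminf_N\E^{\nu^N_{\tilde h}}_{\tilde h,\tilde\lambda}[e^{\pm a\int_0^T\theta^N\,dt}]\ge1$ is free: combining Lemma~\ref{lemm_bound_adjoint}, the relative entropy estimate of Proposition~\ref{prop_entropy_estimate_general} and Pinsker's inequality yields $\E^{\nu^N_{\tilde h}}_{\tilde h,\tilde\lambda}[\int_0^T\theta^N\,dt]\to0$, and one applies Jensen's inequality.

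To bound the exponential moments, I would first condition on the conserved quantity $m=\sum_i\eta_i$. The sectors with $|m-N/2|\gg\sqrt{N\log N}$ carry $\nu^N_{\tilde h}$-mass $e^{-\Omega(N)}$ by the concentration estimates of Appendix~\ref{app_concentration}, which overwhelms the (at worst polynomial-in-$N$) size of the integrand, so they contribute $o_N(1)$; this decomposition is needed because the dynamics preserves $m$, so no global inequality comparing entropy and Dirichlet form can hold for $\nu^N_{\tilde h}$ itself. On a typical sector, writing $U_{m,\tilde h}$ for $\nu^N_{\tilde h}$ conditioned to $\{\sum_i\eta_i=m\}$, I apply the Feynman--Kac inequality of Lemma~\ref{lemm_derivative_entropy} with $F=\pm a\theta^N$ (both the hypothesis on $\theta^N$ and the bound of Lemma~\ref{lemm_bound_adjoint} restrict verbatim to $U_{m,\tilde h}$). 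Using Lemma~\ref{lemm_bound_adjoint} to absorb the adjoint term into $\tfrac{N^2}{4}U_{m,\tilde h}(\Gamma_{\tilde h,\tilde\lambda}(\sqrt g))+\tfrac12U_{m,\tilde h}(g\,\mathcal R_{\tilde h,\tilde\lambda})$, and the hypothesis with this $\delta$ (so that $a\delta=\tfrac18$) to absorb $\pm a\,U_{m,\tilde h}(g\theta^N)$ into $\tfrac{N^2}{8}U_{m,\tilde h}(\Gamma_{\tilde h,\tilde\lambda}(\sqrt g))+a\,U_{m,\tilde h}(g\,\zeta^N_\delta)$, the Dirichlet-form coefficients combine to $\tfrac18+\tfrac14-\tfrac12=-\tfrac18$ and one is left with
\[
\frac1T\log\E^{U_{m,\tilde h}}_{\tilde h,\tilde\lambda}\big[e^{\pm a\int_0^T\theta^N\,dt}\big]
\le\sup_{g\ge0,\,U_{m,\tilde h}(g)=1}\Big\{U_{m,\tilde h}(g\,W_N)-\tfrac{N^2}{8}U_{m,\tilde h}\big(\Gamma_{\tilde h,\tilde\lambda}(\sqrt g)\big)\Big\},\qquad W_N:=a\,\zeta^N_\delta+\tfrac12\mathcal R_{\tilde h,\tilde\lambda}.
\]

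The remaining, and main, step is to show that this residual supremum has $\limsup_N\le0$ uniformly over the typical sectors. Via the entropy inequality, $U_{m,\tilde h}(g\,W_N)\le\kappa^{-1}U_{m,\tilde h}(g\log g)+\kappa^{-1}\log U_{m,\tilde h}(e^{\kappa W_N})$; the second term tends to $0$ by the exponential integrability of $\zeta^N_\delta$, the bound~\eqref{eq_exp_moment_R_N_h} on $\mathcal R_{\tilde h,\tilde\lambda}$, Hölder's inequality and the sector-wise concentration estimates of Appendix~\ref{app_concentration}, for a suitable small $\kappa=\kappa(\delta)>0$. There remains $\sup_g\{\kappa^{-1}U_{m,\tilde h}(g\log g)-\tfrac{N^2}{8}U_{m,\tilde h}(\Gamma_{\tilde h,\tilde\lambda}(\sqrt g))\}$, which vanishes once $U_{m,\tilde h}$ is known to control its entropy by its (bounded-rate, hence $N^{-2}$-scale) Dirichlet form, uniformly in $N$ and the typical $m$. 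This logarithmic-Sobolev-type comparison is the delicate point — the reference measure $U_{m,\tilde h}$ is correlated, so it does not follow by naive comparison with the canonical Bernoulli measure, cf.\ the discussion in Section~\ref{sec_perspectives} — and is where the bulk of the technical work and the concentration machinery of Appendix~\ref{app_concentration} enter. Once it is in place, the residual supremum is $o_N(1)$, the exponential moments converge to $1$, and the first paragraph completes the proof.
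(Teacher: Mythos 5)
There is a genuine gap. You apply the Feynman--Kac inequality directly to $F=\pm a\theta^N$, and after using hypothesis~\eqref{eq_def_zeta_N_corollary} and Lemma~\ref{lemm_bound_adjoint} to absorb what you can into the carr\'e du champ, you are left with a residual of the form
\[
\sup_{g}\Big\{U_{m,\tilde h}\big(g\,W_N\big)-\tfrac{N^2}{8}\,U_{m,\tilde h}\big(\Gamma_{\tilde h,\tilde\lambda}(\sqrt g)\big)\Big\},
\qquad
W_N := a\,\zeta^N_\delta + \tfrac12\,\mathcal R_{\tilde h,\tilde\lambda}.
\]
Controlling this requires a logarithmic Sobolev inequality for the \emph{correlated} conditional measure $U_{m,\tilde h}$ uniformly in $m$, plus exponential-moment bounds for $W_N$ \emph{under $U_{m,\tilde h}$}. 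Neither is available. The paper explicitly flags the log-Sobolev inequality for $U_{m,\tilde h}$ as an open problem in Section~\ref{sec_perspectives}, and the concentration estimates of Appendix~\ref{app_concentration} are stated under $\nu^N_{\tilde h}$, not sector by sector: the na\"ive passage from $\nu^N_{\tilde h}$ to a typical sector $U_{m,\tilde h}$ costs a factor $\nu^N_{\tilde h}\big(\sum_i\eta_i=m\big)^{-1}\sim\sqrt N$, which destroys the $1+o_N(1)$ bound you need. So your reduction lands squarely on the difficulty the paper says it cannot yet resolve.

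The paper's proof sidesteps all of this by choosing the observable inside Feynman--Kac so that \emph{every} residual cancels exactly against the Dirichlet form for \emph{all} densities $f$, without any entropy--Dirichlet-form comparison. Set $\alpha_a=(4a)^{-1}$ and apply Feynman--Kac not to $\pm a\theta^N$ but to
\[
F_{\pm}=\pm a\,\theta^N - a\,\zeta^N_{\alpha_a} - \tfrac12\,\mathcal R_{\tilde h,\tilde\lambda}.
\]
The $-\tfrac12\mathcal R_{\tilde h,\tilde\lambda}$ cancels the $+\tfrac12\nu^N_{\tilde h}(f\mathcal R_{\tilde h,\tilde\lambda})$ that Lemma~\ref{lemm_bound_adjoint} produces from the adjoint, and the $-a\zeta^N_{\alpha_a}$ cancels the $+a\nu^N_{\tilde h}(f\zeta^N_{\alpha_a})$ that hypothesis~\eqref{eq_def_zeta_N_corollary} produces (with $a\alpha_a=\tfrac14$). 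The Feynman--Kac supremum is therefore $\le 0$ on the nose, density by density. The subtracted quantities $\int_0^T|\zeta^N_{\alpha_a}|\,dt$ and $\tfrac1{2a}\int_0^T\mathcal R_{\tilde h,\tilde\lambda}\,dt$ are then controlled separately, time slice by time slice, through the entropy inequality~\eqref{eq_entropy_ineq}, the $o_N(1)$ entropy bound of Proposition~\ref{prop_entropy_estimate_general}, and the exponential-moment bounds for $\zeta^N_{\alpha_a}$ and $\mathcal R_{\tilde h,\tilde\lambda}$ under $\nu^N_{\tilde h}$ itself. This yields $\E^{\nu^N_{\tilde h}}_{\tilde h,\tilde\lambda}[|\int_0^T\theta^N|]\le\tfrac1a\log2+o_N(1)$, and one finishes by sending $a\to\infty$. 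No sector decomposition and no log-Sobolev inequality for the correlated measure are needed. Your opening paragraph (Jensen, $e^{|x|}\le e^x+e^{-x}$, send the free parameter to infinity) is in the same spirit as the paper; the missing ingredient is the subtraction of $\zeta^N_{\alpha_a}$ and $\tfrac1{2a}\mathcal R_{\tilde h,\tilde\lambda}$ \emph{inside} the exponential so that the residual vanishes algebraically.
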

If $\theta^N = \zeta^N_\delta$ for some $\delta\in(0,1/2)$, 
the assumptions in the corollary are just saying that $\gamma|\theta^N|$ has exponential moments under $\nu^N_{\tilde h}$ equal to $1+o_N(1)$. 
The result is then a direct consequence of the entropy estimate of Proposition~\ref{prop_entropy_estimate_general} applied at each time $t\in[0,T]$. 
The more subtle $\theta^N\neq \zeta^N_\delta$ case will be proven at the end of Section~\ref{sec_bound_adjoint}. 
\subsubsection{Computation of the adjoint}
Here, we compute the adjoint $N^2L^*_{g,\tilde h,\tilde\lambda}{\bf 1}(\eta)$ of the generator in $\mathbb L^2(\nu^N_{g})$ for a fixed configuration $\eta\in\Omega_N$. 
For any $f:\Z^2\rightarrow\R$ and any $(i,j)\in\Z^2$, 
write for short:
\begin{align}
\partial^{N}_1 f(i,j) 
&:= 
N\big[f(i+1,j)-f(i,j)\big]
,\nonumber\\
\Delta^N_1 f(i,j) 
&:= 
N^2\big[f(i+1,j) + f(i-1,j) -2f(i,j)\big]
.
\end{align}
Recall that $\bar\eta_i = \eta_i-1/2$ for $i\in\T_N$. 
Elementary computations yield, for each $i\in\T_N$:
\begin{align}
\frac{\nu^N_g(\eta^{i,i+1})}{\nu^N_g(\eta)} 
&= 
\exp\big[-2(\eta_{i+1}-\eta_i)B^g_i/N\big]
,
\end{align}
with:
\begin{equation}
B^g_i(\eta) 
=: 
\frac{1}{2N}\sum_{j\notin\{i,i+1\}} \bar\eta_j\partial^N_1 g_{i,j}
.
\label{eq_def_B_g_x}
\end{equation}
Similarly,
\begin{align}
c_{\tilde h,\tilde\lambda}(\eta,\eta^{i,i+1}) 
&= 
c_{\tilde\lambda}(\eta,\eta^{i,i+1})\exp\big[-(\eta_{i+1}-\eta_i)B^{\tilde h}_i/N\big]
\nonumber\\
&=
c(\eta,\eta^{i,i+1})\exp\Big[-(\eta_{i+1}-\eta_i)\frac{\big(\tilde\lambda + B^{\tilde h}_i\big)}{N}\Big]
.
\label{eq_variation_jump_rate_c_h_lambda}
\end{align}
With these relations, one has for the adjoint $N^2L^*{\bf 1}$:
\begin{align}
N^2L^*_{g,\tilde h,\tilde\lambda}{\bf 1}(\eta) 
&= 
N^2\sum_{i\in\T_N}\Big[c_{\tilde h,\tilde\lambda}(\eta^{i,i+1},\eta)\frac{\nu^N_g(\eta^{i,i+1})}{\nu_g(\eta)} - c_{\tilde h,\tilde\lambda}(\eta,\eta^{i,i+1})\Big]
\nonumber\\
&=
N^2\sum_{i\in\T_N}\eta_i(1-\eta_{i+1})\Big(\exp\Big[\frac{-\tilde\lambda + B^{2g-\tilde h}_i}{N}\Big] -\exp\Big[\frac{\tilde\lambda+B^{\tilde h}_i}{N}\Big]\Big)
\nonumber\\
&\quad + 
N^2\sum_{i\in\T_N}\eta_{i+1}(1-\eta_i)\Big(\exp\Big[\frac{\tilde\lambda - B^{2g-\tilde h}_i}{N}\Big] -\exp\Big[-\frac{\tilde\lambda+B^{\tilde h}_i}{N}\Big]\Big)
. 
\end{align}
Developing the exponentials using $e^x = 1+x + \frac{x^2}{2} + \frac{x^3}{6}+ \int_0^1 \frac{(1-t)^3}{6}x^4e^{tx}\, dt$ yields:
\begin{align}
N^2L^*_{g,\tilde h,\tilde\lambda}{\bf 1}(\eta) 
&= 
2N\sum_{i\in\T_N}\big(\eta_{i+1}-\eta_i\big)\big(\tilde\lambda-B^{g-\tilde h}_i\big) + 2\sum_{i\in\T_N}c(\eta,\eta^{i,i+1}) \Big[\big(B^{g-\tilde h}_i - \tilde\lambda\big)B^{g}_i\Big] 
\nonumber\\
&\quad + 
\frac{1}{6N}\sum_{i\in\T_N}\big(\eta_{i+1}-\eta_i\big) \Big[\big(\tilde\lambda-B^{2g-\tilde h}_i\big)^3 - \big(\tilde\lambda + B^{\tilde h}_i\big)^3\Big] 
+ \delta^N_{\text{order}\geq 4}(\eta)
\label{eq_L_star_is_L_1_plus_L_2}\\
&=: 
\sum_{k=1}^{3}\big(N^2L^*_{g,\tilde h,\tilde\lambda}{\bf 1}(\eta)\big)_{|\text{order k}}  + \delta^N_{\text{order}\geq 4}(\eta)\nonumber
,
\end{align}
with:
\begin{equation}
\sup_{\eta\in\Omega_N}\big|\delta^N_{\text{order}\geq 4}(\eta)\big|
=
O(N^{-1})
.
\end{equation}
The bound on $\delta^N_{\text{order}\geq 4}$ comes from the fact that $B^g_i, B^{\tilde h}_i$ are bounded uniformly in $i,N,\eta$.  
We now focus on computing $\big(N^2L^*_{g,\tilde h,\tilde \lambda}{\bf 1}(\eta)\big)_{|\text{order k}}$ for $k\in\{1,2,3\}$. \\

\noindent\textbf{Computing} $\big(N^2L^*_{g,\tilde h,\tilde\lambda}{\bf 1}(\eta)\big)_{|\text{order 1}}${\bf .} 
Since $(\eta_{i+1}-\eta_i) = (\bar\eta_{i+1}-\bar\eta_i)$, 
upon integrating $\bar\eta_i$ by parts in the first sum in \eqref{eq_L_star_is_L_1_plus_L_2} and re-indexing, one finds:
\begin{align}
\big(N^2L^*_{g,\tilde h,\tilde\lambda}{\bf 1}(\eta)\big)_{|\text{order 1}} 
&= 
\frac{1}{N}\sum_{i\in\T_N}\sum_{j : |j-i|>1}\bar\eta_i\bar\eta_j \Delta^N_1 (g-h)_{i,j}
\nonumber\\
&\quad
+ \sum_{i\in\T_N}\bar\eta_i\bar\eta_{i+1}\big[\partial^{N}_1 (g-\tilde h)_{i+1,i} - \partial^{N}_1 (g-\tilde h)_{i-1,i+1}\big]
.
\label{eq_expr_finale_L_star_order_1}
\end{align}
\textbf{Computing }$\big(N^2L^*_{g,\tilde h,\tilde\lambda}{\bf 1}(\eta)\big)_{|\text{order 2}}${\bf .} 
Recall definition \eqref{eq_def_B_g_x} of $B^g$. 
One has:
\begin{align}
\big(N^2L^*_{g,\tilde h,\tilde\lambda}{\bf 1}(\eta)\big)_{|\text{order 2}} 
&= 
\frac{1}{2N^2}\sum_{j,\ell\in\T_N}\bar\eta_j\bar\eta_\ell \Big(\frac{1}{N}\sum_{\substack{i\in \T_N \\ i\notin\{j-1,j,\ell-1,\ell\}}} c(\eta,\eta^{i,i+1})\partial^{N}_1 (g-\tilde h)_{i,j}\partial^{N}_1 g_{i,\ell}\Big) 
\nonumber\\
&\quad
- \frac{\tilde\lambda}{N}\sum_{i\in\T_N}c(\eta,\eta^{i,i+1}) \sum_{j\notin\{i,i+1\}}\bar\eta_j\partial^{N}_1 g_{i,j}
.
\label{eq_order_2_Lstar_0}
\end{align}
To proceed, rewrite the jump rates $c(\eta,\eta^{i,i+1})$ in terms of $\bar\eta_i,\bar\eta_{i+1}$ as follows:
\begin{equation}
c(\eta,\eta^{i,i+1})
=
\eta_i(1-\eta_{i+1}) + \eta_{i+1}(1-\eta_{i}) 
=
\frac{1}{2}- 2\bar\eta_i\bar\eta_{i+1}
=
2\big(\sigma - \bar\eta_i\bar\eta_{i+1}\big)
.
\end{equation}
Splitting also the first sum in~\eqref{eq_order_2_Lstar_0} between $j=\ell$ diagonal terms and non-diagonal ones and recalling $\bar\eta_i^2 = \sigma$, 
one finds:
\begin{align}
\big(N^2L^*_{g,\tilde h,\tilde\lambda}{\bf 1}(\eta)\big)_{|\text{order 2}} 
&= 
\frac{\sigma}{N}\sum_{\substack{j,\ell\in\T_N\\ j\neq\ell}}\bar\eta_j\bar\eta_\ell \Big(\frac{1}{N}\sum_{\substack{i\in \T_N \\ i\notin\{j-1,j,\ell-1,\ell\}}} \partial^{N}_1( g-\tilde h)_{i,j}\partial^{N}_1 g_{i,\ell}\Big) 
\nonumber\\
&\quad 
+ \frac{\sigma^2}{N^2}\sum_{j\in\T_N}\sum_{\substack{i\in \T_N \\ i\notin\{j-1,j\}}} \partial^{N}_1 (g-\tilde h)_{i,j} \partial^{N}_1 g_{i,j}+ \epsilon_{g,\tilde h,\tilde\lambda} + \e_{g}
.
\label{eq_expr_finale_L_star_order_2}
\end{align}
The term $\e_{g}$ contains third-order correlations that will require more work to estimate:
\begin{align}
\e_{g}(\eta) 
&= 
\frac{2\tilde \lambda}{N}\sum_{i\in\T_N} \bar\eta_i\bar\eta_{i+1}\sum_{j\notin\{i,i+1\}}\bar\eta_j \partial^{N}_1 g_{i,j}
.
\label{eq_def_W_3}
\end{align}
The term $\epsilon_{g,\tilde h}$ in \eqref{eq_expr_finale_L_star_order_2} contains all other terms that will be shown to be small:
\begin{align}
\epsilon_{g,\tilde h,\tilde\lambda} 
&= 
-\frac{1}{N}\sum_{i\in\T_N}\bar\eta_{i}\bar\eta_{i+1}\Big(\frac{1}{N}\sum_{j\notin\{i,i+1\}}\sigma\partial^{N}_1 (g-\tilde h)_{i,j}\partial^{N}_1 g_{i,j}\Big)
\nonumber\\
&\quad 
-\frac{1}{N}\sum_{i\in\T_N}\bar\eta_{i}\bar\eta_{i+1}\Big(\frac{1}{N}\sum_{j\neq \ell\notin\{i,i+1\}}\bar\eta_j\bar\eta_\ell\partial^{N}_1 g_{i,j}\partial^{N}_1 g_{i,\ell}\Big)
\nonumber\\
&\quad 
-\frac{2\tilde \lambda\sigma}{N}\sum_{i\in\T_N}\sum_{j\notin \{i,i+1\}}\bar\eta_j \partial^{N}_1 g_{i,j}
.
\label{eq_def_epsilon_N}
\end{align}
Putting together the initial expression~\eqref{eq_L_star_is_L_1_plus_L_2} of the adjoint and the first and second order estimates~\eqref{eq_expr_finale_L_star_order_1}--\eqref{eq_expr_finale_L_star_order_2}, 
we have so far obtained:
\begin{align}
N^2L^*_{g,\tilde h,\tilde\lambda}{\bf 1}(\eta) 
&=
\frac{1}{N}\sum_{i\in\T_N}\sum_{j : |j-i|>1}\bar\eta_i\bar\eta_j \Delta^N_1 (g-\tilde h)_{i,j}
\nonumber\\
&\quad
+ \sum_{i\in\T_N}\bar\eta_i\bar\eta_{i+1}\big[\partial^{N}_1 (g-\tilde h)_{i+1,i} - \partial^{N}_1 (g-\tilde h)_{i-1,i+1}\big]
\nonumber\\
&\quad 
+ \frac{\sigma}{N}\sum_{\substack{j,\ell\in\T_N\\ j\neq\ell}}\bar\eta_j\bar\eta_\ell \Big(\frac{1}{N}\sum_{\substack{i\in \T_N \\ i\notin\{j-1,j,\ell-1,\ell\}}} \partial^{N}_1( g-\tilde h)_{i,j}\partial^{N}_1 g_{i,\ell}\Big) 
\nonumber\\
&\quad 
+ \frac{\sigma^2}{N^2}\sum_{j\in\T_N}\sum_{\substack{i\in \T_N \\ i\notin\{j-1,j\}}} \partial^{N}_1 (g-\tilde h)_{i,j} \partial^{N}_1 g_{i,j}+ \delta_{g,\tilde h,\tilde\lambda}(\eta)
,
\label{eq_final_estimate_L_star_before_choice_g}
\end{align}
where $\delta_{g,\tilde h}$ is given by:
\begin{equation}
\delta_{g,\tilde h,\tilde\lambda} 
= 
\epsilon_{g,\tilde h,\tilde\lambda} + \e_{g} + \big(N^2L^*_{g,\tilde h,\tilde\lambda}{\bf 1}(\eta)\big)_{|\text{order 3}} + O(N^{-1})
,
\label{eq_def_delta_g_h}
\end{equation}
with the $O(N^{-1})$ uniform on the configuration.

Postponing the precise estimate of $\delta_{g,\tilde h,\tilde\lambda}$ to Section~\ref{sec_bound_adjoint}, 
let us informally explain why the choice $g=\tilde h$ is optimal.  
The contribution of two-point correlations
i.e. the first four lines of~\eqref{eq_final_estimate_L_star_before_choice_g}, 
are expected to typically be of order $1$ in $N$ since $\Pi^N$ is, informally speaking, the square of the approximately Gaussian random variable $Y^N$. 
On the other hand $\delta_{g,\tilde h,\tilde\lambda}$ involves higher order correlations or other terms that we expect to typically be of order $o_N(1)$.   
Thus, in order to make the adjoint as small as possible as stated in Lemma~\ref{lemm_derivative_entropy}, 
we need to choose $g$ so that all two-point correlations vanish. 
This is the case when $g=\tilde h$ and we obtain:
\begin{equation}
N^2L^*_{\tilde h,\tilde h,\tilde\lambda}{\bf 1}(\eta) 
= 
\delta_{\tilde h,\tilde h,\tilde\lambda}
.
\label{eq_adjoint_equals_delta}
\end{equation}
Therefore bounding $\delta_{\tilde h,\tilde h,\tilde\lambda}$ is the same as bounding the adjoint.

In Section~\ref{sec_driven_process_candidate} we will make use of a bound of $\delta_{g,\tilde h,\tilde\lambda}$ for $g\neq \tilde h$, so we keep a general $g$ for now.  
In the next section, 
we conclude the proof of Lemma~\ref{lemm_bound_adjoint} by constructing, 
for each $\alpha\in(0,1/2]$, 
a function $\mathcal R_{g,\tilde h,\tilde\lambda,\alpha}:\Omega_N\rightarrow\R$ such that, 
for each density $f$ for $\nu^N_{\tilde h}$:
\begin{equation}
\nu^N_{g}\big(f\delta_{g,\tilde h,\tilde\lambda}\big) 
\leq 
\alpha N^2 \nu^N_{g}\big(\Gamma(\sqrt{f})\big) 
+ \nu^N_{g}\big(f\mathcal R_{g,\tilde h,\tilde\lambda,\alpha}\big)
,
\label{eq_estimate_error_terms}
\end{equation}
with, for some $\gamma_\alpha,C_\alpha>0$ independent of $N$ and all $N\in\N_{\geq 1}$:
\begin{equation}
\frac{1}{\gamma_\alpha}\log \nu^N_{g}\Big(\exp\big|\gamma_\alpha\mathcal R_{g,\tilde h,\tilde\lambda,\alpha}\big|\Big)
\leq 
\frac{C_\alpha}{N^{1/2}}
.
\end{equation}
The function $\mathcal R_{\tilde h,\tilde\lambda}$ appearing in Lemma~\ref{lemm_derivative_entropy} is simply $\mathcal R_{g=\tilde h,\tilde h,\tilde\lambda,\alpha=1/2}$. 
\subsubsection{Proof of~\eqref{eq_estimate_error_terms} and estimate of error terms}\label{sec_bound_adjoint}
Here, we complete the proof of Lemma~\ref{lemm_bound_adjoint} by proving~\eqref{eq_estimate_error_terms}.  
At the end of the section, 
we also establish the moment bound in Proposition~\ref{prop_entropy_estimate_general} and prove Corollary~\ref{coro_bound_error_terms}, 
which in particular implies the estimate~\eqref{eq_Boltzmann_Gibbs_sketch}. 
Recall that we still work with a general $g\in C^\infty_D(\T^2)$ such that $\sigma g$ has leading eigenvalue strictly below 1.

The proof of~\eqref{eq_estimate_error_terms} is split into two parts: 
the estimate of $\e_{\tilde h}$ (recall~\eqref{eq_def_W_3}), 
and the estimate of the rest, 
i.e. $\epsilon_{g,\tilde h,\tilde\lambda}+(N^2L^*_{g,\tilde h,\tilde\lambda}{\bf 1})_{|\text{order }3}$ (recall that $\epsilon_{g,\tilde h,\tilde\lambda}$ is defined in~\eqref{eq_def_epsilon_N}).

In Proposition~\ref{prop_concentration_exponentielle}, 
the following concentration estimates under $\nu^N_{\tilde h}$ are established. 
Let $n\geq 1$ be an integer and $\phi_n:\T^n_N\rightarrow\R$ ($N\in\N_\geq 1$) be a sequence of functions satisfying $\sup_N \|\phi_n\|_\infty <\infty$. 
For $J\subset\T_N$ and $n\geq 2$, define:
\begin{equation}
W^{n,J}_{\phi_n}(\eta)
:=
\frac{1}{N^{n-1}}\sum_{i_1,...,i_n\in\T_N}\phi_n(i_1,...,i_n) \Big(\prod_{j\in J}\bar\eta_{i_1+j}\Big)\bar\eta_{i_2}...\bar\eta_{i_n},
\qquad \eta\in\Omega_N
\label{eq_def_W_n_J_phi}
.
\end{equation}
If $n=1$, define instead:
\begin{equation}
W^{1,J}_{\phi_1}(\eta) 
:=
\frac{1}{N^{1/2}}\sum_{i\in\T_N}\phi_1(i) \Big(\prod_{j\in J}\bar\eta_{i+j}\Big)
,
\qquad \eta\in\Omega_N
.
\end{equation}
There is then $\gamma_n,C_n>0$ depending only $\tilde h,J$ but not on $\phi_n$ such that, 
for each $N\in\N$:
\begin{equation}\label{eq_bound_exp_moments_sec_2}
\frac{1}{\gamma_n}\log \nu^N_{g}\Big(\exp\Big[\frac{\gamma_n |W^{n,K}_{\phi_n}|}{\sup_N\|\phi_n\|_\infty}\Big]\Big)
\leq 
\begin{cases}
C_1\quad &\text{if }n\in\{1,2\},\\
\frac{C_n}{N^{\frac{n-2}{2}}}&\text{if }n\geq 3.
\end{cases}
\end{equation}
Let us use~\eqref{eq_bound_exp_moments_sec_2} to bound $\epsilon_{g,\tilde h,\tilde \lambda}$ and $\big(N^2L^*_{g,\tilde h,\tilde\lambda}{\bf 1}(\eta)\big)_{|\text{order }3}$. 
The first line of $\epsilon_{g,\tilde h,\tilde\lambda}$ in~\eqref{eq_def_epsilon_N} is of the form $N^{-1/2} W^{1,\{0,1\}}_{\phi_1}$ for a bounded $\phi_1$ depending on $g,\tilde h$ and given by:
\begin{equation}
\phi_1(i)
:=
-\frac{1}{N}\sum_{j\notin\{i,i+1\}}\sigma\partial^{N}_1 (g-\tilde h)_{i,j}\partial^{N}_1 g_{i,j}
,\qquad
i\in\T_N
.
\end{equation}
On the other hand, the second line of $\epsilon_{g,\tilde h,\tilde\lambda}$ is of the form $W^{3,\{0,1\}}_{\phi_3}$ with:
\begin{equation}
\phi_3(i,j,\ell)
=
-{\bf 1}_{j\neq \ell\notin\{i,i+1\}}\partial^{N}_1 g_{i,j}\partial^{N}_1 g_{i,\ell}
,\qquad
(i,j,\ell)\in\T^3_N
.
\end{equation}
Lastly, the last line of $\epsilon_{g,\tilde h,\tilde\lambda}$ reads, 
exchanging sums and integrating by parts:
\begin{align}
-\frac{2\tilde\lambda\sigma}{N}\sum_{i\in\T_N}\sum_{j\notin \{i,i+1\}}\bar\eta_j \partial^{N}_1 g_{i,j}
&=
-2\tilde\lambda\sigma\sum_{j\in\T_N}\bar\eta_j\sum_{i\notin \{j-1,j\}}\big[g_{i+1,j}-g_{i,j}\big]
\nonumber\\
&=
\frac{2\tilde\lambda\sigma}{N}\sum_{j\in\T_N}\bar\eta_j N \big[g_{j+1,j}-g_{j-1,j}\big]
.
\label{eq_bound_IPP_epislon_term}
\end{align}
Since $g\in C^\infty_D(\T^2)$, $N$ times the bracket is bounded uniformly in $N,j$:
\begin{align}
\psi_2(j)
:=&\,
N \big[g_{j+1,j}-g_{j-1,j}\big] 
= 
N \big[g_{j+1,j}-g_{j_+,j}\big] + N\big[g_{j_-,j} - g_{j-1,j}\big]
\nonumber\\
=&\,
\partial_1 g_{j_+,j} - \partial_1 g_{j_-,j} 
+ 
o_N(1)
,
\end{align}
with the $o_N(1)$ uniform in $j$ and where we used the continuity of $g$ on the diagonal.  
The last line of~\eqref{eq_bound_IPP_epislon_term} is thus of the form $N^{-1/2}W^{1,\{0\}}_{\psi_1}$. 
From~\eqref{eq_bound_exp_moments_sec_2} and the entropy inequality applied to each three terms in $\epsilon_{g,\tilde h,\tilde\lambda}$, 
it follows that there are constants $\gamma_1,\gamma'_1,\gamma_3>0$ and $C_1,C'_1,C_3>0$ such that, 
for any density $f$ for $\nu^N_{g}$ (recall ${\bf H}^N_g(f):= \nu^N_{g}(f\log f)$):
\begin{align}
\nu^N_{g}\big(f\epsilon_{g,\tilde h,\tilde\lambda}\big) 
&\leq 
\frac{{\bf H}^N_g(f)}{\gamma_1 N^{1/2}}
+\frac{1}{\gamma_1}\log \nu^N_g\Big( \exp\big[\gamma_1 W^{1,\{0,1\}}_{\phi_1}\big]\Big)
+\frac{ {\bf H}^N_g(f)}{\gamma_3N^{1/2}} 
\nonumber\\
&\quad 
+\frac{1}{\gamma_3}\log \nu^N_{g}\Big( \exp\big[\gamma_3 W^{3,\{0,1\}}_{\phi_3}\big]\Big)
+\frac{ {\bf H}^N_g(f)}{\gamma'_1N^{1/2}} 
+\frac{1}{\gamma'_1 N^{1/2}}\log \nu^N_{g}\Big( \exp\big[\gamma'_1 W^{1,\{0\}}_{\psi_1}\big]\Big)
\nonumber\\
&\leq 
{\bf H}^N_g(f)\Big[\frac{1}{\gamma_3} +\frac{1}{\gamma_1 N^{1/2}} + \frac{1}{\gamma'_1 N^{1/2}}\Big]
+\frac{C_1+C'_1+C_3}{N^{1/2}}
.
\label{eq_bound_epsilon}
\end{align}
Consider now $\big(N^2L^*_{g,\tilde h,\tilde\lambda}{\bf 1}(\eta)\big)_{|\text{order 3}}$. 
Recalling the expression~\eqref{eq_L_star_is_L_1_plus_L_2} of the third order term,
\begin{align}\label{eq_development_L_star_order_3}
\big(N^2L^*_{g,\tilde h,\tilde\lambda}{\bf 1}(\eta)\big)_{|\text{order 3}}  
&=
\frac{1}{6N}\sum_{i\in\T_N}\big(\eta_{i+1}-\eta_i\big) \Big[-6\tilde\lambda^2 B^{g}_i + 12\tilde\lambda B_i^{g}B_i^{g-\tilde h} -\big(B_i^{2g-\tilde h}\big)^3 - \big(B_i^{\tilde h}\big)^3\Big]
.
\end{align}
Since $B^g_i,B^{\tilde h}_i$ are bounded uniformly in $i,N$ 
and $\eta_{i+1}-\eta_i = \bar\eta_{i+1}-\bar\eta_i$,
~\eqref{eq_development_L_star_order_3} can be bounded as follows for some $C(g,\tilde h,\tilde \lambda)>0$:
\begin{equation}\label{eq_development_L_star_order_3_bis}
\big(N^2L^*_{g,\tilde h,\tilde\lambda}{\bf 1}(\eta)\big)_{|\text{order 3}}
\leq 
-\frac{\tilde\lambda^2}{N}\sum_{i\in\T_N}(\bar\eta_{i+1}-\bar\eta_i) B^g_i
+
\frac{C(g,\tilde h,\tilde\lambda)}{N}\sum_{i\in\T_N}\Big[ (B_i^g)^2 + (B_i^{\tilde h})^2\Big]
.
\end{equation}
The first term in the right-hand side of~\eqref{eq_development_L_star_order_3_bis} can be integrated by parts, 
after which it becomes bounded by $O(N^{-1})$ uniformly in the configuration. 
On the other hand, 
for each $i\in\T_N$, 
$(B^g_i)^2,(B^{\tilde h}_i)^2$ are of the form $N^{-1}W^{2,\{0\}}_{\psi_2^i}$ for a family $\psi^i_2:\T_N^2\to\R$ 
satisfying $\sup_{N,i,\T_N^2}|\psi_2^i|<\infty$. 
Equation~\eqref{eq_bound_exp_moments_sec_2} and the entropy inequality thus yield similarly to~\eqref{eq_bound_epsilon}, 
for some $C_2,\gamma_2>0$ depending only on $g,\tilde h,\tilde\lambda$:
\begin{equation}\label{eq_bound_average_Lstar_order3}
\nu^N_g\Big(f\big(N^2L^*_{g,\tilde h,\tilde\lambda}{\bf 1}(\eta)\big)_{|\text{order 3}}\Big)
\leq 
\frac{{\bf H}^N_g(f)}{\gamma_2 N} + \frac{C_2}{N}
.
\end{equation}
Putting together~\eqref{eq_bound_epsilon}--\eqref{eq_bound_average_Lstar_order3}, 
we have shown the existence of $\gamma,C>0$, 
depending only on $g,\tilde h,\tilde \lambda$, 
such that (recall the definition~\eqref{eq_def_W_3} of $\e_g$ and~\eqref{eq_def_delta_g_h} of $\delta_{g,\tilde h,\tilde\lambda}$):
\begin{equation}
\nu^N_{g}\big(f\delta_{g,\tilde h,\tilde\lambda}\big)
\leq 
\nu^N_{g}\big(f\e_{g}\big) + \frac{{\bf H}^N_g(f)}{\gamma} + \frac{C}{N^{1/2}}
.
\end{equation}
Lemma~\ref{lemm_estimate_threepoint} below provides an estimate of $\e_g$, 
concluding the proof of~\eqref{eq_estimate_error_terms} with the function $\mathcal R_{g,\tilde h,\tilde\lambda,\alpha}$ ($\alpha\in(0,1/2]$) appearing there given by: 
\begin{equation}
\mathcal R_{g,\tilde h,\tilde \lambda,\alpha} 
:= 
\big(N^2L^*_{g,\tilde h,\tilde\lambda}{\bf 1}\big)_{\text{order }3} + \epsilon_{g,\tilde h,\tilde\lambda} + \mathcal W_{g,\tilde h,\tilde\lambda,\alpha}
,
\end{equation}
and $\mathcal W_{g,\tilde h,\tilde\lambda,\alpha}$ defined in the next lemma.
\begin{lemm}\label{lemm_estimate_threepoint}
Let $\e_{g}$ be given by~\eqref{eq_def_W_3} and $\alpha\in(0,1/2]$. 
There is $\mathcal W_{g,\tilde h,\tilde\lambda,\alpha}:\Omega_N\rightarrow\R$ such that, 
for any density $f$ for $\nu^N_{g}$:
\begin{equation}
\nu^N_{g}\big(f \e_{g}\big)
\leq 
\alpha N^2 \nu^N_{g}\big(\Gamma_{\tilde h,\tilde\lambda}(\sqrt{f})\big)
+ \nu^N_{g}\big(f \mathcal W_{\tilde h,\tilde \lambda}\big)
,
\end{equation}
and, for some $\gamma,C>0$ depending on $g,\tilde h,\tilde\lambda,\alpha$ but not on $N$:
\begin{equation}
\frac{1}{\gamma}\log \nu^N_{g}\Big(\exp\big[\gamma |\mathcal W_{\tilde h,\tilde\lambda,\alpha}|\big]\Big)
\leq 
\frac{C}{N^{1/2}}
.
\end{equation}
\end{lemm}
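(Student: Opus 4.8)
The plan is to treat $\e_g$ as a genuinely cubic (three-point) functional and to prove the estimate by a \emph{second-order Boltzmann--Gibbs} --- i.e.\ fluctuation-level replacement --- argument, in the spirit of~\cite{Jara2018,Correlations2022}. The essential point is that $\e_g$ itself is only of order $1$ in $N$ (and as a cubic chaos does not even have uniformly bounded exponential moments), so it cannot be absorbed directly into $\mathcal W$; instead the Dirichlet form $\alpha N^2\nu^N_g(\Gamma_{\tilde h,\tilde\lambda}(\sqrt f))$ must soak up the dangerous part, leaving a remainder $\mathcal W_{g,\tilde h,\tilde\lambda,\alpha}$ that is genuinely of size $N^{-1/2}$ in exponential moment. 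A first reduction uses the identity $\bar\eta_i\bar\eta_{i+1}=\sigma-\tfrac12 c(\eta,\eta^{i,i+1})$, with $c(\eta,\eta^{i,i+1})=\eta_i(1-\eta_{i+1})+\eta_{i+1}(1-\eta_i)$ the undrifted jump rate, to write, with $V_i:=\sum_{j\notin\{i,i+1\}}\bar\eta_j\,\partial^N_1 g_{i,j}$ (a function independent of $\eta_i,\eta_{i+1}$),
\[
\e_g \;=\; \frac{2\tilde\lambda\sigma}{N}\sum_{i\in\T_N}V_i \;-\;\frac{\tilde\lambda}{N}\sum_{i\in\T_N}c(\eta,\eta^{i,i+1})\,V_i .
\]
After exchanging the $i$- and $j$-summations and integrating by parts in $i$ (the boundary terms being controlled by continuity of $g$ across the diagonal, as in~\eqref{eq_bound_IPP_epislon_term}), the first sum is a term of the form $N^{-1/2}W^{1,\{0\}}_{\psi_1}$ with $\sup_N\|\psi_1\|_\infty<\infty$, hence --- by the concentration estimates~\eqref{eq_bound_exp_moments_sec_2} of Proposition~\ref{prop_concentration_exponentielle} --- it enters $\mathcal W_{g,\tilde h,\tilde\lambda,\alpha}$ with $\log$-exponential moment $O(N^{-1/2})$ and needs no Dirichlet form.

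For the genuinely cubic term $-\tfrac{\tilde\lambda}{N}\sum_i c(\eta,\eta^{i,i+1})V_i$ I would run a multiscale replacement at a mesoscopic length $\ell=\ell_N$ with $1\ll\ell\ll N$. First replace $c(\eta,\eta^{i,i+1})$ by its average $\overline{c}^{\,\ell}_i$ over a window of $\ell$ bonds near $i$: the replacement error is a telescoping sum of differences $c(\eta,\eta^{i,i+1})-c(\eta,\eta^{k,k+1})$, each moved one bond at a time; changing variables $\eta\mapsto\eta^{m,m+1}$ in $\nu^N_g$ (whose density ratios are $1+O(N^{-1})$, cf.~\eqref{eq_variation_jump_rate_c_h_lambda}) produces a gradient $\sqrt{f(\eta^{m,m+1})}-\sqrt{f(\eta)}$, and Young's inequality splits each step into a multiple of $c_{\tilde h,\tilde\lambda}(\eta,\eta^{m,m+1})[\sqrt{f(\eta^{m,m+1})}-\sqrt{f(\eta)}]^2$ --- whose total weight, with the Young parameter chosen of order $N^3/\ell$, stays within the budget $\alpha N^2\nu^N_g(\Gamma_{\tilde h,\tilde\lambda}(\sqrt f))$ --- plus a remainder quadratic in $V_i$, which is a $W^2$-type term with a small prefactor and hence harmless. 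Next, replace the local average $\overline{c}^{\,\ell}_i$ by the conditional expectation $\E[\,c\mid\text{mesoscopic density}\,]$ via equivalence of ensembles; up to lower-order errors this reads $2\sigma-2(\bar\rho^{(i)}_\ell)^2+O(\ell^{-1})$, where $\bar\rho^{(i)}_\ell=\ell^{-1}\sum_{k}\bar\eta_k$ is the density in the window. The constants again give $W^{1,\{0\}}$-type contributions, while the surviving piece $\tfrac{2\tilde\lambda}{N}\sum_i(\bar\rho^{(i)}_\ell)^2 V_i$ is a multilinear functional whose typical size is now $O(\ell^{-1/2})$ instead of $O(1)$; for an appropriate power $\ell=N^{a}$ (and, if needed, one further iteration of the replacement at a larger scale, exploiting the thin support near the diagonal of the kernel produced by $\bar\rho^{(i)}_\ell$ rather than only its sup-norm) this is of $W^{n,J}_\phi$ type with $\log$-exponential moment $O(N^{-1/2})$ by~\eqref{eq_bound_exp_moments_sec_2}.

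Collecting the three contributions gives $\mathcal W_{g,\tilde h,\tilde\lambda,\alpha}$, and the bound $\gamma^{-1}\log\nu^N_g\big(e^{\gamma|\mathcal W_{g,\tilde h,\tilde\lambda,\alpha}|}\big)\le C N^{-1/2}$ follows by combining the individual exponential-moment bounds with Hölder's inequality. The main obstacle is the precision demanded: in~\cite{Jara2018} only an $O_N(1)$ analogue is needed and a much cruder split suffices, whereas the target here, $N^{-1/2}$, forces the full multiscale extraction and a delicate simultaneous tuning of $\ell$, the Young parameters, and the $\alpha N^2$ Dirichlet budget so that no remainder exceeds size $N^{-1/2}$ in exponential moment. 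Two features make this tuning harder than in the standard setting: $V_i$ is \emph{not} bounded --- it is of size $N^{1/2}$ --- so the usual trick of pulling a bounded observable out of the Dirichlet-form estimate is unavailable and one must keep the $\bar\eta_j$'s of $V_i$ inside the $W^{n,J}$ structure throughout; and the kernel $g$ is only $C^\infty_D(\T^2)$, so its derivative jumps on the diagonal must be carried carefully through every integration by parts and every equivalence-of-ensembles step.
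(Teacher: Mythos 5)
Your overall strategy (absorb the dangerous part of the cubic object into the Dirichlet form, leave an $N^{-1/2}$-small remainder) is correct, and it is indeed what the paper does. But your concrete route to implement it has several genuine gaps, and the paper's route is both simpler and avoids them.

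First, the identity $\bar\eta_i\bar\eta_{i+1}=\sigma-\tfrac12 c(\eta,\eta^{i,i+1})$ is circular: $c(\eta,\eta^{i,i+1})=2\sigma-2\bar\eta_i\bar\eta_{i+1}$, so after splitting off the $\sigma$-part you still have to smooth the same product $\bar\eta_i\bar\eta_{i+1}$. This is precisely what you propose to do (averaging $c$ over a window), but the step is not simpler because you renamed it. The paper's Lemma~\ref{lemm_estimate_threepoint} instead splits the sum over $j$ into $W^+$, $W^-$, $W^=$ and, in $W^\pm$, replaces the \emph{single} factor $\bar\eta_{i+1}$ (respectively $\bar\eta_i$) by its block average $\bar\eta^{\pm,aN}$ over a \emph{macroscopic} window of $aN$ sites, keeping the window disjoint from the range of $j$ by requiring $a<1/2$. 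This is crucially simpler than averaging the product, because $\bar\eta_{i+1}-\bar\eta^{+,aN}_{i+1}$ is manifestly a weighted sum of nearest-neighbour gradients $\bar\eta_p-\bar\eta_{p+1}$, to which Lemma~\ref{lemm_IPP} applies directly, while $u_p$ is constructed so that it does not depend on $\eta_p,\eta_{p+1}$. Your telescoping of $c(\eta,\eta^{i,i+1})-c(\eta,\eta^{k,k+1})$ is more delicate: the product $\bar\eta_m\bar\eta_{m+1}$ is not a gradient of a single site, and carrying $V_i$ through the bond swaps while keeping it untouched requires care that you do not address.

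Second, your plan to pass from the block average $\overline c^{\,\ell}_i$ to the conditional expectation $\mathbb E[c\mid\bar\rho^{(i)}_\ell]=2\sigma-2(\bar\rho^{(i)}_\ell)^2+O(\ell^{-1})$ is an equivalence-of-ensembles step. For the correlated measure $\nu^N_g$, which is not product, this is an additional non-trivial input that is nowhere established in the paper, and is not needed: the paper never passes to a canonical ensemble.

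Third, the scale $\ell$ cannot be mesoscopic. The residual $\tfrac{2\tilde\lambda}{N}\sum_i(\bar\rho^{(i)}_\ell)^2 V_i$, when written in the $W^{3,\{0\}}_{\phi_3}$ form of Proposition~\ref{prop_concentration_exponentielle}, has coefficient function $\phi_3$ of size $\sim N/\ell$; since the exponential-moment bound scales with $\sup_N\|\phi_3\|_\infty$, one needs $\ell\gtrsim N$ to get the desired $O(N^{-1/2})$. The paper acknowledges this implicitly by taking the block macroscopic from the start ($\ell=aN$). Once $\ell\sim N$ is accepted, the \emph{disjointness} of the $\ell$-window from the $j$-range of $V_i$ (which the paper enforces with $a<1/2$) becomes the key structural point, and the equivalence-of-ensembles layer of your argument is both unnecessary and likely not available; your hint at ``one further iteration at a larger scale'' does not resolve this, since a single macroscopic step already suffices and is the natural choice.

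In short: the underlying idea (block smoothing + Dirichlet form) is right, but the concrete implementation via the $c=2\sigma-2\bar\eta_i\bar\eta_{i+1}$ identity, mesoscopic blocks, and equivalence of ensembles introduces obstacles the paper sidesteps by averaging only one of the two neighbouring occupation variables over a macroscopic block and appealing to Lemma~\ref{lemm_IPP}.
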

\begin{proof}[Proof of Lemma~\ref{lemm_estimate_threepoint}]
Fix a density $f$ for $\nu^N_g$ throughout the proof.
Consider more generally a family of functions $\phi_2:\T^2_N\rightarrow\R$ that is bounded uniformly in $N$ and let us prove Lemma~\ref{lemm_estimate_threepoint} for $W^{2,\{0,1\}}_{\phi_2}$ (recall~\eqref{eq_def_W_n_J_phi}) 
rather than $\e_{\tilde h}$ (defined in~\eqref{eq_def_W_3}). 
A direct application of the entropy inequality only yields a bound $O_N(1)$ on the exponential moment. 
To improve this bound to $O(N^{-1/2})$, 
we make use of the long, diffusive time-scale, 
which appears in the bound~\eqref{eq_derivative_entropy} on the entropy production $\partial_t {\bf H}^N(f_t)$ of the relative entropy through the carré du champ.  
Following the technique of~\cite{Jara2018}, 
we estimate the cost of replacing
the sum on $\bar\eta_i\bar\eta_{i+1}$ by a sum on local averages of $\bar\eta$'s. 
To do so, let $a\in(0,1/2)$. 
For ease of notation, we write $aN$ instead of $\lfloor a N\rfloor$. 
Split $W^{2,\{0,1\}}_{\phi_2}$ as follows: 
\begin{equation}
W^{2,\{0,1\}}_{\phi_2} 
=
W^+ + W^- 
+W^{=}
,
\end{equation}
with, for $\eta\in\Omega_N$:
\begin{align}
W^+(\eta) 
&:=
\frac{1}{N}\sum_{i\in\T_N} \sum_{j\notin\{i,...,i+aN\}}\bar\eta_i\bar\eta_{i+1}\bar\eta_j \phi_2(i,j),
\nonumber\\
W^-(\eta)
&:= 
\frac{1}{N}\sum_{i\in\T_N}\sum_{j\in \{i+2,...,i+aN\}} \bar\eta_i\bar\eta_{i+1}\bar\eta_j \phi_2(i,j)
\nonumber\\
W^=(\eta)
&:=
\frac{1}{N}\sum_{i\in\T_N}\sum_{j\in\{i,i+1\}} \bar\eta_i\bar\eta_{i+1}\bar\eta_j \phi_2(i,j)
.
\end{align}
Note that, 
recalling $(\bar\eta_i)^2=\sigma$ ($i\in\T_N$), 
$W^=$ is of the form $N^{-1/2}W^{1,\{0,1\}}_{\psi_1}$ for $\psi_1 = \sigma[\phi_2(i,i)+\phi_2(i,i+1)]$. 
One thus directly has by~\eqref{eq_bound_exp_moments_sec_2} and the entropy inequality, 
for some $\gamma,C>0$:
\begin{equation}
\nu^N_{\tilde h}\big(f W^=\big)
\leq 
\frac{{\bf H}^N_g(f)}{\gamma N^{1/2}} + \frac{1}{\gamma N^{1/2}}\log\nu^N_g\Big(\exp\big[\gamma W^=\big]\Big)
\leq 
\frac{{\bf H}^N_g(f)}{\gamma N^{1/2}}
+ \frac{C}{N^{1/2}}
.
\end{equation}
We now estimate $W^{+}$ and $W^-$.  
For $i\in\T_N$, define the local averages:
\begin{equation}
\bar\eta^{+,a N}_i 
:= 
\frac{1}{a N}\sum_{\ell =i}^{i+aN-1}\bar\eta_\ell,
\qquad
\bar\eta^{-,a N}_i 
:= 
\frac{1}{a N}\sum_{\ell =i-aN+1}^{i}\bar\eta_\ell
,
\end{equation}
with $i\pm (aN -1)$ understood modulo $N$. 
The only difference in the estimate of $W^{+},W^-$ is that we replace each $\eta_{i+1}$ (respectively $\eta_i$) by $\eta^{+,aN}_{i+1}$ (respectively $\eta^{-,aN}$) ($i\in\T_N$), 
so we only treat $W^+$. 
Introduce $W^{+,aN}$, defined for $\eta\in\Omega_N$ by:
\begin{align}
W^{+,aN}(\eta)
:&= 
\frac{1}{N}\sum_{i\in\T_N}\sum_{j\notin\{i,..,i+aN\}}\bar\eta_i\bar\eta_{i+1}^{+,aN}\bar\eta_j \phi_2(i,j)
\nonumber\\
&=
\frac{1}{aN^2}\sum_{i\in\T_N}\sum_{j\notin\{i,..,i+aN\}}\sum_{\ell\in\{i+1,...,i+aN\}}\bar\eta_i\bar\eta_{\ell}\bar\eta_j \phi_2(i,j)
.
\end{align}
As $a<1/2$ implies that the sums on $j,\ell$ do not overlap, 
$W^{+,aN}$ is of the form $W^{3,\{0\}}_{\phi_3}$ for a bounded $\phi_3$, 
so its log-exponential moments are bounded by $O(N^{-1/2})$ according to~\eqref{eq_bound_exp_moments_sec_2}.  
We now show that the difference $W^+-W^{+,aN}$ can be estimated in terms of the carré du champ. 
To do so, notice first the following elementary identity:
\begin{equation}
\bar\eta_{i+1} - \bar\eta_{i+1}^{+,aN}
=
\sum_{p\in\{i+1,...,i+aN-1\}} \varphi_a(p-i)\big(\bar\eta_{p} - \bar\eta_{ p+1}\big),
\qquad 
\varphi_a(n) = {\bf 1}_{n\in\{1,...,aN-1\}}\Big(1-\frac{n}{aN}\Big)
.
\end{equation}
Define then, for each $p\in\T_N$:
\begin{equation}
u_p(\eta)
:=
\frac{1}{N}\sum_{\substack{i\in\T_N \\ i\in\{ p-(aN-1),...,p-1\}}}\sum_{j\notin \{i,...,i+aN\}}\varphi_a(|p-i|_{\T_N})\bar\eta_i\bar\eta_j \phi_2(i,j)
\qquad \eta\in\Omega_N
,
\end{equation}
with $|p-i|_{\T_N}$ the torus distance between $i,p$. 
The earlier choice of $a\in(0,1/2)$ removes any ambiguity in the definition of this distance and:
\begin{equation}
W^{+}(\eta) - W^{+,aN}(\eta)
=
\sum_{p\in\T_N}(\bar\eta_{p}-\bar\eta_{p+1})u_p(\eta),
\qquad
\eta\in\Omega_N
.
\end{equation}
Applying the integration by parts formula of Lemma~\ref{lemm_IPP} for each $p\in\T_N$ to $u = u_p$, 
then summing over $p$
gives the existence of $C= C(g,\tilde h,\lambda)>0$ such that:
\begin{align}
\nu^N_{g}\big(f\big[W^{+}- W^{+,aN}\big]\big)
&\leq
\alpha N^2\nu^N_{g}\big(\Gamma_{\tilde h,\tilde\lambda}(\sqrt{f})\big)
+ \frac{C}{\alpha N^2}\sum_{p\in\T_N}\nu^N_{g}\big(f |u_p|^2\big)
\nonumber\\
&\hspace{-1.5cm}
+ \sum_{p\in\T_N}\nu^N_{g}\bigg( fu_p(\bar\eta_{p}-\bar\eta_{p+1}) \Big(\exp \Big[-\frac{2(\bar\eta_{p+1}-\bar\eta_{p})}{N} B^{\tilde h}_p\Big]-1\Big)\bigg)
.
\label{eq_bound_W_3_final_0}
\end{align}
Let us estimate the different terms in the right-hand side of ~\eqref{eq_bound_W_3_final_0}. 
The quantity:
\begin{equation}
\mathcal R^+_\alpha 
:=
\frac{C}{\alpha N}\sum_{p\in\T_N} |u_p|^2
\end{equation}
is of the form $W^{4,\{0\}}_{\phi_4}$ for some bounded $\phi_4$. 
By~\eqref{eq_bound_exp_moments_sec_2}, 
it thus satisfies, for some $\gamma_4,C_4>0$ (depending on $\alpha$):
\begin{equation}
\nu^N_g\big(f\mathcal R^+_\alpha\big)
\leq 
\frac{{\bf H}^N_g(f)}{\gamma_4} + \frac{C}{\gamma_4 N}
.
\end{equation}
On the other hand, using $|e^x - 1 - x| \leq cx^2$ for some $c>0$ and all $x$ smaller than $2\sup_{p,N}N^{-1}|B^{\tilde h}_p|$, 
we find that the second line in~\eqref{eq_bound_W_3_final_0} is bounded by:
\begin{align}
\nu^N_{g}\big(f\mathcal R^{+}\big)
:= 
-\frac{2}{N}\sum_{p\in\T_N}\nu^N_{g}\big( fu_pB^{\tilde h}_p\big) +\frac{c}{N}\sum_{p\in\T_N} \Big(\sup_{p,N}\frac{|u_p|}{N}\Big)\nu^N_{g}\big( f\big(B^{\tilde h}_p\big)^2\big)
.
\label{eq_def_R_+}
\end{align}
The first term is an average of functions of the form $W^{3,\{0\}}_{\phi^p_3}$, 
the second one of the form $N^{-1}W^{2,\{0\}}_{\phi^p_2}$ for functions $\phi^p_2,\phi^p_3$ bounded uniformly in $p\in\T_N$ and $N$.  
The log exponential moments under $\nu^N_{g}$ of both terms in the right-hand side of~\eqref{eq_def_R_+} are thus bounded by $O(N^{-1/2})$ in a neighbourhood of $0$ by~\eqref{eq_bound_exp_moments_sec_2}.
This concludes the estimate of the cost of replacing $W^+$ by $W^{+,aN}$: 
for some $\gamma,C>0$,
\begin{align}
\nu^N_{g}\big(f\big[W^+ - W^{+,aN}\big]\big)
&\leq 
\alpha N^2\nu^N_{g}\big(\Gamma_{\tilde h,\tilde\lambda}(\sqrt{f})\big)
+ \nu^N_g\big(f\mathcal R^+_\alpha\big) + \nu^N_{g}\big(f\mathcal R^+\big)
\nonumber\\
&\leq
\alpha N^2\nu^N_{g}\big(\Gamma_{\tilde h,\tilde\lambda}(\sqrt{f})\big)
+ \frac{{\bf H}^N_g(f)}{\gamma} + \frac{C}{N^{1/2}}
.
\end{align}
Similarly building $\mathcal R^{-},\mathcal R^-_\alpha$ and defining $\mathcal W_{g,\tilde h,\tilde \lambda,\alpha}$ as follows concludes the proof:
\begin{equation}
\mathcal W_{g, \tilde h,\tilde \lambda,\alpha}
:= 
W^{+,aN}+\mathcal R^+ + \mathcal R^+_\alpha +  W^{-,aN} + \mathcal R^- + \mathcal R^-_\alpha
+ W^=
.
\label{eq_def_W_g_h}
\end{equation}
\end{proof}
Lemma~\ref{lemm_derivative_entropy} provides the entropy bound in Proposition~\ref{prop_entropy_estimate_general}. 
We now prove the moment bound on the correlation field at each time.
\begin{coro}[Moment bound for the correlation field]\label{coro_moment_bound}
Let $\phi:\T^2\rightarrow\R$ be bounded. 
Then, for any $T\geq 0$:
\begin{equation}
\sup_N\sup_{t\leq T}\E^{\nu^N_{\tilde h}}_{\tilde h,\tilde\lambda}\Big[\big|\Pi^N_t(\phi)\big|^{3/2}\Big]
\leq 
C(T,\phi)
.
\end{equation}
\end{coro}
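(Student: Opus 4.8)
To prove the moment bound, the plan is to combine three facts available at this stage: the deterministic a priori bound $|\Pi^N_t(\phi)|\le \|\phi\|_\infty N/16$, which is immediate from $|\bar\eta_i|\le 1/2$ in the definition~\eqref{eq_def_correlations}; the exponential concentration of $\Pi^N(\phi)$ under the reference measure $\nu^N_{\tilde h}$; and the sharp $O(N^{-1/2})$ relative entropy bound of Proposition~\ref{prop_entropy_estimate_general}. For the concentration, I would write $4\Pi^N(\phi)=W^{2,\{0\}}_{\phi^N}-\frac{\sigma}{N}\sum_i\phi^N_{i,i}$ with $\phi^N_{i,j}=\phi(i/N,j/N)$ (so that $\sup_N\|\phi^N\|_\infty\le\|\phi\|_\infty$ and the last term is bounded by $\sigma\|\phi\|_\infty$), and invoke the $n=2$ case of~\eqref{eq_bound_exp_moments_sec_2}. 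This yields constants $a,M>0$, independent of $N$ and of $\phi$ beyond its sup norm, with $\nu^N_{\tilde h}(e^{a|\Pi^N(\phi)|})\le M$, hence $\nu^N_{\tilde h}(|\Pi^N(\phi)|>s)\le Me^{-as}$ for every $s\ge 0$.

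The next step is to transport this tail to the time-$t$ law $f_t\nu^N_{\tilde h}$, using the entropy inequality~\eqref{eq_entropy_ineq} in optimized form. Applying it to $\lambda\mathbf 1_{\{|\Pi^N(\phi)|>s\}}$ and choosing $\lambda=\frac{1}{2}\log(1/v_N(s))$ with $v_N(s):=\nu^N_{\tilde h}(|\Pi^N(\phi)|>s)$ gives, for $s$ large enough that $v_N(s)<1$,
\[
\Prob^{\nu^N_{\tilde h}}_{\tilde h,\tilde\lambda}\big(|\Pi^N_t(\phi)|>s\big)\ \le\ \frac{2\,{\bf H}^N(f_t)+2\sqrt{v_N(s)}}{\log(1/v_N(s))}\ \le\ \frac{C}{s}\,{\bf H}^N(f_t)+\frac{C}{s}\,e^{-as/2},
\]
the last bound using $v_N(s)\le Me^{-as}$ and $\log(1/v_N(s))\ge as/2$ for $s$ large. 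Choosing $\lambda$ strictly below the ``natural'' value $\log(1/v_N(s))$ is what replaces the otherwise irreducible additive constant in the numerator by the exponentially small term $\sqrt{v_N(s)}$, which will be essential below.

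I would then conclude by the layer-cake formula, cutting the integral at the a priori scale:
\[
\E^{\nu^N_{\tilde h}}_{\tilde h,\tilde\lambda}\big[|\Pi^N_t(\phi)|^{3/2}\big]=\frac{3}{2}\int_0^{\|\phi\|_\infty N/16}s^{1/2}\,\Prob^{\nu^N_{\tilde h}}_{\tilde h,\tilde\lambda}\big(|\Pi^N_t(\phi)|>s\big)\,ds.
\]
On a fixed window $s\le s_0=s_0(a,M)$ the integral is a constant (since $\Prob\le 1$); on $s\ge s_0$ the $e^{-as/2}$ term integrates to a constant depending only on $a,M$, while the ${\bf H}^N(f_t)/s$ term contributes at most a multiple of ${\bf H}^N(f_t)\int_{s_0}^{\|\phi\|_\infty N/16}s^{-1/2}\,ds\lesssim {\bf H}^N(f_t)\,N^{1/2}$. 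Since $\sup_{t\le T}{\bf H}^N(f_t)\le \gamma C e^{T/\gamma}N^{-1/2}$ by Proposition~\ref{prop_entropy_estimate_general}, this is bounded uniformly in $N$ and in $t\le T$, giving $C(T,\phi)$ depending only on $T$, $\|\phi\|_\infty$ and $\tilde h,\tilde\lambda$. The only delicate point — and the reason the exponent is exactly $3/2$ — is this final balance: cutting the layer-cake integral at scale $N$ costs a factor $N^{1/2}$ which is exactly absorbed by the $N^{-1/2}$ gain of the entropy bound, so any exponent above $3/2$ would diverge; and the refined version of the entropy inequality is needed precisely so that the $N$-independent part of the tail remains integrable against $s^{1/2}$.
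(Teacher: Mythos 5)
Your proof is correct and follows essentially the same strategy as the paper's: a layer-cake formula for the $3/2$-moment, the entropy inequality to transfer tail estimates to the time-$t$ law with a tilt growing linearly in $s$, exponential concentration of $\Pi^N(\phi)$ under the reference measure $\nu^N_{\tilde h}$, and the crucial balance between the $N^{1/2}$ cost of the deterministic cutoff at scale $N$ and the $O(N^{-1/2})$ relative entropy bound. The only cosmetic difference is that you present the entropy-inequality parameter as explicitly optimized, $\lambda=\tfrac12\log(1/v_N(s))$, whereas the paper simply takes $\gamma=cr$ with $c>0$ small enough that $(e^{cr}-1)\nu^N_{\tilde h}(|\Pi^N(\phi)|>r)\le e^{-cr/2}$; under the concentration bound $v_N(s)\le Me^{-as}$ these two choices are proportional and lead to the same tail estimate.
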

\begin{proof}
Let $t\geq 0$. 
Recall the identity $\E[X^{3/2}]\leq 1+ \frac{3}{2}\int_{1}^\infty r^{1/2}\Prob(X>r)\, dr$ valid for any random variable $X\geq 0$. 
This together with the bound $|\Pi^N(\phi)|\leq C(\phi)N$ for each $N$ gives:
\begin{align}
\E^{\nu^N_{\tilde h}}_{\tilde h,\tilde\lambda}&\Big[\big|\Pi^N_t(\phi)\big|^{3/2}\Big]
\leq 
\frac{3}{2}\int_0^{C(\phi)N}r^{1/2}\,  \Prob^{\nu^N_{\tilde h}}_{\tilde h,\tilde\lambda}\Big(\big|\Pi^N_t(\phi)\big|>r\Big)\,dr
.
\end{align}
Using the entropy inequality~\eqref{eq_entropy_inequality_sec3} with a constant $\gamma = cr$ for $c>0$ to be chosen later and the test function ${\bf 1}_{|\Pi^N(\phi)|>r}$ ($r>0$) yields:
\begin{align}
\E^{\nu^N_{\tilde h}}_{\tilde h,\tilde\lambda}&\Big[\big|\Pi^N_t(\phi)\big|^{3/2}\Big]
\leq 
\frac{3}{2}\int_0^{C(\phi)N} \frac{1}{c \sqrt{r}}\Big( {\bf H}^N(f_t)+ \log \nu^N_g\Big[e^{cr{\bf 1}_{|\Pi^N(\phi)|>r}}\Big]\Big) \,dr
\nonumber\\
&=
\frac{3}{2}\int_0^{C(\phi)N} \frac{1}{c \sqrt{r}}\Big( {\bf H}^N(f_t)+ \log \Big( 1+ (e^{cr}-1)\nu^N_{\tilde h}\big(|\Pi^N(\phi)|> r\big)\Big)\Big)\,dr
.
\end{align}
Proposition~\ref{prop_concentration_exponentielle} ensures that one can choose $c=c(\phi)>0$ such that:
\begin{equation}
(e^{cr}-1)\nu^N_{\tilde h}\big(|\Pi^N(\phi)|> r\big)
\leq 
e^{-cr/2},\qquad r>0
.
\end{equation}
Using the entropy bound of Proposition~\ref{prop_entropy_estimate_general}, 
the bound $\log(1+x)\leq x$ for $x\geq 0$ and the fact that $r\mapsto r^{-1/2}e^{-cr/2}$ is integrable on $\R_+$ concludes the proof.
\end{proof}
We now prove Corollary~\ref{coro_bound_error_terms} on time averages of error terms.
\begin{proof}[Proof of Corollary~\ref{coro_bound_error_terms}]
Only the case $\theta^N\neq \zeta^N_\alpha$ ($\alpha\in (0,1/2]$) requires a proof as explained below the statement of the corollary. 
This case corresponds to situations where a renormalisation procedure such as the one performed for $\e_{\tilde h}$ in the proof of Lemma~\ref{lemm_estimate_threepoint} is necessary. 
The proof below is basically the one in~\cite[Theorem 5.1]{Jara2018} adapted to get bounds on expectations of time-integrated observables rather than just tail probabilities. 
Recall the definition of the function $\mathcal R_{\tilde h,\tilde \lambda,\alpha}$ of Lemma~\ref{lemm_bound_adjoint}, 
let $a>1$ be a parameter that will eventually become large, 
define $\alpha_a := (4a)^{-1}$ and write:
\begin{align}
\Big|\int_0^T\theta^N(\eta_t)\, dt\Big| 
&\leq
\Big|\int_0^T\big[\theta^N(\eta_t) - \zeta^N_{\alpha_a}(\eta_t)\big]\, dt\Big| 
- \frac{1}{2a}\int_0^T \mathcal R_{\tilde h,\tilde\lambda}(\eta_t)\, dt 
\nonumber\\
&\quad 
+ \int_0^T\big|\zeta^N_{\alpha_a}(\eta_t)\big|\, dt
+ \frac{1}{2a}\int_0^T\mathcal R_{\tilde h,\tilde\lambda}(\eta_t)\, dt
. 
\label{eq_decomp_theta_Boltzmann_Gibbs}
\end{align}
By assumption on $\zeta^N_{\alpha_a}$ and by Lemma~\ref{lemm_bound_adjoint} for $\mathcal R_{\tilde h,\tilde\lambda}$, 
the second line is controlled by the entropy inequality:
\begin{equation}
\limsup_{N\rightarrow\infty}\E^{\nu^N_{\tilde h}}_{\tilde h,\tilde\lambda}\bigg[\int_0^T\big|\zeta^N_{\alpha_a}(\eta_t) + \mathcal R_{\tilde h,\tilde\lambda}(\eta_t)\big|\, dt\bigg]
=
0
.
\label{eq_bound_zeta_N_R_h}
\end{equation}
On the other hand, subtracting $\zeta^N_{\alpha_a}$ and $\mathcal R_{\tilde h,\tilde\lambda}$ gives us good control on the expectation of the first line in~\eqref{eq_decomp_theta_Boltzmann_Gibbs} as we show below:
\begin{align}
&\E^{\nu^N_{\tilde h}}_{\tilde h,\tilde\lambda}\bigg[\Big|\int_0^T\big[\theta^N(\eta_t)-\zeta^N_{\alpha_a}(\eta_t)\big]\, dt\Big| - \frac{1}{2a}\int_0^T\mathcal R_{\tilde h,\tilde\lambda}(\eta_t)\, dt\bigg]
\\
&\hspace{3cm}\leq 
\frac{1}{a}\log \E^{\nu^N_{\tilde h}}_{\tilde h,\tilde\lambda}\bigg[\exp\Big[ a\Big|\int_0^T\big[\theta^N(\eta_t)-\zeta^N_{\alpha_a}(\eta_t)\big]\, dt\Big| - \frac{1}{2}\int_0^T\mathcal R_{\tilde h,\tilde\lambda}(\eta_t)\, dt \Big]\bigg]
\nonumber
.
\end{align}
Inside the exponential, the absolute value can be removed using $e^{|x|}\leq e^x + e^{-x}$ ($x\in\R$). 
It is thus enough to separately bound, uniformly in $N,a$, 
the exponential moments of $\pm a [\theta^N-\zeta^N_{\alpha_a}]$. 
Feynman-Kac inequality (see Lemma~\ref{lemm_derivative_entropy}) and the bound of Lemma~\ref{lemm_bound_adjoint} on the adjoint give, e.g. for $+a[\theta^N-\zeta^N_{\alpha_a}]$:
\begin{align}
\log\, &\E^{\nu^N_{\tilde h}}_{\tilde h,\tilde\lambda}\bigg[\exp\Big[a\int_0^T\big[\theta^N(\eta_t)-\zeta^N_{\alpha_a}(\eta_t)\big]\, dt 
- \frac{1}{2}\int_0^T\mathcal R_{\tilde h,\tilde\lambda}(\eta_t)\, dt \Big]\bigg]
\nonumber\\
&\quad \leq
T\sup_{f\geq 0:\nu^N_{\tilde h}(f)=1}\Big\{ 
a\, \nu^N_{\tilde h}\big(f\big[\theta^N-\zeta^N_{\alpha_a}\big]\big)-\frac{1}{2}\nu^N_{\tilde h}\big(f \mathcal R_{\tilde h,\tilde\lambda}\big) 
\nonumber\\
&\hspace{5cm}
- \frac{N^2}{2}\nu^N_{\tilde h}\big(\Gamma_{\tilde h,\tilde\lambda}(\sqrt{f})\big) + \frac{N^2}{2}\nu^N_{\tilde h}\big(N^2L^*_{\tilde h,\tilde h,\tilde\lambda}{\bf 1}f\big)
\Big\}
\nonumber\\
&\quad
\leq
T\sup_{f\geq 0:\nu^N_{\tilde h}(f)=1}\Big\{ 
a\, \nu^N_{\tilde h}\big(f\big[\theta^N-\zeta^N_{\alpha_a}\big]\big)
- \frac{N^2}{4}\nu^N_{\tilde h}\big(\Gamma_{\tilde h,\tilde\lambda}(\sqrt{f})\big) \Big\}
.
\end{align}
Recalling the definition~\eqref{eq_def_zeta_N_corollary} of $\zeta^N_{\alpha_a}$ and the choice $\alpha_a = (4a)^{-1}$, 
the last supremum is bounded by $0$. 
This bound and~\eqref{eq_bound_zeta_N_R_h} conclude the proof.
\end{proof}
\section{A candidate for the approximate driven process}\label{sec_driven_process_candidate}
In this section, we start the proof of Theorem \ref{theo_main_result} by carrying out the first step in the program expounded in Section \ref{sec_structure_proof}. \\
This involves computing the Radon-Nikodym derivative $D^N_{\tilde h,\lambda,E}:= \mathrm{d}\Prob^N_E/\mathrm{d}\Prob^N_{\tilde h,\lambda+E}\big|_T$ up to time $T$ for $\lambda,E\in\R$ and some $\tilde h:\T^2\rightarrow\R$ 
(these two dynamics are defined in~\eqref{eq_def_generateur_SSEP}--\eqref{eq_def_jump_rates_h_and_lambda}),  
then finding conditions on $\tilde h$ that make $\Prob^N_{\tilde h,\lambda+E}$ a good candidate for the approximate driven process as defined in~\eqref{eq_def_driven_process}. 
This is carried out in Section \ref{sec_Radon_Nikodym_derivative}. \\
In Section \ref{sec_domination_current}, 
we prove domination results between the current-biased dynamics and the dynamics $\Prob^N_{\lambda+E}$, $\Prob^N_{h,\lambda+E}$, 
with $h$ the bias identified in Section~\ref{sec_Radon_Nikodym_derivative}. 
These domination results ensure that the current-biased dynamics and $\Prob^N_{h,\lambda+E}$ are indeed suitably close and reduce the proof of Theorem~\ref{theo_main_result} to long-time decorrelation estimates at the macroscopic level, 
established in Section \ref{sec_time_decorrelation}.
\subsection{The Radon-Nikodym derivatives}\label{sec_Radon_Nikodym_derivative}
Let $\tilde h\in C^0(\T)\cap C^\infty([0,1])$ be fixed throughout. 
The function $\tilde h$ is identified with $(x,y)\mapsto \tilde h(x-y)\in C^\infty_D(\T^2)$ 
so that $\Pi^N(\tilde h)$ is a meaningful object. 
We always assume that $\sigma\tilde h$ has leading eigenvalue strictly smaller than $1$, 
so that the results of Section~\ref{sec_fluc_correl_prob_h_lambda} on $\Prob^N_{\tilde h,\lambda+E}$ apply.

Here, we compute the Radon-Nikodym derivative $D^N_{\tilde h,\lambda+E} := \mathrm{d}\Prob^N/\mathrm{d}\Prob^N_{\tilde h,\lambda+E}\big|_T$ for trajectories up to time $T>0$, 
starting with $D^N_{\lambda+E} =: D^N_{0,\lambda+E}$. 
\begin{lemm}\label{lemm_RD_Prob_lambda}
Let $\mu^N$ be a probability measure on $\Omega_N$ ($N\in\N_{\geq 1}$). 
Let $\lambda,E\in\R$, $T>0$ and let $A_{\lambda,E}$ be given by:
\begin{equation}
A_{\lambda,E}(\eta)
= 
- \lambda(\lambda+2E)\sum_{i\in\T_N}\bar\eta_i\bar\eta_{i+1}
,\qquad 
\eta\in\Omega_N
.
\label{eq_def_A}
\end{equation}
There is then $\epsilon_{\lambda,E}^N:\Omega_N\rightarrow\R$ with $\sup_{\eta}|\epsilon_{\lambda,E}^N| \leq C(\lambda,E)/N$ for each $N\in\N_{\geq 1}$, 
such that:
\begin{equation}
\Prob^{curr,\mu^N}_{\lambda,E,T}(\cdot) 
= 
\frac{\E^{\mu^N}_{\lambda+E}\Big[{\bf 1}_{\cdot}\exp\Big(\int_0^T A_{\lambda,E}(\eta_t)\, dt + \int_0^T\epsilon^N_{\lambda,E}(\eta_t)\, dt\Big)\Big]}{\E^{\mu^N}_{\lambda+E}\Big[\exp\Big(\int_0^T A_{\lambda,E}(\eta_t)\, dt + \int_0^T\epsilon^N_{\lambda,E}(\eta_t)\, dt\Big)\Big]}
.
\label{eq_dyn_courant_en_terme_dyn_lambda}
\end{equation}
\end{lemm}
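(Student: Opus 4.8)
The statement is a Girsanov-type change of measure. The current-biased dynamics is, by definition~\eqref{eq_def_dynamique_conditionnee_courant}, a tilt of the WASEP $\Prob^{\mu^N}_E$ by $e^{\lambda N Q_T}$; the claim is that tilting $\Prob^{\mu^N}_E$ by $e^{\lambda N Q_T}$ is the same, up to a multiplicative density involving only a time-integrated local function, as running the WASEP $\Prob^{\mu^N}_{\lambda+E}$ with shifted asymmetry. So the plan is: (i) write down the Radon--Nikodym derivative $\mathrm{d}\Prob^N_E/\mathrm{d}\Prob^N_{\lambda+E}\big|_{[0,T]}$ explicitly via the standard formula for the density between two Markov jump processes with the same jump graph but different rates; (ii) recognize that this derivative, combined with the factor $e^{\lambda N Q_T}$, produces an exponential whose exponent is a time-integral of a local function of $\eta_t$ plus boundary terms that cancel; (iii) Taylor-expand in $1/N$ to isolate the leading term $A_{\lambda,E}$ and collect the remainder into $\epsilon^N_{\lambda,E}$ with the claimed $O(1/N)$ uniform bound.

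\textbf{Step (i): the Radon--Nikodym derivative between the two WASEPs.} Both $\Prob^N_E$ and $\Prob^N_{\lambda+E}$ are Markov jump processes on $\Omega_N$ with the same allowed transitions $\eta\to\eta^{i,i+1}$, started from the same $\mu^N$, with rates $N^2 c_E(\eta,\eta^{i,i+1})$ and $N^2 c_{\lambda+E}(\eta,\eta^{i,i+1})$ respectively. By the classical formula (e.g.~\cite[Appendix 1]{Kipnis1999}),
\begin{equation}
\frac{\mathrm{d}\Prob^{\mu^N}_E}{\mathrm{d}\Prob^{\mu^N}_{\lambda+E}}\Big|_{[0,T]}
=
\exp\Big(\sum_{\text{jumps }i\to i+1\text{ or }i+1\to i\text{ in }[0,T]}\log\frac{c_E}{c_{\lambda+E}}
-N^2\int_0^T\sum_{i\in\T_N}\big[c_E(\eta_t,\eta_t^{i,i+1})-c_{\lambda+E}(\eta_t,\eta_t^{i,i+1})\big]\, dt\Big).
\end{equation}
From~\eqref{eq_def_jump_rates_lambda}, $c_{\lambda+E}=c_E\, e^{-N^{-1}(\eta_{i+1}-\eta_i)\lambda}$, so $\log(c_E/c_{\lambda+E})=N^{-1}(\eta_{i+1}-\eta_i)\lambda$ on the bond $(i,i+1)$; summed over all jumps across $(i,i+1)$, the jump-counting term yields exactly $\lambda N (N_T(i\to i+1)-N_T(i+1\to i))/N^2\cdot N = \lambda N Q_T$ after summing over $i$ and using the definition of $Q_T$. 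Multiplying by the bias $e^{\lambda N Q_T}$ from~\eqref{eq_def_dynamique_conditionnee_courant}, the jump-counting terms either double or cancel — one must be careful here: in~\eqref{eq_def_dynamique_conditionnee_courant} we tilt $\Prob^N_E$ by $e^{\lambda N Q_T}$, and then rewrite in terms of $\Prob^N_{\lambda+E}$, so the total exponent gets $+\lambda N Q_T$ (from the bias) plus $-\lambda N Q_T$-type contribution from $\mathrm{d}\Prob^N_E/\mathrm{d}\Prob^N_{\lambda+E}$; one checks the signs so that the stochastic (jump-counting) part cancels, leaving only the deterministic compensator $-N^2\int_0^T\sum_i[c_E-c_{\lambda+E}]\, dt$. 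This is the key mechanism: shifting the asymmetry by $\lambda$ is precisely designed to absorb the current tilt, leaving a drift-type correction.

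\textbf{Step (ii)--(iii): expanding the compensator.} It remains to analyze $\Phi^N_t:=-N^2\sum_{i\in\T_N}\big[c_E(\eta_t,\eta_t^{i,i+1})-c_{\lambda+E}(\eta_t,\eta_t^{i,i+1})\big]$. Using $c_{\lambda+E}=c\, e^{-N^{-1}(\eta_{i+1}-\eta_i)(\lambda+E)}$ and $c_E=c\, e^{-N^{-1}(\eta_{i+1}-\eta_i)E}$ with $c=c(\eta,\eta^{i,i+1})=\eta_i(1-\eta_{i+1})+\eta_{i+1}(1-\eta_i)$, write the difference as $c\, e^{-N^{-1}(\eta_{i+1}-\eta_i)E}\big(1-e^{-N^{-1}(\eta_{i+1}-\eta_i)\lambda}\big)$ and Taylor-expand both exponentials as $e^x=1+x+\tfrac{x^2}{2}+O(x^3)$ with $x=O(1/N)$. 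Since $c\,(\eta_{i+1}-\eta_i)=c\,(\bar\eta_{i+1}-\bar\eta_i)$ and $c$ vanishes unless exactly one of $\eta_i,\eta_{i+1}$ equals $1$, one has the algebraic identities $c\,(\eta_{i+1}-\eta_i)^2=c$ and, after summing $\sum_i c\,(\eta_{i+1}-\eta_i)$ telescopes (or is a lower-order boundary term), while $\sum_i c = \sum_i[\tfrac12-2\bar\eta_i\bar\eta_{i+1}]$. Collecting the $N^2\cdot N^{-2}=O(1)$ terms: the $N^{-1}$-linear pieces sum to something of order $N^{-1}$ after the telescoping/symmetry of $\bar\eta_{i+1}-\bar\eta_i$, and the $N^{-2}$-quadratic pieces, multiplied by $N^2$, give the $O(1)$ term. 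A short computation identifies this surviving $O(1)$ term as $-\lambda(\lambda+2E)\sum_i c(\eta,\eta^{i,i+1})/2$ plus a constant; using $c=\tfrac12-2\bar\eta_i\bar\eta_{i+1}$ and discarding the (time- and configuration-independent) constant, which cancels between numerator and denominator in~\eqref{eq_dyn_courant_en_terme_dyn_lambda}, this is exactly $A_{\lambda,E}(\eta)=-\lambda(\lambda+2E)\sum_i\bar\eta_i\bar\eta_{i+1}$ up to the claimed additive constant. Everything of order $N^{-3}$ in the expansion, multiplied by $N^2$, is $O(N^{-1})$ uniformly in $\eta$ (the remainder in $e^x=1+x+\tfrac{x^2}{2}+\int_0^1\tfrac{(1-t)^2}{2}x^3e^{tx}dt$ is controlled since $|x|\leq (|\lambda|+|E|)/N$ is bounded), and this is collected into $\epsilon^N_{\lambda,E}$, which then satisfies $\sup_\eta|\epsilon^N_{\lambda,E}|\leq C(\lambda,E)/N$. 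The additive constant from the $\sum_i c$ term and any $N^{-1}$-order leftover that is configuration-independent can also be absorbed, either into $\epsilon^N_{\lambda,E}$ or (for a genuine constant) cancelled in the ratio.

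\textbf{Main obstacle.} The conceptually delicate point is the careful bookkeeping of signs and of the stochastic (jump-counting) terms in Step (i): one must verify that tilting by $e^{\lambda N Q_T}$ and simultaneously shifting the asymmetry $E\to\lambda+E$ makes the martingale/jump-counting part of the Radon--Nikodym derivative cancel exactly, so that what survives is purely the deterministic compensator — otherwise stray terms linear in jump counts (hence of order $N$ in a typical trajectory, after the $Q_T$ normalization) would remain and the statement would be false. The rest is a routine but somewhat tedious Taylor expansion and the algebraic simplifications $c\,(\eta_{i+1}-\eta_i)^2=c$, $c=\tfrac12-2\bar\eta_i\bar\eta_{i+1}$, plus a telescoping/symmetry argument to see that the naively-$O(1)$ linear-in-$N^{-1}$ contribution is in fact $O(N^{-1})$.
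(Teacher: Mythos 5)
Your overall strategy matches the paper's: write the Radon--Nikodym derivative $\mathrm{d}\Prob^N_E/\mathrm{d}\Prob^N_{\lambda+E}\big|_{[0,T]}$ via the standard jump-process Girsanov formula, observe that the jump-counting part equals $-\lambda N Q_T$ and cancels exactly against the bias $e^{\lambda N Q_T}$, then expand the deterministic compensator in powers of $1/N$ to extract $A_{\lambda,E}$ and a remainder. This is precisely the paper's route.

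There is, however, a genuine gap in your remainder bound. You truncate $e^x = 1 + x + \tfrac{x^2}{2} + O(x^3)$, note that per bond the $O(x^3)$ remainder multiplied by $N^2$ is $O(N^{-1})$, and conclude $\sup_\eta|\epsilon^N_{\lambda,E}|\leq C/N$. But $\epsilon^N_{\lambda,E}$ contains a sum over the $N$ bonds of the torus, and $N$ terms each of size $O(N^{-1})$ give $O(1)$, not $O(N^{-1})$ --- so as written the argument only yields $\sup_\eta|\epsilon^N_{\lambda,E}|\leq C$, which is weaker than the claim (and too weak to later control the exponential in the ratio uniformly over $[0,T]$). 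The telescoping you invoke for the linear term must also be applied to the cubic term: in fact the compensator can be written, per bond, as
\[
N^2\Big[\eta_i(1-\eta_{i+1})\big(e^{(\lambda+E)/N}-e^{E/N}\big)+\eta_{i+1}(1-\eta_i)\big(e^{-(\lambda+E)/N}-e^{-E/N}\big)\Big],
\]
and the odd part (proportional to $\eta_i(1-\eta_{i+1})-\eta_{i+1}(1-\eta_i)=\eta_i-\eta_{i+1}$) telescopes exactly to zero after $\sum_i$ at \emph{every} odd order, while the even part is $N^2\big[\cosh\tfrac{\lambda+E}{N}-\cosh\tfrac{E}{N}\big]\,c(\eta,\eta^{i,i+1})=\big[\tfrac{\lambda(\lambda+2E)}{2}+O(N^{-2})\big]\,c(\eta,\eta^{i,i+1})$. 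Summing over $N$ bonds, the $O(N^{-2})$ piece yields $O(N^{-1})$ uniformly, which is the claimed bound. Equivalently, expand to \emph{third} order, observe the cubic contribution telescopes to zero, and bound the fourth-order remainder by $O(N^{-4})\cdot N^2\cdot N = O(N^{-1})$. Without invoking this parity cancellation for the odd orders beyond the first, the $O(N^{-1})$ claim does not follow. (A secondary, inconsequential slip: you assert the surviving $O(1)$ term is $-\tfrac{\lambda(\lambda+2E)}{2}\sum_i c$; the correct sign is $+$, which then gives $A_{\lambda,E}$ after using $c=\tfrac12-2\bar\eta_i\bar\eta_{i+1}$ and discarding the constant.)
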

\begin{proof}
Let $T>0$. 
The two dynamics $\Prob^N_E$ and $\Prob^N_{\lambda+E}$ are absolutely continuous with respect to one another on the space of trajectories on $[0,T]$. 
Fix $\eta(\cdot)\in \mathcal D(\R_+,\Omega_N)$. 
We write $\eta_t$ rather than $\eta(t)$ for the configuration at time $t$ when there is no ambiguity with the occupation number $\eta_i$ at a site $i\in\T_N$. 
The Radon-Nikodym derivative $D_{\lambda,E}^{N}$ until time $T$ reads (see the proof of Proposition A.2.6 in~\cite{Kipnis1999}):
\begin{align}
D^{N}_{\lambda,E}\big((\eta_t)_{t\leq T}\big)
&=
\frac{\mathrm{d}\Prob^{N}_E}{\mathrm{d}\Prob^N_{\lambda+E}}\Big|_T\big((\eta_t)_{t\leq T}\big) 
\nonumber\\
&= \exp\Big[-\lambda NQ_T+ N^2\int_0^T\Big( \sum_{i\in\T_N}\eta_i(t)(1-\eta_{i+1}(t))e^{E/N}\big(e^{\lambda/N}-1\big) \nonumber\\
&\hspace{3cm}+ \eta_{i+1}(t)(1-\eta_i(t))e^{-E/N}\big(e^{-\lambda/N}-1\big)\Big)\, dt\Big]
.
\label{eq_der_RD_Prob_lambda_vs_Prob}
\end{align}
Developing the exponential in the second member, 
we find that there is $C>0$ and a function $\epsilon^N_{\lambda,E}:\Omega_N\rightarrow\R$ with $\sup_{\eta\in\Omega_N}|\epsilon^N_{\lambda,E}|\leq C/N$ such that:
\begin{align}
D^{N}_{\lambda,E}\big((\eta_t)_{t\leq T}\big)
&= 
\exp\Big[-\lambda NQ_T+ \frac{\lambda(\lambda+2E)}{2}\int_0^T \sum_{i\in\T_N}c(\eta_t,\eta_t^{i,i+1})\, dt+\int_0^t\epsilon^N_{\lambda,E}(\eta_t)\, dt\Big]
.
\label{eq_der_RD_Prob_lambda_Prob_h_dvlp}
\end{align}
The jump rates $c(\eta,\eta^{i,i+1})$ can be rewritten in terms of the variables $\bar\eta_i=\eta_i-1/2$ ($i\in\T_N$):
\begin{equation}
c(\eta,\eta^{i,i+1}) 
= 
2\sigma -2\bar\eta_i\bar\eta_{i+1}
.
\label{eq_dvplt_jump_rate_en_bar_eta}
\end{equation}
Note that the total number of particles $\sum_{i}\eta_i(t)$ is constant in time. 
The sum in \eqref{eq_der_RD_Prob_lambda_Prob_h_dvlp} thus reads:
\begin{align}
\int_0^T\sum_{i\in\T_N}c(\eta_t,\eta_t^{i,i+1})\, dt 
&=
2 NT\sigma 
-2\int_0^T\sum_{i\in\T_N} \bar\eta_i(t)\bar\eta_{i+1}(t)\, dt
.
\label{eq_expansion_c_as_sigma_plus_sigma'_integre_en_temps}
\end{align}
Injecting the expression~\eqref{eq_expansion_c_as_sigma_plus_sigma'_integre_en_temps} in both numerator and denominator in the definition~\eqref{eq_def_dynamique_conditionnee_courant} of $\Prob^{curr,\mu^N}_{\lambda,E,T}$ concludes the proof.
\end{proof}
Lemma~\ref{lemm_RD_Prob_lambda} expresses the current-biased dynamics $\Prob^{curr,\mu^N}_{\lambda,E,T}$ in terms of the dynamics $\Prob^{\mu^N}_{\lambda+E}$ which has the same macroscopic current. 
We now tune the long-range two-point correlation structure by considering the dynamics $\Prob^{\mu^N}_{\tilde h,\lambda+E}$ and optimising on $\tilde h$ so that this dynamics is, 
loosely speaking, as close to $\Prob^{curr,\mu^N}_{\lambda,E,T}$ as possible when $N$, then $T$ are large. 
%
\begin{prop}\label{prop_choice_of_h_sec4}
Let $\mu^N$ be a probability measure on $\Omega_N$ ($N\in\N_{\geq 1}$). 
Let $\lambda,E\in\R$ be sub-critical in the sense of Proposition~\ref{prop_solving_PDE_h} and $h=h_{\lambda,E}$ be the associated solution of~\eqref{eq_ODE_sur_h}.  
Then, for each $T>0$:
\begin{align}
\Prob^{curr,\mu^N}_{\lambda,E,T}(\cdot) 
= 
\frac{\E^{\mu^N}_{h,\lambda+E}\Big[{\bf 1}_{\cdot} \exp\Big(-\Pi^N_T(h)+\Pi^N_0(h) + \int_0^T \epsilon^N_{h,\lambda,E}(\eta_t)\, dt\Big)\Big]}
{\E^{\mu^N}_{h,\lambda+E}\Big[ \exp\Big(-\Pi^N_T(h)+\Pi^N_0(h) + \int_0^T \epsilon^N_{h,\lambda,E}(\eta_t)\, dt\Big)\Big]}
,
\label{eq_change_dynamics_lambda_to_h_lambda}
\end{align}
where $\epsilon^N_{h,\lambda,E}$ is a function that has small time integral under both $\Prob^N_{\lambda+E},\Prob^N_{h,\lambda+E}$ (started from suitable initial conditions):
\begin{align}
\forall\delta>0,\qquad 
&\limsup_{N\rightarrow\infty}\E^{\nu^N_{1/2}}_{\lambda+E}\Big[\Big|\int_0^T\epsilon^N_{h,\lambda,E}(\eta_t)\, dt\Big|\Big] 
= 
0,
\nonumber\\ 
&\limsup_{N\rightarrow\infty}\E^{\nu^N_h}_{h,\lambda+E}\Big[\Big|\int_0^T\epsilon^N_{h,\lambda,E}(\eta_t)\, dt\Big|\Big]
=
0
.
\label{eq_size_epsilon_N_h_sous_prob_lambda_et_prob_h}
\end{align}
\end{prop}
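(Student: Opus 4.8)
The plan is to combine Lemma~\ref{lemm_RD_Prob_lambda}, which already rewrites $\Prob^{curr,\mu^N}_{\lambda,E,T}$ as a tilt of $\Prob^{\mu^N}_{\lambda+E}$ by $\exp\big(\int_0^T(A_{\lambda,E}+\epsilon^N_{\lambda,E})(\eta_t)\,\mathrm dt\big)$, with a Girsanov‑type change of measure from $\Prob^{\mu^N}_{\lambda+E}$ to $\Prob^{\mu^N}_{h,\lambda+E}$. Since the rates of $\Prob^N_{h,\lambda+E}$ are those of $\Prob^N_{\lambda+E}$ multiplied by $e^{\Pi^N(h)(\eta^{i,i+1})-\Pi^N(h)(\eta)}$ (recall~\eqref{eq_def_jump_rates_h_and_lambda}), the standard Radon–Nikodym formula for pure‑jump Markov chains (as in the proof of Proposition~A.2.6 of~\cite{Kipnis1999}) gives, on trajectories up to time $T$,
\[
\frac{\mathrm d\Prob^{\mu^N}_{\lambda+E}}{\mathrm d\Prob^{\mu^N}_{h,\lambda+E}}\Big|_T
=
\exp\Big(-\Pi^N_T(h)+\Pi^N_0(h)+\int_0^T G^N_h(\eta_t)\,\mathrm dt\Big),
\]
where $G^N_h(\eta):=\sum_{i\in\T_N}c_{\lambda+E}(\eta,\eta^{i,i+1})\big(e^{\Pi^N(h)(\eta^{i,i+1})-\Pi^N(h)(\eta)}-1\big)$ is the compensator integrand, a genuine function of the single configuration $\eta$, and the boundary terms arise from the telescoping sum $\sum_{s\le T}\big[\Pi^N(h)(\eta_s)-\Pi^N(h)(\eta_{s-})\big]=\Pi^N_T(h)-\Pi^N_0(h)$ over jumps. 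Inserting this identity into numerator and denominator of~\eqref{eq_dyn_courant_en_terme_dyn_lambda} yields~\eqref{eq_change_dynamics_lambda_to_h_lambda} with $\epsilon^N_{h,\lambda,E}:=A_{\lambda,E}+G^N_h+\epsilon^N_{\lambda,E}$; it then only remains to prove the two bounds~\eqref{eq_size_epsilon_N_h_sous_prob_lambda_et_prob_h}.

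To estimate the time integral of $\epsilon^N_{h,\lambda,E}$ I would expand $G^N_h$ using $\Pi^N(h)(\eta^{i,i+1})-\Pi^N(h)(\eta)=-(\eta_{i+1}-\eta_i)B^h_i/N$ (see~\eqref{eq_def_B_g_x}--\eqref{eq_variation_jump_rate_c_h_lambda}), with $B^h_i$ bounded uniformly, Taylor‑expanding the exponential to second order. The terms of order $\ge 2$ in the exponent, together with the $O(N^{-2})$ corrections to $c_{\lambda+E}$, are $N^{-1}$‑prefactored sums of bounded local functions, i.e.\ of the type $N^{-1}W^{n,K}_{\phi_n}$ of~\eqref{eq_def_W_n_J_phi}, so they satisfy a bound of the form~\eqref{eq_def_zeta_N_corollary} directly from the concentration estimates of Proposition~\ref{prop_concentration_exponentielle}. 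The leading term, using $c(\eta,\eta^{i,i+1})(\eta_{i+1}-\eta_i)=\eta_{i+1}-\eta_i$ and a discrete integration by parts, is a quadratic expression in the $\bar\eta$'s; after a mesoscopic replacement of nearest‑neighbour products $\bar\eta_i\bar\eta_{i+1}$ by block averages at a carré‑du‑champ cost, exactly as in the proof of Lemma~\ref{lemm_estimate_threepoint}, its kernel converges to $h''$ away from the diagonal, while the kink of $h$ produces a diagonal contribution proportional to $h'(0^+)-h'(1^-)$; the second‑order term of the expansion similarly produces the convolution $\tfrac{\sigma}{2}\,h'\!\ast\!h'$ away from the diagonal. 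Combining with $A_{\lambda,E}$, which after the same replacement is $-\lambda(\lambda+2E)$ times a discrete approximation of the diagonal, the bulk part of the total kernel is $h''-\tfrac{\sigma}{2}h'\!\ast\!h'$ and its diagonal part is proportional to $h'(0^+)-h'(1^-)-2\lambda(\lambda+2E)$; both vanish precisely because $h$ solves~\eqref{eq_ODE_sur_h}. Hence $\epsilon^N_{h,\lambda,E}$ too obeys~\eqref{eq_def_zeta_N_corollary}, i.e.\ $\nu^N_{\tilde h}\big(f(\pm\epsilon^N_{h,\lambda,E})\big)\le\delta N^2\nu^N_{\tilde h}\big(\Gamma_{\tilde h,\tilde\lambda}(\sqrt f)\big)+\nu^N_{\tilde h}\big(f\zeta^N_\delta\big)$ with $\zeta^N_\delta$ having $1+o_N(1)$ exponential moments, for $\tilde h\in\{0,h\}$ (here $\nu^N_0=\nu^N_{1/2}$).

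To conclude: $\sup_\eta|\epsilon^N_{\lambda,E}|\le C(\lambda,E)/N$ makes that piece harmless, and both $\Prob^{\nu^N_{1/2}}_{\lambda+E}$ and $\Prob^{\nu^N_h}_{h,\lambda+E}$ are instances of the family $\Prob^{\nu^N_{\tilde h}}_{\tilde h,\tilde\lambda}$ of Section~\ref{sec_fluc_correl_prob_h_lambda} (take $\tilde h=0$, resp.\ $\tilde h=h$, and $\tilde\lambda=\lambda+E$), so Corollary~\ref{coro_bound_error_terms}, fed with the bound just established, gives $\limsup_{N}\E\big[\big|\int_0^T\epsilon^N_{h,\lambda,E}(\eta_t)\,\mathrm dt\big|\big]=0$ under each of them, which is~\eqref{eq_size_epsilon_N_h_sous_prob_lambda_et_prob_h}. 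The main obstacle is the analytic bookkeeping in the middle paragraph: isolating the singular, kink‑induced diagonal part of the second difference $\Delta^N_1 h$, carrying out the mesoscopic replacement with an explicit carré‑du‑champ remainder, and checking that the surviving bulk and diagonal coefficients are exactly the two members of the ODE~\eqref{eq_ODE_sur_h}. The point to get right is that it is precisely sub‑criticality of $h$ (leading eigenvalue of $\sigma h$ below $1$, so that $\nu^N_h$ is well defined and Proposition~\ref{prop_concentration_exponentielle} applies) together with $h$ solving~\eqref{eq_ODE_sur_h} that makes every surviving term fall within the scope of Corollary~\ref{coro_bound_error_terms}.
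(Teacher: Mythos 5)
Your proposal follows essentially the same route as the paper. The ``Girsanov for pure-jump chains'' Radon--Nikodym identity you invoke (telescoping jump sum giving the boundary terms $-\Pi^N_T(h)+\Pi^N_0(h)$, plus the compensator integral) is algebraically identical to the Feynman--Kac form $\int_0^T e^{-\Pi^N_t(h)}N^2L_{\lambda+E}\,e^{\Pi^N_t(h)}\,dt$ used in the paper (note, though, that your $G^N_h$ is missing the $N^2$ prefactor coming from the diffusively rescaled generator; without it the subsequent order counting would not make sense). Composing this with Lemma~\ref{lemm_RD_Prob_lambda}, Taylor-expanding the compensator, and matching the resulting quadratic-in-$\bar\eta$ coefficients against $A_{\lambda,E}$ to obtain the ODE and Neumann condition of~\eqref{eq_ODE_sur_h} is precisely the paper's argument, and your use of Corollary~\ref{coro_bound_error_terms} with $\tilde h\in\{0,h\}$ and the concentration estimates of Proposition~\ref{prop_concentration_exponentielle} to dispose of the residual is the intended mechanism.

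One bookkeeping imprecision worth flagging: the blanket claim that ``the terms of order $\ge 2$ in the exponent \ldots are $N^{-1}$-prefactored sums of bounded local functions'' is not right as stated. The second-order term $\tfrac12\sum_i c(\eta,\eta^{i,i+1})(B^h_i)^2$ contains, upon expanding $c = 2\sigma - 2\bar\eta_i\bar\eta_{i+1}$, the $\Pi^N$-type contribution $\tfrac{\sigma}{4N}\sum_{j\neq\ell}\bar\eta_j\bar\eta_\ell\,\tfrac1N\sum_i\partial^N_1 h_{i,j}\partial^N_1 h_{i,\ell}$, which is $O(N)$ a priori and \emph{not} small; it is exactly the piece that must cancel against $\Delta^N_1 h$ through the ODE, as your next sentence correctly says. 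Similarly, the mesoscopic (carr\'e du champ) replacement is needed only for the genuinely cubic-and-higher correlation terms produced by the $-2\bar\eta_i\bar\eta_{i+1}$ part of $c$ and by the cross term $2(\lambda+E)B^h_i\bar\eta_i\bar\eta_{i+1}$ (the analogue of $\e_g$ in~\eqref{eq_def_W_3}); the nearest-neighbour part of the first-order term cancels $A_{\lambda,E}$ algebraically, up to a discretisation error $\epsilon^N_{disc,h}$ handled directly by Proposition~\ref{prop_concentration_exponentielle}, with no mesoscopic averaging. None of this changes the validity of your overall scheme, but the separation is needed to make the error control in~\eqref{eq_size_epsilon_N_h_sous_prob_lambda_et_prob_h} airtight.
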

\begin{proof}
Let $T>0$. 
By Feynman-Kac formula, see Appendix A.7 in~\cite{Kipnis1999}, 
the Radon-Nikodym derivative $D^{N}_{\tilde h,\lambda+E} = \mathrm{d}\Prob^N_{\lambda+E}/\mathrm{d}\Prob^N_{\tilde h,\lambda+E}|_T$ up to time $T$ reads, 
on each trajectory $\eta_\cdot=(\eta_t)_{t\leq T}$:
\begin{equation}
\log D^N_{\tilde h,\lambda+E} (\eta_\cdot) 
= 
-\Pi^N_T(\tilde h)+ \Pi^N_0(\tilde h) + \int_0^Te^{-\Pi^N_t(\tilde h)}N^2L_{\lambda+E}\, e^{\Pi^N_t(\tilde h)}\, dt
.
\label{eq_der_RD_Prob_lambda_Prob_h}
\end{equation}
The computation of $e^{-\Pi^N(\tilde h)}N^2 L_{\lambda+E}\,  e^{\Pi^N(\tilde h)}$ is very similar to that of $N^2L^*_{h,h,\lambda+E}{\bf 1}$ 
performed in Section \ref{sec_entropy_estimate}, 
so we only give the result. 
For each $\eta\in\Omega_N$,
\begin{align}
&e^{-\Pi^ N(\tilde h)}N^2L_{\lambda+E}\, e^{\Pi^N(\tilde h)}(\eta) 
= 
\frac{1}{2}\sum_{i\in\T_N}\bar\eta_i\bar\eta_{i+1}\big(\partial^N_1 \tilde h_{i+1,i}-\partial^N_1 \tilde h_{i-1,i+1}\big)\label{eq_Neumann_choice_of_h}
\\
&\qquad + 
\frac{1}{2N}\sum_{j\neq \ell\in\T_N}\bar\eta_j\bar\eta_\ell \bigg[{\bf 1}_{|j-\ell|>1}\Delta^N_1 \tilde h_{j,\ell} + \frac{\sigma}{2N} \sum_{i\notin\{j-1,j,\ell-1,\ell\}} \partial^N_1 \tilde h_{i,j}\partial^N_1 \tilde h_{i,\ell}\bigg] 
\label{eq_EDP_sur_h_discrete}
\\
&\qquad + 
\frac{\sigma^2}{4N}\sum_{i\in\T_N}\sum_{j\notin\{i,i+1\}}\big(\partial^{N}_1 \tilde h_{i,j}\big)^2 
+ \delta^N_{\tilde h,\lambda+E}(\eta) + O(N^{-1})
,
\end{align}
with the error term $O(N^{-1})$ uniform on the configuration. 
The quantity $\delta^N_{\tilde h,\lambda+E}(\eta)$ reads:
\begin{align}
\delta^N_{\tilde h,\lambda+E}(\eta) 
&:= 
\frac{(\lambda+E) \sigma}{N}\sum_{i\in\T_N}\bar\eta_i N\big[\tilde h_{i-1,i} -\tilde h_{i+1,i}\big] 
\nonumber\\
&\quad 
-\frac{1}{4N^2}\sum_{i\in\T_N}\bar\eta_i\bar\eta_{i+1}\sum_{j,\ell\notin\{i,i+1\}}\bar\eta_j\bar\eta_\ell\partial^{N}_1\tilde h_{i,j}\partial^{N}_1 \tilde h_{i,\ell} 
\nonumber\\
&\quad
- \frac{(\lambda+E)}{N}\sum_{i\in\T_N}\bar\eta_i\bar\eta_{i+1}\sum_{j\notin\{i,i+1\}}\bar\eta_j\partial^N_1\tilde h_{i,j} 
.
\label{eq_def_error_term_RD}
\end{align}
Let us show that $\delta^N_{\tilde h,\lambda+E}$ satisfies~\eqref{eq_size_epsilon_N_h_sous_prob_lambda_et_prob_h}. 
The term $\delta^N_{\tilde h,\lambda+E}$ is similar to~\eqref{eq_def_epsilon_N} estimated in Section~\ref{sec_entropy_estimate}. 
There we showed that, for any $\alpha\in(0,1/2]$ and 
any $g\in C^\infty_D(\T^2)$ with $\sigma g$ having leading eigenvalue strictly below $1$, 
there is a function $\zeta^N_{g,\tilde h,\lambda+E,\alpha}$ such that, 
for any density $f$ for $\nu^N_{g}$:
\begin{align}
\nu^N_{g}\big(f\delta^N_{\tilde h,\lambda+E}\big)
&\leq 
\alpha N^2 \nu^N_{g}\big(\Gamma_{\tilde h,\tilde \lambda}(\sqrt{f})\big)
+ \nu^N_g\big(f\zeta^N_{g,\tilde h,\lambda+E,\alpha}\big)
\end{align}
with, for some $\gamma_\alpha,C_\alpha>0$:
\begin{equation}
\frac{1}{\gamma_\alpha}\log \nu^N_g\Big(\exp\big[\gamma_\alpha\big|\zeta^N_{g,\tilde h,\lambda+E,\alpha}\big|\big]\Big)
\leq 
\frac{C_\alpha}{N^{1/2}}
,\qquad 
N\in\N_{\geq 1}
.
\end{equation}
Taking $g=0$ and $g=\tilde h$
this implies~\eqref{eq_size_epsilon_N_h_sous_prob_lambda_et_prob_h} for $\Prob^{\nu^N_{1/2}}_{\lambda+E},\Prob^{\nu^N_{\tilde h}}_{\tilde h,\lambda+E}$ respectively by Corollary~\ref{coro_bound_error_terms}.

We now choose $\tilde h$. 
The bracket in \eqref{eq_EDP_sur_h_discrete} is a discretised differential operator acting on $\tilde h$, 
while the first line \eqref{eq_Neumann_choice_of_h} plays the role of a condition on the normal derivative of $\tilde h$ on the diagonal $\{(x,x):x\in\T\}$ of $\T^2$ 
(recall that, as $\tilde h$ is symmetric, 
$\partial_1 \tilde h(x,y) = \partial_2\tilde h(y,x)$ for each $x\neq y\in\T^2$), 
see~\eqref{eq_normal_derivative_sec4} below. 
To determine the continuous analogue of the discrete differential operator, 
let us replace discrete derivatives by their continuous counterparts using the following error estimates 
that follow from $\tilde h\in C^\infty_D(\T^2)$:
\begin{equation}
\sup_N\sup_{|i-j|>1} N\big|\Delta^N_1 \tilde h_{i,j}- \Delta_1\tilde h_{i,j}\big|<\infty,
\qquad
\sup_N\sup_{\substack{i,j \in \T_N \\ j\notin\{i,i+1\}}} N\big|\partial^{N}_1 \tilde h_{i,j}- \partial_1\tilde h_{i,j}\big|
<
\infty
.
\end{equation}
Let $\epsilon^N_{\tilde h,\lambda,E}$ be given by:
\begin{align}
\epsilon^N_{\tilde h,\lambda,E}
&=
\delta^N_{\tilde h,\lambda,E} 
+ \epsilon^N_{disc,\tilde h}
,
\label{eq_def_epsilon_h_lambda_E}
\end{align}
with $\epsilon^N_{disc,\tilde h}$ the discretisation error from switching to continuous partial derivatives:
\begin{align}
&\epsilon^N_{disc,\tilde h}(\eta)
=
\frac{1}{2N}\sum_{i\in\T_N} \bar\eta_i\bar\eta_{i+1}N \big[(\partial^N_1\tilde h_{i+1,i}-\partial^N_1\tilde h_{i-1,i+1}) - (\partial_1\tilde h_{i_+,i} - \partial_1\tilde h_{i_-,i})\big]
\nonumber\\
&\qquad 
+\frac{1}{2N^2}\sum_{j,\ell:|j-\ell|>1}\bar\eta_j\bar\eta_\ell N\big[\Delta^N_1\tilde h_{j,\ell} - \Delta_1\tilde h_{j,\ell}\big]
\label{eq_def_epsilon_disc}\\
&\qquad 
+ \frac{\sigma}{4N^2}\sum_{j\neq \ell}\bar\eta_j\bar\eta_\ell N\Big[\frac{1}{N}\sum_{i\notin\{j-1,j,\ell-1,\ell\}}\partial^{N}_1\tilde h_{i,j}\partial^{N}_1 \tilde h_{i,\ell} - \int_{\T}\partial_1\tilde h(x-j/N)\partial_1 \tilde h(x-\ell/N)\, dx\Big]
.
\nonumber
\end{align}
The exponential moment under $\nu^N_{\tilde h},\nu^N_{1/2}$ of each term above is controlled through Proposition~\ref{prop_concentration_exponentielle}. 
Corollary~\ref{coro_bound_error_terms} thus shows that $\epsilon^N_{\tilde h,\lambda,E}$, 
also satisfies~\eqref{eq_size_epsilon_N_h_sous_prob_lambda_et_prob_h} 
(replacing $h$ there by $\tilde h$) and: 
\begin{align}
&e^{-\Pi^ N(\tilde h)}N^2L_{\lambda+E}\, e^{\Pi^N(\tilde h)}(\eta) 
= 
\frac{\sigma^2}{4N}\sum_{i\in\T_N}\sum_{j\notin\{i,i+1\}}\big(\partial^N_1 \tilde h_{i,j}\big)^2 + \frac{1}{2}\sum_{i\in\T_N}\bar\eta_i\bar\eta_{i+1}\big(\partial_1 \tilde h_{i_+,i}-\partial^N_1 \tilde h_{i_-,i}\big) 
\nonumber\\
&\hspace{2cm}
+ 2\Pi^N\Big(\Delta_1 \tilde h + (x,y)\mapsto \frac{\sigma}{2} \int_{\T}\partial_1 \tilde h(z,x)\partial_1 \tilde h(z,y)\, dz\Big)
+ \epsilon^N_{\tilde h,\lambda,E}(\eta)
.
\label{eq_error_term_last_line_der_RD_before_epsilon_N}
\end{align}
Our goal is to now choose $\tilde h$ such that the two-point correlation term in~\eqref{eq_error_term_last_line_der_RD_before_epsilon_N} 
precisely cancels the term $A_{\lambda,E}=-\lambda(\lambda+2E)\sum_i\bar\eta_i\bar\eta_{i+1}$ appearing in Lemma~\ref{lemm_RD_Prob_lambda}.  
This fixes the partial derivative of $\tilde h$ across the diagonal:
\begin{equation}
\forall i\in\T_N,\qquad
\partial_1 \tilde h_{i_+,i}-\partial_1 \tilde h_{i_-,i}
=
h'(0_+)-h'(1_-)
=
2\lambda(\lambda+2E)
.
\label{eq_normal_derivative_sec4}
\end{equation}
The remaining two-point correlation terms in~\eqref{eq_error_term_last_line_der_RD_before_epsilon_N} also need to vanish for the approximation of the current-biased dynamics by $\Prob^N_{\tilde h,\lambda+E}$ to be better than that by $\Prob^N_{\lambda+E}$. 
Thus:
\begin{equation}
\Delta_1 \tilde h + (x,y)\mapsto \frac{\sigma}{2} \int_{\T}\partial_1 \tilde h(z,x)\partial_1 \tilde h(z,y)
=
0
,\qquad 
x\neq y\in\T
.
\label{eq_PDE_tildeh}
\end{equation}
As $\tilde h(x,y)=\tilde h(x-y)$ by assumption, 
$\partial_1\tilde h(z,x) = \tilde h'(z-x)=-\tilde h'(x-z)$ ($x\neq z$) and we see that~\eqref{eq_PDE_tildeh} is precisely the ordinary differential equation appearing in Proposition~\ref{prop_solving_PDE_h}:
\begin{equation}
\tilde h''(x) - \sigma\int_{\T}\tilde h'(x-y)\tilde h'(y)\, dy
=0
,\qquad 
x\in(0,1)
.
\end{equation}
From this observation and~\eqref{eq_normal_derivative_sec4} it follows that $\tilde h$ is the function $h$ of Proposition~\ref{prop_solving_PDE_h}. 
Note that $\sigma h$ indeed has leading eigenvalue strictly below $1$ as $\lambda,E$ are sub-critical by assumption. 
As the last term in the first line of~
\eqref{eq_error_term_last_line_der_RD_before_epsilon_N} is configuration-independent, 
the proof of Proposition~\ref{prop_choice_of_h_sec4} is concluded. 
\end{proof}
\subsection{A key domination result}\label{sec_domination_current}
The expression~\eqref{eq_change_dynamics_lambda_to_h_lambda} involves the exponential of terms that we know to have average bounded with $N$ under $\Prob^N_{h,\lambda+E}$ when $N$, then $T$ are large. 
Since these terms appear inside exponentials, 
we need to make sure that their exponential moments are also bounded uniformly in $N,T$. 
Unbounded exponential moments would mean that that $\Prob^N_{h,\lambda+E}$ is in fact not a good candidate for an approximate driven process. 
Proposition~\ref{prop_prob_curr_as_prob_driven_with_bounded_terms} shows that the exponential moments are indeed bounded, 
at least for parameters $\lambda,E$ in a small enough subset of the sub-critical region of Proposition~\ref{prop_solving_PDE_h}. 
This corresponds to Step 2 in the program outlined in Section~\ref{sec_structure_proof}.

The main ingredient to establish Proposition~\ref{prop_prob_curr_as_prob_driven_with_bounded_terms} is the following domination result. 
It is the only result of the present paper for which we need to restrict the range of sub-critical $\lambda,E$, 
see the discussion in Section~\ref{sec_perspectives}. 
\begin{prop}\label{prop_domination_by_P_h}
Let $\lambda,E\in\R$ be sub-critical parameters as in Proposition~\ref{prop_solving_PDE_h} and let $T>0$.  
If $\lambda(\lambda+2E)\geq 0$ and is small enough (independently of $T$), 
there are constants $\kappa\in(0,1),C>0$ depending only on $h,\lambda,E$ such that, 
for any sequence $(\mathcal O_N)_N$ of events involving the dynamics up to time $T$:
\begin{equation}
\limsup_{N\rightarrow\infty}\Prob^{curr,\nu^N_{1/2}}_{\lambda,T}(\mathcal O_N) 
\leq 
C \limsup_{N\rightarrow\infty}\Prob_{h,\lambda,E}^{\nu^N_h}\big(\mathcal O_N\big)^{\kappa}
.
\end{equation}
\end{prop}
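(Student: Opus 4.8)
The plan is to compare the two dynamics through the Radon-Nikodym representation of Proposition~\ref{prop_choice_of_h_sec4}. Starting from
\begin{equation}
\Prob^{curr,\nu^N_{1/2}}_{\lambda,E,T}(\mathcal O_N)
=
\frac{\E^{\nu^N_h}_{h,\lambda+E}\Big[{\bf 1}_{\mathcal O_N} \exp\Big(-\Pi^N_T(h)+\Pi^N_0(h) + \int_0^T \epsilon^N_{h,\lambda,E}(\eta_t)\, dt\Big)\Big]}{\E^{\nu^N_h}_{h,\lambda+E}\Big[\exp\Big(-\Pi^N_T(h)+\Pi^N_0(h) + \int_0^T \epsilon^N_{h,\lambda,E}(\eta_t)\, dt\Big)\Big]},
\label{eq_plan_start}
\end{equation}
I would apply Hölder's inequality to the numerator with exponents $1/\kappa$ and its conjugate. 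This factors out $\Prob^{\nu^N_h}_{h,\lambda+E}(\mathcal O_N)^{\kappa}$ and leaves the task of showing that the remaining exponential moment, namely
\begin{equation}
\E^{\nu^N_h}_{h,\lambda+E}\Big[\exp\Big(\tfrac{1}{1-\kappa}\big(-\Pi^N_T(h)+\Pi^N_0(h) + \textstyle\int_0^T \epsilon^N_{h,\lambda,E}(\eta_t)\, dt\big)\Big)\Big],
\end{equation}
is bounded uniformly in $N$ (for fixed $T$), and that the denominator in~\eqref{eq_plan_start} is bounded below away from $0$ uniformly in $N$. The constant $C$ in the statement will absorb the ratio of these two bounds.

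For the boundary terms $\Pi^N_0(h)$, $\Pi^N_T(h)$, I would use that at time $0$ the law is exactly $\nu^N_h$, under which exponential moments of $\Pi^N(h)$ of all orders are controlled by the concentration estimates of Appendix~\ref{app_concentration} (Proposition~\ref{prop_concentration_exponentielle}); for the terminal time I would first run the Markov semigroup and then invoke the relative entropy bound of Proposition~\ref{prop_entropy_estimate_general} together with the entropy inequality, so that $\Pi^N_T(h)$ has exponential moments close to those under $\nu^N_h$. The time-integrated term $\int_0^T\epsilon^N_{h,\lambda,E}(\eta_t)\,dt$ is controlled by the Feynman-Kac inequality of Lemma~\ref{lemm_derivative_entropy}: its exponential moment is bounded by $\exp\big(T\sup_f\{\nu^N_h(f\,c\,\epsilon^N_{h,\lambda,E}) + \tfrac{N^2}{2}\nu^N_h(fL^*_{h,h,\lambda+E}\mathbf 1) - \tfrac{N^2}{2}\nu^N_h(\Gamma(\sqrt f))\}\big)$, and the estimate~\eqref{eq_estimate_error_terms} established in Section~\ref{sec_bound_adjoint} shows that, for $c$ small enough, each of these contributions is absorbed by a fraction of the carré du champ up to an $O(N^{-1/2})$ remainder, so the supremum is $o_N(1)$. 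This is where the smallness of $\lambda(\lambda+2E)\geq 0$ enters: one needs the constant $\tfrac{1}{1-\kappa}c|\lambda(\lambda+2E)|$ in front of $\Pi^N(h)$-type quantities (and in the estimates chaining $\epsilon^N$ through $\Gamma$) to be below the threshold at which the relevant exponential moments under $\nu^N_h$, and the Feynman-Kac variational problems, stay finite; the admissible window for $\kappa$ shrinks as $\lambda(\lambda+2E)$ grows and closes if $\lambda(\lambda+2E)$ is too large or has the wrong sign.

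I expect the main obstacle to be exactly this last point: obtaining a \emph{uniform-in-$T$} and uniform-in-$N$ control on the exponential moment of $\int_0^T\epsilon^N_{h,\lambda,E}$, because a naive Feynman-Kac bound produces a factor growing like $e^{o_N(1)\cdot T}$, which is harmless for fixed $T$ but must be handled carefully if one wants the constant $C$ to be genuinely $T$-independent (as the statement asserts). The resolution is that $\epsilon^N_{h,\lambda,E}$ has been engineered in Proposition~\ref{prop_choice_of_h_sec4} precisely so that the two-point part of the adjoint cancels, leaving only higher-order correlation terms whose Feynman-Kac variational problem has supremum $\leq 0$ after absorbing into $\Gamma$ — so the exponential growth rate in $T$ is actually non-positive, not just $o_N(1)$, once $\lambda(\lambda+2E)$ is small. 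Verifying that the same smallness threshold simultaneously makes the denominator of~\eqref{eq_plan_start} bounded below (by Jensen's inequality applied to the log of the exponent, using $\limsup_N\E^{\nu^N_h}_{h,\lambda+E}[|\int_0^T\epsilon^N_{h,\lambda,E}|]=0$ from~\eqref{eq_size_epsilon_N_h_sous_prob_lambda_et_prob_h} and the boundary-term bounds) closes the argument, with $\kappa$ chosen as any fixed value in the resulting admissible interval.
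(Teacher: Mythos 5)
Your plan --- apply H\"older to the Radon--Nikodym formula and then bound the remaining exponential moment of the exponent --- is a natural first attempt, and your treatment of $\Pi^N_0(h)$ and of $\int_0^T\epsilon^N_{h,\lambda,E}$ (via concentration under $\nu^N_h$ and via Feynman--Kac, respectively) is in the right spirit. The genuine gap is the terminal boundary term $\Pi^N_T(h)$. You propose to ``run the Markov semigroup and then invoke the relative entropy bound of Proposition~\ref{prop_entropy_estimate_general} together with the entropy inequality,'' but this cannot work as stated. Pinsker's inequality in Proposition~\ref{prop_entropy_estimate_general} only compares the law at time $T$ to $\nu^N_h$ at the price $\|F_N\|_\infty N^{-1/4}$ for a \emph{bounded} observable $F_N$, and the observable you need to control, $F_N=\exp\big[\tfrac{c}{1-\kappa}|\Pi^N(h)|\big]$, is not bounded uniformly in $N$: it can be of size $e^{CN}$ on configurations where $\Pi^N(h)$ is extremal. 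Applying the entropy inequality to the exponential instead produces a double-exponential moment $\nu^N_h\big(\exp[\gamma e^{cF}]\big)$, which diverges. No improvement of the relative entropy estimate from $O(N^{-1/2})$ to $O(N^{-\alpha})$ can fix this, because the unbounded factor grows exponentially in $N$ while the entropy gain is only polynomial. So the direct H\"older route stalls exactly at the terminal time.

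The paper closes this gap with a preliminary step that your proposal omits: it first proves a cruder, $T$-dependent domination (Proposition~\ref{prop_domination_by_P_lambda}) of $\Prob^{curr,\nu^N_{1/2}}_{\lambda,E,T}$ by $\Prob^{\nu^N_{1/2}}_{\lambda+E}$, whose proof is where the sign and smallness of $\lambda(\lambda+2E)$ actually enter, via the inequality $A_{\lambda,E}\leq A^m_{\lambda,E}$ at fixed particle number and the log-Sobolev inequality for the uniform measure $U_m$ (Lemma~\ref{lemm_LSI_SSEP}). This crude bound is then used to show that the event $S_T(\log\log N)=\{|\Pi^N_T(h)|\leq\log\log N\}$ has probability tending to $1$ under the current-biased dynamics, so one may restrict to it at no cost. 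On $S_T(\log\log N)$ one has $\exp[|\Pi^N_T(h)|]\leq\log N=o(N^{1/4})$, which is precisely within the reach of Pinsker's inequality combined with the $O(N^{-1/2})$ relative entropy bound, and from there the estimate of the numerator and the Jensen lower bound on the denominator proceed essentially as you outline. Without this two-step structure --- first a weak, time-dependent domination to truncate $\Pi^N_T(h)$ at a slowly growing level, then the entropy machinery --- your argument has no way to keep the terminal-time exponential moment finite. (A minor side remark: in your starting identity you keep $+\Pi^N_0(h)$, but comparing $\Prob^{curr,\nu^N_{1/2}}$ with $\Prob^{\nu^N_h}_{h,\lambda+E}$ changes the initial measure and contributes an extra $e^{-2\Pi^N_0(h)}$ from~\eqref{eq_def_nu_rho_g}, so the correct exponent is $-\Pi^N_T(h)-\Pi^N_0(h)+\int_0^T\epsilon^N_{h,\lambda,E}$ as in~\eqref{eq_interm_proof_prop_real_domination}.)
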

Proposition~\ref{prop_domination_by_P_h} allows for an expression of the current-biased dynamics in terms of expectations of bounded observables under $\Prob^N_{h,\lambda+E}$ as follows. 
Let $M>0$ and define $F_M$ by:
\begin{equation}
F_M(x) = \begin{cases}
e^x &\text{if }x\leq M,\\
0 &\text{if }x\geq 2M,\\
\text{smooth and bounded by }e^{M} &\text{otherwise},
\end{cases}
\qquad 
x\in\R
.
\label{eq_def_F_M}
\end{equation}
\begin{prop}\label{prop_prob_curr_as_prob_driven_with_bounded_terms}
Let $\lambda,E\in\R$ be sub-critical so that $\sigma h$ has leading eigenvalue strictly below $1$. 
Assume that $\lambda,E$ are such that the domination bound of Proposition~\ref{prop_domination_by_P_h} holds with exponent $\kappa>0$.

There are then $C,C'>0$ and $\kappa'\in(0,1)$ depending only on $h,\lambda,E$ such that,
for each $T>0$, 
each $M>C'$ and each continuous and bounded $G: \mathcal D(\R_+,\s'(\T))\rightarrow\R$:
\begin{align}
\limsup_{N\rightarrow\infty}
\bigg|\E^{curr,\nu^N_{1/2}}_{\lambda,E,T}\Big[ G\big((Y^N_s)_{s\leq T}\big)\Big]
&- \frac{\E^{\nu^N_h}_{h,\lambda+E}\Big[G\big((Y^N_s)_{s\leq T}\big)F_M\big(-\Pi^N_0(h)\big)F_M\big(-\Pi^N_T(h)\big)\Big]}{\E^{\nu^N_h}_{h,\lambda+E}\Big[F_M\big(-\Pi^N_0(h)\big)F_M\big(-\Pi^N_T(h)\big)\Big]}
\bigg| 
\nonumber\\
&\hspace{5cm}
\leq 
\frac{C\|G\|_\infty}{M^{\kappa'}}
.
\label{eq_prob_curr_as_prob_driven_with_bounded_terms}
\end{align}
\end{prop}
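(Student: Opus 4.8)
The plan is to start from the representation~\eqref{eq_change_dynamics_lambda_to_h_lambda} of the current-biased dynamics provided by Proposition~\ref{prop_choice_of_h_sec4}:
\begin{equation}
\Prob^{curr,\nu^N_{1/2}}_{\lambda,E,T}(\cdot)
=
\frac{\E^{\nu^N_h}_{h,\lambda+E}\big[{\bf 1}_{\cdot}\, e^{X^N_T}\big]}{\E^{\nu^N_h}_{h,\lambda+E}\big[e^{X^N_T}\big]},
\qquad
X^N_T := -\Pi^N_T(h)+\Pi^N_0(h)+\int_0^T\epsilon^N_{h,\lambda,E}(\eta_t)\, dt.
\label{eq_plan_rep}
\end{equation}
First I would dispose of the $\int_0^T\epsilon^N_{h,\lambda,E}\, dt$ term: by~\eqref{eq_size_epsilon_N_h_sous_prob_lambda_et_prob_h} its expectation under $\Prob^{\nu^N_h}_{h,\lambda+E}$ vanishes as $N\to\infty$, and the domination bound of Proposition~\ref{prop_domination_by_P_h} transfers this to $\Prob^{curr,\nu^N_{1/2}}_{\lambda,E,T}$ (apply domination to the event $\{|\int_0^T\epsilon^N_{h,\lambda,E}\, dt|>\delta\}$ and use Markov), so that up to an $o_N(1)$ error one may replace $e^{X^N_T}$ by $e^{-\Pi^N_T(h)+\Pi^N_0(h)}$ in both numerator and denominator of~\eqref{eq_plan_rep}. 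Then I would truncate: write $e^{-\Pi^N_0(h)}=F_M(-\Pi^N_0(h))+\big(e^{-\Pi^N_0(h)}-F_M(-\Pi^N_0(h))\big)$ and similarly at time $T$, noting that $0\le e^x-F_M(x)\le e^x{\bf 1}_{x\ge M}$. The task reduces to showing that the contribution of the non-truncated parts, both in the numerator (when multiplied by the bounded $G$) and when comparing the two denominators, is $O(M^{-\kappa'})$ uniformly in $N$.

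The key estimate is therefore a bound of the form $\E^{\nu^N_h}_{h,\lambda+E}\big[e^{-\Pi^N_0(h)}{\bf 1}_{-\Pi^N_0(h)\ge M}\big]\le C e^{-\kappa' M}$ and the analogous bound at time $T$ and for the product of the two indicators. I would obtain this from the concentration-of-measure estimates of Appendix~\ref{app_concentration} (Proposition~\ref{prop_concentration_exponentielle}): since $\Pi^N(h)$ has a bounded exponential moment $\nu^N_h(e^{\gamma|\Pi^N(h)|})\le C$ for some $\gamma>0$ uniformly in $N$, and since $h$ is a negative kernel when $\lambda(\lambda+2E)\ge 0$ (Proposition~\ref{prop_solving_PDE_h}), one has $-\Pi^N_0(h)\ge 0$ up to an $O(N^{-1})$ diagonal correction, and $\nu^N_h(e^{(1+\gamma')(-\Pi^N_0(h))})\le C$ for a small $\gamma'>0$ — this is exactly where the smallness of $\lambda(\lambda+2E)$ enters, as it controls the size of $h$ hence the gap between the exponential weight $e^{2\Pi^N(h)}$ defining $\nu^N_h$ in~\eqref{eq_def_nu_rho_g} and the extra factor $e^{-\Pi^N_0(h)}$. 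A Cauchy–Schwarz/Hölder split then gives $\E^{\nu^N_h}_{h,\lambda+E}[e^{-\Pi^N_0(h)}{\bf 1}_{-\Pi^N_0(h)\ge M}]\le \nu^N_h\big(e^{(1+\gamma')(-\Pi^N_0(h))}\big)^{1/(1+\gamma')}\nu^N_h\big(-\Pi^N_0(h)\ge M\big)^{\gamma'/(1+\gamma')}\le Ce^{-\kappa' M}$, using the stationarity-type estimate~\eqref{eq_pinsker} to pass from the measure $\nu^N_h$ to the law $f_t\nu^N_h$ at times $0$ and $T$. For the time-$T$ factor one uses that $f_T\nu^N_h$ is close in total variation to $\nu^N_h$ by the relative entropy estimate~\eqref{eq_entropy_estimate_prop_sec3}, up to a factor $e^{T/\gamma}$ which is harmless at fixed $T$; alternatively, and more robustly in $T$, one uses the domination bound again to move the time-$T$ event onto the current-biased side and back.

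The main obstacle I anticipate is twofold. First, one must carefully verify that the denominator $\E^{\nu^N_h}_{h,\lambda+E}[F_M(-\Pi^N_0(h))F_M(-\Pi^N_T(h))]$ stays bounded away from $0$ uniformly in $N$ (and, ideally, in $T$): this requires a lower bound, which follows from the fact that $F_M(-\Pi^N_0(h))F_M(-\Pi^N_T(h))\ge e^{-\Pi^N_0(h)-\Pi^N_T(h)}{\bf 1}_{-\Pi^N_0(h)\le M,\,-\Pi^N_T(h)\le M}$ together with a second-moment or Paley–Zygmund argument for $\Pi^N_0(h),\Pi^N_T(h)$ under $\Prob^{\nu^N_h}_{h,\lambda+E}$, again using the moment bound~\eqref{eq_moment_bound} and the convergence of $\Pi^N_0(h)$ to a non-degenerate Gaussian-type limit from Theorem~\ref{theo_fluct_correl_driven_process}. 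Second, the joint control of the two time boundaries $0$ and $T$ — i.e. estimating $\E^{\nu^N_h}_{h,\lambda+E}[e^{-\Pi^N_0(h)}e^{-\Pi^N_T(h)}{\bf 1}_{\{-\Pi^N_0(h)\ge M\}\cup\{-\Pi^N_T(h)\ge M\}}]$ — cannot be done by naive independence since the dynamics correlates times $0$ and $T$; here I would either invoke Proposition~\ref{prop_domination_by_P_h} directly with $\mathcal O_N$ the union event (turning the weighted expectation back into a probability under $\Prob^{curr}$, on which $e^{-\Pi^N_0(h)+\Pi^N_T(h)}$ has already been absorbed into the change of measure), or bound the cross term by Cauchy–Schwarz into a product of single-time estimates at the cost of worsening $\kappa'$. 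Assembling the single-time tail bounds, the denominator lower bound, and the $o_N(1)$ removal of $\epsilon^N_{h,\lambda,E}$ then yields~\eqref{eq_prob_curr_as_prob_driven_with_bounded_terms} with a fresh exponent $\kappa'\in(0,1)$ and constants depending only on $h,\lambda,E$.
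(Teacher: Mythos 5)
Your high-level plan — start from the representation \eqref{eq_change_dynamics_lambda_to_h_lambda}, truncate the boundary exponentials by $F_M$, and control the tails by concentration plus the domination bound — is recognisably the same circle of ideas as the paper's, but the order in which you carry them out creates a genuine gap that the paper's own organisation avoids.

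\textbf{Sign/initial-condition bookkeeping.} Your $X^N_T$ carries $+\Pi^N_0(h)$, but that is the exponent when the reference dynamics is started from $\nu^N_{1/2}$. Changing to the initial condition $\nu^N_h=\mathcal Z^N_h{}^{-1}e^{2\Pi^N(h)}\nu^N_{1/2}$ brings a factor $e^{-2\Pi^N_0(h)}$ into both numerator and denominator, flipping the sign to $-\Pi^N_0(h)$; this is what makes the statement match $F_M(-\Pi^N_0(h))$. You later implicitly use the corrected sign, so this is a slip rather than a conceptual error, but it should be fixed so the weights line up.

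\textbf{The main gap: time $T$.} You want to bound tail contributions of the form
$\E^{\nu^N_h}_{h,\lambda+E}\big[e^{-\Pi^N_0(h)-\Pi^N_T(h)}{\bf 1}_{-\Pi^N_t(h)\ge M}\big]$
directly. Your single-time bound at time $0$ is fine and even gives exponential decay: under $\nu^N_h$, for $h$ a negative kernel one has $\nu^N_h(e^{(1+\gamma')(-\Pi^N(h))})=\nu^N_{1/2}(e^{(1-\gamma')\Pi^N(h)})/\mathcal Z^N_h\le C$ for every $\gamma'\le 1$, and then H\"older plus the Chernoff tail from Proposition~\ref{prop_concentration_exponentielle} closes it. (Note this has nothing to do with smallness of $\lambda(\lambda+2E)$: that smallness is consumed entirely by Proposition~\ref{prop_domination_by_P_h}, which you are assuming.) The trouble is the factor $e^{-\Pi^N_T(h)}$. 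At time $T>0$ the law is $f_T\nu^N_h$, not $\nu^N_h$, and Pinsker/total-variation arguments transfer expectations only up to $\|F\|_\infty\sqrt{2{\bf H}^N(f_T)}$; but $e^{-\Pi^N_T(h)}$ is bounded only by a constant of order $e^{cN}$, so the Pinsker error swamps the $N^{-1/4}$ decay of the entropy. Your second proposed fix (Cauchy--Schwarz) hits the same wall: it produces $\E^{\nu^N_h}_{h,\lambda+E}[e^{-2\Pi^N_T(h)}]$, which you have no tool to bound. Your first proposed fix (apply the domination bound to the union event) converts $\E[e^{\mathcal R^N_T}{\bf 1}_{\mathcal O^c}]$ into $B\,\Prob^{curr}(\mathcal O^c)$ with $B$ the denominator, but you also need $\E[F_MF_M{\bf 1}_{\mathcal O^c}]$; bounding this by $e^{\ldots}{\bf 1}_{\mathcal O^c}$ reintroduces the same unbounded time-$T$ exponential, and bounding it by $e^{2M}\Prob^{\nu^N_h}(\mathcal O^c)$ does not decay in $M$ unless you know the concentration rate $\gamma$ exceeds $2$, which you do not. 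The removal of $\int_0^T\epsilon^N$ from inside the exponential ``up to $o_N(1)$'' has the same flavour of problem: $\int_0^T\epsilon^N$ being small in $L^1$ does not let you replace $e^{X^N_T}$ by $e^{X^N_T-\int\epsilon^N}$ in the ratio without controlling an exponential moment of the error, which is exactly the thing you cannot do at time $T$.

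\textbf{What the paper does differently, and why it matters.} The paper \emph{conditions first}: it defines $V_T(M,\delta)=\{|\Pi^N_0(h)|\le M\}\cap\{|\Pi^N_T(h)|\le M\}\cap\{|\int_0^T\epsilon^N|\le\delta\}$, shows $\Prob^{\nu^N_h}_{h,\lambda+E}(V_T^c)\le C/M+o_N(1)$ (a Markov bound and the entropy estimate suffice), and then uses the domination bound once to replace $\E^{curr}[G]$ by $\E^{curr}[G\mid V_T(M,\delta)]$ at cost $O(\|G\|_\infty/M^\kappa)$. Inside the conditioning, $e^{-\Pi^N_t(h)}$ and $F_M(-\Pi^N_t(h))$ coincide and everything is bounded by $e^M$ uniformly in $N$, so the only remaining task is to drop the indicator ${\bf 1}_{V_T}$ from the ratio of $F_M$-expectations. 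That last step is done with a H\"older inequality at exponent $2+1/r_h$ (so that the exponential moment of $-\Pi^N_0(h)$ stays on the right side of the spectral-radius threshold) and with Pinsker at time $T$, which now applies because $F_M^{2+1/r_h}$ is bounded by $e^{(2+1/r_h)M}$ independently of $N$. The key point is that the paper never evaluates $\E^{\nu^N_h}_{h,\lambda+E}[e^{-p\Pi^N_T(h)}]$ for any $p$: by conditioning before truncating, it keeps every time-$T$ observable bounded uniformly in $N$ so that entropy/Pinsker arguments apply. Your route of truncating before conditioning forfeits that property. If you want to salvage your plan, you should condition on a set controlling $|\Pi^N_T(h)|$ (and $|\int\epsilon^N|$) \emph{before} splitting $e^{-\Pi^N_T(h)}$ into $F_M$ plus tail; at that point you will essentially have reconstructed the paper's proof.
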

\subsubsection{Proof of Proposition~\ref{prop_prob_curr_as_prob_driven_with_bounded_terms} assuming Proposition~\ref{prop_domination_by_P_h}}
\label{sec_proof_comparison_assuming_domination}
\begin{proof}
Let $T,M,\delta>0$, $t\geq 0$, and let  
$G$ be as in Proposition~\ref{prop_prob_curr_as_prob_driven_with_bounded_terms}. 
Introduce the sets:
\begin{equation}
S_t(M) 
:= 
\Big\{ \big|\Pi^N_t(h)\big|\leq M\Big\},
\qquad
U_T(\delta) 
:=
\Big\{ \Big|\int_0^T \epsilon^N_{h,\lambda,E}(\eta_t)\, dt\Big|\leq \delta\Big\}
,
\label{eq_def_ensemble_error_term_small}
\end{equation}
with $\epsilon^N_{h,\lambda,E}$ given in Proposition~\ref{prop_choice_of_h_sec4}. 
Theorem~\ref{theo_size_entropy}, 
the entropy inequality and the concentration bounds of Proposition~\ref{prop_concentration_exponentielle} give a control on $S_t(M)^c$: 
for some $\gamma,C>0$ depending only on $h,\lambda,E$,
\begin{align}
\Prob^{\nu^N_h}_{h,\lambda+E}\big(\big|\Pi^N_t(h)\big|\geq M\big) 
&\leq 
\frac{1}{M}\E^{\nu^N_h}_{h,\lambda+E}\big[\big|\Pi^N_t(h)\big|\big]
\leq
\frac{{\bf H}^N(f_t)}{M\gamma} +\frac{1}{M\gamma}\log \nu^N_h\Big(e^{\gamma^{-1}|\Pi^N(h)|}\Big)
\nonumber\\
&\leq
\frac{Ce^{Ct}}{N^{1/2}} + \frac{C}{M}
.
\end{align}
Proposition~\ref{prop_choice_of_h_sec4} shows that $\epsilon^N_{h,\lambda,E}$ has small time integral, 
thus for a different constant $C>0$:
\begin{equation}
\limsup_{N\rightarrow\infty}\Prob^{\nu^N_h}_{h,\lambda+E}\Big((V_T(M,\delta))^c\Big) 
\leq 
\frac{C}{M},
\quad 
V_T(M,\delta) 
:= 
S_0(M)\cap S_T(M)\cap U_T(\delta)
.
\label{eq_decay_complement_W_T}
\end{equation}
The domination bound of Proposition~\ref{prop_domination_by_P_h} then implies:
\begin{equation}
\limsup_{N\rightarrow\infty}
\Big|\E^{curr,\nu^N_{1/2}}_{\lambda,E,T}\big[G\big] - \E^{curr,\nu^N_{1/2}}_{\lambda,E,T}\big[G \big| V_T(M,\delta)\big]\Big| 
\leq
\frac{C\|G\|_\infty }{M^{\kappa}}
.
\label{eq_current_biased_as_current_biased_conditioned} 
\end{equation}
It is therefore enough to prove Proposition~\ref{prop_prob_curr_as_prob_driven_with_bounded_terms} for the current-biased dynamics conditioned on $V_T(M,\delta)$. 
The proof at this point is elementary but tedious. 
For short, write:
\begin{equation}
\mathcal R^N_T
=
\mathcal R^N_T \big((\eta_t)_{t\leq T}\big)
:=
-\Pi^N_T(h)-\Pi^N_0(h) + \int_0^T\epsilon^N_{h,\lambda,E}(\eta_t)\, dt
.
\end{equation}
From the expression~\eqref{eq_change_dynamics_lambda_to_h_lambda} of the current-biased dynamics in terms of $\Prob^N_{h,\lambda+E}$, 
we can write:
\begin{equation}
\E^{curr,\nu^N_{1/2}}_{\lambda,E,T}\big[G \big| V_T(M,\delta)\big]
=
\Big(\E^{\nu^N_h}_{h,\lambda+E}\big[{\bf 1}_{ V_T(M,\delta)}e^{\mathcal R^N_T}\big]\Big)^{-1}\E^{\nu^N_h}_{h,\lambda+E}\big[G {\bf 1}_{ V_T(M,\delta)}e^{\mathcal R^N_T}\big]
.
\end{equation}
Recall definition~\eqref{eq_def_ensemble_error_term_small} of $U_T(\delta)\subset V_T(M,\delta)$. 
On $U_T(\delta)$, the time integral of $\epsilon^N_{h,\lambda,E}$ is bounded by $\delta$.  
Moreover, on $S_t(M)$, 
$F_M(-\Pi^N_t(h)) = e^{-\Pi^N_t(h)}$, 
$t\in\{0,T\}$. 
Rewriting also the exponential containing $\epsilon^N_{h,\lambda,E}$ using $e^x = 1+ e^x-1$, 
we find after elementary computations:
\begin{align}
\Bigg|\E^{curr,\nu^N_{1/2}}_{\lambda,E,T}\big[G\big| V_T(M,\delta)\big] 
- 
\frac{\E^{\nu^N_h}_{h,\lambda+E}\Big[ G{\bf 1}_{V_T(M,\delta)}F_M\big(-\Pi^N_0(h)\big)F_M\big(-\Pi^N_T(h)\big)\Big]}{\E^{\nu^N_h}_{h,\lambda+E}\Big[{\bf 1}_{V_T(M,\delta)}F_M\big(-\Pi^N_0(h)\big)F_M\big(-\Pi^N_T(h)\big)\Big]}\Bigg| 
\leq 
\|G\|_\infty c(\delta)
,
\label{eq_proof_prop45_0}
\end{align}
with $c(\delta) = 2(e^\delta-1)$. 
To conclude the proof, 
it remains to remove the indicator function in the last equation. 
We start with the numerator, the denominator being similar.

Recall from Proposition~\ref{prop_solving_PDE_h} that $h$ is either a negative kernel when $\lambda(\lambda+2E)\geq 0$
or a positive kernel if $\lambda(\lambda+2E)\leq 0$. 
Recall also that we always work with sub-critical $\lambda,E$ or equivalently that the leading eigenvalue of $\sigma h$ strictly below $1$ (see Proposition~\ref{prop_solving_PDE_h}), 
say smaller than $1-r$ for $r=r(\lambda,E)>0$. 
Recall finally that the domination bound of Proposition~\ref{prop_domination_by_P_h} is assumed to hold for $\lambda,E$.   
Although this domination bound is established only for $\lambda(\lambda+2E)\geq 0$ corresponding to a negative kernel $h$, 
we conjecture it should hold throughout the sub-critical regime (see the discussion in Section~\ref{sec_perspectives}) and therefore carry out the proof of~\eqref{prop_prob_curr_as_prob_driven_with_bounded_terms} also in the case $\lambda(\lambda+2E)<0$.

Assume first that $h$ is a positive kernel. 
This implies (recall $\bar\eta_i^2=\sigma=1/4$ for each $i\in\T_N$):
\begin{equation}
-\Pi^N(h) 
=
-\frac{1}{4N}\sum_{i\neq j}\bar\eta_i\bar\eta_j h_{i-j}
=
-\frac{1}{4N}\sum_{i,j}\bar\eta_i\bar\eta_j h_{i-j}
+\frac{h_0\sigma}{4}
\leq 
\frac{h_0\sigma}{4}
\leq
\frac{\|h\|_\infty}{16}
.
\end{equation}
From $F_M(x)\leq e^x$ for each $x\in\R$, we get:
\begin{equation}
F_M(-\Pi_T(h))
\leq 
e^{-\Pi_T(h)}
\leq 
e^{\|h\|_\infty/16}
.
\label{eq_bound_Pi_T}
\end{equation}
The same holds for $\Pi^N_0$. Thus:
\begin{align}
&\bigg|\E^{\nu^N_h}_{h,\lambda+E}\Big[ G{\bf 1}_{V_T(M,\delta)}F_M\big(-\Pi^N_0(h)\big)F_M\big(-\Pi^N_T(h)\big)\Big]
\nonumber\\
&\hspace{2cm}
-\E^{\nu^N_h}_{h,\lambda+E}\Big[ GF_M\big(-\Pi^N_0(h)\big)F_M\big(-\Pi^N_T(h)\big)\Big]\bigg|
\nonumber\\
&\hspace{2.7cm}=
\Big|\E^{\nu^N_h}_{h,\lambda+E}\Big[ G{\bf 1}_{V_T(M,\delta)^c}F_M\big(-\Pi^N_0(h)\big)F_M\big(-\Pi^N_T(h)\big)\Big]\Big|
\nonumber\\
&\hspace{2.7cm}\leq
\|G\|_\infty e^{\|h\|_\infty/8} 
\Prob^{\nu^N_h}_{h,\lambda+E}\big(V_T(M,\delta)^c\big)
\leq
\frac{C(h,\lambda,E)\|G\|_\infty(1+o_N(1))}{M} 
.
\end{align}
Assume now that $h$ is a negative kernel. 
By the explicit formula for $h$ in Proposition~\ref{prop_solving_PDE_h}, 
we know that $-\sigma h$ has eigenvalues bounded from above. 
Let $r_h$ be the spectral radius of $-\sigma h$. 
Hölder inequality with exponents $1+(2r_h)^{-1}, 2r_h+1$ and Cauchy-Schwarz inequality give:
\begin{align}
&\E^{\nu^N_h}_{h,\lambda+E}\Big[ G{\bf 1}_{V_T(M,\delta)^c}F_M\big(-\Pi^N_0(h)\big)F_M\big(-\Pi^N_T(h)\big)\Big]
\label{eq_bound_with_spectral_radius}\\
&\
\leq 
\|G\|_\infty  \E^{\nu^N_h}_{h,\lambda+E}\Big[F_M\big(-\Pi^N_0(h)\big)^{1+\frac{1}{2r_h}}F_M\big(-\Pi^N_T(h)\big)^{1+\frac{1}{2r_h}}\Big]^{\frac{2r_h}{2r_h+1}}
\Prob^{\nu^N_h}_{h,\lambda+E}\big(V_T(M,\delta)^c\big)^{\frac{1}{2r_h+1}}
\nonumber\\
&\
\leq 
\frac{C(h,\lambda,E)\|G\|_\infty}{M^{\frac{1}{2r_h+1}}} 
\E^{\nu^N_h}_{h,\lambda+E}\Big[F_M\big(-\Pi^N_0(h)\big)^{2+\frac{1}{r_h}}\Big]^{\frac{r_h}{2r_h+1}}
\E^{\nu^N_h}_{h,\lambda+E}\Big[F_M\big(-\Pi^N_T(h)\big)^{2+\frac{1}{r_h}}\Big]^{\frac{r_h}{2r_h+1}}
.
\nonumber
\end{align}
We claim that the last two expectations are bounded independently of $N,M,T$. 
Indeed, as $F_M(x)\leq e^x$, the exponential moment involving $\Pi_0(h)$ satisfies:
\begin{align}
\E^{\nu^N_h}_{h,\lambda+E}\Big[F_M\big(-\Pi^N_0(h)\big)^{2+\frac{1}{r_h}}\Big]
&\leq 
\nu^N_h\Big[\exp\Big[-\Big(2+\frac{1}{r_h}\Big)\Pi^N_0(h)\Big]\Big]
\nonumber\\
&\leq 
\frac{1}{\mathcal Z^N_h}\nu^N_{1/2}\Big[\exp\Big[-\frac{1}{r_h}\Pi^N_0(h)\Big]\Big]
.
\label{eq_bound_exp_moment_Pi0}
\end{align}
By definition of $r_h$, 
$-(\sigma/2r_h) h$ has leading eigenvalue at most $1/2$. 
By Lemma~\ref{lemm_bound_correlations}, 
the exponential moment in~\eqref{eq_bound_exp_moment_Pi0} is therefore bounded with $N$. Lemma~\ref{lemm_bound_correlations} also gives $\mathcal Z^N_h\geq 1$. 

Consider now $\Pi^N_T$. 
As $F_M$ is a bounded function, 
Pinsker's inequality in Proposition~\ref{prop_entropy_estimate_general} gives:
\begin{equation}
\limsup_{N\rightarrow\infty}\Big|\E^{\nu^N_h}_{h,\lambda+E}\Big[F_M\big(-\Pi^N_T(h)\big)^{2+\frac{1}{2r_h}}\Big] -\nu^N_h\Big[F_M\big(-\Pi^N(h)\big)^{2+\frac{1}{2r_h}}\Big]\Big|
=
0
.
\label{eq_bound_exp_moment_PiT}
\end{equation}
The bound is then the same as in~\eqref{eq_bound_exp_moment_Pi0}. 

Consider now the denominator. 
Recalling that $F_M(-\Pi_t(h)) = e^{-\Pi_t(h)}$ on $S_t(M)$, 
it is enough to prove that, for $M$ larger than some $C(h,\lambda,E)>0$:
\begin{equation}
\inf_{M>C(h,\lambda,E)}\inf_{T>0}\liminf_{N\to\infty}\E^{\nu^N_h}_{h,\lambda+E}\Big[{\bf 1}_{V_T(M,\delta)}\exp\big[-\Pi^N_0(h)-\Pi^N_T(h)\big]\Big]
>
0
.
\end{equation}
Write ${\bf 1}_{V_T(\delta)} = 1-{\bf 1}_{V_T(\delta)^c}$. 
The moment bound in Proposition~\ref{prop_entropy_estimate_general}, 
Jensen inequality and 
the convergence/stationarity results of Theorem~\ref{theo_fluct_correl_driven_process} give the existence of $c>0$ independent of $N,T,M$ such that:
\begin{align}
&\liminf_{N\to\infty}\E^{\nu^N_h}_{h,\lambda+E}\Big[\exp\big[-\Pi^N_0(h)-\Pi^N_T(h)\big]\Big] 
\nonumber\\
&\hspace{4cm}\geq 
2\liminf_{N\to\infty}\exp\Big[-\E^{\nu^N_h}_{h,\lambda+E}\big[\Pi^N_0(h)+\Pi^N_T(h)\big]\Bigg]
\geq 
c>0
.
\label{eq_lower_bound_exp_moments_Pi0_Pi_T}
\end{align}
On the other hand, we know by~\eqref{eq_current_biased_as_current_biased_conditioned} and~\eqref{eq_proof_prop45_0} that: 
\begin{align}
&\sup_{T>0}\limsup_{N\to\infty}\E^{\nu^N_h}_{h,\lambda+E}\Big[{\bf 1}_{V_T(M,\delta)^c}\exp\big[-\Pi^N_0(h)-\Pi^N_T(h)\big]\Big]
\nonumber\\
&\hspace{5cm}
\leq 
\frac{C}{M^{\frac{1}{2r_h+1}}}{\bf 1}_{\lambda(\lambda+2E)>0}  +\frac{C}{M}{\bf 1}_{\lambda(\lambda+2E)\leq 0}
.
\end{align}
Putting together this bound and~\eqref{eq_current_biased_as_current_biased_conditioned} and~\eqref{eq_proof_prop45_0}, 
we conclude that for all $M$ larger than some $C=C(h,\lambda,E)>0$:
\begin{align}
&\limsup_{N\rightarrow\infty}\Bigg|\E^{curr,\nu^N_{1/2}}_{\lambda,E,T}\big[G\big] 
- 
\frac{\E^{\nu^N_h}_{h,\lambda+E}\Big[ GF_M\big(-\Pi^N_0(h)\big)F_M\big(-\Pi^N_T(h)\big)\Big]}{\E^{\nu^N_h}_{h,\lambda+E}\Big[F_M\big(-\Pi^N_0(h)\big)F_M\big(-\Pi^N_T(h)\big)\Big]}\Bigg| 
\nonumber\\
&\hspace{3.5cm}
\leq 
\|G\|_\infty c(\delta)
+\frac{C\|G\|_\infty}{M^{\kappa'}}
,
\end{align}
where $\kappa' = \min\{(2r_h+1)^{-1},\kappa\}$ if $h$ is a negative kernel,  
$\kappa'=\min\{1,\kappa\}$ if $h$ is a positive kernel. 
Since $c(\delta) = 2(e^{\delta}-1)$, 
taking $\delta= M^{-\kappa'}$ concludes the proof of Proposition~\ref{prop_prob_curr_as_prob_driven_with_bounded_terms} assuming the domination bound of Proposition~\ref{prop_domination_by_P_h}.
\end{proof}
\subsubsection{Proof of Proposition~\ref{prop_domination_by_P_h}: the domination bound}\label{sec_proof_domination}
The time-independent domination bound of Proposition~\ref{prop_domination_by_P_h} is proven by first establishing 
a weaker domination result, Proposition~\ref{prop_domination_by_P_lambda}, 
in terms of the dynamics $\Prob^N_{\lambda+E}$ that has the correct macroscopic current but the wrong correlation structure. 
This first domination bound provides a control of the current-biased dynamics to order $1$ in $N$, 
but not yet uniform in time, 
reflecting the fact that the correlation structure is not yet properly tuned. 
\begin{prop}\label{prop_domination_by_P_lambda}
Let $\lambda,E\in\R$ be sub-critical so that $\sigma h$ has leading eigenvalue strictly below $1$ (recall Proposition~\ref{prop_solving_PDE_h}). 
Recall the definition of $A_{\lambda,E}$ in~\eqref{eq_def_A}.

Then, for each $s>0$, if $\lambda(\lambda+2E)\geq0$ and is small enough depending on $s$, 
there is $C(s,\lambda,E)>0$ satisfying:
\begin{equation}
\forall T>0,\qquad 
\sup_{N\geq 1}\E^{\nu^N_{1/2}}_{\lambda+E}\Big[\exp\Big[s\int_0^TA_{\lambda,E}(\eta_t)\, dt\Big]\Big] 
\leq 
e^{C(s,\lambda,E)T}
.
\label{eq_bound_normalisation_P_lambda}
\end{equation}
This implies that there is $C(h,\lambda,E)>0$ such that, 
for any event $\mathcal O$ depending on the dynamics up to time $T$:
\begin{equation}
\Prob^{curr,\nu^N_{1/2}}_{\lambda,E,T}(\mathcal O) 
\leq 
e^{C(h,\lambda,E)T}\Prob_{\lambda+E}^{\nu^N_{1/2}}(\mathcal O)^{1/2}
,\qquad
N\in\N_{\geq 1}
.
\label{eq_domination_par_P_lambda}
\end{equation}
\end{prop}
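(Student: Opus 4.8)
The plan is to reduce the domination bound~\eqref{eq_domination_par_P_lambda} to the exponential-moment estimate~\eqref{eq_bound_normalisation_P_lambda}, and then to prove the latter by a Feynman--Kac argument.

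\emph{Deducing~\eqref{eq_domination_par_P_lambda} from~\eqref{eq_bound_normalisation_P_lambda}.} By Lemma~\ref{lemm_RD_Prob_lambda}, for any event $\mathcal O$ measurable with respect to the trajectory up to time $T$,
\begin{equation*}
\Prob^{curr,\nu^N_{1/2}}_{\lambda,E,T}(\mathcal O)
=
\frac{\E^{\nu^N_{1/2}}_{\lambda+E}\big[{\bf 1}_{\mathcal O}\,\exp\big(\int_0^T A_{\lambda,E}(\eta_t)\,dt + \int_0^T\epsilon^N_{\lambda,E}(\eta_t)\,dt\big)\big]}{\E^{\nu^N_{1/2}}_{\lambda+E}\big[\exp\big(\int_0^T A_{\lambda,E}(\eta_t)\,dt + \int_0^T\epsilon^N_{\lambda,E}(\eta_t)\,dt\big)\big]},
\end{equation*}
where $\sup_\eta|\epsilon^N_{\lambda,E}|\leq C(\lambda,E)/N$, so that $\big|\int_0^T\epsilon^N_{\lambda,E}(\eta_t)\,dt\big|\leq C(\lambda,E)T$ pathwise, uniformly in $N$. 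The numerator is at most $e^{C(\lambda,E)T}\,\E^{\nu^N_{1/2}}_{\lambda+E}[{\bf 1}_{\mathcal O}e^{\int_0^T A_{\lambda,E}}]$, which by Cauchy--Schwarz and~\eqref{eq_bound_normalisation_P_lambda} with $s=2$ is bounded by $e^{C(\lambda,E)T}\,\Prob^{\nu^N_{1/2}}_{\lambda+E}(\mathcal O)^{1/2}\,e^{C(2,\lambda,E)T/2}$. For the denominator, Jensen's inequality gives $\E^{\nu^N_{1/2}}_{\lambda+E}\big[e^{\int_0^T (A_{\lambda,E}+\epsilon^N_{\lambda,E})(\eta_t)\,dt}\big]\geq \exp\big(\E^{\nu^N_{1/2}}_{\lambda+E}\big[\int_0^T (A_{\lambda,E}+\epsilon^N_{\lambda,E})(\eta_t)\,dt\big]\big)$, and since the product Bernoulli measure $\nu^N_{1/2}$ is invariant for the WASEP, the time marginals of $\Prob^{\nu^N_{1/2}}_{\lambda+E}$ equal $\nu^N_{1/2}$, under which $\nu^N_{1/2}(\bar\eta_i\bar\eta_{i+1})=0$; hence $\E^{\nu^N_{1/2}}_{\lambda+E}[\int_0^T A_{\lambda,E}(\eta_t)\,dt]=0$ and the denominator is at least $e^{-C(\lambda,E)T}$. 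Combining these bounds yields~\eqref{eq_domination_par_P_lambda} with an updated constant; note that only $s=2$ is used, so~\eqref{eq_bound_normalisation_P_lambda} is needed for that single value of $s$.

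\emph{Reduction of~\eqref{eq_bound_normalisation_P_lambda} to a static problem.} Fix $s>0$. The Feynman--Kac inequality of Lemma~\ref{lemm_derivative_entropy}, applied to $\Prob^N_{\lambda+E}$ with reference measure $\nu^N_{1/2}$ (for which the adjoint term $\nu^N_{1/2}(fL^*{\bf 1})$ vanishes, $\nu^N_{1/2}$ being invariant for $\Prob^N_{\lambda+E}$), gives
\begin{equation*}
\tfrac{1}{T}\log\E^{\nu^N_{1/2}}_{\lambda+E}\Big[e^{s\int_0^T A_{\lambda,E}(\eta_t)\,dt}\Big]
\leq
\sup_{f\geq 0:\,\nu^N_{1/2}(f)=1}\Big\{ s\,\nu^N_{1/2}(fA_{\lambda,E}) - \tfrac{N^2}{2}\nu^N_{1/2}\big(\Gamma_{\lambda+E}(\sqrt f)\big)\Big\},
\end{equation*}
with $\Gamma_{\lambda+E}$ the carré du champ of $\Prob^N_{\lambda+E}$. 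Since $c_{\lambda+E}(\eta,\eta^{i,i+1})\geq e^{-|\lambda+E|}c(\eta,\eta^{i,i+1})$ for $N\geq1$, the carré du champ dominates $2e^{-|\lambda+E|}D^N(\sqrt f)$, where $D^N(\psi):=\tfrac{1}{2}\sum_{i\in\T_N}\nu^N_{1/2}\big(c(\eta,\eta^{i,i+1})[\psi(\eta^{i,i+1})-\psi(\eta)]^2\big)$ is the symmetric exclusion Dirichlet form. It therefore suffices to prove that, for $\lambda(\lambda+2E)\geq 0$ small enough depending on $s$, there is $C(s,\lambda,E)$ with
\begin{equation*}
\sup_{f\geq 0:\,\nu^N_{1/2}(f)=1}\Big\{ s\,\nu^N_{1/2}(fA_{\lambda,E}) - e^{-|\lambda+E|}N^2 D^N(\sqrt f)\Big\}\leq C(s,\lambda,E),\qquad N\in\N_{\geq1}.
\end{equation*}

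\emph{The core estimate and the main obstacle.} Using $c(\eta,\eta^{i,i+1})=2\sigma-2\bar\eta_i\bar\eta_{i+1}$ one rewrites $A_{\lambda,E}=\tfrac{\lambda(\lambda+2E)}{2}G$, where $G:=-2\sum_{i\in\T_N}\bar\eta_i\bar\eta_{i+1}=\sum_{i\in\T_N}(\eta_i-\eta_{i+1})^2-\tfrac{N}{2}$ is a $\nu^N_{1/2}$-centred quadratic functional counting the (centred) number of discordant bonds. Since $\lambda(\lambda+2E)\geq0$, the potential rewards configurations with many discordant bonds $\eta_i\neq\eta_{i+1}$, which are precisely those on which the exclusion Dirichlet form is large, so one expects the gain to be absorbed into the Dirichlet cost once $\lambda(\lambda+2E)$ is small. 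To make this quantitative I would reuse the cut-off/replacement scheme of the proof of Lemma~\ref{lemm_estimate_threepoint}: at a Dirichlet cost of at most $\alpha N^2 D^N(\sqrt f)$ plus a term whose exponential moment under $\nu^N_{1/2}$ is $1+o_N(1)$, replace each $\bar\eta_i$, $\bar\eta_{i+1}$ appearing in $G$ by a block average over $\lfloor aN\rfloor$ sites, thereby reducing $G$ to a smoothed, intensive quadratic functional $\tilde G$ of local averages; then bound $\tfrac{s\lambda(\lambda+2E)}{2}\nu^N_{1/2}(f\tilde G)$ by the entropy inequality, using that $\tilde G$ is a $\Pi^N$-type functional with exponential moments bounded uniformly in $N$ by the concentration estimates of Proposition~\ref{prop_concentration_exponentielle}. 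Choosing $\alpha$ a small multiple of $e^{-|\lambda+E|}$ and $\lambda(\lambda+2E)$ small then closes the bound with a constant independent of $N$ and $T$. The hard part is exactly this core estimate: applying the entropy inequality directly to $G$ loses a factor diverging with $N$ because $G$ is extensive, so one genuinely needs the replacement step to pass to an intensive functional before invoking concentration. This is the only point where $\lambda(\lambda+2E)$ must be taken small (and $\geq0$, for the favourable sign), and hence the only source of the parameter restriction in the statement.
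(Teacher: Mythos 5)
Your reduction of~\eqref{eq_domination_par_P_lambda} to~\eqref{eq_bound_normalisation_P_lambda} matches the paper: Lemma~\ref{lemm_RD_Prob_lambda} plus Cauchy--Schwarz (with $s=2$) on the numerator, Jensen on the denominator, and $\nu^N_{1/2}(A_{\lambda,E})=0$. Your Feynman--Kac reduction to the static variational problem is likewise the one used in the paper. The problem is entirely in the ``core estimate'' step, where your plan has a genuine gap.

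You propose to absorb $s\lambda(\lambda+2E)\nu^N_{1/2}(f\tilde G)$ by the entropy inequality together with the concentration estimates for $\Pi^N$-type functionals under $\nu^N_{1/2}$. But the entropy inequality produces a term $\nu^N_{1/2}(f\log f)/\gamma$ which must then be dominated by the Dirichlet form $N^2 D^N(\sqrt f)$. Since the exclusion dynamics conserves particle number, there is no log-Sobolev inequality for $\nu^N_{1/2}$ with respect to $D^N$ on the full space $\Omega_N$: a density $f$ that is constant on a single hyperplane $\Omega_{N,m}$ has $D^N(\sqrt f)=0$ while $\nu^N_{1/2}(f\log f)=\Theta(N)$ for $m$ bounded away from $N/2$. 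So the entropy term you generate is simply not controlled by your Dirichlet budget, and the argument does not close. The paper circumvents this precisely by bounding the Feynman--Kac moment by a $\max$ over the fixed-particle-number sectors, where the uniform measure $U_m$ is invariant and where the log-Sobolev inequality of Lemma~\ref{lemm_LSI_SSEP} holds with a constant uniform in $m$; that inequality is the missing ingredient in your plan.

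Two further points where your argument diverges from what is actually needed. First, the way the paper exploits $\lambda(\lambda+2E)\geq 0$ is not your ``discordant bonds cost Dirichlet energy'' heuristic (which conflates discordant bonds of a \emph{configuration} with gradients of $\sqrt f$ — these are different objects, and a flat density on a hyperplane has zero Dirichlet form regardless of how many discordant bonds its support has). Rather, the sign is used to discard the extensive density-deviation term: on $\Omega_{N,m}$ one has $A_{\lambda,E}=A^m_{\lambda,E}-\lambda(\lambda+2E)N(m/N-1/2)^2\leq A^m_{\lambda,E}$, and the recentred $A^m_{\lambda,E}$ has the deterministic, $O(1)$ contribution $\sum_{i\neq j}\bar\eta^m_i\bar\eta^m_j=-N\rho(1-\rho)$ after the integration by parts of Lemma~\ref{lemm_IPP}. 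Second, once one works on a sector the relevant concentration tool is the Herbst-type bound of Lemma~\ref{lemm_exp_mom_uniform} for $U_m$ (applied to $\big(N^{-1/2}\sum_{i\neq j}\phi_{N-1}(j-i)\bar\eta^m_i\big)^2$, with the constraint that $\gamma c s^2\lambda^2(\lambda+2E)^2$ be small enough), not Proposition~\ref{prop_concentration_exponentielle} for $\nu^N_{1/2}$. This is also where the smallness restriction on $\lambda(\lambda+2E)$ originates, consistent with what you anticipated, but via the sector-wise Gaussian moment rather than via a replacement step under $\nu^N_{1/2}$.
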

\begin{proof}
Let us first prove~\eqref{eq_domination_par_P_lambda} assuming \eqref{eq_bound_normalisation_P_lambda}. 
Fix $\mathcal O$ as in the proposition. 
Lemma~\ref{lemm_RD_Prob_lambda} relates the current-biased dynamics starting from $\nu^N_{1/2}$ to $\Prob^{\nu^N_{1/2}}_{\lambda+E}$:
\begin{equation}
\Prob^{curr,\nu^N_{1/2}}_{\lambda,E,T}(\mathcal O) 
=
\frac{\E^{\nu^N_{1/2}}_{\lambda+E}\Big[{\bf 1}_{\mathcal O}\exp\Big[\int_0^TA_{\lambda,E}(\eta_t)\, dt\Big]\Big]}{\E^{\nu^N_{1/2}}_{\lambda+E}\Big[\exp\Big[\int_0^TA_{\lambda,E}(\eta_t)\, dt\Big]\Big]}
.
\end{equation}
Applying Cauchy-Schwarz inequality to the numerator and the bound~\eqref{eq_bound_normalisation_P_lambda}, 
then Jensen inequality to the denominator, 
one finds:
\begin{align}
\Prob^{curr,\nu^N_{1/2}}_{\lambda,E,T}(\mathcal O) 
\leq 
\frac{e^{C(s=2,\lambda,E)T/2}\, \nu_{1/2}^N\big(\exp[2\Pi^N(h)]\big)^{1/2}}{\exp\E^{\nu^N_{1/2}}_{\lambda+E}\Big[\int_0^TA_{\lambda,E}(\eta_t)\, dt\Big]}\Prob_{\lambda+E}^{N,\nu^N_{1/2}}(\mathcal O)^{1/2}
.
\label{eq_proof_domination_0}
\end{align}
Since $\sigma h$ has leading eigenvalue strictly below $1$, 
Lemma~\ref{lemm_bound_correlations} gives:
\begin{equation}
\sup_N\nu^N_{1/2}(\exp[2\Pi^N(h)])
<
\infty
.
\end{equation}
The numerator of~\eqref{eq_proof_domination_0} thus has the desired form~\eqref{eq_domination_par_P_lambda}. 
Moreover, 
recalling definition~\eqref{eq_def_A} of $A_{\lambda,E}$ 
and the fact that $\nu^N_{1/2}$ is invariant for $\Prob^N_{\lambda+E}$ and product,
\begin{equation}
\E^{\nu^N_{1/2}}_{\lambda+E}\Big[\int_0^TA_{\lambda,E}(\eta_t)\, dt\Big] 
= 
T\nu^N_{1/2}\big[A_{\lambda,E}\big]
=0
.
\end{equation}
This concludes the proof of \eqref{eq_domination_par_P_lambda} assuming \eqref{eq_bound_normalisation_P_lambda}, which we now prove. 
For each $0\leq m\leq N$, let $U_m$ denote the uniform measure on the set $\Omega_{N,m}$ of configurations with $m$ particles. 
One can check that the measure $U_m$ is invariant for the dynamics $\Prob^N_{\lambda+E}$. 
Let us rewrite $A_{\lambda,E}$, defined in~\eqref{eq_def_A}, 
in terms of quantities with vanishing average under $U_m$. 
Define, for each $m$ and $\eta\in\Omega_{N,m}$:
\begin{equation}
A^m_{\lambda,E}(\eta)
:=
-\lambda(\lambda+2E)\sum_{i\in\T_N}\bar\eta_i^m\bar\eta^m_{i+1},
\qquad
\bar\eta^m_\cdot 
= 
\eta_\cdot - \frac{m}{N}
.
\end{equation}
Then, on $\Omega_{N,m}$:
\begin{equation}
A_{\lambda,E}
=
A^m_{\lambda,E}
-\lambda(\lambda+2E)N\Big(\frac{m}{N}-\frac{1}{2}\Big)^2
\leq 
A^m_{\lambda,E}
,
\end{equation}
where the inequality follows from the assumption $\lambda(\lambda+2E)\geq 0$. 
As a result, the exponential moment to estimate satisfies, for each $s>0$:
\begin{align}
\E^{\nu^N_{1/2}}_{\lambda+E}\Big(\exp\Big[s\int_0^TA_{\lambda,E}(\eta_t)\, dt\Big]\Big) 
&\leq 
\max_{0\leq m\leq N}\E^{U_m}_{\lambda+E}\Big(\exp\Big[s\int_0^TA^m_{\lambda,E}(\eta_t)\, dt\Big]\Big) 
.
\end{align}
Fix $0\leq m\leq N$ henceforth, and let us estimate the above exponential moment. 
Let $\Gamma_{\lambda+E}$ denote the carré du champ operator associated with $L_{\lambda+E}$:
\begin{equation}
\Gamma_{\lambda+E}(g)(\eta)
:=
\frac{1}{2}\sum_{i\in\T_N} c_{\lambda+E}(\eta,\eta^{i,i+1})\big[u(\eta^{i,i+1})-u(\eta)\big]^2,
\qquad \eta\in\Omega_N,\quad
u:\Omega_N\rightarrow\R
.
\end{equation}
Feynman-Kac inequality (see Lemma~\ref{lemm_derivative_entropy}) and the invariance of $U_m$ give:
\begin{align}
\frac{1}{T}&\log\E^{U_m}_{\lambda+E}\Big[\exp\Big[s\int_0^TA^m_{\lambda,E}(\eta_t)\, dt\Big]\Big] 
\nonumber\\
&\qquad \leq 
\sup_{f\geq 0:U_m(f)=1}\Big\{U_m\big(fsA_{\lambda,E}\big) - \frac{N^2}{2}\, U_m\big(\Gamma_{\lambda+E}(\sqrt{f})\big)\Big\}
.
\label{eq_bound_normalisation_E_lambda_0}
\end{align}
An elementary computation shows that, for each density $f$ for $U_m$:
\begin{equation}
U_m\big(\Gamma_{\lambda+E}(\sqrt{f})\big) 
= 
\cosh\big((\lambda+E)/N\big) D^m_{ex}(\sqrt{f}),
\qquad 
D^m_{ex}(\sqrt{f}) 
:= 
- U_m(\sqrt{f}L_0\sqrt{f})
.
\end{equation}
The supremum in \eqref{eq_bound_normalisation_E_lambda_0} therefore reduces to:
\begin{align}
\sup_{f\geq 0:U_m(f)=1}\Big\{\nu^N_{1/2}\big(f s A^m_{\lambda,E}\big) - \frac{N^2}{2}\cosh\big((\lambda+E)/N\big)D^m_{ex}(\sqrt{f}) \Big\}.\label{eq_bound_normalisation_E_lambda_1}
\end{align}
Fix a density $f\geq 0$ for $U_m$. 
We aim to prove that this supremum is bounded by some $C(s,\lambda,E)>0$ provided $\lambda(\lambda+2E)\geq 0$ is small enough. 
To do so,  
we smooth out $A^m_{\lambda,E}$ using the carre du champ in~\eqref{eq_bound_normalisation_E_lambda_1} as was done in the proof of Lemma~\ref{lemm_estimate_threepoint}. 
Notice first that an integration by parts gives, for each $\eta\in\Omega_N$ and $i\in\T_N$:
\begin{equation}
\bar\eta^m_{i+1}
=
\frac{1}{N-1}\sum_{j\neq i}\bar\eta^m_{j}
+
\sum_{j\neq i,i-1} (\bar\eta^m_{j}-\bar\eta^m_{j+1})\phi_{N-1}(j-i)
,
\label{eq_IPP_sec3}
\end{equation}
with:
\begin{equation}
\phi_{N-1}(a) 
:= 
\frac{N-1-(a\mod N)}{N-1},
\qquad 
a\in\Z
.
\end{equation}
Fix a density $f$ for $\nu^N_{1/2}$. 
Using~\eqref{eq_IPP_sec3} on $A^m_{\lambda,E}$ for each $i\in\T_N$ and the integration by parts Lemma~\ref{lemm_IPP} 
(which applies to $U_m$ by Remark~\ref{rmk_IPP_uniform_measure}) 
with $\alpha=1/2$, $\tilde h=0$ and $\tilde\lambda=\lambda+E$, 
we find that there is a numerical constant $c>0$ such that:
\begin{align}
U_m\big(fsA^m_{\lambda,E}\big) 
&\leq 
\frac{N^2}{4}D^m_{ex}(\sqrt{f}) 
- \frac{s\lambda(\lambda+2E)}{N-1} U_m\Big(f\sum_{i\neq j}\bar\eta^m_i\bar\eta^m_j\Big)
\nonumber\\
&\quad
+ \frac{cs^2\lambda^2(\lambda+2E)^2}{N}U_m\bigg[f\sum_{j\in\T_N} \Big(\frac{1}{\sqrt{N}}\sum_{i\neq j}\phi_{N-1}(j-i)\bar\eta^m_i\Big)^2\bigg]
.
\label{eq_to_prove_control_A_lambda_line_1}
\end{align}
Adding and removing the (deterministic) $i=j$ term in the first sum and using $\lambda(\lambda+2E)\geq 0$, we get:
\begin{equation}
- \frac{s\lambda(\lambda+2E)}{N-1} U_m\Big(f\sum_{i\neq j}\bar\eta^m_i\bar\eta^m_j\Big)
\leq 
s\lambda(\lambda+2E)\frac{N}{N-1} \frac{m}{N}\Big(1-\frac{m}{N}\Big)
.
\end{equation}
Since $\rho(1-\rho)\leq 1/4$ for any $\rho\in[0,1]$,  
the bound~\eqref{eq_bound_normalisation_P_lambda} will be proven, 
recalling~\eqref{eq_bound_normalisation_E_lambda_1}, 
if we can show that, for $\lambda(\lambda+2E)$ small enough:
\begin{equation}
\frac{cs^2\lambda^2(\lambda+2E)^2}{N}U_m\bigg(f\sum_{j\in\T_N} \Big(\frac{1}{\sqrt{N}}\sum_{i\neq j}\phi_{N-1}(j-i)\bar\eta^m_i\Big)^2\bigg)
\leq 
C(s,\lambda,E) + \frac{N^2}{4}D^m_{ex}(\sqrt{f})
,
\label{eq_to_prove_control_A_lambda_line_2}
\end{equation}
where $C(s,\lambda,E)>0$ is independent of $f,m,N$. 

To prove~\eqref{eq_to_prove_control_A_lambda_line_2}, 
we use the entropy inequality for each $j\in\T_N$ to get, 
for any $\gamma>0$:
\begin{align}
&\frac{cs^2\lambda^2(\lambda+2E)^2}{N}U_m\bigg(f\sum_{j\in\T_N} \Big(\frac{1}{\sqrt{N}}\sum_{i\neq j}\phi_{N-1}(j-i)\bar\eta^m_i\Big)^2\bigg)
\nonumber\\
&\quad \leq 
\frac{U_m(f\log f)}{\gamma} 
\nonumber\\
&\qquad 
+\frac{1}{\gamma N}\sum_{j\in\T_N}\log U_m\bigg(\exp\Big(\gamma cs^2\lambda^2(\lambda+2E)^2\Big(\frac{1}{\sqrt{N}}\sum_{i\neq j}\phi_{N-1}(j-i)\bar\eta^m_i\Big)^2\Big]\bigg)
.
\end{align}
The entropic term can be estimated with the log-Sobolev inequality of Lemma~\ref{lemm_LSI_SSEP}: 
for $\gamma$ larger than some $\gamma_0>0$ independent of $m$,
\begin{equation}
\frac{U_m(f\log f)}{\gamma}
\leq 
\frac{N^2}{4} D^m_{ex}(\sqrt{f})
.
\label{eq_use_LSI_sec3}
\end{equation}
On the other hand, the function $\phi_{N-1}(j-\cdot)$ is Lipschitz uniformly in $j\in\T_N$, 
in the sense:
\begin{equation}
\sup_{N\geq 2}\max_{i,j\in\T_N}N\big|\phi_{N-1}(j-i-1)-\phi_{N-1}(j-i)\big|
=
2
.
\end{equation}
Lemma~\ref{lemm_exp_mom_uniform} then implies that, 
for $\lambda(\lambda+2E)$ smaller than a constant depending only on $s,\gamma$:
\begin{equation}
\frac{1}{\gamma N}\sum_{j\in\T_N}\log U_m\bigg[\exp\Big(\gamma cs^2\lambda^2(\lambda+2E)^2\Big(\frac{1}{\sqrt{N}}\sum_{i\neq j}\phi_{N-1}(j-i)\bar\eta^m_i\Big)^2\Big]\bigg)
\leq 
\frac{3}{\gamma}
.
\end{equation}
Putting the last estimate together with~\eqref{eq_to_prove_control_A_lambda_line_1}--\eqref{eq_to_prove_control_A_lambda_line_2}--\eqref{eq_use_LSI_sec3} inside~\eqref{eq_bound_normalisation_E_lambda_1} concludes the proof of Proposition~\ref{prop_domination_by_P_lambda}.
\end{proof}
We can now use Proposition~\ref{prop_domination_by_P_lambda}, 
together with the finely tuned correlation structure of $\Prob^N_{h,\lambda+E}$ to prove the time-independent domination bound of Proposition~\ref{prop_domination_by_P_h}. 
\begin{proof}[Proof of Proposition~\ref{prop_domination_by_P_h}]
Let $T>0$ and $(\mathcal O_N)_N$ be a sequence of measurable sets involving the dynamics up to time $T$ only. 
Recall the expression of $\Prob^{curr,\nu^N_{1/2}}_{\lambda,E,h}$ in terms of $\Prob^{\nu^N_h}_{h,\lambda+E}$ from Proposition \ref{prop_choice_of_h_sec4}:
\begin{align}
\Prob^{curr,\nu^N_{1/2}}_{\lambda,E,h}\big(\mathcal O_N\big) 
= 
\frac{\E^{\nu^N_h}_{h,\lambda+E}\Big[{\bf 1}_{\mathcal O_N}e^{-\Pi^N_T(h)-\Pi^N_0(h) + \int_0^T\epsilon^N_{h,\lambda,E}(\eta_t)\, dt}\Big]}{\E^{\nu^N_h}_{h,\lambda+E}\Big[e^{-\Pi^N_T(h)-\Pi^N_0(h)+ \int_0^T\epsilon^N_{h,\lambda,E}(\eta_t)\,dt}\Big]} 
.
\label{eq_interm_proof_prop_real_domination}
\end{align}
The idea is to use the (time-dependent) domination bound of Proposition~\ref{prop_domination_by_P_lambda} to reduce $\mathcal O_N$ to a set $\mathcal O_N\cap V^N_T(\delta)$ 
(see~\eqref{eq_def_V_N_T}) on which the exponential moment in the numerator in~\eqref{eq_interm_proof_prop_real_domination} cannot be too large when $N\gg 1$. 
This will allow us to use entropy estimate of Theorem~\ref{theo_size_entropy} to bound these exponential moments uniformly in time and $N$. 
The arguments are similar to those used to prove Proposition~\ref{prop_prob_curr_as_prob_driven_with_bounded_terms}.

Recall from~\eqref{eq_def_ensemble_error_term_small} the definition of the sets $S_t(M),U_T(\delta)$ for $M,\delta>0$ and $t\geq 0$:
\begin{equation}
S_t(M) 
= 
\big\{|\Pi_t(h)|\leq M\big\},
\qquad 
U_T(\delta)  
= 
\Big\{ \Big|\int_0^T\epsilon^N_{h,\lambda,E}(\eta_t)\, dt\Big|\leq \delta\Big\}
.
\end{equation}
Choosing $M=\log\log N$, define:
\begin{equation}
V^N_T(\delta) 
= 
S_T(\log\log N)\cap U_T(\delta)
.
\label{eq_def_V_N_T}
\end{equation}
Proposition~\ref{prop_domination_by_P_lambda} shows that $V^N_T(\delta)^c$ is unlikely under the current biased dynamics:
\begin{equation}
\limsup_{N\rightarrow\infty}
\Prob^{\nu^N_{1/2}}_{\lambda+E}\big(V^N_T(\delta)^c\big)
= 0
\quad \Rightarrow\quad 
\limsup_{N\rightarrow\infty}\Prob^{curr,\nu^N_{1/2}}_{\lambda,E,T}\big(V^N_T(\delta)^c\big)
=
0
.
\end{equation}
It is therefore enough to prove the domination bound for $(\mathcal O_N\cap V^N_T(\delta))_N$ rather than $(\mathcal O_N)_N$. 

On $V^N_T(\delta)$, we can estimate the exponential moments of $\Pi^N_T(h),\epsilon^N_{h,\lambda,E}$ in the numerator as we show below.  
Let us bound numerator and denominator in~\eqref{eq_interm_proof_prop_real_domination} separately. 
For the denominator, Jensen inequality gives the lower bound:
\begin{equation}
\exp\E^{\nu^N_h}_{h,\lambda+E}\Big[-\Pi^N_T(h)-\Pi^N_0(h) + \int_0^T\epsilon^N_{h,\lambda,E}(\eta_t)\, dt\Big]
.
\end{equation}
The expectation of $\Pi^N_0(h),\Pi^N_T(h)$ is bounded uniformly in $N,T$ as in~\eqref{eq_lower_bound_exp_moments_Pi0_Pi_T}. 
Together with the estimate of $\epsilon^N_{h,\lambda,E}$ in Proposition~\ref{prop_choice_of_h_sec4}, 
we obtain the following bound on the denominator of \eqref{eq_interm_proof_prop_real_domination}:
\begin{equation}
\inf_{T>0}\liminf_{N\to\infty}\E^{\nu^N_h}_{h,\lambda+E}\Big[e^{-\Pi^N_T(h)-\Pi^N_0(h)+ \int_0^T\epsilon^N_{h,\lambda,E}(\eta_t)\,dt}\Big]
>
0
.
\label{eq_final_bound_denom_prop_44}
\end{equation}
Consider now the numerator in~\eqref{eq_interm_proof_prop_real_domination} for the set $\mathcal O_N\cap V^N_T(\delta)$. 
On $V^N_T(\delta)$, the time integral of $\epsilon^N_{h,\lambda,E}$ is bounded by $\delta$, 
so only the exponential moments of $-\Pi^N_0(h),-\Pi^N_T(h)$ remain.  
As in~\eqref{eq_bound_with_spectral_radius}, 
one can use Hölder inequality to estimate exponential moments of $-\Pi^N_0(h),-\Pi^N_T(h)$ separately. 
The key observation is that, on the set $S_T(\log \log N)$, 
$\exp[-\Pi_T(h)]$ is bounded by $\log N=o(N^{1/4})$, 
therefore one can still use Pinsker's inequality in Proposition~\ref{prop_entropy_estimate_general} to reduce the computation of this exponential moment to an estimate under $\nu^N_h$. 
This is done exactly as in~\eqref{eq_bound_with_spectral_radius} and following equations, 
so we conclude the proof of Proposition~\ref{prop_domination_by_P_h} as in~\eqref{eq_bound_exp_moment_Pi0}--\eqref{eq_bound_exp_moment_PiT}:
\begin{align}
\limsup_{N\to\infty}\Prob^{curr,\nu^N_{1/2}}_{\lambda,E,h}\big(\mathcal O_N
\big)
&=
\limsup_{N\to\infty}\Prob^{curr,\nu^N_{1/2}}_{\lambda,E,h}\big(\mathcal O_N\cap V^N_T(\delta)\big)
\nonumber\\
&\leq 
C(h,\lambda,E)\limsup_{N\to\infty}\Prob^{\nu^N_h}_{h,\lambda+E}\big(\mathcal O_N\big)^{\kappa}
,
\end{align}
with $\kappa = (2r_h+1)^{-1}$ and $r_h$ the spectral radius of $-\sigma h$ ($r_h=0$ if $h$ is a positive kernel).
\end{proof}
\section{Time decorrelation and fluctuations for the current-biased dynamics}\label{sec_time_decorrelation}
In this section, 
we carry out the second part of Step $3$ as outlined in Section~\ref{sec_structure_proof}, 
establishing time decorrelation estimates. 
This concludes the proof of the main result,~Theorem \ref{theo_main_result}. 

Throughout, let $\lambda,E$ be sub-critical in the sense of Proposition~\ref{prop_solving_PDE_h} and let $h=h_{\lambda,E}$ be the associated solution of \eqref{eq_ODE_sur_h}.\\
Let $\tau\geq 0$ and let $G: \mathcal D(\R_+,\s'(\T))\rightarrow\R$ be a bounded continuous function. 
We need to prove:
\begin{equation}
\lim_{t\rightarrow\infty}\lim_{T\rightarrow\infty}\lim_{N\rightarrow\infty} 
\int G\big((Y^N_{s})_{t\leq s\leq t+\tau}\big)\, \mathrm{d}\Prob^{curr,\nu^N_{1/2}}_{\lambda,E,T} 
=  
{\bf E}_{h}^{\nu^\infty_{h}}\big[G\big((Y_{s})_{0\leq s\leq \tau}\big)\big]
,
\label{eq_main_result}
\end{equation}
where $\nu^\infty_{h}$ is the centred Gaussian field on $\s'(\T)$ with covariance $C_h = (\sigma^{-1}\text{id}-h)^{-1}$ ($\sigma:=1/4$)
and ${\bf E}^{\nu^\infty_{h}}_{h}$ denotes the expectation associated with the couple $(Y_\cdot,\Pi_\cdot)$ of limiting fields as given by Theorem~\ref{theo_fluct_correl_driven_process}. 

The proof of~\eqref{eq_main_result} is carried out in several steps. 
We first start from the expression of the current-biased dynamics obtained in Proposition~\ref{prop_prob_curr_as_prob_driven_with_bounded_terms} to take the large $N$ limit and reduce to quantities involving only the limiting processes $Y_\cdot,\Pi_\cdot$; this is done below.  
We then study the large time behaviour of the limiting fluctuation process. 
This is done through a classical approximation in terms of finite-dimensional diffusions following~\cite{Holley1978}, 
in Section~\ref{sec_finite_dimensional_approx}--\ref{sec_convergence_to_Y}, 
together with a large time analysis of these approximations in Section~\ref{sec_long_time_asymptotics}. \\

Let $M>0$. 
Since $F_M$ (recall~\eqref{eq_def_F_M}) is continuous and bounded, 
Proposition~\ref{prop_prob_curr_as_prob_driven_with_bounded_terms} and
Theorem~\ref{theo_fluct_correl_driven_process} give:
\begin{equation}
\limsup_{N\rightarrow\infty}
\Bigg|
\E^{curr,\nu^N_{1/2}}_{\lambda,E,T}\big[G\big]
- 
\frac{{\bf E}^{\nu^\infty_h}_{h}\Big[GF_M\big(-\Pi_T(h)\big)F_M\big(-\Pi_0(h)\big)\Big]}{{\bf E}^{\nu^\infty_h}_{h}\Big[F_M\big(-\Pi_T(h)\big)F_M\big(-\Pi_0(h)\big)\Big]}\Bigg| 
\leq 
\frac{C(h)\|G\|_\infty}{M^{\kappa'}}
,
\label{eq_start_to_prove_main_result}
\end{equation}
It is convenient to rewrite the ratio of expectations in~\eqref{eq_start_to_prove_main_result} in terms of the fluctuation field only. 
For any $\delta>0$, 
let $h_\delta$ denote the truncation of the Fourier series of $h$ given in Proposition~\ref{prop_solving_PDE_h} at some $n_\delta\in\N$ satisfying:
\begin{equation}
\|h_\delta - h\|_\infty
\leq \delta
,\qquad 
h_\delta 
:=
\sum_{j=0}^{n_\delta}  \big<e_j,h\big>e_j
.
\end{equation}
As usual, we identify $h_\delta$ with the function $(x,y)\in\T^2\mapsto h_\delta(x-y)$. 
Equation~\eqref{eq_continuity_Pi} in Proposition~\ref{prop_lien_Y_Pi} quantifies the error made when replacing $h$ by $h_\delta$ in~\eqref{eq_start_to_prove_main_result}: 
writing $1 = {\bf 1}_{B} + {\bf 1}_{B^c}$ with $B^c=\{|\Pi_0(h-h_\delta)|\leq \delta^{1/2},|\Pi_T(h-h_\delta)|\leq \delta^{1/2}\}$ and using that $F_M$ is Lipschitz,
\begin{align}
\bigg|&{\bf E}^{\nu^\infty_{h}}_{h}\Big[GF_M\big(-\Pi_T(h)\big)F_M\big(-\Pi_0(h)\big)\Big] 
- 
{\bf E}^{\nu^\infty_{h}}_{h}\Big[GF_M\big(-\Pi_T(h_\delta)\big)F_M\big(-\Pi_0(h_\delta)\big)\Big]
\bigg|
\nonumber\\
&\qquad \leq 
2\|G\|_\infty\|F_M\|^2_\infty {\bf P}^{\nu^\infty_{h}}_{h}\Big(|\Pi_0(h-h_\delta)|\geq \delta^{1/2}\text{ or }|\Pi_T(h-h_\delta)|\geq \delta^{1/2}\Big)
\nonumber\\
&\qquad \quad 
+ 2\delta^{1/2}\|G\|_\infty\|F_M\|_\infty\|F'_M\|_\infty
\nonumber\\
&\quad \ \,
\overset{\eqref{eq_continuity_Pi}}{\leq} 
C(\|G\|_\infty,M)\delta^{1/2}
.
\end{align}
The function $F_M(-\Pi_T(h_\delta))$ is now a function of $Y_T$ in view of the link~\eqref{eq_lien_Pi_et_Y_dans_theo} between $\Pi_T,Y_T$. 
Since the right-hand side above does not depend on time, 
vanishes when $\delta$ is small and $\delta$ does not depend on $M$, 
the proof of the main result, 
Theorem~\ref{theo_main_result}, 
is reduced to the proof of the following proposition.
\begin{prop}\label{prop_to_prove_main_result}
Let $G: \mathcal D(\R_+,\s'(\T))\rightarrow \R$ be a continuous and bounded function, 
let $\tau\geq 0$, 
and let $H_1,H_2 : \s'(\T)\rightarrow\R$ be continuous and bounded. 
Then:
\begin{align}
\lim_{t\rightarrow\infty}\lim_{T\rightarrow\infty}
\frac{{\bf E}_{h}^{\nu^\infty_{h}}\big[G\big((Y_{s})_{t\leq s\leq t+\tau}\big)H_1(Y_0)H_2(Y_T)\big]}{{\bf E}_{h}^{\nu^\infty_{h}}\big[H_1(Y_0)H_2(Y_T)\big]} 
= 
{\bf E}_{h}^{\nu^\infty_{h}}\big[G\big((Y_{s})_{0\leq s\leq \tau}\big)\big]
.
\end{align}
The result also holds if $t,\tau$ are diverging functions of $T$ provided $T-\tau$ diverges with $T$.
\end{prop}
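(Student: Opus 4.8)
The plan is to establish the decorrelation of the limiting fluctuation process $(Y_s)_{s\geq 0}$ from the "boundary observables" $H_1(Y_0)$ and $H_2(Y_T)$ via a spectral-gap argument on a finite-dimensional approximation, following the scheme of~\cite{Holley1978}. The process $Y_\cdot$ is, by Theorem~\ref{theo_fluct_correl_driven_process}, the infinite-dimensional Ornstein--Uhlenbeck process with drift $\mathcal L_h = (\mathrm{id}-\sigma h)\Delta$ and noise $\sqrt{2\sigma}\,\nabla\mathrm dW$, started from its invariant Gaussian field $\nu^\infty_h$ with covariance $C_h=(\sigma^{-1}\mathrm{id}-h)^{-1}$. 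Since $h$ is translation invariant it commutes with $\Delta$, so in the eigenbasis $(e_\ell)_{\ell\geq 0}$ of the Laplacian the process diagonalises: writing $Y_\cdot^{(\ell)} := Y_\cdot(e_\ell)$, each coordinate is an independent real OU process $\mathrm dY^{(\ell)}_t = -\mu_\ell Y^{(\ell)}_t\,\mathrm dt + \sqrt{2\sigma}\,2\pi|\ell|\,\mathrm dB^{(\ell)}_t$ with relaxation rate $\mu_\ell = (1-\sigma\langle e_\ell,h\rangle)(2\pi \ell)^2$, and $\mu_\ell\to\infty$. The zero mode $Y^{(0)}$ is frozen (it is constant, equal to $N^{-1/2}\sum_i\bar\eta_i$ in the limit), which I will need to handle separately, but under $\nu^\infty_h$ at density $1/2$ it has a fixed Gaussian law of variance $C_h(0)=\sigma$ and contributes a bounded, time-independent factor.

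First I would fix $n\in\mathbb N$ and let $Y^{\leq n}_\cdot$ denote the projection onto $\mathrm{span}(e_0,\dots,e_n)$, a finite-dimensional OU process. Using the continuity estimate~\eqref{eq_continuity_Pi} (equivalently, $\mathbb L^2$-control of $Y_t(f)$ for fixed $f$, which is uniform in $t$ by stationarity) and the fact that $G$ is bounded and continuous on the Skorokhod space, I would show that $G((Y_s)_{t\leq s\leq t+\tau})$ can be approximated, uniformly in $t,T$, by a bounded continuous functional $G_n$ of the finite-dimensional trajectory $(Y^{\leq n}_s)_{t\leq s\leq t+\tau}$, with error $o_n(1)$; similarly $H_1,H_2$ by functions of $Y^{\leq n}_0,Y^{\leq n}_T$. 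This reduces the claim to a statement about the finite-dimensional Gaussian Markov chain. For the latter, the key quantitative input is the exponential $\mathbb L^2(\nu^{\leq n}_h)$-ergodicity of the finite-dimensional OU semigroup: the generator has spectral gap $\mu_1 = (1-\sigma\langle e_2,h\rangle)(2\pi)^2>0$ (using that $\sigma h$ has leading eigenvalue below $1$, Proposition~\ref{prop_solving_PDE_h}), restricted to the non-constant modes. Concretely, for mean-zero $\Phi,\Psi\in\mathbb L^2(\nu^{\leq n}_h)$ one has $|\mathrm{Cov}_{\nu^{\leq n}_h}(\Phi(Y_0),\Psi(Y_T))|\leq e^{-\mu_1 T}\|\Phi\|_2\|\Psi\|_2$, and more generally the conditional law of $(Y_s)_{t\leq s\leq t+\tau}$ given $Y_0,Y_T$ converges, as $t\to\infty$ and then $T\to\infty$, to the stationary law, with rate governed by $e^{-\mu_1 t}$ and $e^{-\mu_1(T-t-\tau)}$ (this is where the hypothesis "$T-\tau$ diverges" enters when $t,\tau$ grow with $T$). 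I would make this precise by conditioning: given the endpoints, the bridge of the OU process on $[0,T]$ is Gaussian with a covariance that, on the window $[t,t+\tau]$, differs from the stationary covariance by terms exponentially small in $\min(t, T-t-\tau)$, and its mean is an explicit exponentially-damped interpolation of the endpoints; since $H_1,H_2,G_n$ are bounded, dominated convergence then yields the ratio $\to {\bf E}^{\nu^\infty_h}_h[G_n((Y_s)_{0\leq s\leq\tau})]$, provided the denominator ${\bf E}^{\nu^\infty_h}_h[H_1(Y_0)H_2(Y_T)]$ stays bounded away from $0$ — which it does, since as $T\to\infty$ it converges to ${\bf E}[H_1(Y_0)]\,{\bf E}[H_2(Y_0)]$ and one restricts attention to $H_1,H_2$ for which this product is nonzero (the only $H_1,H_2$ used upstream are of the form $F_M(-\Pi_\bullet(h_\delta))$, which are strictly positive, so this is harmless).

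Finally I would let $n\to\infty$ to remove the truncation: the $o_n(1)$ errors are uniform in $t,T$ by stationarity and the summability $\sum_\ell \ell^{-2}<\infty$ controlling the tail modes of $Y$, so the order of limits $\lim_n\lim_t\lim_T$ can be rearranged into $\lim_t\lim_T$ of the untruncated quantity. Assembling the pieces gives Proposition~\ref{prop_to_prove_main_result}. \textbf{The main obstacle} I anticipate is not the spectral-gap estimate itself but the uniformity of the finite-dimensional approximation: I need the replacement of $G$ by $G_n$, of $H_i$ by $H_{i,n}$, and of the infinite OU bridge by its finite projection, to hold with errors that do not degrade as $t,T\to\infty$ (and in particular when $t,\tau$ are allowed to grow with $T$). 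Stationarity of $Y_\cdot$ and $\Pi_\cdot$ (item~1 of Theorem~\ref{theo_fluct_correl_driven_process}) is exactly what makes this possible, but one must be careful that the functional $G$ depends on a whole trajectory segment rather than a single time, so the approximation argument needs a modulus-of-continuity / tightness control on $(Y_s)_{s\in[t,t+\tau]}$ in $\mathcal D$ that is uniform in $t$ — again furnished by stationarity together with the Aldous-type bounds already used in Section~\ref{sec_tightness}. A secondary technical point is the correct bookkeeping of the frozen zero mode, which must be excluded from the spectral-gap argument and treated as a fixed (bounded) multiplicative factor common to numerator and denominator.
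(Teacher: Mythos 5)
Your proposal follows essentially the same strategy as the paper's Section~\ref{sec_time_decorrelation}: finite-dimensional Holley--Stroock approximation of the limiting OU process, the spectral gap of $\mathcal L_h$ (Lemma~\ref{lemm_gap}), a uniform-in-$n$ exponential decorrelation bound for the projected diffusions, and passage to the limit. Where you diverge is in the implementation of the decorrelation estimate (the paper's Lemma~\ref{lemm_long_time}): you propose to condition on both endpoints $Y_0,Y_T$ and analyse the Gaussian OU bridge on the window $[t,t+\tau]$, whereas the paper applies the Markov property at a single time, bounds the total variation distance of the transition kernel to the invariant measure via Pinsker's inequality together with the explicit Gaussian relative entropy formula~\eqref{eq_relative_entropy_n_t}, and then obtains the decorrelation at time $T$ by reversibility of $y^{(n)}_\cdot$. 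Both routes rest on the same Gaussian mixing rate $e^{-c_h\min\{t,T-\tau-t\}}$; the paper's avoids ever writing the bridge mean and covariance, which your route would need explicitly. You also correctly flag a point the paper leaves implicit: deducing the ratio limit of Proposition~\ref{prop_to_prove_main_result} from the numerator estimate of Proposition~\ref{prop_time_decorrel} requires $\nu^\infty[H_1]\,\nu^\infty[H_2]\neq 0$, which holds in the only upstream application since the $H_i$ are strictly positive cutoffs of the form $F_M(-\Pi_\cdot(h_\delta))$. Finally, rather than truncating $G$ to a finite-dimensional $G_n$ with uniform-in-$(t,T)$ error control --- which you identify as your main obstacle --- the paper proves weak convergence $Y^{(n)}_\cdot\to Y_\cdot$ in $C(\R_+,\s'(\T))$ (Proposition~\ref{prop_convergence_approximation}) and lets the $n$-independent decorrelation constants pass to the limit directly, sidestepping the uniformity concern.
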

In Proposition~\ref{prop_to_prove_main_result}, 
the microscopic model does not appear any more. 
All that is required is an analysis of the long time behaviour of the limiting fluctuation process $(Y_t)_{t\geq0}$, 
carried out in the next sections via an approximation procedure.  
\subsection{The finite-dimensional approximation}\label{sec_finite_dimensional_approx}
In this section, we follow~\cite{Holley1978} to construct, 
for each $n\in\N_{\geq 1}$, 
a diffusion process $Y^{(n)}_\cdot\in C(\R_+,\s'(\T))$. 
This process is finite-dimensional in a sense made clear below, relaxes to its invariant measure exponentially fast, 
and $(Y^{(n)}_\cdot)_n$ will be shown to converge weakly to $Y_\cdot$ in the next sections. \\

Recall from \eqref{eq_eigenbasis_laplacien} the definition of the eigenvectors $(e_\ell)_{\ell\in\N}$ of the Laplacian on the torus, 
and notice that the fact that $Y_t$ is a bounded linear form for each $t\geq 0$ implies:
\begin{equation}
\forall t\geq 0,\forall f\in C^\infty(\T),\qquad 
Y_t(f) 
= 
\sum_{\ell\in\N} \big<e_\ell,f\big>Y_t(e_\ell)
.
\end{equation}
To construct an approximation $Y^{(n)}_\cdot$ ($n\in\N_{\geq 1}$), 
let us make some preliminary remarks and introduce notations. 
Recall the definition of the self-adjoint operator $\mathcal L_{h}$ appearing in the martingale problem of Theorem \ref{theo_fluct_correl_driven_process} when $\tilde h$ there is equal to $h$.

For $n\in\N_{\geq 1}$, let $\pi_n$ be the projection operator on $\text{span}(e_1,...e_n)$. 
In the following, if $A$ is a bounded linear operator on $C^\infty(\T)$, its projection $\pi_n A\pi_n$ is assimilated when useful with the matrix $(\big<e_k,Ae_\ell\big>)_{1\leq k,\ell\leq n}$. 
Recall that the covariance operator $C_h$ defining the centred Gaussian measure $\nu^\infty_h$ is given by:
\begin{equation}
C_h 
=
(\sigma^{-1}\text{id}-h)^{-1}
,\qquad
\sigma
=
1/4
.
\end{equation}
For each $n\in\N_{\geq 1}$, let $C_n := \pi_n C_h\pi_n$ and let $\nu^{(n)}$ be the centred normal distribution on $\R^n$ with covariance matrix $C_n$:
\begin{equation}
\nu^{(n)}(dx) 
= 
p^{(n)}(x)\, dx,\qquad 
p^{(n)}(x) 
= 
\frac{1}{(2\pi)^{n/2}\det(C_n)^{1/2}}\exp\Big[-\frac{x^TC_n^{-1}x}{2}\Big],
\quad x\in\R^n
.
\label{eq_def_nu_(n)}
\end{equation}
Let us now define the approximation process $(Y^{(n)}_t)_{t\geq 0}$. 
First, by the martingale characterisation of fluctuations in Theorem~\ref{theo_fluct_correl_driven_process}, 
observe that $Y_t(1) = Y_0(1)$ for all $t\geq 0$ with probability $1$, with $1$ denoting here the function constant equal to $1$. 
We therefore only need to approximate $Y_\cdot$ for functions with vanishing average.

For $Y^{(n)}_\cdot$ to be a good approximation of $Y_\cdot$, 
it will have to satisfy the same martingale problem in the large $n$ limit. 
We build $Y^{(n)}_\cdot$ by appropriate finite-dimensional truncations of the drift and diffusion operator of $Y_\cdot$. 
It turns out to be convenient to do so by approximating the drift, on the one hand, 
and the covariance $C_h$ of the invariant measure on the other hand. 
The diffusion coefficient is then fixed by the other two.
Introduce thus the diffusion $y^{(n)}_\cdot$ on $\R^n$
given by:
\begin{equation}
dy^{(n)}_t
=
B_ny^{(n)}_tdt + \sqrt{2}D_n^{1/2}dW^{(n)}_t
,
\end{equation}
with $(W^{(n)}_t)_{t\geq 0}$ a standard Brownian motion on $\R^n$, 
and where $B_n$, $D_n$ respectively approximate the drift and diffusion part:
\begin{align}
B_n 
&:=
\pi_n \mathcal L_{h}\pi_n 
=
\Big(\big<e_k,\mathcal L_{h}e_\ell\big>\Big)_{1\leq k,\ell\leq n}
,
\nonumber\\
\qquad
D_n 
&:= 
-\frac{1}{2}\big[B_nC_n +C_n B_n^T\big]
=
-\frac{1}{2}\big[B_nC_n +C_n B_n\big]
.
\label{eq_def_D_n_B_n_appendix}
\end{align}
Note that $B_n,C_n,D_n$ all commute since $h$ commutes with the Laplacian ($h$ is translation invariant, recall Proposition~\ref{prop_solving_PDE_h}). 
Note also that $y^n_\cdot$ is a reversible diffusion, 
admitting $\nu^{(n)}$ as its unique invariant measure as can be checked from direct computations. 
The process $Y^{(n)}_\cdot$ ($n\in\N_{\geq 1}$)is then defined as follows:
\begin{equation}
Y^{(n)}_t(f) 
= 
\big<1,f\big>Y_0(1) + \sum_{j=1}^n \big<e_j,f\big>y^{(n)}_t(j)
,\qquad
f\in C^\infty(\T),t\geq 0
.
\label{eq_def_Y_(n)_appendix}
\end{equation}
Let ${\bf P}^{(n)},{\bf E}^{(n)}$ denote the probability/expectation associated with $Y^{(n)}_\cdot$. 
Notice that there is a continuous function $\Phi^{(n)}$ such that $Y^{(n)}_\cdot = \Phi^{(n)}(y^{(n)}_\cdot)$. 
Therefore, abusing notations, we will also write ${\bf P}^{(n)},{\bf E}^{(n)}$ for the probability and expectation associated with $y^{(n)}_\cdot$.  
Unless an initial condition is specified, the processes are assumed to start from the invariant measure $\nu^{(n)}$ of $y^{(n)}_\cdot$ (or the corresponding measure on distributions for $Y^{(n)}_\cdot$). 
\subsection{Convergence to $Y_\cdot$}\label{sec_convergence_to_Y}
Here we prove:
\begin{prop}\label{prop_convergence_approximation}
Assume $y_0^{(n)}$ has law $\nu^{(n)}$ and is independent from $Y_0(1)$. 
Then $Y^{(n)}_\cdot$ converges weakly to $Y_\cdot$ in $C(\R_+,\s'(\T))$.
\end{prop}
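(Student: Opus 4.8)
The plan is to prove weak convergence of $Y^{(n)}_\cdot$ to $Y_\cdot$ in $C(\R_+,\s'(\T))$ by the standard two-step scheme: first establish tightness of the laws of $(Y^{(n)}_\cdot)_n$ in $C(\R_+,\s'(\T))$, then identify every limit point as a solution of the martingale problem characterising $Y_\cdot$ in Theorem~\ref{theo_fluct_correl_driven_process}, which has a unique solution. Since $h$ is translation invariant, all the matrices $B_n,C_n,D_n$ are simultaneously diagonal in the Fourier basis, so the coordinate processes $y^{(n)}_t(j)$ are \emph{independent} one-dimensional Ornstein--Uhlenbeck processes; this makes every estimate below an explicit Gaussian computation. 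Concretely $y^{(n)}_t(j)$ has drift coefficient $b_j = \langle e_j,\mathcal L_h e_j\rangle = -(2\pi\lfloor (j+1)/2\rfloor)^2(1-\sigma \hat h_j)<0$ (uniformly negative in $j$ and independent of $n$), stationary variance $c_j=\langle e_j,C_he_j\rangle$, and diffusion coefficient $d_j=-b_jc_j>0$, all of which converge as $n\to\infty$ to the corresponding quantities for $Y_\cdot$.

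\textbf{Tightness.} By Mitoma's criterion it suffices to show tightness of the real-valued projections $(Y^{(n)}_\cdot(f))_n$ in $C(\R_+,\R)$ for each $f\in C^\infty(\T)$. Writing $Y^{(n)}_t(f) = \langle 1,f\rangle Y_0(1) + \sum_{j=1}^n\langle e_j,f\rangle y^{(n)}_t(j)$, the tail $\sum_{j>n}$ is absent and the remaining sum is, at each fixed $t$, a Gaussian with variance $\sum_{j=1}^n \langle e_j,f\rangle^2 c_j \le \sum_j \langle e_j,f\rangle^2 c_j < \infty$ (using $c_j\le \|C_h\|_{\mathrm{op}}<\infty$ since $\sigma h$ has leading eigenvalue below $1$), so $\{Y^{(n)}_t(f)\}_n$ is tight for each $t$. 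For the modulus-of-continuity estimate I would bound, for $s<t$, $\E^{(n)}[(Y^{(n)}_t(f)-Y^{(n)}_s(f))^4] \le C(f)\,|t-s|^2$ uniformly in $n$: using the OU representation $y^{(n)}_t(j)-y^{(n)}_s(j) = (e^{b_j(t-s)}-1)y^{(n)}_s(j) + \sqrt{2d_j}\int_s^t e^{b_j(t-r)}dW^{(n)}_r(j)$, the first term contributes $O(|b_j(t-s)|^2 c_j^2)$ and the second $O(d_j(t-s))$ per coordinate; summing against $\langle e_j,f\rangle^2$ and using $\sum_j j^2 \langle e_j,f\rangle^2 <\infty$ for $f$ smooth (which absorbs the $|b_j|\sim j^2$ factors) gives a bound of the form $C(f)((t-s)^2 + (t-s)^2)$, using that fourth moments of Gaussians are controlled by squares of variances. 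Kolmogorov's criterion then yields tightness in $C([0,T],\R)$ for every $T$, hence in $C(\R_+,\R)$, and Mitoma gives tightness in $C(\R_+,\s'(\T))$.

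\textbf{Identification of the limit.} Let $\tilde Y_\cdot$ be any weak limit point along a subsequence. I would check it solves the martingale problem of item~3 of Theorem~\ref{theo_fluct_correl_driven_process} with $\tilde h = h$. By construction, for $f\in C^\infty(\T)$ expanded as $f=\sum_j\langle e_j,f\rangle e_j$ and by Itô's formula applied to $y^{(n)}_\cdot$, the process $M^{(n)}_t(f) := Y^{(n)}_t(f) - Y^{(n)}_0(f) - \int_0^t Y^{(n)}_s(\pi_n\mathcal L_h\pi_n f)\,ds$ is a martingale with bracket $\langle M^{(n)}(f)\rangle_t = 2t\sum_{j=1}^n \langle e_j,f\rangle^2 d_j$ (plus cross terms that vanish by independence, so it is exactly this). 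As $n\to\infty$, $\pi_n\mathcal L_h\pi_n f \to \mathcal L_h f$ in $C^\infty(\T)$ (the Fourier coefficients converge and the tail is controlled since $\mathcal L_h f\in C^\infty$), so $\int_0^t Y^{(n)}_s(\pi_n\mathcal L_h\pi_n f)ds$ converges to $\int_0^t\tilde Y_s(\mathcal L_h f)ds$; and $\sum_{j=1}^n\langle e_j,f\rangle^2 d_j \to \sum_j\langle e_j,f\rangle^2 d_j^\infty$ where one checks $d_j^\infty = -b_j^\infty c_j^\infty$ with $b_j^\infty = \langle e_j,\mathcal L_h e_j\rangle$, $c_j^\infty = \langle e_j,C_h e_j\rangle$, and $-b_j^\infty c_j^\infty = (2\pi\ell)^2(1-\sigma\hat h_j)\cdot\sigma(1-\sigma\hat h_j)^{-1} = \sigma(2\pi\ell)^2 = \sigma\langle\partial_x e_j,\partial_x e_j\rangle$ (with $\ell=\lceil j/2\rceil$), whence $\sum_j\langle e_j,f\rangle^2 d_j^\infty = \sigma\|\partial_x f\|_2^2$. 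Passing to the limit in the martingale property and the bracket (the fourth-moment bounds above give uniform integrability of $M^{(n)}_t(f)$ and $M^{(n)}_t(f)^2$) shows that $M_t(f) = \tilde Y_t(f)-\tilde Y_0(f) - \int_0^t\tilde Y_s(\mathcal L_h f)ds$ and $M_t(f)^2 - 2\sigma t\|\partial_x f\|_2^2$ are martingales; the time-dependent test-function version follows by the same computation with $\partial_s f_s$ added. Finally $\tilde Y_0$ has law $\nu^\infty_h$: $Y^{(n)}_0$ is by construction $\langle 1,\cdot\rangle Y_0(1)$ plus the Gaussian $\nu^{(n)}$ on the first $n$ modes, and $\nu^{(n)}\to\nu^\infty_h$ weakly since the covariances converge mode by mode with a summable uniform bound. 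Thus $\tilde Y_\cdot$ solves the martingale problem, which has a unique solution equal in law to $Y_\cdot$ by Theorem~\ref{theo_fluct_correl_driven_process}, so every limit point coincides with $Y_\cdot$ and the full sequence converges.

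\textbf{Main obstacle.} The delicate point is not any single estimate — each is an elementary Gaussian/OU computation — but bookkeeping the uniformity in $n$ in the modulus-of-continuity bound so that the $|b_j|\sim j^2$ factors from the drift are absorbed by the smoothness of $f$; and, on the identification side, verifying the algebraic identity $d_j^\infty = \sigma(2\pi\ell)^2$ that forces the limiting bracket to match the $2\sigma\|\partial_x f_s\|_2^2$ appearing in~\eqref{eq_martingale_problem_theo}. This identity is exactly where the specific choice $D_n = -\tfrac12(B_nC_n+C_nB_n)$ pays off: it is designed so that the fluctuation-dissipation relation holds at the level of the approximation, guaranteeing both that $\nu^{(n)}$ is invariant for $y^{(n)}_\cdot$ and that the limiting diffusion coefficient is the correct universal one, independent of $h$.
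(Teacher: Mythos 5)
Your proof is correct in its essentials, but it takes a genuinely different route from the paper. The paper simply cites Holley--Stroock~\cite{Holley1978}, Theorem~(1.23), which establishes precisely that the finite-dimensional Ornstein--Uhlenbeck approximations converge to the $\s'(\T)$-valued process solving the martingale problem when the initial conditions are \emph{deterministic} and convergent; the proof then reduces to a single Gaussian computation showing that $Y^{(n)}_0\to\nu^\infty_h$ weakly. You instead reprove the convergence from scratch, via Mitoma tightness, Kolmogorov's modulus estimate for the projections $Y^{(n)}_\cdot(f)$, and limit identification through the martingale problem of Theorem~\ref{theo_fluct_correl_driven_process}. This buys you a self-contained argument that does not import any black-box and makes explicit the fluctuation-dissipation identity $d_j=-b_jc_j=\sigma(2\pi\ell)^2$ that forces the limiting bracket to equal $2\sigma\|\partial_xf_s\|_2^2$ -- a useful point that the paper leaves implicit in the choice~\eqref{eq_def_D_n_B_n_appendix}; the cost is a longer argument and the need to track uniformity in $n$ carefully. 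Two small bookkeeping slips: in the modulus estimate, the drift term of the increment has \emph{variance} $(e^{b_j(t-s)}-1)^2c_j$, not $c_j^2$; and absorbing this term requires $\sum_j j^4\langle e_j,f\rangle^2<\infty$ (since $|b_j|^2\sim j^4$), not $\sum_j j^2\langle e_j,f\rangle^2<\infty$ -- both harmless for $f\in C^\infty(\T)$, and in fact the cleaner bound $(e^{b_j(t-s)}-1)^2\leq |b_j|(t-s)$ lets you bound the whole per-mode variance by $3d_j(t-s)$, so the full increment variance is $\leq 3\sigma(t-s)\|\partial_xf\|_2^2$ and only one power of $j^2$ is ever needed.
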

For deterministic initial conditions, 
the convergence in Proposition~\ref{prop_convergence_approximation} is proven in~\cite{Holley1978} 
(on $\R^d$, adapting the proof to the torus $\T$ is straightforward) as stated next.
\begin{prop}{\cite[Theorem (1.23)]{Holley1978}.}
Assume $Y^{(n)}_0$ starts at a deterministic $X^{(n)}_0\in \s'(\T)$ 
with $\lim_n X^{(n)}_0 = X_0\in\s'(\T)$.  
Then $Y^{(n)}_\cdot$ converges weakly in $C(\R_+,\s'(\T))$ to the process $Y^{X_0}_\cdot$ starting at $X_0$ and satisfying the same martingale problem as $Y_\cdot$, 
written out in Theorem~\ref{theo_fluct_correl_driven_process}.  
\end{prop}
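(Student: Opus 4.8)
This is \cite[Theorem (1.23)]{Holley1978}; we recall the (standard) argument, the passage from $\R^d$ to the torus $\T$ being immediate. By~\eqref{eq_def_Y_(n)_appendix}, $Y^{(n)}_\cdot$ is the image under a fixed continuous map of the Ornstein--Uhlenbeck diffusion $dy^{(n)}_t = B_n y^{(n)}_t\, dt + \sqrt2\, D_n^{1/2}dW^{(n)}_t$ on $\R^n$, here started from the deterministic point $y^{(n)}_0 := (X^{(n)}_0(e_j))_{1\leq j\leq n}$. Since $h$ is translation invariant (Proposition~\ref{prop_solving_PDE_h}), the matrices $B_n,C_n,D_n$ are all diagonal in the Fourier basis $(e_j)$; in particular $B_n$ is symmetric, and negative definite on $\mathrm{span}(e_1,\dots,e_n)$ thanks to the sub-criticality assumption~\eqref{eq_assumption_size_tilde_h}, while $\mathcal L_h=(\text{id}-\sigma h)\Delta$ and $C_h=\sigma(\text{id}-\sigma h)^{-1}$ give $\mathcal L_hC_h=\sigma\Delta$ and hence $D_n=-\tfrac{1}{2}(B_nC_n+C_nB_n)=-\sigma\,\pi_n\Delta\pi_n$.

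\textbf{Tightness.} By Mitoma's criterion it suffices to prove that, for each $f\in C^\infty(\T)$, the real processes $Y^{(n)}_\cdot(f)$ are tight in $C(\R_+,\R)$. Using the symmetry of $B_n$, $Y^{(n)}_\cdot(f)$ obeys the semimartingale decomposition
\[
Y^{(n)}_t(f) = Y^{(n)}_0(\pi_n f) + \int_0^t Y^{(n)}_s(\mathcal L_h\pi_n f)\, ds + M^{(n)}_t(f),\qquad \langle M^{(n)}(f)\rangle_t = 2\langle\pi_n f,D_n\pi_n f\rangle\, t.
\]
As the initial point is deterministic, $y^{(n)}_t$ is Gaussian with mean $e^{tB_n}y^{(n)}_0$ and covariance bounded uniformly in $n$ and $t$ (the $C_n$ converge and $B_n\leq 0$), so all moments of $Y^{(n)}_t(f)$ are bounded uniformly in $n$ on compact time intervals. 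Combined with $\sup_n\|\mathcal L_h\pi_n f\|_\infty<\infty$ and $\sup_n\langle\pi_n f,D_n\pi_n f\rangle<\infty$ — both clear from $D_n=-\sigma\pi_n\Delta\pi_n$ and $f\in C^\infty(\T)$, the two-derivative loss in $\mathcal L_h\pi_n f$ being harmless for smooth $f$ and smooth kernel $h$ — this yields the Kolmogorov/Aldous estimates giving tightness in $C(\R_+,\s'(\T))$.

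\textbf{Identification of limit points and uniqueness.} Let $Y_\cdot$ be a subsequential weak limit. Since $\pi_n f\to f$ and $\mathcal L_h\pi_n f=(\text{id}-\sigma h)\Delta\pi_n f\to\mathcal L_h f$ in every $C^k(\T)$, and $\langle\pi_n f,D_n\pi_n f\rangle\to\sigma\|\partial_x f\|_2^2$, passing to the limit in the decomposition above and in $(M^{(n)}_t(f))^2-\langle M^{(n)}(f)\rangle_t$ — uniform integrability being supplied by the Gaussian moment bounds — shows that $Y_\cdot$ solves the martingale problem of Theorem~\ref{theo_fluct_correl_driven_process} with $Y_0=\lim_n X^{(n)}_0=X_0$. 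That martingale problem has at most one solution for a prescribed law of $Y_0$: inserting the time-dependent test functions $f_{s,t}=e^{(t-s)\mathcal L_h}f$ and applying Dubins--Schwarz exactly as in Section~\ref{sec_martingale_problem} yields the closed formula $Y_t(f)=Y_0(e^{t\mathcal L_h}f)+B^{f_{\cdot,t}}_{2\sigma\int_0^t\|\partial_x f_{s,t}\|_2^2\, ds}$, which determines all finite-dimensional distributions of $Y_\cdot$ from those of $Y_0$. Hence the subsequential limit is unique, so the whole sequence $Y^{(n)}_\cdot$ converges weakly in $C(\R_+,\s'(\T))$ to $Y^{X_0}_\cdot$.

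\textbf{Main point.} There is no genuine obstacle; the only thing needing care is the bookkeeping caused by $Y^{(n)}_t$ seeing only the first $n$ Fourier modes of the test function — one must check that the drift $\mathcal L_h\pi_n f$ stays bounded in $C^0$ despite the Laplacian (true since $f$ and $h$ are smooth) and that $\langle\pi_n f,D_n\pi_n f\rangle$ is uniformly bounded and converges, which is immediate from the diagonal identity $D_n=-\sigma\pi_n\Delta\pi_n$.
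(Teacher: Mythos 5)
The paper offers no proof of this proposition: it is quoted directly from \cite[Theorem (1.23)]{Holley1978} (with the remark that adapting from $\R^d$ to $\T$ is straightforward), so there is no internal argument to compare against. Your sketch follows the standard route — Mitoma tightness of the real projections $Y^{(n)}_\cdot(f)$, identification of limit points through the martingale problem, and uniqueness via the same Dubins--Schwarz/semigroup computation the paper itself runs for the microscopic field in Section~\ref{sec_martingale_problem} — and it is essentially correct. The identities $B_nC_n=\sigma\pi_n\Delta\pi_n$ (hence $D_n=-\sigma\pi_n\Delta\pi_n$) and the convergences $\mathcal L_h\pi_n f\to\mathcal L_h f$ in every $C^k$, $\langle\pi_n f,D_n\pi_n f\rangle\to\sigma\|\partial_x f\|_2^2$, are exactly what make the truncation consistent with the limiting martingale problem, and the closed formula $Y_t(f)=Y_0(e^{t\mathcal L_h}f)+B^{f_{\cdot,t}}_{2\sigma\int_0^t\|\partial_x f_{s,t}\|_2^2\,ds}$ does pin down the law once $Y_0=X_0$ is fixed. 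Two small points. First, in your semimartingale decomposition the initial term should be $Y^{(n)}_0(f)$ rather than $Y^{(n)}_0(\pi_n f)$, since the constant mode $\langle 1,f\rangle Y_0(1)$ appears on both sides of~\eqref{eq_def_Y_(n)_appendix}; this is cosmetic. Second, your uniform moment bound correctly controls the covariance (it is dominated by $C_n$, which is uniformly bounded), but the mean $\langle\pi_n f,e^{tB_n}y^{(n)}_0\rangle$ with $y^{(n)}_0=(X^{(n)}_0(e_j))_{j\leq n}$ is uniformly bounded in $n$ only after invoking equicontinuity of the convergent family $(X^{(n)}_0)$ in $\s'(\T)$ (Banach--Steinhaus on the Fr\'echet space $C^\infty(\T)$), which yields $|X^{(n)}_0(e_j)|\leq Cj^k$ and lets the rapid decay of $\langle e_j,f\rangle$ together with $e^{tB_n}\leq \mathrm{id}$ absorb everything; this is the one place where the deterministic, $n$-dependent initial data genuinely enter the tightness estimate and deserves a sentence. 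With that, the argument is complete and consistent with the cited source.
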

To prove Proposition~\ref{prop_convergence_approximation}, 
it is therefore enough to prove that, 
if $y^{(n)}$ is distributed according to $\nu^{(n)}$ and independent from $Y_0(1)$, 
then $Y^{n}_0$ converges weakly to the centred Gaussian field $\nu^\infty_{h}$ with covariance $C_h=(\sigma^{-1}\text{id}-h)^{-1}$. 
This is an elementary Gaussian computation, given next. 
Let $n\in\N$. By definition, 
\begin{equation}
Y^{(n)}_0(f) 
= 
\big<1,f\big>Y_0(1) + \sum_{j=1}^n\big<e_j,f\big>y^{(n)}_0(k)
,\qquad
f\in C^\infty(\T)
.
\end{equation}
with $y^{(n)}_0$ a normal distribution with covariance $C_n$. As $Y_0(1)$ and $y^{(n)}_0$ are independent, 
$Y^{(n)}_0$ is a centred Gaussian field with covariance $\sigma\text{id} + C_n$. 
To prove that $Y_0^{(n)}$ converges to the Gaussian field $Y_0$ with covariance $C_h$, 
it is enough to check convergence of the covariances. 
Take thus $f_1,f_2 \in C^\infty(\T)$, and write, as $y^{(n)}_0$ has vanishing average: 
\begin{align}
{\bf E}^{(n)}\Big[Y_0^{(n)}(f_1)Y_0^{(n)}(f_2)\Big] 
&= 
\sigma\big<1,f_1\big>\big<1,f_2\big>+\sum_{j,\ell=1}^n\big<e_j,f_1\big>\big<e_\ell,f_2\big>\big<e_j,C_he_\ell\big> 
\nonumber\\
&\underset{n\rightarrow\infty}{\longrightarrow} 
\sigma\big<1,f_1\big>\big<1,f_2\big>+ \sum_{j,\ell=1}^\infty\big<e_j,f_1\big>\big<e_\ell,f_2\big>\big<e_j,C_he_\ell\big>
. 
\end{align}
To prove convergence of the covariances to those of $Y_0$, we need the last line to be equal to:
\begin{equation}
\sum_{j,\ell =0}^\infty\big<e_j,f_1\big>\big<e_\ell,f_2\big>\big<e_j,C_he_\ell\big>
= 
\int_{\T}f_1(x)(C_h f_2)(x)\, dx
.
\end{equation}
For this to be true, it is enough to prove that $\big<e_j,C_h1\big> = \big<1,C_he_j\big>= 0$ for each $j\in\N_{\geq 1}$. 
We show in Proposition~\ref{prop_existence_regularity_h} that $\int_{\T}h(x)\, dx=0$. 
It follows that the symmetric operator $C_h$ admits $1$ has an eigenvector, 
thus the orthogonal decomposition $\mathbb L^2(\T)  = 1\oplus 1^\perp$ 
(for the usual inner product in $\mathbb L^2(\T)$) is stable under $C_h$. 
This concludes the proof of Proposition~\ref{prop_convergence_approximation}.
\subsection{Long time asymptotics and conclusion of the proof}\label{sec_long_time_asymptotics}
To conclude the proof of Proposition \ref{prop_to_prove_main_result}, 
it is enough to prove that $Y_\cdot$ decorrelates in time in the following sense.
\begin{prop}\label{prop_time_decorrel}
Let $\lambda,E\in\R$ be such that $\sigma h$ has leading eigenvalue strictly below $1$. 
Let $\tau\geq 0$ and $G:\mathcal D(\T_+,\s'(\T))\to\R$, $H_1,H_2:\s'(\T)\to\R$ be bounded continuous functions. 
There is $C>0$ and $c_{h}>0$ such that, for any $t\geq 0$ with $t+\tau<T$, 
\begin{align}
\Big|{\bf E}^{\nu^\infty_{h}}_{h,\lambda+E}\Big[H_1(Y_0)H_2(Y_T)G\big((Y_s)_{t\leq s\leq t+\tau}\big)\Big] 
&- \nu^\infty\big[H_1(Y)\big]\nu^\infty\big[H_2(Y)\big] {\bf E}^{\nu^\infty_{h}}_{h,\lambda+E}\big[G\big((Y_s)_{0\leq s\leq \tau}\big)\big]\Big|
\nonumber\\
&\qquad\leq 
C\exp\big[-c_{h}\min\{t,T-\tau-t\}\big]
.
\end{align}
\end{prop}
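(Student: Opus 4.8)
\emph{Strategy.} The plan is to transfer the statement to the finite-dimensional approximations $Y^{(n)}_\cdot$ built in Sections~\ref{sec_finite_dimensional_approx}--\ref{sec_convergence_to_Y}, to prove the decorrelation estimate for each $Y^{(n)}_\cdot$ with constants that are uniform in $n$, and then to let $n\to\infty$. The reduction is legitimate by Proposition~\ref{prop_convergence_approximation}: since $Y^{(n)}_\cdot\Rightarrow Y_\cdot$ weakly in $C(\R_+,\s'(\T))$ and the maps $Y_\cdot\mapsto H_1(Y_0)$, $Y_\cdot\mapsto H_2(Y_T)$, $Y_\cdot\mapsto G((Y_s)_{t\le s\le t+\tau})$ are bounded and continuous, one has ${\bf E}^{(n)}[H_1(Y^{(n)}_0)H_2(Y^{(n)}_T)G_{[t,t+\tau]}]\to{\bf E}^{\nu^\infty_h}_{h,\lambda+E}[H_1(Y_0)H_2(Y_T)G((Y_s)_{t\le s\le t+\tau})]$, as well as ${\bf E}^{(n)}[H_i(Y^{(n)}_0)]\to\nu^\infty_h[H_i]$ and ${\bf E}^{(n)}[G_{[0,\tau]}]\to{\bf E}^{\nu^\infty_h}_{h,\lambda+E}[G((Y_s)_{0\le s\le\tau})]$, where $G_{[a,b]}:=G((Y^{(n)}_s)_{a\le s\le b})$ and $\nu^{(n)}[\cdot]$ is expectation under the law of $Y^{(n)}_0$. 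I will assume $H_1,H_2$ depend on the field only through its non-constant Fourier modes; this is the case in the application where $H_1,H_2$ are functions of $\Pi(h_\delta)$ and $\int_\T h=0$. The constant mode is frozen, $Y^{(n)}_t(1)=Y^{(n)}_0(1)$ (take $f=1$ in the martingale problem of Theorem~\ref{theo_fluct_correl_driven_process}), and is independent of the remaining modes, so the general case is recovered by conditioning on its value.

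\emph{Uniform spectral gap.} Recall that $y^{(n)}_\cdot$ is the reversible Ornstein--Uhlenbeck diffusion on $\R^n$ with drift $B_n=\pi_n\mathcal{L}_h\pi_n$ and invariant law $\nu^{(n)}=N(0,C_n)$, $C_n=\pi_nC_h\pi_n$. Since $h$ is translation invariant, $\mathcal{L}_h=(\mathrm{id}-\sigma h)\Delta$ is diagonal in the Fourier basis with eigenvalue $-4\pi^2\ell^2(1-\sigma\hat h_\ell)$ on the frequency-$\ell$ modes, $\hat h_\ell$ being the $\ell$-th Fourier coefficient of $h$. The hypothesis that $\sigma h$ has leading eigenvalue strictly below $1$ gives $1-\sigma\hat h_\ell\ge\delta$ for some $\delta>0$ and all $\ell\ge1$, so every eigenvalue of $-B_n$ is at least $\rho:=4\pi^2\delta$, uniformly in $n$. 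For a reversible Gaussian OU process the eigenvalues of the negative generator on $L^2(\nu^{(n)})$ are the non-negative integer combinations of those of $-B_n$, hence its spectral gap is at least $\rho$ and the semigroup $P^{(n)}_s$ satisfies
\begin{equation}
\big\|P^{(n)}_s\phi-\nu^{(n)}[\phi]\big\|_{L^2(\nu^{(n)})}\ \le\ e^{-\rho s}\,\big\|\phi-\nu^{(n)}[\phi]\big\|_{L^2(\nu^{(n)})},\qquad s\ge0,
\end{equation}
uniformly in $n$. Moreover, because the process is stationary and reversible, the transition kernel run backward from time $t$ to time $0$ is again $P^{(n)}_t$, so ${\bf E}^{(n)}[\phi(Y^{(n)}_0)\,|\,(Y^{(n)}_s)_{s\ge t}]=(P^{(n)}_t\phi)(Y^{(n)}_t)$.

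\emph{Two-sided decorrelation and conclusion.} Set $\Psi:=H_2(Y^{(n)}_T)G_{[t,t+\tau]}$, a functional of $(Y^{(n)}_s)_{s\ge t}$ bounded by $\|H_2\|_\infty\|G\|_\infty$. Conditioning on $(Y^{(n)}_s)_{s\ge t}$ and using the backward identity,
\begin{equation}
{\bf E}^{(n)}\big[H_1(Y^{(n)}_0)\Psi\big]=\nu^{(n)}[H_1]\,{\bf E}^{(n)}[\Psi]+{\bf E}^{(n)}\big[(P^{(n)}_tH_1-\nu^{(n)}[H_1])(Y^{(n)}_t)\,\Psi\big],
\end{equation}
and by Cauchy--Schwarz, the fact that $Y^{(n)}_t$ has law $\nu^{(n)}$, and the spectral gap, the last term is at most $\|\Psi\|_\infty\,e^{-\rho t}\,\|H_1-\nu^{(n)}[H_1]\|_{L^2(\nu^{(n)})}\le 2\|G\|_\infty\|H_1\|_\infty\|H_2\|_\infty\,e^{-\rho t}$. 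By stationarity ${\bf E}^{(n)}[\Psi]={\bf E}^{(n)}[G_{[0,\tau]}H_2(Y^{(n)}_{T-t})]$; applying the Markov property at time $\tau$ and the same decomposition to $P^{(n)}_{T-t-\tau}H_2$ yields ${\bf E}^{(n)}[\Psi]=\nu^{(n)}[H_2]\,{\bf E}^{(n)}[G_{[0,\tau]}]+O(\|G\|_\infty\|H_2\|_\infty e^{-\rho(T-t-\tau)})$. Combining the two displays gives the sought bound with $c_h=\rho$ and $C$ a universal multiple of $\|G\|_\infty\|H_1\|_\infty\|H_2\|_\infty$, uniformly in $n$; passing to the limit $n\to\infty$ with the convergences of the first paragraph concludes. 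I expect no serious analytic obstacle here: the only delicate points are making the spectral gap independent of $n$ (handled above via translation invariance of $h$) and the bookkeeping of the frozen zero mode, the estimate being essentially the explicit Gaussian computation announced in Section~\ref{sec_structure_proof}.
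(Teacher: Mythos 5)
Your proof is correct and, at its technical core, takes a route genuinely different from the paper's. The reduction to the finite-dimensional Ornstein--Uhlenbeck approximations $y^{(n)}_\cdot$ via Proposition~\ref{prop_convergence_approximation}, and the identification of the uniform-in-$n$ spectral gap of $-B_n$ from the translation invariance of $h$ (Lemma~\ref{lemm_gap}), coincide with what the paper does. Where you diverge is in how you exploit that gap. The paper (Lemma~\ref{lemm_long_time}) bounds the total variation distance $\|\nu^{(n)}_t(x)-\nu^{(n)}\|_{TV}$ via Pinsker's inequality and then computes the Gaussian relative entropy explicitly, term by term (trace, log-determinant, quadratic form), controlling each using the eigenvalue bound on $B_n$. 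You instead invoke the $L^2(\nu^{(n)})$-spectral gap of the reversible OU generator, which is the minimal eigenvalue of $-B_n$ because the generator is diagonalized by Hermite polynomials (this uses that $B_n,C_n,D_n$ commute, hence the process decouples into independent one-dimensional OUs after a linear change of variables), and conclude by Cauchy--Schwarz and reversibility. Your version is shorter and more conceptual, at the cost of appealing to the OU Hermite spectral decomposition as a black box; the paper's explicit entropy computation is more self-contained. Both deliver the same rate $c_h$, up to a factor, and both are uniform in $n$ for the same reason. You also flag, more carefully than the paper's write-up does, the frozen constant mode $Y_t(1)=Y_0(1)$: since that mode does not relax, the factorization in the statement genuinely requires $H_1,H_2$ to depend only on the non-constant modes (or be averaged out by conditioning), which is indeed the case in the application because $h_\delta$ has vanishing mean; your conditioning remark makes this hypothesis explicit, which is a worthwhile clarification.
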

As $Y^{(n)}_\cdot$ converges weakly to $Y_\cdot$, 
it is enough to prove Proposition \ref{prop_time_decorrel} for $Y^{(n)}_\cdot,\nu^{(n)}$ instead of $Y_\cdot,\nu^\infty$ 
with the same constants $C,c_{h}$ for each $n\in\N_{\geq 1}$. 
In particular, Proposition~\ref{prop_time_decorrel} is a direct consequence of the following lemma (recall the definition of ${\bf E}^{(n)}$ from below~\eqref{eq_def_Y_(n)_appendix}). 

\begin{lemm}\label{lemm_long_time}
With the notations of Proposition \ref{prop_time_decorrel} and $c_{h}$ given by~\eqref{eq_gap_section_long_time} below, 
for any $\delta>0$, there are constants $C_1,C_2$ depending only on $H_1,H_2,h,\delta$ such that, 
for any $t\geq \delta$:
\begin{equation}
\sup_{n\in\N_{\geq 1}} \Big|{\bf E}^{(n)}\Big[H_1(Y^{(n)}_0) G\big((Y^{(n)})_{s\geq t}\big)\Big] - \nu^{(n)}(H_1) {\bf E}^{(n)}\Big[G\big((Y^{(n)})_{s\geq 0}\big)\Big] \Big| 
\leq 
C_1e^{-c_{h}t}
,
\label{eq_time_decorrel_at_0}
\end{equation}
and for $t+\tau<T$:
\begin{equation}
\sup_{n\in\N_{\geq 1}} \Big|{\bf E}^{(n)}\Big[H_2(Y^{(n)}_T) G\big((Y^{(n)})_{s\leq t+\tau}\big)\Big] 
- \nu^{(n)}(H_2) {\bf E}^{(n)}\Big[G\big((Y^{(n)})_{s\leq t+\tau}\big)\Big] \Big| 
\leq 
C_2e^{-c_{h}(T-t-\tau)}
.
\label{eq_time_decorrel_at_T}
\end{equation}
\end{lemm}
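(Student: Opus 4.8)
The plan is to exploit the fact that, for each fixed $n$, the process $y^{(n)}_\cdot$ is a finite-dimensional \emph{reversible} Ornstein--Uhlenbeck diffusion on $\R^n$ with invariant measure $\nu^{(n)}$, so its semigroup $(P^{(n)}_t)_{t\geq 0}$ has a spectral gap that can be controlled \emph{uniformly in $n$}. Recall $B_n = \pi_n\mathcal L_h\pi_n$, $C_n = \pi_n C_h\pi_n$ and $D_n = -\tfrac12(B_nC_n+C_nB_n)$, and that $B_n,C_n,D_n$ all commute because $h$ commutes with the Laplacian. Since the generator of $y^{(n)}_\cdot$ is $\mathcal A_n = B_nx\cdot\nabla + \operatorname{Tr}(D_n\nabla^2)$ and is self-adjoint in $\mathbb L^2(\nu^{(n)})$, its spectrum on the orthogonal complement of the constants is generated by the eigenvalues of $B_n$: the relevant quantity is $-\max\operatorname{spec}(B_n)$. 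Now $B_n$ is the compression of $\mathcal L_h = (\mathrm{id}-\sigma h)\Delta$, whose eigenvalues on $e_\ell$ are $-(2\pi\ell)^2(1-\sigma\hat h_\ell)$; using $\sigma\tilde h$ (here $\sigma h$) having leading eigenvalue $\leq 1-\delta_0$ for some $\delta_0>0$, all these are $\leq -4\pi^2\delta_0 =: -c_h$, uniformly in $\ell\geq 1$ and hence in $n$. I would record this as
\begin{equation}
c_h := 4\pi^2\Big(1-\sup_{\ell\geq 1}\sigma\hat h_\ell\Big) > 0,
\label{eq_gap_section_long_time}
\end{equation}
with $\hat h_\ell$ the Fourier coefficients of $h$ from Proposition~\ref{prop_solving_PDE_h}. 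Because $B_n$, $C_n$ commute, one gets the quantitative Gaussian ergodicity bound $\|P^{(n)}_tH - \nu^{(n)}(H)\|_{\mathbb L^2(\nu^{(n)})} \leq e^{-c_h t}\,\|H - \nu^{(n)}(H)\|_{\mathbb L^2(\nu^{(n)})}$ for every $H\in\mathbb L^2(\nu^{(n)})$, with $c_h$ independent of $n$.

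From this the proof of \eqref{eq_time_decorrel_at_0} is a standard Markov-property argument. Write, using the Markov property of $y^{(n)}_\cdot$ at time $t$ and stationarity,
\begin{equation}
{\bf E}^{(n)}\Big[H_1(Y^{(n)}_0)\,G\big((Y^{(n)})_{s\geq t}\big)\Big]
= {\bf E}^{(n)}\Big[\big(P^{(n)}_t\psi\big)(Y^{(n)}_0)\Big],
\end{equation}
where $\psi(x) := {\bf E}^{(n)}_{x}\big[G\big((Y^{(n)})_{s\geq 0}\big)\big]$ — here I abuse notation and let $H_1$ also denote the pullback of $H_1$ through $\Phi^{(n)}$, similarly for $\psi$ — no, more carefully: one writes the left side as ${\bf E}^{(n)}[H_1(Y^{(n)}_0)(P^{(n)}_t\Psi)(y^{(n)}_0)]$ with $\Psi(x)={\bf E}^{(n)}_x[G((Y^{(n)})_{s\geq 0})]$, and then
\begin{equation}
\big|{\bf E}^{(n)}[H_1\, P^{(n)}_t\Psi] - \nu^{(n)}(H_1)\nu^{(n)}(\Psi)\big|
= \big|{\bf E}^{(n)}\big[(H_1-\nu^{(n)}(H_1))(P^{(n)}_t\Psi - \nu^{(n)}(\Psi))\big]\big|
\leq \|H_1\|_\infty\, e^{-c_h t}\,\|\Psi\|_\infty,
\end{equation}
by Cauchy--Schwarz and the gap bound, and $\nu^{(n)}(\Psi) = {\bf E}^{(n)}[G((Y^{(n)})_{s\geq 0})]$ by stationarity; this is \eqref{eq_time_decorrel_at_0} with $C_1 = \|H_1\|_\infty\|G\|_\infty$ (the restriction $t\geq\delta$ is only needed if one wants to absorb lower-order terms, and can be dropped, or kept harmlessly). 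For \eqref{eq_time_decorrel_at_T} one runs the same argument backwards in time: by reversibility of $y^{(n)}_\cdot$ under $\nu^{(n)}$, the time-reversed process on $[0,T]$ is again the same stationary OU diffusion, so conditioning on $(Y^{(n)})_{s\leq t+\tau}$ and applying $P^{(n)}_{T-t-\tau}$ to $H_2$ gives $|{\bf E}^{(n)}[H_2(Y^{(n)}_T)G((Y^{(n)})_{s\leq t+\tau})] - \nu^{(n)}(H_2){\bf E}^{(n)}[G((Y^{(n)})_{s\leq t+\tau})]| \leq \|H_2\|_\infty\|G\|_\infty e^{-c_h(T-t-\tau)}$, i.e. $C_2=\|H_2\|_\infty\|G\|_\infty$.

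Finally, to deduce Proposition~\ref{prop_time_decorrel} from the lemma, combine \eqref{eq_time_decorrel_at_0} and \eqref{eq_time_decorrel_at_T}: first decorrelate $H_2(Y_T)$ from the pair $\big(H_1(Y_0), G((Y_s)_{t\leq s\leq t+\tau})\big)$ at cost $Ce^{-c_h(T-t-\tau)}$, then decorrelate $H_1(Y_0)$ from $G((Y_s)_{t\leq s\leq t+\tau})$ at cost $Ce^{-c_h t}$ (using stationarity to shift the window $[t,t+\tau]$ to $[0,\tau]$), giving the bound $C\exp[-c_h\min\{t,T-\tau-t\}]$; then pass to the limit $n\to\infty$ using Proposition~\ref{prop_convergence_approximation} and the boundedness and continuity of $H_1,H_2,G$. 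I expect the only genuinely delicate point to be the \emph{uniformity in $n$} of the spectral gap: one must check that compressing $\mathcal L_h$ to $\pi_n\mathcal L_h\pi_n$ does not create eigenvalues of $B_n$ closer to $0$ than $-c_h$, which is immediate here because $h$ is translation-invariant so $\pi_n$ commutes with $\mathcal L_h$ and $B_n$ is literally diagonal in the basis $(e_\ell)_{\ell\leq n}$ with the same eigenvalues as $\mathcal L_h$ — but one should state this explicitly and also verify that $\nu^{(n)}$ is indeed the invariant measure and that $y^{(n)}_\cdot$ is reversible, which follows from the commutation of $B_n,C_n,D_n$ and the relation $D_n=-\tfrac12(B_nC_n+C_nB_n)$ (the fluctuation--dissipation identity).
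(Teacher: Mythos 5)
Your argument is correct, and it is a genuinely different route from the one in the paper. The paper proves Lemma~\ref{lemm_long_time} by (i) using the Markov property to reduce to a total-variation distance between the transition law $\nu^{(n)}_t(x)$ and the invariant Gaussian $\nu^{(n)}$, (ii) passing to relative entropy via Pinsker's inequality, (iii) computing the relative entropy between two Gaussians explicitly, and (iv) controlling the resulting traces such as $\operatorname{tr}(e^{2tB_n})$ using the spectrum of $B_n$. You instead invoke the $\mathbb L^2(\nu^{(n)})$ spectral gap of the reversible OU generator and conclude by a one-line Cauchy--Schwarz. The two approaches rest on exactly the same ingredient — the uniform-in-$n$ lower bound $c_h \geq 4\pi^2(1-\rho_{\sigma h})$ on $-\max\operatorname{spec}(B_n)$, which in the paper is Lemma~\ref{lemm_gap} and in your write-up is \eqref{eq_gap_section_long_time} — and the reversibility/fluctuation--dissipation relation $D_n = -\tfrac12(B_nC_n+C_nB_n)$ with $B_n,C_n$ commuting, which the paper uses to identify $\nu^{(n)}$ as the stationary measure and you use to get self-adjointness of the generator.

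What each approach buys: your $\mathbb L^2$ argument is considerably shorter, yields explicit constants ($C_1 = \|H_1\|_\infty\|G\|_\infty$, $C_2 = \|H_2\|_\infty\|G\|_\infty$), and — as you note — is valid for all $t\geq 0$ without the $t\geq\delta$ restriction, which the paper only needs because $H(\nu^{(n)}_t(x)\,|\,\nu^{(n)})\to\infty$ as $t\to 0$ while the $\mathbb L^2$ contraction has no singularity at $t=0$. The paper's TV/relative-entropy route is more computational but self-contained: it does not rely on the spectral decomposition of the OU generator, only on elementary Gaussian algebra. Two small points you should make rigorous if you expand this: first, the $\mathbb L^2$ contraction $\|P^{(n)}_tH-\nu^{(n)}(H)\|_{2}\leq e^{-c_ht}\|H-\nu^{(n)}(H)\|_2$ requires identifying the spectrum of the OU generator as $\{\sum_k m_k\lambda_k(B_n): m_k\in\N\}$, which is standard but deserves a reference (or a short change of variables reducing $C_n$ to the identity); second, in the argument for \eqref{eq_time_decorrel_at_T}, since $G$ also depends on $Y^{(n)}_{t+\tau}$, you cannot factor directly — rather pull out $\|G\|_\infty$ and then bound the $\mathbb L^1(\nu^{(n)})$ norm of $P^{(n)}_{T-t-\tau}H_2-\nu^{(n)}(H_2)$ by its $\mathbb L^2$ norm via Jensen, as you implicitly do. With those two small additions, this is a clean alternative proof.
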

To prove Lemma~\ref{lemm_long_time}, the key element is the fact that the generator $\mathcal L_{h}$ is for sub-critical $\lambda,E$. 
\begin{lemm}\label{lemm_gap}
Let $\lambda,E\in\R$ be sub-critical in the sense of Proposition~\ref{prop_solving_PDE_h}. 
Then the generator $\mathcal L_{h}$ is gapped: 
	there is $c_{h}>0$ such that, 
	for any $f\in C^\infty(\T)$ with $\int_{\T}f(x)\, dx = 0$:
	\begin{equation}
	\int_{\T} f(x)\big( -\mathcal L_{h}f\big)(x)\, dx
	\geq 
	c_{h}\int_{\T} f(x)^2\, dx
	.
	\label{eq_gap_section_long_time}
	\end{equation}
	Let $u\in\R^n$ with $\|u\|_2=1$ and write $f = \sum_{\ell=1}^n u_\ell e_\ell$. 
Then the gap~\eqref{eq_gap_section_long_time} carries over to $B_n$:
\begin{align}
u^T(-B_n) u 
&= 
-\int_{\T}f(x)\big(\mathcal L_{h}f\big)(x)dx 
\geq 
c_{h}
.
\label{eq_lower_bound_spectre_-L_Y_dans_preuve}
\end{align}
\end{lemm}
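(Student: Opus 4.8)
The plan is to obtain the spectral gap directly from the explicit Fourier diagonalisation of $\mathcal L_h$ provided by the analysis of $h$ in Proposition~\ref{prop_solving_PDE_h}. Recall from~\eqref{eq_def_L_star_theo_fluct} that $\mathcal L_h = (\mathrm{id}-\sigma h)\Delta$, and since $h$ is translation invariant, both $h$ and $\Delta$ are diagonal in the Fourier basis $(e_\ell)_{\ell\geq 0}$ of~\eqref{eq_eigenbasis_laplacien}. Concretely, $\Delta e_\ell = -4\pi^2 \ell^2 e_\ell$ for $\ell\geq 1$ (with the convention that $e_{2\ell-1},e_{2\ell}$ share the index $\ell$), while $h e_\ell = \hat h_\ell e_\ell$ with $\hat h_\ell = \frac{\sqrt2}{\sigma}\big(1-[1+\tfrac{\sigma\lambda(\lambda+2E)}{\pi^2\ell^2}]^{1/2}\big)\sqrt2$ the Fourier coefficient read off from Proposition~\ref{prop_solving_PDE_h}. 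Hence for $f = \sum_{\ell\geq 1} c_\ell e_\ell$ with $\int_\T f = 0$ (so the $\ell=0$ mode is absent),
\begin{equation}
\int_\T f(-\mathcal L_h f)\,dx
= \sum_{\ell\geq 1} 4\pi^2\ell^2\,(1-\sigma\hat h_\ell)\,c_\ell^2 .
\end{equation}
First I would compute $1-\sigma\hat h_\ell$ explicitly: from the formula for $\hat h_\ell$ one gets $\sigma\hat h_\ell = 2\big(1-[1+\tfrac{\sigma\lambda(\lambda+2E)}{\pi^2\ell^2}]^{1/2}\big)$, so $1-\sigma\hat h_\ell = 2[1+\tfrac{\sigma\lambda(\lambda+2E)}{\pi^2\ell^2}]^{1/2}-1$. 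Sub-criticality~\eqref{eq_phiT_in_prop} means exactly $\sigma\lambda(\lambda+2E) > -\pi^2$, i.e. $1+\tfrac{\sigma\lambda(\lambda+2E)}{\pi^2\ell^2} \geq 1+\tfrac{\sigma\lambda(\lambda+2E)}{\pi^2} > 0$ for all $\ell\geq 1$ (the worst mode being $\ell=1$), so each coefficient $4\pi^2\ell^2(1-\sigma\hat h_\ell)$ is strictly positive.

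It then remains to bound $4\pi^2\ell^2(1-\sigma\hat h_\ell)$ below by a constant $c_h>0$ uniform in $\ell\geq 1$. For large $\ell$ this is immediate: $1-\sigma\hat h_\ell \to 1$ as $\ell\to\infty$, so $4\pi^2\ell^2(1-\sigma\hat h_\ell)\to\infty$ and is bounded below by $\ell=1$'s value for $\ell$ large; more precisely a Taylor expansion gives $1-\sigma\hat h_\ell = 1 + \tfrac{\sigma\lambda(\lambda+2E)}{\pi^2\ell^2} + O(\ell^{-4})$, so $4\pi^2\ell^2(1-\sigma\hat h_\ell) = 4\pi^2\ell^2 + 4\sigma\lambda(\lambda+2E) + O(\ell^{-2}) \to \infty$. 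Since the map $\ell\mapsto 4\pi^2\ell^2(1-\sigma\hat h_\ell)$ takes only positive values and diverges, its infimum over $\ell\geq 1$ is attained and strictly positive; I would simply set
\begin{equation}
c_h := \inf_{\ell\geq 1} 4\pi^2\ell^2(1-\sigma\hat h_\ell) > 0 ,
\label{eq_def_c_h_proof}
\end{equation}
which is well defined and positive by the above. Then $\int_\T f(-\mathcal L_h f)\,dx \geq c_h \sum_{\ell\geq 1} c_\ell^2 = c_h\|f\|_2^2$ by Parseval, proving~\eqref{eq_gap_section_long_time}. For the finite-dimensional statement~\eqref{eq_lower_bound_spectre_-L_Y_dans_preuve}, take $u\in\R^n$ with $\|u\|_2=1$ and $f = \sum_{\ell=1}^n u_\ell e_\ell$; then $\|f\|_2 = 1$ by orthonormality of the $e_\ell$, and $u^T(-B_n)u = \langle f, -\pi_n\mathcal L_h\pi_n f\rangle = \langle f, -\mathcal L_h f\rangle$ since $f\in\mathrm{span}(e_1,\dots,e_n)$ is already in the range of $\pi_n$ and $\mathcal L_h$ preserves this span (being diagonal in the $e_\ell$). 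Applying~\eqref{eq_gap_section_long_time} gives $u^T(-B_n)u \geq c_h$.

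There is no real obstacle here; the only mild point of care is the bookkeeping between the single index $\ell$ labelling Fourier frequencies and the double index used in~\eqref{eq_eigenbasis_laplacien} for $(e_{2\ell-1},e_{2\ell})$, and the fact that $\ell=0$ (the constant mode) must be excluded — which is legitimate since $\int_\T f = 0$ by hypothesis in~\eqref{eq_gap_section_long_time}, and since $Y_\cdot(1)$ is conserved so the constant mode plays no dynamical role. One should also note that the constant $c_h$ produced this way depends only on $\lambda,E$ through $\lambda(\lambda+2E)$ and is independent of $n$, which is exactly what the subsequent use of the lemma (uniform-in-$n$ exponential decay in Lemma~\ref{lemm_long_time}) requires.
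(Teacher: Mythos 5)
Your approach is the same as the paper's: diagonalise $\mathcal L_h=(\mathrm{id}-\sigma h)\Delta$ in the Fourier basis, read off the eigenvalues of the convolution kernel $h$ from Proposition~\ref{prop_solving_PDE_h}, and take the infimum over modes. The paper phrases this slightly more abstractly: it invokes only the qualitative statement that $\sigma h$ has leading eigenvalue $\rho_{\sigma h}<1$ (as guaranteed by~\eqref{eq_critical_line_as_eigenvalue}) together with the gap $4\pi^2$ of $-\Delta$ on mean-zero functions, and concludes $c_h\geq 4\pi^2(1-\rho_{\sigma h})$ without writing out the mode-by-mode formula. Your restriction to $\mathrm{span}(e_1,\dots,e_n)$ for~\eqref{eq_lower_bound_spectre_-L_Y_dans_preuve} is also the paper's argument.

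There is, however, a genuine arithmetic slip in your eigenvalue computation, and as written it breaks the positivity you claim. The eigenvalue of the convolution operator $f\mapsto\int_\T h(\cdot-y)f(y)\,dy$ on the mode $e_{2\ell}$ is the ordinary Fourier coefficient $c_\ell(h)=\int_\T h(z)e^{-2i\pi\ell z}\,dz$, which for the $h_{\lambda,E}$ of Proposition~\ref{prop_solving_PDE_h} equals $\tfrac{1}{\sigma}\big(1-[1+\tfrac{\sigma\lambda(\lambda+2E)}{\pi^2\ell^2}]^{1/2}\big)$ (this is computed explicitly in the proof of Proposition~\ref{prop_existence_regularity_h}). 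You instead took $\hat h_\ell=\tfrac{2}{\sigma}\big(1-[\cdots]^{1/2}\big)$, i.e.\ twice the correct value, by folding in both $\sqrt2$ factors from the normalised basis $e_{2\ell}=\sqrt2\cos(2\pi\ell\cdot)$; the eigenvalue is $c_\ell(h)=a_\ell/\sqrt2$ when $h=\sum_\ell a_\ell e_{2\ell}$, not $a_\ell\sqrt2$. With your formula one gets $1-\sigma\hat h_\ell=2[1+\tfrac{\sigma\lambda(\lambda+2E)}{\pi^2\ell^2}]^{1/2}-1$, which is \emph{not} implied to be positive by sub-criticality: e.g.\ at $\ell=1$ and $\sigma\lambda(\lambda+2E)\in(-\pi^2,-3\pi^2/4)$, which is sub-critical, this quantity is negative, so the step ``$1+\tfrac{\sigma\lambda(\lambda+2E)}{\pi^2\ell^2}>0$, hence each coefficient is strictly positive'' is a non sequitur under your formula. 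With the correct eigenvalue one has $1-\sigma c_\ell(h)=[1+\tfrac{\sigma\lambda(\lambda+2E)}{\pi^2\ell^2}]^{1/2}>0$, which is exactly what sub-criticality gives; the rest of your argument (monotonicity/divergence in $\ell$, $c_h=\inf_\ell 4\pi^2\ell^2(1-\sigma c_\ell(h))$, and the reduction of $B_n$ to $\mathcal L_h$ on $\mathrm{span}(e_1,\dots,e_n)$) then goes through as intended.
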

\begin{proof}
Recall that $\mathcal L_{h}$ acts on $f\in C^\infty(\T)$ according to:
\begin{equation}
\mathcal L_{h} f(x)
=
f''(x) 
-\sigma\int_{\T} h'(x-y)f'(y)\, dy
=
(\text{id} - \sigma h)\Delta f(x)
.
\end{equation}
As $h$ is translation invariant, $\Delta$ and $h$ commute and are thus diagonalisable in the real orthonormal basis $(e_\ell)_{\ell\in\N}$ of eigenvectors of the Laplacian. 
The assumption that $\lambda,E$ are sub-critical implies by Proposition~\ref{prop_solving_PDE_h} that $\sigma h$ has leading eigenvalue $\rho_{\sigma h}$ strictly smaller than $1$. 
As $-\Delta$ has gap $4\pi^2$ on $\T$, this implies:
\begin{equation}
-\int_{\T} f(x)\mathcal L_{h}f(x)\, dx
\geq 
c_{h}\int_{\T}f(x)^2\, dx
,
\quad
c(h)
\geq 
4\pi^2(1-\rho_{\sigma h})
.
\end{equation}
\end{proof}
\begin{proof}[Proof of Lemma~\ref{lemm_long_time}]
The proof is an explicit Gaussian computation for finite dimensional Ornstein-Uhlenbeck processes.

Let us start with some notations. 
Let $\nu^{(n)}_t(x,dy) = p^{(n)}_t(x,y)\, dy$ denote the law of $y^{(n)}_t$ starting from $x$, 
and recall that the Gaussian measure $\nu^{(n)}(dy) = p^{(n)}(y)\, dy$ is the invariant measure of the diffusion $y^{(n)}_\cdot$.  
For two probability measures $\pi_1(dx) = \rho_1(x)\, dx$, 
$\pi_2(dx)=\rho_2(x)\, dx $ on $\R^n$, 
let $\|\pi_1-\pi_2\|_{TV}$ denote their total variation distance:
\begin{equation}
\|\pi_1-\pi_2\|_{TV}
=
\frac{1}{2}\int_{\R^n}|\rho_1(x)-\rho_2(x)|\, dx
.
\end{equation}
Let $\Psi^{(n)}$ be the continuous function such that $Y^{(n)}_t = \Psi^{(n)}(y^{(n)}_t)$ for each $t\geq 0$. 
We first prove decorrelation at time $0$, i.e.~\eqref{eq_time_decorrel_at_0}. 
By the Markov property for $y^{(n)}_\cdot$,
\begin{align}
&\Big|{\bf E}^{(n)}\Big[H_1(Y^{(n)}_0) G\big((Y^{(n)})_{s\geq t}\big)\Big] - \nu^{(n)}(H_1) {\bf E}^{(n)}\Big[G\big((Y^{(n)})_{s\geq 0}\big)\Big] \Big| 
\nonumber\\
&\quad
= 
\bigg|\int \nu^{(n)}(dx) H_1(\Psi^{(n)}(x)) \Big({\bf E}^{\nu^{(n)}_t(x)}\big[G\big(\big(\Psi^{(n)}(y^{(n)}_s)\big)_{s\geq 0}\big)\big]
 - {\bf E}^{(n)}\big[G\big(\big(\Psi^{(n)}(y^{(n)}_s)\big)_{s\geq 0}\big)\big]\Big)\bigg|
\nonumber \\
&\quad
\leq 
2\|G\|_\infty\|H_1\|_\infty\int \nu^{(n)}(dx) \big\|\nu^{(n)}_t(x)-\nu^{(n)}\|_{TV}
.
\label{eq_TV_distance_to_compute}
\end{align}
Let us compute this total variation distance. 
Writing $\nu^{(n)}_t(x) = p^{(n)}_t(x,y)\, dy$, 
an elementary elementary computation gives $p^{(n)}(y)=\lim_{t\rightarrow\infty}p^{(n)}_t(x,y)$ 
and:
\begin{equation}
p^n_t(x,y) 
=
\frac{1}{(2\pi)^{n/2} \det(C_n(t))^{1/2}}\exp\Big[-\frac{1}{2}(y- e^{tB_n}x)^TC_n(t)^{-1}(y- e^{tB_n}x)\Big]
,
\end{equation}
with, recalling~\eqref{eq_def_D_n_B_n_appendix} and the fact that $B_n,D_n,C_n$ commute:
\begin{equation}
C_n(t) 
:=
\int_0^t D_n^{1/2} e^{\frac{s}{2}(B_n + B_n^T)}D_n^{1/2}\, ds
=
C_n 
- 
\int_t^\infty D_n^{1/2} e^{sB_n}D_n^{1/2}\, ds
.
\label{eq_def_C_n_t}
\end{equation}
The last integral is well defined as $B_n$ is gapped, 
recall~\eqref{eq_lower_bound_spectre_-L_Y_dans_preuve}. 
Let $x\in\R^n$ and $t\geq 1$. 
Pinsker's inequality (see Proposition~\ref{prop_entropy_estimate_general}, the inequality is valid on general measurable spaces) 
bounds the total variation distance in terms of the relative entropy:
\begin{equation}
\|\nu^{(n)}_t(x)-\nu^{(n)}\|_{TV}
\leq 
\sqrt{2H\big(\nu^{(n)}_t(x)|\nu^{(n)}\big)}
,
\end{equation}
with:
\begin{equation}
H\big(\nu^{(n)}_t(x)|\nu^{(n)}\big)
:=
\int \log\Big(\frac{p^{(n)}_t(x,y)}{p^{(n)}(y)}\Big)p^{(n)}_t(x,y)\, dy
.
\end{equation}
The relative entropy between Gaussian measures can be computed explicitly:
\begin{equation}
H\big(\nu^{(n)}_t(x)|\nu^{(n)}\big)
=
\frac{1}{2}\text{tr}\big(C_n^{-1}C_n(t)\big) - \frac{n}{2}
+
\frac{1}{2}(e^{tB_n}x)^TC_n^{-1}(e^{tB_n}x) - \frac{1}{2}\log\det\big(C_n^{-1}C_n(t)\big)
.
\label{eq_relative_entropy_n_t}
\end{equation}
Let us bound each term above. 
Consider first the trace. 
The explicit formula~\eqref{eq_def_C_n_t} for $C_n(t),C_n$ gives, as $B_n$ is invertible:
\begin{align}
\frac{1}{2}\text{tr}\big(C_n^{-1}C_n(t)\big) - \frac{n}{2}
&=
-\frac{1}{2}\text{tr}\Big(-2B_n\int_t^\infty e^{2sB_n}\, ds\Big)
=
-\frac{1}{2}\text{tr}\big(e^{2tB_n}\big)
\nonumber\\
&=
-\frac{1}{2}e^{-2c_h t}\, \text{tr}\big(e^{2t(B_n+c_h\text{id})}\big)
\label{eq_trace_exp_B_n}
.
\end{align}
Since $\Delta$ and $h$ commute by translation invariance of $h$, 
the spectrum of $B_n$ is known explicitly. 
In particular, 
if $0\leq \lambda^n_1\leq ...\leq\lambda^n_n(B_n)$ denote the eigenvalues of $-B_n$ then: 
\begin{align}
\lambda^n_{2\ell},
\lambda^n_{2\ell+1}
\geq 
\ell^2 c_h
,\qquad 
0\leq \ell\leq \lfloor n/2\rfloor
.
\end{align}
As a result,
\begin{equation}
\text{tr}\big(e^{2t(B_n+c_\ell\text{id})}\big)
\leq 
\sum_{\ell=1}^{\lfloor n/2\rfloor} 2e^{-(\ell^2-1)c_h t}
,
\label{eq_bound_trace_exp_B_n}
\end{equation}
which is bounded uniformly in $n$ and $t\geq 1$. 
Recalling~\eqref{eq_trace_exp_B_n}, 
we have thus proven:
\begin{equation}
\forall t\geq 1,\qquad
\sup_n\Big|\frac{1}{2}\text{tr}\big(C_n^{-1}C_n(t)\big) - \frac{n}{2}\Big|
\leq 
C e^{-2c_{h} t}
,\qquad 
C = C(h)>0
.
\label{eq_bound_trace_C_ninverse_C_n}
\end{equation}
The determinant in~\eqref{eq_relative_entropy_n_t} is treated similarly:
\begin{equation}
\frac{1}{2}\log\det\big(C_n^{-1}C_n(t)\big)
=
\frac{1}{2}\log\det\big(1-e^{2tB_n}\big) 
\leq 
Ce^{-2c_{h} t},
\qquad 
C=C(h)>0
.
\end{equation}
All in all, recalling~\eqref{eq_relative_entropy_n_t}, 
we find that~\eqref{eq_TV_distance_to_compute} is bounded
by:
\begin{equation}
C\|G\|_\infty\|H_1\|_\infty \int \nu^{(n)}(dx) \Big[e^{-2c_{h} t}+ (e^{tB_n}x)^T C_n^{-1}(e^{tB_n}x) \Big]^{1/2}
.
\label{eq_interm_bound_decorrel_at_0}
\end{equation}
Equation~\eqref{eq_interm_bound_decorrel_at_0} can be bounded
using Cauchy-Schwarz inequality and the following inequality, valid for any matrices $A,B$:
\begin{equation}
\text{tr}(A^TB)
\leq 
\text{tr}(A^TA)^{1/2}\text{tr}(B^TB)^{1/2}
.
\end{equation}
The resulting bound on~\eqref{eq_interm_bound_decorrel_at_0} reads:
\begin{align}
&C\|G\|_\infty \|H_1\|_\infty \Big[e^{-2c_{h} t} + \int \nu^{(n)}(dx) (e^{tB_n}x)^T C_n^{-1}e^{tB_n}x \Big]^{1/2}
\nonumber\\
&\hspace{3cm}
=
C\|G\|_\infty \|H_1\|_\infty \Big[e^{-2c_{h} t} + \text{tr}\Big(C_ne^{tB_n} C_n^{-1}e^{tB_n}\Big)\Big]^{1/2}
\nonumber\\
&\hspace{3cm}
\leq
C\|G\|_\infty \|H_1\|_\infty \Big[e^{-2c_{h} t} + \text{tr}\big(e^{2tB_n}\big)\Big]^{1/2}
\nonumber\\
&\hspace{3cm}
\leq 
C'\|G\|_\infty\|H_1\|_\infty e^{-c_{h} t}
.
\end{align}
This proves the first inequality~\eqref{eq_time_decorrel_at_0} in the lemma. 
This and the reversibility of $y^{(n)}_\cdot$ immediately give the second inequality~\eqref{eq_time_decorrel_at_T}.
\end{proof}

\section{Correlations under the current-biased dynamics}\label{sec_comparison}
In this section, we prove Proposition~\ref{prop_correl_sec_results} on the correlation structure of the current-biased dynamics at times $1\ll t \ll T$. 
We then show heuristically that the result agrees with the prediction of~\cite{Bodineau2008}, 
where correlations were computed at fixed density of particles.
\begin{prop}\label{prop_correlations_current_app}
Let $\lambda,E\in\R$ be sub-critical as in Proposition~\ref{prop_solving_PDE_h}. 
Let $k_{curr}$ denote the kernel encoding long-range correlations under the current-biased dynamics at times far away from $0$ and $T$, in the sense that:
\begin{equation}
\lim_{T\rightarrow\infty}\lim_{N\rightarrow\infty}\E^{curr,\nu^N_{1/2}}_{\lambda,E,T}\big[Y^N_{T/2}(f_1)Y^N_{T/2}(f_2)\big] 
=
\int f_1(x) (\sigma\text{id} + k_{curr})f_2 (x)\, dx
.
\label{eq_def_k_curr_sec6}
\end{equation}
Then $k_{curr}$ satisfies $(\sigma^{-1}\text{id} - h)^{-1}=\sigma\text{id}+k_{curr}$ and is explicitly given by:
\begin{equation}
k_{curr}(x)
=
\sqrt{2}\sigma\sum_{\ell\geq 1}\bigg[\frac{1}{\sqrt{1+\frac{\lambda(\lambda+2E)\sigma}{\pi^2\ell^2}}} -1\bigg]\sqrt{2}\cos(2\pi\ell x)
,\qquad 
x\in\T
.
\label{eq_k_curr_mine}
\end{equation}
\end{prop}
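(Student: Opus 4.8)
The plan is to identify the limit in~\eqref{eq_def_k_curr_sec6} with a covariance of the limiting fluctuation field under the approximate driven process, and then to diagonalise in the Fourier basis. First I would reduce the quadratic observable to the correlation field: since $\bar\eta_i^2 = \sigma$ for every $i$, the definition~\eqref{eq_lien_Pi_et_Y_dans_intro} of $\Pi^N$ gives, for smooth $f_1,f_2$ and on every trajectory,
\[
Y^N_t(f_1)Y^N_t(f_2) = 4\,\Pi^N_t(f_1\otimes f_2) + \frac{\sigma}{N}\sum_{i\in\T_N}(f_1)_i(f_2)_i ,
\]
where the last term is a non-random Riemann sum converging to $\sigma\int_\T f_1 f_2$. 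Hence it suffices to compute $\lim_{T\to\infty}\lim_{N\to\infty}\E^{curr,\nu^N_{1/2}}_{\lambda,E,T}\big[\Pi^N_{T/2}(f_1\otimes f_2)\big]$. Note that $\Pi^N_{T/2}(f_1\otimes f_2)$ is a continuous (though unbounded) function $\Psi$ of $Y^N_{T/2}\in\s'(\T)$, and that evaluation at the endpoint of a Skorokhod path is continuous, so for any $M>0$ and any small $\tau>0$ the functional $F((Y_s)_{0\le s\le\tau}):=(\Phi_M\circ\Psi)(Y_0)$, with $\Phi_M$ a bounded Lipschitz truncation at level $M$, is bounded continuous on $\mathcal D([0,\tau],\s'(\T))$.

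Next I would invoke Theorem~\ref{theo_main_result} with this $F$, with $\tau$ small and $t = T/2$ (legitimate since $t\to\infty$ while $T-\tau=T\to\infty$): in the iterated limit the current-biased expectation of $(\Phi_M\circ\Psi)(Y^N_{T/2})$ coincides with the $\Prob^{\nu^N_h}_{h,\lambda+E}$-expectation of $(\Phi_M\circ\Psi)(Y^N_0)$, which by Theorem~\ref{theo_fluct_correl_driven_process} converges to ${\bf E}^{\nu^\infty_h}_{h}[\Phi_M(\Pi_0(f_1\otimes f_2))]$. Letting $M\to\infty$ and using~\eqref{eq_lien_Pi_et_Y_dans_theo} together with the fact that $Y_0\sim\nu^\infty_h$ is a centred Gaussian field with covariance $C_h := (\sigma^{-1}\text{id}-h)^{-1}$,
\[
{\bf E}^{\nu^\infty_h}_{h}\big[\Pi_0(f_1\otimes f_2)\big] = \tfrac14\int_\T f_1\,(C_h f_2) - \tfrac{\sigma}{4}\int_\T f_1 f_2 ,
\]
so, combining with the first display, $\lim_{T\to\infty}\lim_{N\to\infty}\E^{curr}[Y^N_{T/2}(f_1)Y^N_{T/2}(f_2)] = \int_\T f_1\,(C_h f_2)$. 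Comparison with~\eqref{eq_def_k_curr_sec6} gives $\sigma\text{id}+k_{curr} = C_h = (\sigma^{-1}\text{id}-h)^{-1}$, the first assertion.

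The hard part will be the $M\to\infty$ step on the current-biased side, i.e. the uniform integrability of $\Pi^N_{T/2}(f_1\otimes f_2)$ under $\Prob^{curr,\nu^N_{1/2}}_{\lambda,E,T}$, uniformly in both $N$ and $T$. Here I would use the domination bound of Proposition~\ref{prop_domination_by_P_h} to transfer tails to $\Prob^{\nu^N_h}_{h,\lambda+E}$, together with the moment bound~\eqref{eq_moment_bound}: for fixed $T$ it gives $\sup_N\E^{\nu^N_h}_{h,\lambda+E}\big[|\Pi^N_{T/2}(\psi)|^{3/2}\big]<\infty$, and combined with the weak convergence $\Pi^N_{T/2}(\psi)\Rightarrow\Pi_{T/2}(\psi)$ and the stationarity of $\Pi_\cdot$ (Theorem~\ref{theo_fluct_correl_driven_process}), $\limsup_N\E^{\nu^N_h}_{h,\lambda+E}\big[|\Pi^N_{T/2}(\psi)|^{3/2}\big]$ equals the $T$-independent quantity ${\bf E}^{\nu^\infty_h}_h[|\Pi_0(\psi)|^{3/2}]$. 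Domination then yields $\limsup_N\Prob^{curr,\nu^N_{1/2}}_{\lambda,E,T}(|\Pi^N_{T/2}(\psi)|>a)\le C a^{-3\kappa/2}$ uniformly in $T$, with $\kappa = (2r_h+1)^{-1}$ and $r_h$ the spectral radius of $-\sigma h$; since $r_h$ is small when $\lambda(\lambda+2E)\ge0$ is small, $3\kappa/2>1$, the tail is integrable, and the desired uniform integrability follows. (One could equivalently route this step through Proposition~\ref{prop_prob_curr_as_prob_driven_with_bounded_terms}, time-decorrelation and stationarity, at the cost of two extra truncation parameters.) This is the only non-routine point, and it is also the reason the identification is obtained only for $\lambda(\lambda+2E)\ge0$ small.

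Finally, for the explicit formula~\eqref{eq_k_curr_mine} I would diagonalise in the orthonormal basis $(e_\ell)_{\ell\ge0}$ of~\eqref{eq_eigenbasis_laplacien}. Since $h$ is translation invariant (Proposition~\ref{prop_solving_PDE_h}) the kernel operator $h$ commutes with the Laplacian; from its explicit Fourier series and $\int_\T h = 0$, $h$ has eigenvalue $0$ on $e_0$ and eigenvalue $\sigma^{-1}\big(1 - [1+\sigma\lambda(\lambda+2E)/(\pi^2\ell^2)]^{1/2}\big)$ on $e_{2\ell-1}$ and $e_{2\ell}$ for $\ell\ge1$. Hence $\sigma^{-1}\text{id}-h$ has eigenvalues $\sigma^{-1}$ and $\sigma^{-1}[1+\sigma\lambda(\lambda+2E)/(\pi^2\ell^2)]^{1/2}$, which are positive in the sub-critical regime, so $C_h=(\sigma^{-1}\text{id}-h)^{-1}$ has eigenvalues $\sigma$ and $\sigma[1+\sigma\lambda(\lambda+2E)/(\pi^2\ell^2)]^{-1/2}$, and $k_{curr}=C_h-\sigma\text{id}$ has eigenvalue $0$ on $e_0$ and $\sigma\big([1+\sigma\lambda(\lambda+2E)/(\pi^2\ell^2)]^{-1/2}-1\big)$ on $e_{2\ell-1},e_{2\ell}$. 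Reassembling the Fourier series of the associated translation-invariant kernel gives exactly~\eqref{eq_k_curr_mine}; this last computation is routine.
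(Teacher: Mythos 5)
Your proof is correct in substance and establishes the same result, but it differs from the paper's proof at two non-trivial points, one of which contains a small gap in justification.

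For the reduction to the limiting covariance, both you and the paper funnel everything through Theorem~\ref{theo_main_result} and Theorem~\ref{theo_fluct_correl_driven_process}, but the way unboundedness of $Y^N_{T/2}(f_1)Y^N_{T/2}(f_2)$ is handled differs. The paper keeps uniform integrability entirely on the $\Prob^{\nu^N_h}_{h,\lambda+E}$ side: the moment bound~\eqref{eq_moment_bound} gives uniform integrability at each fixed $T$, and the passage $T\to\infty$ is done at the level of the limiting Gaussian process by invoking the time-decorrelation of Proposition~\ref{prop_time_decorrel} directly on the polynomial observable $Y_{T/2}(f_1)Y_{T/2}(f_2)$, whose moments are automatically controlled. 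You instead push the uniform integrability to the current-biased side via the domination bound of Proposition~\ref{prop_domination_by_P_h}; this is a workable alternative, and your observation that $3\kappa/2>1$ when $r_h$ is small is the right condition. However, your justification that $\limsup_N\E^{\nu^N_h}_{h,\lambda+E}\big[|\Pi^N_{T/2}(\psi)|^{3/2}\big]$ is $T$-independent by ``weak convergence plus stationarity'' does not go through as stated: weak convergence $\Pi^N_{T/2}(\psi)\Rightarrow\Pi_{T/2}(\psi)$ together with the uniform $3/2$-moment bound gives (by Fatou) only ${\bf E}[|\Pi_0(\psi)|^{3/2}]\le\liminf_N\E[|\Pi^N_{T/2}(\psi)|^{3/2}]$, i.e.\ a lower bound, not the upper bound you need. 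The correct reason the $\limsup_N$ is uniform in $T$ is visible in the proof of Corollary~\ref{coro_moment_bound}: the $T$-dependence enters only through the entropy ${\bf H}^N(f_{T/2})\le \gamma C e^{T/(2\gamma)}N^{-1/2}$, which vanishes as $N\to\infty$ for each fixed $T$; the remaining integral $\int_0^\infty r^{-1/2}\log(1+e^{-cr/2})\,dr$ is $T$-independent. With that fix your domination route closes.

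For the explicit formula~\eqref{eq_k_curr_mine}, your derivation also differs: you read off the Fourier eigenvalues of $h$ from the explicit formula of Proposition~\ref{prop_solving_PDE_h}, compute the eigenvalues of $C_h=(\sigma^{-1}\mathrm{id}-h)^{-1}$ in the basis $(e_\ell)$, and subtract $\sigma\,\mathrm{id}$. The paper instead applies the Ito formula to $Y_t(f_1)Y_t(f_2)$ in stationarity, obtains the identity $\int_\T f_1\,(\sigma\,\mathrm{id}+k_{curr})\mathcal L_h f_2 + \sigma\int_\T f_1' f_2'=0$, and then solves for the Fourier coefficients of $k_{curr}$ using those of $h'$. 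The two computations are equivalent; yours is more direct given that the eigenvalues of $h$ are already known, while the paper's has the merit of expressing the stationarity condition on the covariance explicitly without relying on the translation-invariant diagonalisation (and thus generalises to $\tilde h$ not commuting with the Laplacian).
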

\begin{proof}
Theorem~\ref{theo_main_result} shows that the current-biased dynamics has the same fluctuations as the dynamics $\Prob^{\nu^N_h}_{h,\lambda+E}$ in the sense of convergence of bounded observables. 
To determine the correlation structure of the current-biased dynamics (and in particular prove that the limits in~\eqref{eq_def_k_curr_sec6} make sense), 
we check that Theorem~\ref{theo_main_result} also holds for observables of the form $Y^N_t(f_1)Y^N_t(f_2)$ ($t\geq 0,f_1,f_2\in C^\infty(\T)$), 
then compute this covariance using the characterisation fo limiting fluctuations in Theorem~\ref{theo_fluct_correl_driven_process}. 

Let $f_1,f_2\in C^\infty(\T)$. 
The moment bound~\eqref{eq_moment_bound} on $\Pi^N_t$ at each time $t$ and 
the relationship~\eqref{eq_lien_Pi_et_Y_dans_intro} between $\Pi^N_t$ and $Y^N_t$ give uniform integrability of $(Y^N_t(f_1)Y^N_t(f_2))_N$ under $\Prob^{\nu^N_h}_{h,\lambda+E}$. 
Combining with the time decorrelation of Proposition~\ref{prop_time_decorrel}, 
valid with the same proof for unbounded test functions $G(Y_{T/2})= Y_{T/2}(f_1)Y_{T/2}(f_2)$ that have bounded moments, 
we get:
\begin{equation}
\lim_{T\rightarrow\infty}\lim_{N\rightarrow\infty}\E^{curr,\nu^N_{1/2}}_{\lambda,E,T}\big[Y^N_{\frac{T}{2}}(f_1)Y^N_{\frac{T}{2}}(f_2)\big]
=
{\bf E}^{\nu^{\infty}}_{h,\lambda+E}\big[Y_{0}(f_1)Y_0(f_2)\big] 
,
\end{equation}
where we used the stationarity of $Y_\cdot$ under ${\bf E}^{\nu^{\infty}}_{h,\lambda+E}$. 
Since $Y_0$ is a Gaussian field with covariance $C_h = (\sigma^{-1}\text{id} - h)^{-1}$, 
there is $k_{curr}\in C^\infty([0,1])\cap C^0(\T)$ such that $C_h = \sigma \text{id} + k_{curr}$ (because $C_h-\sigma\text{id} =  \sigma h C_h$ can be checked to be a kernel operator on $\mathbb L^2(\T)$; the regularity of the associated kernel is inherited from $h$). 

It remains to find an expression for this kernel.
We do so using Ito formula on $Y_t(f_1)Y_t(f_2)$ ($t\geq 0$) and the stationarity of $(Y_s)_{s\geq 0}$. 
Recall the martingale problem characterising $(Y_s)_{s\geq 0}$ in Theorem~\ref{theo_fluct_correl_driven_process}. 
Taking expectations, we find:
\begin{equation}
\int_{\T} f_1(x)(\sigma \text{id} + k_{curr})\mathcal L_h f_2(x)\, dx + \sigma\int_{\T} f'_1(x)f'_2(x)\, dx
=
0
,
\end{equation}
where $\mathcal L_h f_2 = f_2'' -\sigma \int h'(\cdot-y) f_2'(y)\, dy$. 
Taking $f_1 = f_2=f$ with $f'(x)=e^{-2i\pi\ell x}$ ($x\in\T$) for $\ell\in\Z\setminus\{0\}$ and evaluating at $x=0$ yields, 
with $c_\ell(u) = \int_{\T} u(x)e^{-2\iota\pi\ell x}\, dx$ the $\ell^{\text{th}}$ Fourier coefficient of a function $u\in\mathbb{L}^2(\T)$:
\begin{equation}
-2\iota\pi\ell c_\ell(k_{curr}) +\sigma c_\ell(k_{curr})c_\ell(h') +\sigma^2c_\ell(h')
=
0
.
\end{equation}
Since $c_{\ell}(h')=2\iota\pi\ell c_\ell(h)$ is known by Proposition~\ref{prop_solving_PDE_h}, we obtain:
\begin{align}
c_\ell(k_{curr}) 
&= 
\frac{\sigma^2c_\ell(h')}{2\iota\pi\ell -\sigma c_\ell(h')}
=
\sigma \bigg[\frac{1}{\sqrt{1+\frac{\lambda(\lambda+2E)\sigma}{\pi^2\ell^2}}} -1\bigg]
.
\end{align}
\end{proof}
{\bf Comparison with~\cite{Bodineau2008}}. 
The conjecture in~\cite{Bodineau2008} 
(see Equation 61 there with the values $D=1$, $\sigma(\rho) = 2\rho(1-\rho)$, $j_0 =\sigma(\rho) (\lambda+E)$, $\bar j_0 = \sigma(\rho) E$) is the following.  
Let $\rho\in[0,1]$,
and consider the current-biased dynamics~\eqref{eq_current_biased_dynamics_intro} starting from the uniform measure with $p=\lfloor N\rho\rfloor$. 
Then, in the large $N$ limit, 
the two-point correlation kernel $k^\rho_{curr}$ satisfies, 
for each $f\in C^{\infty}(\T)$ and $x\in\T$:
\begin{equation}
k^\rho_{curr}f(x)
=
\rho(1-\rho)\int_{\T}\sum_{\ell\geq 1}2\cos\big(2\pi\ell (x-y)\big)\Bigg(1+\Bigg[\frac{1}{\sqrt{1+\frac{\rho(1-\rho)\lambda(\lambda+2E)}{\pi^2\ell^2}}}-1\Bigg]\Bigg)f(y) \, dy
.
\label{eq_C_curr_canonical}
\end{equation}
To be precise, the conjecture in~\cite{Bodineau2008} is formulated for the dynamics conditioned to having current $j_0$, rather than for our current-biased dynamics tilted by $\lambda$ times the current. 
In the large $N$, large $T$ limit, the two should be equivalent with $j_0 = \rho(1-\rho)(\lambda+E)$ at least in the absence of dynamical phase transition~\cite{TouchetteEquivalenceNonequivalenceEnsembles2015}. 

Using the identity $\delta(\cdot)-1 = \sum_{\ell\geq 1}2\cos(2\pi\ell \cdot)$, 
with $\delta(\cdot)$ the Dirac distribution,~\eqref{eq_C_curr_canonical} becomes:
\begin{equation}
C^\rho_{curr}f(x) 
=
\rho(1-\rho)f(x)
+k^\rho_{curr}f(x)
,
\end{equation}
with the translation-invariant kernel $k^\rho_{curr}$ given for $x\in\T$ by:
\begin{equation}
k^\rho_{curr}(x) 
=
-\rho(1-\rho) + \rho(1-\rho)\sum_{\ell\geq 1}2\cos(2\pi\ell x)\Bigg[\frac{1}{\sqrt{1+\frac{\rho(1-\rho)\lambda(\lambda+2E)}{\pi^2\ell^2}}}-1\Bigg]
.
\label{eq_k_curr_canonical}
\end{equation}
%

The next proposition states that the expression~\eqref{eq_k_curr_mine} of $k_{curr}$ and the $k_{curr}^\rho$ of~\cite{Bodineau2008} are compatible, 
in the sense that one recovers $k_{curr}$ upon averaging $k_{curr}^\rho$ on all densities. 
\begin{prop}
Let $\mathcal B_N$ denote the Binomial law with parameters $N,1/2$. Then:
\begin{equation}
k_{curr}(x)
=
\lim_{N\rightarrow\infty} 
\bigg[\E_{\mathcal B_N}\big[ k^{p/N}_{curr}(x)\big] + N \text{Var}_{\mathcal B_N}\big[ p/N\big]\bigg]
=
\lim_{N\rightarrow\infty} \E_{\mathcal B_N}\big[ k^{p/N}_{curr}(x)\big] + \frac{1}{4}
,
\label{eq_proof_comparison_k_curr}
\end{equation}
\end{prop}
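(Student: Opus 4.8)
The plan is to evaluate the two limits in~\eqref{eq_proof_comparison_k_curr} directly; the only real input is a uniform control of the family $\rho\mapsto k^\rho_{curr}(x)$. First I would record the elementary Binomial moments: writing $\rho_N:=p/N$ for the empirical density, one has $\E_{\mathcal B_N}[\rho_N]=\tfrac12$ and $\text{Var}_{\mathcal B_N}[\rho_N]=\tfrac1{4N}$, so that $N\,\text{Var}_{\mathcal B_N}[\rho_N]=\tfrac14$ \emph{exactly}, for every $N$. In particular the second equality in~\eqref{eq_proof_comparison_k_curr} is an immediate consequence of the first, and it remains only to prove $\lim_{N\to\infty}\E_{\mathcal B_N}\big[k^{\rho_N}_{curr}(x)\big]=k_{curr}(x)-\tfrac14$ for each $x\in\T$.

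Next I would check that $\rho\mapsto k^\rho_{curr}(x)$ is bounded and continuous on $[0,1]$, uniformly in $x\in\T$. The point is that, for $\lambda,E$ sub-critical in the sense of Proposition~\ref{prop_solving_PDE_h}, the denominators $1+\tfrac{\rho(1-\rho)\lambda(\lambda+2E)}{\pi^2\ell^2}$ appearing in~\eqref{eq_k_curr_canonical} are bounded below by a constant $\delta_0=\delta_0(\lambda,E)>0$ uniformly in $\rho\in[0,1]$ and $\ell\ge1$: since $\rho(1-\rho)\le\sigma=\tfrac14$, for $\ell\ge2$ the denominator is at least $1+\tfrac{\sigma\lambda(\lambda+2E)}{4\pi^2}>\tfrac34$, while for $\ell=1$ it is at least $1+\tfrac{\sigma\lambda(\lambda+2E)}{\pi^2}>0$ by sub-criticality (using $\rho(1-\rho)\le\sigma$ when $\lambda(\lambda+2E)<0$, and triviality when $\lambda(\lambda+2E)\ge0$). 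By the mean value theorem applied to $u\mapsto u^{-1/2}$ this gives $\big|(1+\tfrac{\rho(1-\rho)\lambda(\lambda+2E)}{\pi^2\ell^2})^{-1/2}-1\big|\le \tfrac{C(\lambda,E)}{\ell^2}$ uniformly in $\rho$, so the Fourier series defining $k^\rho_{curr}(x)$ converges normally; hence $\rho\mapsto k^\rho_{curr}(x)$ is continuous, and bounded by a constant depending only on $\lambda,E$, uniformly in $x$.

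With this in hand, since $\rho_N\to\tfrac12$ in $\mathcal B_N$-probability and $\rho\mapsto k^\rho_{curr}(x)$ is bounded and continuous, the bounded convergence theorem gives $\E_{\mathcal B_N}[k^{\rho_N}_{curr}(x)]\to k^{1/2}_{curr}(x)$. Finally I would identify $k^{1/2}_{curr}(x)=k_{curr}(x)-\tfrac14$ by comparing~\eqref{eq_k_curr_canonical} at $\rho=\tfrac12$ with~\eqref{eq_k_curr_mine}: the $\ell\ge1$ Fourier parts match term by term (both carry the prefactor $\rho(1-\rho)|_{\rho=1/2}=\sigma$ and the identical bracket), and the only additional contribution to $k^{1/2}_{curr}$ is the constant $-\rho(1-\rho)|_{\rho=1/2}=-\tfrac14$. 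Combining the three steps proves the first equality in~\eqref{eq_proof_comparison_k_curr}, and the second then follows from $N\,\text{Var}_{\mathcal B_N}[\rho_N]=\tfrac14$.

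There is no genuine obstacle here; the argument is bookkeeping. The only point requiring a little care is the uniform lower bound $\delta_0>0$ on the denominators, which is needed both for the normal convergence of the $\ell$-series and to ensure that $(\lambda,E)$ remains sub-critical at every density $\rho\in[0,1]$ (it does, since $\rho(1-\rho)\le\sigma$). An alternative, slightly more transparent route avoids the blanket use of bounded convergence: one exchanges $\lim_N$ with $\sum_\ell$ using the $N$-uniform summable bound above, notes that $\E_{\mathcal B_N}[\rho_N(1-\rho_N)]=\tfrac14-\tfrac1{4N}\to\tfrac14$ and that each summand is a bounded continuous function of $\rho_N$, and thereby sees explicitly that the $N\,\text{Var}$ correction in~\eqref{eq_proof_comparison_k_curr} is exactly what is needed to cancel the non-vanishing limit $-\tfrac14$ of the averaged $-\rho_N(1-\rho_N)$ term.
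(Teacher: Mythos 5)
Your proof is correct and follows essentially the same route as the paper's: you check that $\rho\mapsto k^\rho_{curr}(x)$ is continuous and bounded on $[0,1]$ uniformly in $x$ via normal convergence of the Fourier series, apply the law of large numbers together with bounded convergence to get $\E_{\mathcal B_N}[k^{\rho_N}_{curr}(x)]\to k^{1/2}_{curr}(x)$, and then identify $k^{1/2}_{curr}(x)=k_{curr}(x)-\tfrac14$ by matching the Fourier expansions~\eqref{eq_k_curr_canonical} and~\eqref{eq_k_curr_mine} at $\rho=\tfrac12$. You also correctly compute $N\,\text{Var}_{\mathcal B_N}[p/N]=\tfrac14$ (the paper's displayed ``$N/4$'' is a typo, as the statement itself makes clear); and your uniform lower bound $\delta_0(\lambda,E)>0$ on the denominators $1+\rho(1-\rho)\lambda(\lambda+2E)/(\pi^2\ell^2)$ fills in a step the paper leaves implicit when it asserts the $O(\ell^{-2})$ decay of the Fourier coefficients uniformly in $\rho$.
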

\begin{rmk}
The decomposition of $k_{curr}$ as the sum of two terms is not artificial. 
Indeed, on the one hand $k_{curr}$ is the limit of $N$ times a certain covariance, 
and the right-hand side of~\eqref{eq_proof_comparison_k_curr} similarly involves two rescaled covariances.  
On the other hand $k_{curr}$ can be seen as the limiting covariance under the convolution of a binomial measure determining the density of particles, and a measure associated with the steady state of the current biased simple exclusion at fixed density. 
The decomposition of $k_{curr}$ then follows from the general fact that for any three probability measures $\pi,\mu,\nu$ such that $\pi = \mu\star\nu$ and any test functions $F,G$ for which the following quantities make sense:
\begin{equation}
\text{Cov}_{\pi}(F,G)
=
\text{Cov}_{\mu}\big(\E_{\nu}[F],\E_{\nu}[G]\big)
+\E_{\mu}\big[\text{Cov}_{\nu}(F,G)\big]
.
\end{equation}
%
\demo
\end{rmk}
\begin{proof}
Note that Fourier coefficients of $k^{\rho}_{curr}$ in~\eqref{eq_k_curr_canonical} scale like $o(\ell^{-2})$ uniformly in $\rho$,
thus $(\rho,x)\mapsto k_{curr}^\rho(x)$ is continuous on $[0,1]\times \T$ by Dirichlet's normal convergence theorem. 
Let $x\in\T$. 
As $\rho\in[0,1]\mapsto k^\rho_{curr}(x)$ is continuous and bounded, 
the law of large numbers gives, 
recalling the expression~\eqref{eq_k_curr_mine} of $k_{curr}$:
\begin{equation}
\lim_{N\rightarrow\infty}\E_{\mathcal B_N}\big[ k^{p/N}_{curr}(x)\big] 
=
-\frac{1}{4} + k_{curr}(x)
.
\end{equation}
On the other hand, elementary computations give 
$N\text{Var}_{\mathcal B_N}\big[ p/N\big]=N/4$. 
\end{proof}

\section*{Acknowledgements}
The author thanks Thierry Bodineau for many discussions and comments at all stages of this work. 
This work was partially carried out at the University of Cambridge with support from the European Research Council under the European Union's Horizon 2020 research and innovation programme
(grant agreement No.~851682 SPINRG).

\appendix
\section{Concentration of measure}\label{app_concentration}
\subsection{Concentration for discrete Gaussian measures}\label{sec_concentration_product_measure}
Let $g:\T^2\to\R$ be a bounded function. 
Recall that $\nu^N_{1/2}$ is the product Bernoulli measure with density $1/2$ and that $\nu^N_g$ is the following discrete measure:
\begin{equation}
\forall \eta\in\Omega_N,\qquad 
\nu_{g}^N 
:= (\mathcal Z^N_g)^{-1}e^{2\Pi^N(g)}\nu^N_{1/2},\quad \mathcal Z^N_g \text{ a normalisation factor}.\label{eq_def_mesure_gaussienne_appendice}
\end{equation}
We often decompose $g$ as $g = g_-+g_+$ with $g_-$ ($g_+$) a negative (positive) kernel on $\mathbb L^2(\T)$:
\begin{equation}
\forall f\in\mathbb L^2(\T),\qquad 
\pm\int_{\T^2}f(x) g_\pm(x,y)f(y)\, dx\, dy
\geq 
0
.
\label{eq_g_pm_neg_kernel_appendix}
\end{equation}
When $g_{\pm}\in C^0(\T^2)$, 
it is known that the definition~\eqref{eq_g_pm_neg_kernel_appendix} is equivalent to asking for the matrix $\pm(g_{\pm}(x_i,x_j))_{1\leq i,j\leq n}$ to be positive for any $n\in\N_{\geq 1}$ and any $x_1,...,x_n\in\T$, see e.g.~\cite{FerreiraEigenvaluesIntegralOperators2009}. 
In the following we will often assume $g_{\pm}\in C^1_D(\T^2)\subset C^0(\T^2)$, 
the set of continuous functions with restrictions to $\{x\leq y\}$, $\{x\geq y\}$ that are $C^1$ (the derivatives of the restrictions on the diagonal need not coincide). 
Recall also the notations:
\begin{equation}
\sigma 
:= 
1/4
,\qquad
\bar\eta_i := \eta_i-\frac{1}{2}
,\quad i\in\T_N
. 
\label{eq_def_sigma_app_concentration}
\end{equation}
\begin{lemm}\label{lemm_bound_correlations}
Let $n\in\N$ and let $I\subset \T_N$ with $|I| = 2n+1$. 
Then:
\begin{equation}\label{eq_odd_powers_vanish}
\nu^N_{g}\Big(\prod_{i\in I}\bar\eta_i\Big) = 0.
\end{equation}
Moreover, assume that the $g$ of \eqref{eq_def_mesure_gaussienne_appendice} satisfies $g_{\pm}\in C^1_D(\T^2)$ and that, for some $\delta>0$:
\begin{equation}
\forall f\in\mathbb L^2(\T),
\qquad
\sigma \int_{\T^2}f(x)g_+(x,y)f(y)\,dx\,dy 
\leq 
(1-\delta)\int_{\T}f(x)^2\, dx
.
\label{eq_assumption_gap_g}
\end{equation}
Then:
\begin{equation}
1
\leq
\sup_{N\in\N_{\geq 1}}\mathcal Z^N_g
<
\infty
\quad 
\text{and}\quad 
\sup_{J\subset\T_N : |J|=2n}\nu^N_{g}\Big(\prod_{j\in J}\bar\eta_j\Big) 
= 
O(N^{-n})
,\qquad
n\in\N_{\geq 1}
.
\label{eq_bound_correlations}
\end{equation}
\end{lemm}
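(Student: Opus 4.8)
The vanishing of odd moments~\eqref{eq_odd_powers_vanish} is immediate: the measure $\nu^N_g$ is invariant under the spin-flip $\eta \mapsto (1-\eta_i)_{i\in\T_N}$ (this flips the sign of every $\bar\eta_i$ while leaving $\Pi^N(g)$, hence $\nu^N_g$, unchanged), so any monomial $\prod_{i\in I}\bar\eta_i$ with $|I|$ odd has zero expectation. For the quantitative bounds~\eqref{eq_bound_correlations} the plan is to expand the exponential weight $e^{2\Pi^N(g)}$ and reduce everything to a perturbative expansion around the product measure $\nu^N_{1/2}$. The key structural point is that $2\Pi^N(g) = \tfrac{1}{2N}\sum_{i\neq j}\bar\eta_i\bar\eta_j g_{i,j}$ carries a prefactor $N^{-1}$, so in the product measure each term $\bar\eta_i\bar\eta_j$ has mean zero and the sum behaves like $O(1)$; this is what will produce the $O(N^{-n})$ decay. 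Concretely, I would first treat the normalisation $\mathcal Z^N_g = \nu^N_{1/2}(e^{2\Pi^N(g)})$. The lower bound $\mathcal Z^N_g\geq 1$ follows from Jensen's inequality, $\nu^N_{1/2}(e^{2\Pi^N(g)})\geq e^{2\nu^N_{1/2}(\Pi^N(g))} = e^0 = 1$, using $\nu^N_{1/2}(\bar\eta_i\bar\eta_j)=0$ for $i\neq j$. The upper bound $\sup_N \mathcal Z^N_g<\infty$ is where the gap assumption~\eqref{eq_assumption_gap_g} enters: writing $g = g_-+g_+$, the $g_-$ part only helps (it gives a weight $\leq e^{O(1/N)}$ after accounting for the excluded diagonal, since $-\Pi^N(g_-) = -\tfrac{1}{4N}\sum_{i,j}\bar\eta_i\bar\eta_j (g_-)_{i-j} + \tfrac{h_0\sigma}{4}\cdot(\text{bounded})$ with the first term $\leq 0$ for a negative kernel — exactly the manipulation used in~\eqref{eq_bound_Pi_T}), while for $g_+$ one uses a Gaussian-type bound: approximating $\Pi^N(g_+)$ by $\tfrac14 (Y^N, g_+ Y^N)$ up to a diagonal correction, the condition $\sigma g_+ \leq (1-\delta)\,\mathrm{id}$ makes the quadratic form strictly sub-critical, so a Gaussian (or sub-Gaussian, via Hanson–Wright / a direct diagonalisation of $g_+^N$ and handling each Fourier mode as a rescaled sum of i.i.d. centered variables) computation gives $\nu^N_{1/2}(e^{2\Pi^N(g_+)})\leq C(\delta)$ uniformly in $N$. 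The cross term between $g_-$ and $g_+$ is controlled by Cauchy–Schwarz plus the same two estimates.

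For the correlation bound, fix $J\subset\T_N$ with $|J|=2n$ and write $\nu^N_g(\prod_{j\in J}\bar\eta_j) = (\mathcal Z^N_g)^{-1}\nu^N_{1/2}(\prod_{j\in J}\bar\eta_j\, e^{2\Pi^N(g)})$. Since $\mathcal Z^N_g$ is bounded above and below, it suffices to show the numerator is $O(N^{-n})$. I would expand $e^{2\Pi^N(g)} = \sum_{k\geq 0}\frac{(2\Pi^N(g))^k}{k!}$ and compute, for each $k$, the product-measure expectation $\nu^N_{1/2}\big(\prod_{j\in J}\bar\eta_j\cdot (2\Pi^N(g))^k\big)$. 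Expanding $(2\Pi^N(g))^k = N^{-k}\sum \prod_{a=1}^k \bar\eta_{i_a}\bar\eta_{i'_a} g_{i_a,i'_a}$, a nonzero contribution under the product measure requires every site among $J\cup\{i_a,i'_a\}$ to appear an even number of times, and using $\bar\eta_i^2=\sigma$ the surviving terms are exactly those where the $2k$ new indices, together with the $2n$ fixed indices of $J$, pair up. Each of the $2n$ fixed indices must be "matched" to one of the new pair-factors, which forces at least $n$ of the $k$ factors to be used for that purpose and in doing so collapses $n$ of the free index-sums; the remaining sums over the $\le k-n$ un-matched index pairs each contribute a factor $O(N)$ (they are free), against the overall $N^{-k}$, giving $O(N^{-k}\cdot N^{k-n}) = O(N^{-n})$ per term. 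Summability over $k$ is controlled because $\|g\|_\infty<\infty$ and the combinatorial count of pairings at level $k$ grows like $(2k+2n)!!/(\text{stuff})$ but is beaten by the $1/k!$ together with the fact that, after accounting for the $N^{-1}$ per extra factor, only finitely many "topologies" contribute at leading order; the tail in $k$ is absorbed into the exponential moment bounds of Proposition~\ref{prop_concentration_exponentielle} (or re-derived here directly via the same Gaussian estimate used for $\mathcal Z^N_g$). In practice it is cleanest to reorganise this as an approximate Wick formula: the leading $O(N^{-n})$ coefficient is the sum over pairings of $J$ of products of the matrix inverse $(\sigma^{-1}I_N - N^{-1}g^N)^{-1}$, exactly as in~\eqref{eq_local_Wick_sec3}, and the bound then follows from the entrywise bounds on that inverse established in Lemma~\ref{lemm_LU}, which hold precisely under the sub-criticality hypothesis~\eqref{eq_assumption_gap_g}.

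The main obstacle I anticipate is the uniform-in-$N$ control of $\mathcal Z^N_g$ and of the higher-order terms in the exponential expansion in the regime where $g_+$ is nonzero: one must genuinely use the spectral gap $\sigma g_+ \le (1-\delta)\mathrm{id}$ rather than just $\|g_+\|_\infty<\infty$, because the quadratic form $\Pi^N(g_+)$ can otherwise generate a weight that concentrates on atypically large $\|Y^N\|$ and destroys the uniform bound (this is the same phenomenon that, at $\delta=0$, corresponds to the dynamical phase transition). Technically this means diagonalising $g_+^N$ in the discrete Fourier basis, noting the eigenvalues converge to those of the kernel $g_+$ and stay below $\sigma^{-1}(1-\delta/2)$ for $N$ large, and then bounding the exponential moment mode by mode — each mode being a quadratic expression in a centered sum of i.i.d. bounded variables, for which a sub-Gaussian estimate valid below the critical coupling applies. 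Everything else (the spin-flip symmetry, Jensen for the lower bound, the index-counting for the $O(N^{-n})$ decay, and the reduction to the matrix inverse) is routine bookkeeping given the tools already set up in Appendix~\ref{app_concentration} and Lemma~\ref{lemm_LU}.
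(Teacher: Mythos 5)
Your treatment of the vanishing odd moments~\eqref{eq_odd_powers_vanish} via the spin-flip $\eta\mapsto(1-\eta_i)_{i\in\T_N}$, and the Jensen argument for $\mathcal Z^N_g\geq 1$, match the paper exactly. The reduction to $g=g_+$ by discarding the negative-kernel part is also the paper's first move (though the resulting weight is $e^{O(1)}$, not $e^{O(1/N)}$: the diagonal correction $\frac{\sigma}{2N}\sum_i(g_-)_{i,i}$ has $N$ terms of size $O(1/N)$ each). Note also that the paper itself outsources the even-correlation bound~\eqref{eq_bound_correlations} to Lemma~A.2 of~\cite{Correlations2022} once $\sup_N\mathcal Z^N_g<\infty$ is known, so the heart of the lemma is the normalisation bound, which you correctly identify as the real obstacle.

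That is also where there is a genuine gap. None of your three proposed routes for $\sup_N\mathcal Z^N_g<\infty$ work at the claimed threshold. First, the Fourier diagonalisation is unavailable: $g$ is not assumed translation-invariant in this lemma. Second, even after diagonalising $g_+^N$ in some orthonormal basis, the modes $\sum_i u^{(k)}_i\bar\eta_i$ of a quadratic form in i.i.d.\ Bernoulli variables are uncorrelated but \emph{not} independent, so the exponential moment does not factorise mode by mode; this is precisely where Bernoulli differs from Gaussian and where the ``Gaussian computation'' becomes a heuristic rather than a proof. Third, Hanson--Wright with its generic universal constants requires $\|A\|_{op}$ below a small numerical threshold, not below the sharp $1/(2\sigma)$ corresponding to the hypothesis $\sigma\lambda_{\max}(g_+)\leq 1-\delta$; this only recovers the result under a strictly stronger smallness assumption, such as the Hilbert--Schmidt bound $\|\sigma g_+\|_2<1$ of~\cite{Correlations2022} that the lemma is explicitly improving upon. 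The paper's actual mechanism, which your proposal is missing, is a combinatorial trace expansion: expand $e^{2\Pi^N(g_+)}$ into a sum over Wick pairings (after passing to the averaged matrix $\hat g^N$ of~\eqref{eq_def_tilde_g_N}, whose spectrum sits inside that of the kernel $\sigma g_+$), reorganise each pairing as a permutation and each length-$\ell$ cycle as a trace $\text{tr}\big((\sigma\hat g^N/N)^\ell\big)$, exploit the spectral gap via $\text{tr}\big((\sigma\hat g^N/N)^\ell\big)\leq (1-\epsilon)^{\ell/2}$ for $\ell$ above a fixed $\ell_0$, and control the combinatorics by the classical moment bounds on cycle counts of uniform random permutations. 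Finally, invoking Proposition~\ref{prop_concentration_exponentielle} or the Wick machinery of Proposition~\ref{prop_Wick_theorem} and Lemma~\ref{lemm_LU} to absorb the tail of the $k$-expansion is circular, since both of those rely on the present lemma.
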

A similar result was obtained in~\cite[Lemma A.1]{Correlations2022} under the stronger assumption that $\|\sigma g_+\|_2<1$. 
Here the assumption~\eqref{eq_assumption_gap_g} is optimal.
\begin{proof}
The mapping $\eta\in\Omega_N \mapsto (1-\eta_i)_{i\in\T_N}\in\Omega_N$ leaves $\nu^N_{g}$ invariant due to the $1/2$ density, 
but it changes the sign of $\prod_{i\in I}\bar\eta_i$ (recall $\bar\eta_\cdot :=\eta_\cdot-1/2$). 
This proves~\eqref{eq_odd_powers_vanish}. 
Moreover, Jensen inequality implies:
\begin{equation}
\mathcal Z^N_g
\geq 
\exp\Big[\nu^N_{1/2}\big[2\Pi^N(g)\big]\Big]
=
1
.
\end{equation}
The estimate on even correlations is proven in Lemma A.2 in \cite{Correlations2022} provided $\sup_{N}\mathcal Z^N_g<\infty$. 
We thus only prove this bound.

Recall that $g_-\subset C^0(\T^2)$ being a negative kernel implies:
\begin{equation}
2\Pi^N(g_-)
:=
\frac{1}{2N}\sum_{i\neq j}(g_-)_{i,j}\bar\eta_i\bar\eta_j
=
\frac{1}{2N}\sum_{i,j}(g_-)_{i,j}\bar\eta_i\bar\eta_j 
-\frac{\sigma}{2N}\sum_{i}(g_-)_{i,i}
\leq 
\frac{\sigma}{2}\|g_-\|_\infty
.
\end{equation}
Thus:
\begin{align}
\mathcal Z^N_g 
&\leq
\nu^N_{1/2}\Big(\exp\Big[\frac{1}{2N}\sum_{i\neq j\in\T_N}(g_+)_{i,j}\bar\eta_i\bar\eta_j + \frac{1}{2N}\sum_{i,j\in\T_N}(g_-)_{i,j}\bar\eta_i\bar\eta_j\Big]\Big)e^{\sigma \|g_-\|_\infty/2}
\nonumber\\
&\leq 
\nu^N_{1/2}\Big(\exp\Big[\frac{1}{2N}\sum_{i\neq j\in\T_N}(g_+)_{i,j}\bar\eta_i\bar\eta_j\Big]\Big)e^{\sigma\|g_-\|_\infty/2}
.
\label{eq_getting_rid_of_g_-_mathcal_Z}
\end{align}
It is thus enough to bound $\mathcal Z^N_g$ when $g = g_+$, i.e. $g_-=0$, 
which we now assume. 
To make use of Assumption~\eqref{eq_assumption_gap_g} on $g$, 
it is convenient to replace $(g_{i,j})_{(i,j)\in\T_N^2}$ by the matrix $\hat g^N$:
\begin{equation}
\hat g^N(i,j)
=
N^2\int_{(\frac{i}{N},\frac{j}{N})+\frac{1}{N}[0,1)^2}g(x,y)\,dx\, dy 
,\qquad
(i,j)\in \T_N^2
.
\label{eq_def_tilde_g_N}
\end{equation}
The advantage of $\hat g^N$ over $(g_{i,j})_{i,j}$ is that the spectrum of $\sigma \hat g^N/N$ is included in the spectrum of $\sigma g$. 
Indeed, 
if $u\in\R^N$ is an eigenvector of $\sigma\hat g^N$, 
one can check that the following $f\in\mathbb L^2(\T)$ is an eigenvector for $\sigma g$ associated with the same eigenvalue:
\begin{equation}
f(x)
=
\sqrt{N}{\bf 1}_{\frac{1}{N}[i,i+1)}(x) u_i
,\qquad 
x\in\T
.
\end{equation}
As $g=g_+\in C^1_D(\T^2)\subset C^0(\T^2)$, 
replacing $g$ by $\hat g^N$ is not restrictive as far as bounding $\mathcal Z^N_g$ is concerned: 
\begin{equation}
\mathcal Z^N_g 
\leq 
C(g)\, \tilde{\mathcal{Z}}^N_g
,
\qquad
\tilde{\mathcal{Z}}^N_g
:=
\nu^N_{1/2}\Big(\exp\Big[\frac{1}{2N}\sum_{i,j\in\T_N} \bar\eta_i\bar\eta_j \hat g^N(i,j)\Big]\Big)
.
\label{eq_def_tilde_mathcal_Z_N_g}
\end{equation}
Let us estimate $\tilde{\mathcal{Z}^N_g}$. 
Expanding the exponential in~\eqref{eq_def_tilde_mathcal_Z_N_g} yields:
\begin{align}
\nu^N_{1/2}&\Big(\exp\Big[\frac{1}{2N}\sum_{i,j\in\T_N}\hat g^N(i,j)\bar\eta_i\bar\eta_j\Big]\Big)
\nonumber\\
&\qquad
=
\sum_{n=0}^\infty \frac{1}{2^n n! N^n}\sum_{\substack{i_1,...,i_{2n}}}\nu^N_{1/2}\Big( \hat g^N(i_1,i_2)...\hat g^N(i_{2n-1},i_{2n})\prod_{j=1}^{2n}\bar\eta_{i_j}\Big)
.
\label{eq_average_estimate_Z}
\end{align}
Let $P_{2n}$ denote the set of pairings of $\{1,...,2n\}$ elements. 
As $\nu^N_{1/2}$ is a product measure, 
the average in~\eqref{eq_average_estimate_Z} vanishes whenever an index $i_1,...,i_{2n}$ appears only once, 
i.e. vanishes unless there is a pairing $p\in P_{2n}$ such that $i_j=i_{p(j)}$ for $j\in\{1,...,2n\}$. 
Thus, using $\bar\eta_i^2=\sigma$ for each $i\in\T_N$:
\begin{align}
&\frac{1}{N^n}\sum_{\substack{i_1,...,i_{2n}}}\nu^N_{1/2}\Big[ \hat g^N(i_1,i_2)... \hat g^N(i_{2n-1},i_{2n})\prod_{j=1}^{2n}\bar\eta_{i_j}\Big]
\nonumber\\
&\qquad =
\frac{\sigma^n}{N^n}\sum_{p\in P_{2n}}\sum_{i_1,...,i_{2n}} \hat g^N(i_1,i_2)... \hat g^N(i_{2n-1},i_{2n}){\bf 1}_{(i_j)_{j\leq 2n}\text{ paired by }p}
\end{align}
To use the assumption on the spectrum of $\sigma g$, 
we now express the right-hand side in terms of the trace of $\hat g^N$. 
To do so, we need some notations. 
For $p\in P_{2n}$, 
let $b(p)$ be the bijection on $\{1,...,n\}$ obtained from $p$ by identifying integers $2i-1,2i$ ($1\leq i\leq n$), 
so that $b(p)(i)=j$ if either $2i-1$ or $2i$ is paired to either $2j-1$ or $2j$ ($1\leq i,j\leq n$), 
see Figure~\ref{fig_bijection}. 
\begin{figure}
\begin{center}
\includegraphics[width=9cm]{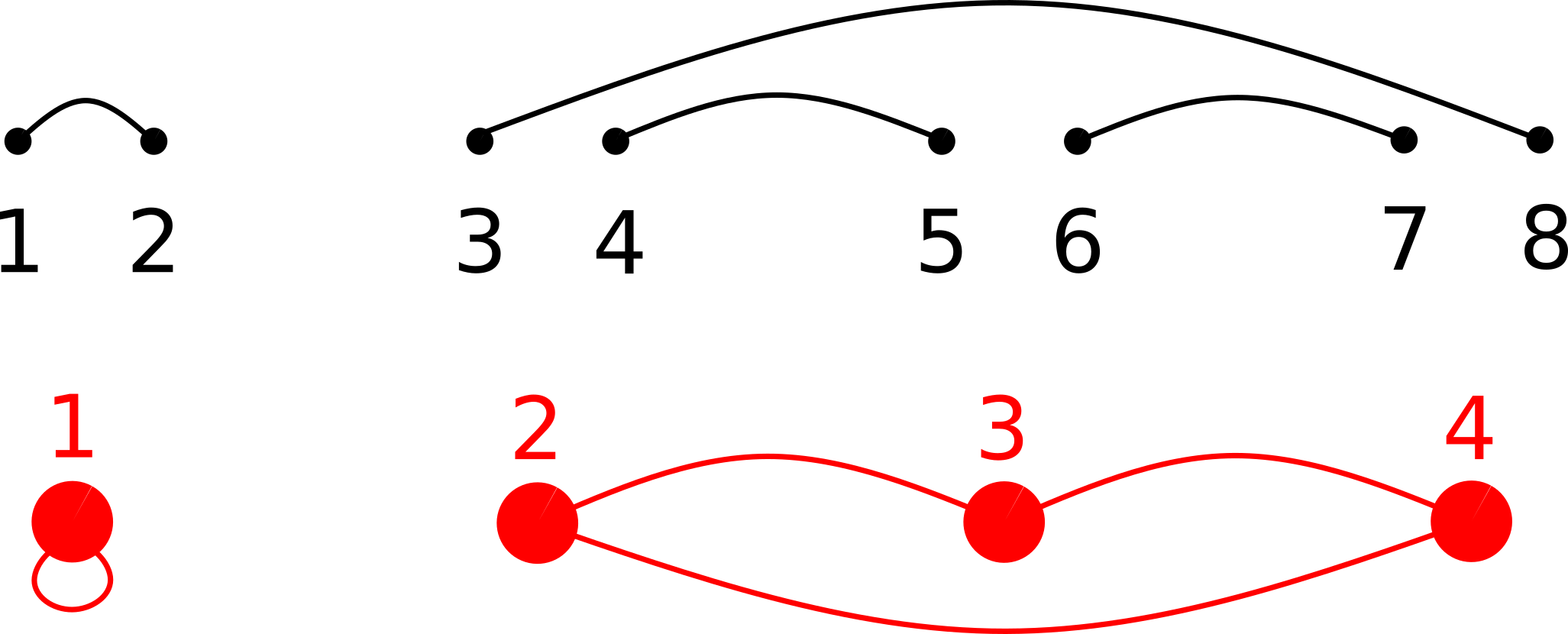} 
\caption{Pairing of eight points (black dots) and associated bijection on four points (red dots) obtained by fusing points $2i-1,2i$ into a single point $i$ ($i\in\{1,2,3,4\}$). 
Black lines link paired points. Lines are conserved when the points are fused (in red) and determine the cycles of the resulting bijection (here (1)(234)).\label{fig_bijection}}
\end{center}
\end{figure}
This is not a bijective mapping as discussed further along the proof. 
Let also $c_\ell$ ($1\leq \ell\leq n$) denote the number of cycles with $\ell$ points in $b(p)$, 
and note that $\sum_{\ell} \ell c_\ell = n$ by definition. 
For each $p\in P_{2n}$, one has:
%
\begin{align}
\sum_{\substack{i_1,...,i_{2n}}} \prod_{j=1}^n \hat g^N(i_{2j-1},i_{2j}){\bf 1}_{(i_j)_{j\leq 2n}\text{ paired by }p}
&=
\prod_{\ell=1}^{n}\Big[ \sum_{i\in\T_N}\big(\hat g^N\big)^\ell (i,i)\Big]^{c_\ell}
.
\end{align}
As a result,
\begin{align}
&\frac{1}{N^n}\sum_{\substack{i_1,...,i_{2n}}}\nu^N_{1/2}\Big[ \hat g^N(i_1,i_2)... \hat g^N(i_{2n-1},i_{2n})\prod_{j=1}^{2n}\bar\eta_{i_j}\Big]
\nonumber\\
&\qquad =
\sum_{p\in P_n}\prod_{\ell=1}^{n}\Big[\frac{\sigma^\ell}{N^\ell}\sum_{i\in \T_N} \big( \hat g^N\big)^\ell (i,i)\Big]^{c_\ell}
=
 \sum_{p\in P_n}\prod_{\ell=1}^n\Big[\text{tr}\Big(\Big(\frac{\sigma}{N}\hat g^N\Big)^\ell\Big) \Big]^{c_\ell}
.
\label{eq_average_estimate_Z_bis}
\end{align}
Letting $\E_{P_{2n}}$ denote the average with respect to the uniform measure on pairings, 
we obtain:
\begin{equation}
\mathcal{Z}^N_g
\leq 
\sum_{n=0}^\infty \frac{|P_{2n}|}{2^n n!}\E_{P_{2n}}\bigg[\prod_{\ell=1}^n\Big[\text{tr}\Big(\Big(\frac{\sigma}{N}\hat g^N\Big)^\ell\Big) \Big]^{c_\ell}\bigg]
.
\end{equation}
The fact that $|P_{2n}|=2^{-n} (2n)!/n!$ and the Stirling equivalent give:
\begin{align}
&\frac{(2n)!}{2^{2n}(n!)^2}
=
\frac{(1+o_n(1))}{\sqrt{\pi n}}
\leq 
\frac{C}{\sqrt{\pi (n+1)}}
,\qquad 
n\geq 0
\nonumber\\
&\hspace{3cm}\Rightarrow\quad
\tilde{\mathcal{Z}}^N_g
\leq  
\frac{C}{\sqrt{\pi}}\sum_{n=0}^\infty
\E_{P_{2n}}\bigg[\prod_{\ell=1}^n\Big[\text{tr}\Big(\Big(\frac{\sigma}{N}\hat g^N\Big)^\ell\Big) \Big]^{c_\ell}\bigg]
.
\label{eq_to_bound_tilde_Z}
\end{align}
Let us estimate this expectation. 
For small values of $\ell$, the boundedness of $g$ implies:
\begin{equation}
\text{tr}\Big[\frac{\sigma}{N} \hat g^N\Big]
\leq 
\|\sigma g\|_\infty
.
\end{equation}
Since all eigenvalues of $\sigma \hat g^N/N$ are in $(0,1-\epsilon)$ by~\eqref{eq_assumption_gap_g} and the discussion below~\eqref{eq_def_tilde_g_N}, 
the trace will be dominated by the leading eigenvalue for large values of $\ell$ and we prove next the existence of $\ell_0\in\N_{\geq 1}$, independent of $N$, such that:
\begin{equation}
\forall\ell\geq \ell_0,
\qquad
\text{tr}\Big[\big(\frac{\sigma}{N} \hat g^N\big)^{\ell}\Big]
\leq 
(1-\epsilon)^{\ell/2}
.
\label{eq_bound_trace_tilde_g}
\end{equation}
Write first, for any $\ell\in\N_{\geq 1}$:
\begin{equation}
\text{tr}\Big[\big(\frac{\sigma}{N} \hat g^N\big)^{\ell}\Big]
\leq 
(1-\epsilon)^{\ell-1}\text{tr}\Big[\frac{\sigma}{N} \hat g^N\Big]
\leq
(1-\epsilon)^{\ell-1}\|\sigma g\|_\infty
.
\label{eq_bound_trace_interm_appendix}
\end{equation}
Choose $\ell_1=\ell_1(g)\in\N_{\geq 1}$ such that the above is smaller than $1$ for $\ell\geq \ell_1$.  
The claim~\eqref{eq_bound_trace_tilde_g} follows for $\ell\geq \ell_0=2\ell_1$, as:
\begin{equation}
\text{tr}\Big[\big(\frac{\sigma}{N} \hat g^N\big)^{\ell}\Big]
\leq 
(1-\epsilon)^{\ell-\ell_1}\text{tr}\Big[\big(\frac{\sigma}{N} \hat g^N\big)^{\ell_1}\Big]
\leq 
(1-\epsilon)^{\ell-\ell_1}
\leq 
(1-\epsilon)^{\ell/2}
.
\end{equation}
It follows that the sum in~\eqref{eq_to_bound_tilde_Z} satisfies, recalling $\sum_\ell \ell c_\ell = n$:
\begin{align}
&\sum_{n=0}^\infty 
\E_{P_{2n}}\bigg[\prod_{\ell=1}^{n}\text{tr}\Big[\big(\frac{\sigma}{N} \hat g^N\big)^\ell \Big]^{c_\ell}\bigg]
\nonumber\\
&\quad 
\leq 
C(\ell_0,\|g\|_\infty)
+
\sum_{n\geq \ell_0}
\E_{P_{2n}}\bigg[\exp\Big[\sum_{\ell=1}^{\ell_0}\ell c_\ell\log (\|\sigma g\|_\infty) +\frac{1}{2}\log(1-\epsilon)\Big(n-\sum_{\ell=1}^{\ell_0}\ell c_\ell\Big)\Big]\bigg]
\nonumber\\
&\quad \leq 
C(\ell_0,\|g\|_\infty)
+
\sum_{n\geq \ell_0}
(1-\epsilon)^{n/2}\prod_{\ell=1}^{\ell_0}\E_{P_{2n}}\bigg[\exp\Big[\ell_0^2 c_\ell\big[\sigma \|g\|_\infty -\frac{1}{2}\log(1-\epsilon)\big]\Big]\bigg]
,
\end{align}
where we used Hölder inequality in the last line on the sum $\sum_{\ell=1}^{\ell_0}$. 
It remains to estimate this exponential moment, which now only involves small cycles.

This is done by rewriting it as an exponential moments of cycles of general permutations on $n$ elements, which is a well known quantity. 
To do so, notice that each bijection on $n$ elements is associated with $2^{\sum_{\ell} (\ell-1)c_\ell}=2^{n-\sum_\ell c_\ell}$ different pairings in $P_{2n}$, 
corresponding to the $2^{(\ell-1)c_\ell}$ ways of joining edges in each cycle. 
Thus, if $\E_{S_n}$ denote the uniform measure on permutations with $n$ elements, 
writing thanks to Jensen inequality:
\begin{equation}
\E_{P_{2n}}[\cdot ] 
= 
\E_{S_n}\big[2^{n-\sum_{\ell}c_\ell}\big]^{-1}\E_{S_n}\big[\cdot 2^{n-\sum_{\ell}c_\ell}\big]
\leq 
2^{\sum_{\ell}\E_{S_n}[c_\ell]}\E_{S_n}\big[\cdot \big]
,
\end{equation}
it is enough to prove that the following quantity has slower-than-geometric growth in $n$:
\begin{align}
2^{\sum_{\ell}\E_{S_n}[c_\ell]}\prod_{\ell=1}^{\ell_0}\E_{S_n}\bigg[\exp\Big[\ell_0^2 c_\ell\big[\sigma \|g\|_\infty -\frac{1}{2}\log(1-\epsilon)\big]\Big]\bigg]
.
\end{align}
The statistics of the number $c_\ell$ of cycles of length $\ell$ under $\E_{S_n}$ is well known~\cite[Theorem 2]{LengyelMomentsNumberCycles1997}. 
In particular, the moment of order $k\in \N$ is smaller than the same moment for a Poisson distribution of parameter $1/\ell$, which has moment generating function defined on the full real line. 
The above exponential moment is thus bounded for each $n>\ell_0$ in terms of $\ell_0,g$ only. 
Also, $\E_{S_n}[c_\ell]=1/\ell$ for each $n>\ell_0$. 
This concludes the proof:
\begin{equation}
2^{\sum_\ell\E_{S_n}[c_\ell]}\prod_{\ell=1}^{\ell_0}\E_{S_n}\bigg[\exp\Big[\ell_0^2 c_\ell\big[\sigma\|g\|_\infty -\frac{1}{2}\log(1-\epsilon)\big]\Big]\bigg]
\leq 
C(\ell_0,g)2^{\log n}
,\quad\ 
n\geq \ell_0
.
\end{equation}
\end{proof}
In the following, let $I_N$ denote the identity matrix on $\R^N$ and define $g^N$ as the matrix:
\begin{equation}
g^N(i,j)
=
g_{i,j},
\qquad
g^N(i,i)
=
0
,
\qquad 
i\neq j\in\T_N
.
\end{equation}
Write also $g^N_{\pm}$ for the matrices similarly defined in terms of $g_\pm$.
\begin{prop}[Approximate Wick theorem for $\nu^N_{g}$]\label{prop_Wick_theorem}
Assume that $g=g_+ + g_-$ satisfies the hypotheses of Lemma~\ref{lemm_bound_correlations}. 
For $n\in\N_{\geq 1}$, let $P_{2n}$ be the set of pairings of $2n$ elements. 
For each 
$I\subset\T_N$ with $|I|=2n$, 
there is then an error term $e^{I}_N$ such that:
\begin{align}
\nu^N_{g}\Big(\prod_{i\in I}\bar\eta_i\Big) 
= 
\sum_{p\in P_{2n}} \prod_{e=(i,p(i))\text{ pair in }p} \big(\sigma^{-1}I_N-N^{-1}g^N)^{-1}(i,p(i))+ e^{I}_N
,
\end{align}
where pairs are counted only once in the product, and with:
\begin{align}
\sup_{|J|=2n}\big|e^{J}_N\big| 
= 
O(N^{-n-1})
.
\end{align}
\end{prop}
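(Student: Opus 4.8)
\emph{Strategy.} The plan is to establish an approximate Schwinger--Dyson (integration by parts) identity for the discrete Gaussian measure $\nu^N_g$ which expresses a $2n$-point correlation in terms of $(2n-2)$-point ones with the propagator $C^N:=(\sigma^{-1}I_N-N^{-1}g^N)^{-1}$, and then to close the recursion by induction on $n$, thereby recovering exactly the pairing expansion. Throughout, $C^N$ is well defined for $N$ large (see the discussion around~\eqref{eq_matrix_inverse_exists} and the spectral comparison in the proof of Lemma~\ref{lemm_bound_correlations}: the eigenvalues of $\sigma N^{-1}g^N$ converge to a subset of those of $\sigma g$, hence lie in an interval $[-C,1-\delta+o(1)]$ and in particular avoid $1$). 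We use as black boxes: $\sup_N\mathcal Z^N_g<\infty$ and $\sup_{|J|=2m}|\nu^N_g(\prod_{j\in J}\bar\eta_j)|=O(N^{-m})$ for each fixed $m$ (Lemma~\ref{lemm_bound_correlations}); the entrywise bounds of Lemma~\ref{lemm_LU} (diagonal $O(1)$, off-diagonal $O(N^{-1})$, hence $\ell^1$-rows $O(1)$), which apply equally to $g$ and to the relevant principal submatrices of $\sigma^{-1}I_N-N^{-1}g^N$; and the bounded exponential moments of linear forms $N^{-1/2}\sum_j\phi(j)\bar\eta_j$ under $\nu^N_g$ from Proposition~\ref{prop_concentration_exponentielle}.

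\emph{Step 1: the identity.} Fix $I\subset\T_N$ with $|I|=2n$, pick $i_1\in I$ and put $I'=I\setminus\{i_1\}$. Let $\tau_{i_1}$ be the spin flip replacing $\eta_{i_1}$ by $1-\eta_{i_1}$ and leaving the other coordinates unchanged; then $\bar\eta_{i_1}\circ\tau_{i_1}=-\bar\eta_{i_1}$ while $\bar\eta_i\circ\tau_{i_1}=\bar\eta_i$ for $i\neq i_1$, and since $\nu^N_{1/2}$ is $\tau_{i_1}$-invariant one has $\nu^N_{1/2}(\bar\eta_{i_1}G)=\tfrac12\nu^N_{1/2}(\bar\eta_{i_1}(G-G\circ\tau_{i_1}))$ for every $G$. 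Applying this to $G=e^{2\Pi^N(g)}\prod_{i\in I'}\bar\eta_i$, and using that flipping $i_1$ multiplies $e^{2\Pi^N(g)}$ by $e^{-2\bar\eta_{i_1}\xi_{i_1}}$ with $\xi_{i_1}:=N^{-1}\sum_{j\neq i_1}g_{i_1,j}\bar\eta_j$, together with $2\bar\eta_{i_1}\in\{\pm1\}$ (so that $e^{-2\bar\eta_{i_1}\xi_{i_1}}=\cosh\xi_{i_1}-2\bar\eta_{i_1}\sinh\xi_{i_1}$) and $\bar\eta_{i_1}^2=\sigma$, elementary algebra yields
\begin{equation}
\nu^N_g\Big(\prod_{i\in I}\bar\eta_i\Big)
=\frac{\sigma}{N}\sum_{j\neq i_1}g_{i_1,j}\,\nu^N_g\Big(\bar\eta_j\prod_{i\in I'}\bar\eta_i\Big)+r^I_N,
\end{equation}
\begin{equation}
r^I_N=-\tfrac12\,\nu^N_g\Big(\prod_{i\in I}\bar\eta_i\,(\cosh\xi_{i_1}-1)\Big)+\sigma\,\nu^N_g\Big(\prod_{i\in I'}\bar\eta_i\,(\sinh\xi_{i_1}-\xi_{i_1})\Big).
\end{equation}
Taylor-expanding $\cosh\xi_{i_1}-1$ and $\sinh\xi_{i_1}-\xi_{i_1}$ to order $2n$ and writing $\xi_{i_1}^p=N^{-p}\sum_{j_1,\dots,j_p}\prod_m g_{i_1,j_m}\bar\eta_{j_m}$, the polynomial part is a bounded combination of correlations of at most $2n+2k$ ($1\le k\le n$) not necessarily distinct sites; reducing coincidences via $\bar\eta_j^2=\sigma$ and using the $O(N^{-m})$ bounds of Lemma~\ref{lemm_bound_correlations} makes each such term $O(N^{-(n+k)})=O(N^{-(n+1)})$, while the Taylor remainder is bounded deterministically in the $\bar\eta_i$'s times $C_n|\xi_{i_1}|^{2n+2}$, whose $\nu^N_g$-expectation is $O(N^{-(n+1)})$ since $\xi_{i_1}=N^{-1/2}$ times a linear form with bounded exponential moments (Proposition~\ref{prop_concentration_exponentielle}). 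Hence $\sup_{|I|=2n}|r^I_N|=O(N^{-(n+1)})$, uniformly in $I$.

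\emph{Step 2: closing the recursion.} Split the sum according to whether $j\in I'$: the contribution $\tfrac{\sigma^2}{N}\sum_{k\in I'}g_{i_1,k}\,\nu^N_g(\prod_{i\in I'\setminus k}\bar\eta_i)$ (using $\bar\eta_k\prod_{i\in I'}\bar\eta_i=\sigma\prod_{i\in I'\setminus k}\bar\eta_i$) is already a combination of $(2n-2)$-point functions. For $j\notin I$, set $v_m:=\nu^N_g(\bar\eta_m\prod_{i\in I'}\bar\eta_i)$ for $m\notin I'$; applying the identity to each $v_m$ gives a closed system $(I_W-\tfrac\sigma N\tilde g)\,v=b$ on $W:=\T_N\setminus I'$, where $\tilde g$ is the principal submatrix of $g^N$ on $W$ (with $I_W-\tfrac\sigma N\tilde g$ invertible for $N$ large by eigenvalue interlacing) and $b_m=\tfrac{\sigma^2}{N}\sum_{k\in I'}g_{m,k}\nu^N_g(\prod_{I'\setminus k}\bar\eta)+r_m$ with $\sup_m|r_m|=O(N^{-(n+1)})$. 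Solving for $v_{i_1}=\nu^N_g(\prod_{i\in I}\bar\eta_i)$ and comparing $\tfrac{\sigma^2}{N}\sum_{m\in W}(I_W-\tfrac\sigma N\tilde g)^{-1}(i_1,m)g_{m,k}$ with $C^N(i_1,k)$ via the Schur-complement identity relating $(\sigma^{-1}I_N-N^{-1}g^N)^{-1}$ to the inverse of its $(W,W)$-block $\sigma^{-1}I_W-N^{-1}\tilde g$ (the correction involves only the $|I'|=O(1)$ removed rows/columns, has operator norm $O(N^{-1})$, and produces an $O(N^{-2})$ entrywise error), one obtains
\begin{equation}
\nu^N_g\Big(\prod_{i\in I}\bar\eta_i\Big)=\sum_{k\in I'}C^N(i_1,k)\,\nu^N_g\Big(\prod_{i\in I'\setminus k}\bar\eta_i\Big)+R^I_N,\qquad\sup_{|I|=2n}|R^I_N|=O(N^{-(n+1)}),
\end{equation}
the error $R^I_N$ absorbing the $O(N^{-2})$ Schur corrections (times $O(N^{-(n-1)})$ correlations, over $O(1)$ terms) and $\sum_{m\in W}(I_W-\tfrac\sigma N\tilde g)^{-1}(i_1,m)r_m$ (controlled by the $\ell^1$-row bound on the restricted resolvent). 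Induction on $n$ finishes the proof: for $n=0$ the statement reads $\nu^N_g(1)=1$; for the inductive step, since $C^N(i_1,k)=O(N^{-1})$ for $k\ne i_1$ and there are $O(1)$ values of $k$, replacing each $(2n-2)$-point function by its pairing expansion costs a further $O(N^{-1})\cdot O(N^{-n})=O(N^{-(n+1)})$, while $\sum_{k\in I'}C^N(i_1,k)\sum_{p'\in P_{2n-2}}\prod C^N(\cdot,\cdot)$ is exactly $\sum_{p\in P_{2n}}\prod_{(i,p(i))}C^N(i,p(i))$.

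\emph{Expected main obstacle.} The conceptual content is short; the work concentrates in (i) the uniform-in-$I$ (and -$m$) accounting of the remainders $r^I_N,r_m$, where one must track every index coincidence when re-expanding $\xi_{i_1}^p$ and check it never brings the order above $O(N^{-(n+1)})$, and (ii) the Schur-complement identification of the restricted resolvent with $C^N$ up to $O(N^{-2})$, together with the off-diagonal $O(N^{-1})$ and $\ell^1$-row bounds on all resolvents involved --- this is where the regularity $g_\pm\in C^1_D(\T^2)$ and Lemma~\ref{lemm_LU} enter, and where the potentially large negative part $g_-$ could seem problematic; but since the argument never expands $C^N$ (nor the restricted resolvents) as a Neumann series and only ever manipulates genuine resolvents, whose existence and entrywise bounds are guaranteed unconditionally, $g_-$ causes no trouble. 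I expect (ii) to be the more delicate point. Equivalently, the whole statement can be obtained by a direct diagrammatic expansion of $\nu^N_{1/2}(\prod_{i\in I}\bar\eta_i\,e^{2\Pi^N(g)})$ along the lines of the proof of Lemma~\ref{lemm_bound_correlations}: the $g$-edges form a multigraph whose odd-degree vertices are precisely those of $I$, hence decompose into $n$ chains pairing up the elements of $I$ (resumming to $\prod C^N$) plus closed walks (resumming to a factor that cancels $\mathcal Z^N_g$), with an essentially identical error analysis.
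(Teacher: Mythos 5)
Your proposal is correct and rests on the same two pillars as the paper's own proof: an approximate Schwinger--Dyson (integration-by-parts) identity for $\nu^N_g$, and an induction on $n$ that turns that identity into the pairing sum. Within that backbone there are two genuine technical variants. First, for deriving the identity you use the spin-flip $\tau_{i_1}$ and the exact algebra $e^{-2\bar\eta_{i_1}\xi_{i_1}}=\cosh\xi_{i_1}-2\bar\eta_{i_1}\sinh\xi_{i_1}$, whereas the paper isolates $\bar\eta_a$ by writing $2\Pi^N(g)=\tfrac{\bar\eta_a}{N}\sum_{i\neq a}g_{a,i}\bar\eta_i + 2G_{\{a\}^c}$ and then applies $e^x=1+x\int_0^1 e^{tx}\,dt$. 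These are two ways of saying the same thing; the error analysis (Taylor remainder plus coincidence accounting, giving $O(N^{-n-1})$ uniformly in $I$) is essentially identical, and both use the exponential-moment bound of Proposition~\ref{prop_concentration_exponentielle} for the tail. Second, for closing the recursion you set up a linear system on $W=\T_N\setminus I'$ with the restricted matrix $\tilde g$ and then need a Schur-complement identification $(I_W-\tfrac{\sigma}{N}\tilde g)^{-1}=\sigma^{-1}C^N\big|_{W\times W}+O(N^{-2})$ to recover the propagator $C^N$. This works — the checks you defer (the $O(N^{-2})$ entrywise correction, the $\ell^1$-row bounds on the restricted resolvent, and the comparison $\tfrac{\sigma}{N}\sum_{m\in W}(I_W-\tfrac{\sigma}{N}\tilde g)^{-1}(i_1,m)g_{m,k}=\sigma^{-1}C^N(i_1,k)+O(N^{-2})$) all go through — but it is a detour the paper avoids: rather than excising $I'$, the paper extends the identity to $a\in J$ via $\bar\eta_a^2=\sigma$, getting a linear system on the \emph{full} index set $\T_N$ whose matrix is exactly $\sigma^{-1}I_N-N^{-1}g^N$; inverting it directly with the entrywise bounds of Lemma~\ref{lemm_LU} gives the recursion in one step, with no principal-submatrix/Schur analysis. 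Your closing remark about a direct multigraph/diagrammatic expansion of $\nu^N_{1/2}(\prod_{i\in I}\bar\eta_i\,e^{2\Pi^N(g)})$ is a third viable route, in the spirit of the proof of Lemma~\ref{lemm_bound_correlations}, but neither you nor the paper carry it out.
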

\begin{proof}
We claim that the hypothesis on $g_+$ ensures that $\sigma^{-1}I_N-N^{-1}g^N$ is invertible for all large enough $N\in\N_{\geq 1}$. 
Indeed, the continuity of $g_-$ implies that $g^N_-$ has eigenvalues in $\R_-$, 
see e.g.~\cite[Theorem 2.3]{FerreiraEigenvaluesIntegralOperators2009}. 
On the other hand, recall the notation $\hat g^N_+$ from~\eqref{eq_def_tilde_g_N}. 
Since $g_+\in C^1_D(\T^2)$, one has:
\begin{equation}
\frac{\sigma}{N}\max_{i\neq j}\big|[\hat g^N_+-g^N_+](i,j)\big|
=
O(N^{-2})
,\qquad
\frac{\sigma}{N}\max_{i}\big|[\hat g^N_+-g^N_+](i,i)\big|
=
O(N^{-1})
.
\end{equation}
It follows that the matrix $\frac{\sigma}{N}(\hat g^N_+-g^N_+)$ has eigenvalues bounded by $C/N$ for some $C>0$. 
Therefore, for any vector $u\in\R^N$ with $\|u\|_2=1$, by definition of $\hat g^N_+$:
\begin{align}
\frac{\sigma}{N}\sum_{i,j}u_i u_j  g^N_+(i,j)
&\leq 
\frac{\sigma}{N}\sum_{i,j}u_i u_j  \hat g^N_+(i,j) + \frac{C}{N}
\leq 
1-\delta + \frac{C}{N}
.
\label{eq_matrix_inverse_exists}
\end{align}
If $N$ is large enough, the spectral radius of $\frac{\sigma}{N} g^N_+$ is thus smaller than $1-\delta/2$ and the matrix $I_N-\frac{\sigma}{N} g^N$ is invertible as claimed. 
We henceforth work with such an $N$. \\

For any $J\subset \T_N$, 
write $\bar\eta_J$ for the product $\prod_{j\in J}\bar\eta_j$. 
The key part of the proof is the observation \eqref{eq_to_prove_wick_theorem_0} below: 
the correlations $\nu^N_{g}(\bar\eta_a\bar\eta_J)$ can be expressed in terms of all other correlations of the form $\nu^N_{g}(\bar\eta_b\bar\eta_J)$ ($b\in\T_N$), 
up to a small error that depends on the cardinal $|J|$ of $J$. 
The claim of Proposition \ref{prop_Wick_theorem} is then established by recursion on $|J|$.

Let $n\in\N_{\geq 1}$. 
The set $I$ in the proposition will correspond here to sets of the form $\{a\}\cup J$ with $a\notin J$ and $J\subset\T_N$ with $|J|=2n-1$. 
Let $\bar\eta_\cdot \bar\eta_J$ denote the vector $(\bar\eta_b\bar\eta_J)_{b\in\T_N}$.  
Assume for the moment that we have proven the following averaging identity, 
for each $J$ with $|J|=2n-1$ and each $a\notin J$:
\begin{equation}
\nu^N_{g}(\bar\eta_a\bar\eta_J) 
= 
\Big[\frac{\sigma}{N}g^N\nu^N_{g}(\bar\eta_\cdot\bar\eta_J)\Big](a) + \tilde e^{J}_N(a)
,
\label{eq_to_prove_wick_theorem_0}
\end{equation}
with $\tilde e^{n,J}_N = (\tilde e^{J}_N(i))_{i\in\T_N}$ a vector satisfying:
\begin{equation}
\sup_{|J'|=2n-1}\sup_{i\notin J'}|\tilde e^{J'}_N(i)| 
= 
O(N^{-n-1})
,\qquad
\tilde e^{J'}_N(i) 
=0\quad (i\in J')
.
\label{eq_size_tilde_e_J_Wick_appendix}
\end{equation}
Let us then use~\eqref{eq_to_prove_wick_theorem_0} to prove the claim of Proposition~\ref{prop_Wick_theorem} by recursion. 
Fix $J\subset\T_N$ with $|J|=2n-1$.
Equation~\eqref{eq_to_prove_wick_theorem_0} is valid for $a\notin J$. 
Let us first extend it to general $a\in\T_N$. 
Using $(\bar\eta_\cdot)^2 = \sigma$, 
with the convention $\bar\eta_{\emptyset} := 1$:
\begin{align*}
\nu^N_{g}(\bar\eta_a\bar\eta_J) 
= 
{\bf 1}_{J}(a) \sigma\nu^N_{g}\big(\bar\eta_{J\setminus\{a\}}\big) + {\bf 1}_{J^c}(a)\sigma\big[N^{-1}g^N\nu^N_{g}(\bar\eta_\cdot\bar\eta_J)\big](a) + \tilde e^{J}_N(a)
.
\end{align*}
Rearranging terms, this becomes:
\begin{align}
\Big[\big(\sigma^{-1}I_N -  N^{-1}&g^N\big)\nu^N_{g}(\bar\eta_\cdot\bar\eta_J)\Big](a)
\nonumber \\
&= 
{\bf 1}_{J}(a)\Big(\nu^N_{g}(\bar\eta_{J\setminus\{a\}}) - \frac{1}{N}\sum_{i\neq a}g_{a,i}\nu^N_{g}(\bar\eta_i\bar\eta_J)\Big)+\sigma^{-1}\tilde e^{J}_N(a) 
\label{eq_recursion_to_compute_correlations_2}
\\
&= 
{\bf 1}_{J}(a)\big(\nu^N_{g}(\bar\eta_{J\setminus\{a\}})+O(N^{-n})\big) +  O(N^{-n-1}) 
,
\nonumber
\end{align}
where the $O(N^{-n}),O(N^{-n-1})$ are uniform on $a,J$ as given respectively by Lemma~\ref{lemm_bound_correlations} and~\eqref{eq_size_tilde_e_J_Wick_appendix}. 
We now proceed with the proof of the claim of Proposition \ref{prop_Wick_theorem} by recursion assuming~\eqref{eq_to_prove_wick_theorem_0}.

Take first $n=1$, $a\in\T_N$ and $J = \{b\}\subset\T_N$. 
We will make use of the following lemma, 
proven afterwards.
\begin{lemm}\label{lemm_LU}
Let $\mathcal M$ be the set of sequences of matrices on $\R^{N\times N}$ indexed by $N\in\N_{\geq 1}$ and defined as follows. 
$(A_N)_{N\geq N_0}\in\mathcal M$ for some $N_0\in\N_{\geq 1}$ if and only if there are sequences $(D_N)_N,(R_N)_N$ of matrices such that, 
for each $N\in \N_{\geq N_0}$, 
$D_N$ is diagonal, 
$R_N(i,i) = 0$ for $1\leq i \leq N$, 
and $A_N = D_N+N^{-1}R_N$. Moreover, $(R_N)_N$, $(D_N)_N$ are such that there is $c>0$ with:
\begin{equation}
\text{for each }N\geq N_0,\qquad \inf_{1\leq i \leq N}|D_N(i,i)|\geq c,\quad \sup_{1\leq i,j \leq N}|R_N(i,j)|\leq c^{-1}.
\end{equation}
It then holds that if $(A_N)_N\in\mathcal M$ and $A_N$ is invertible $(N\geq N_0)$, 
then $(A_N^{-1})_{N\geq N_0}\in\mathcal M$.
\end{lemm}
By Lemma~\ref{lemm_LU}, 
the matrix $\sigma^{-1} I_N -N^{-1}g^N$ has inverse satisfying:
\begin{equation}
\sup_{i\in\T_N} \big|\big(\sigma^{-1} I_N -N^{-1}g^N\big)^{-1}(i,i)\big| 
= 
O_N(1),\quad 
\sup_{i\neq j\in \T_N}\big|\big(\sigma^{-1} I_N -N^{-1}g^N\big)^{-1}(i,j)\big| 
= 
O(N^{-1})
.
\label{eq_consequence_LU}
\end{equation}
The last equation and~\eqref{eq_recursion_to_compute_correlations_2} with $J=\{b\}$ yield:
\begin{align}
\nu^N_{g}\big(\bar\eta_a\bar\eta_b\big) 
= 
\big(\sigma^{-1} I_N -N^{-1}g^N\big)^{-1}(a,b) + e^{\{b\}}_N(a)
,
\end{align}
with:
\begin{align}
e^{\{b\}}_N (a)
:= 
\big(\sigma^{-1} I_N -N^{-1}g^N\big)^{-1}(a,b)
\bigg[-\frac{1}{N}\sum_{i\neq b}g_{b,i}\nu^N_{g}(\bar\eta_i\bar\eta_b) + \sigma^{-1}\tilde e^{\{b\}}_N(b)\bigg]
.
\end{align}
The error term indeed satisfies $\sup_{a\neq b}|e^{\{b\}}_N(a)| = O(N^{-2})$ by~\eqref{eq_consequence_LU}. 
Assume now $n\geq 1$ and the claim of Proposition \ref{prop_Wick_theorem} to hold for each subset of $\T_N$ containing $2n$ elements. 
Let $J\subset\T_N$ with $|J|=2n+1$ and $a\in\T_N$. 
Then, by~\eqref{eq_recursion_to_compute_correlations_2},
\begin{align}
\nu^N_{g}(\bar\eta_a\bar\eta_J) = \sum_{j\in J} \big(\sigma^{-1}I_N - N^{-1}g^N\big)^{-1}(a,j)\nu^N_{g}(\bar\eta_{J\setminus\{j\}}) + e^{J}_{N}(a),\label{eq_to_prove_wick_theorem_5}
\end{align}
with $e^{J}_N = (e^{J}_N(i))_{i\in\T_N}$ the vector given by:
\begin{align}
e^{J\cup}_N (a)
&:= 
\sum_{j\in J}\big(\sigma^{-1} I_N -N^{-1}g^N\big)^{-1}(a,j)\bigg[-\frac{1}{N}\sum_{i\in\T_N}g_{j,i}\nu^N_{g}(\bar\eta_i\bar\eta_J) +\sigma^{-1}\tilde e^{J}_N(j)\bigg]
.
\end{align}
The recursion hypothesis and~\eqref{eq_consequence_LU} imply that $e^{J}_N(a) = O(N^{-n-2})$ uniformly on $J,a$. 
The recursion hypothesis applied to each $J\setminus\{j\}$ ($j\in J$) then turns~\eqref{eq_to_prove_wick_theorem_5} into:
\begin{align}
\nu^N_{g}(\bar\eta_a\bar\eta_J) 
&= 
\sum_{j\in J} \big(\sigma^{-1}I_N - N^{-1}g^N\big)(a,j)\sum_{p\in P_{|J\setminus\{j\}|}} \prod_{i\in J\setminus\{j\}} \big(\sigma^{-1} I_N-N^{-1}g^N\big)^{-1}(i,p(i)) + e^{J}_N(a)
.
\end{align}
Since the sum on $j\in J$ and $p\in P_{|J\setminus\{j\}|} = P_{2n}$ in the left-hand side exactly enumerates elements of the set $P_{2n+2}$ of pairings of $2n+2$ elements, 
Proposition~\ref{prop_Wick_theorem} is proven assuming~\eqref{eq_to_prove_wick_theorem_0}:
\begin{equation}
\nu^N_{g}(\bar\eta_a\bar\eta_J) 
= 
\sum_{p\in P_{2n+2}} \prod_{i\in J\cup\{a\}} \big(\sigma^{-1} I_N-N^{-1}g^N\big)^{-1}(i,p(i)) +e^{J}_N(a)
.
\end{equation}
We now prove the identity~\eqref{eq_to_prove_wick_theorem_0}. 
Let $J\subset\T_N$ with $|J|=2n-1$ ($n\geq 1$), 
and let $a\in\T_N\setminus J$. 
Recall that $\nu^N_{g}\propto \exp[2\Pi^N(g)]\nu^N_{1/2}$, 
and decompose $\Pi^N(g)$ as follows:
\begin{equation}
2\Pi^N(g) 
= 
\frac{\bar\eta_a}{N}\sum_{i\neq a}\bar\eta_i g_{a,i} + 2G_{\{a\}^c}(\eta),
\qquad 
G_{\{a\}^c}(\eta) 
= 
\frac{1}{4N}\sum_{i\neq j \in\T_N\setminus\{a\}}\bar\eta_i\bar\eta_j g_{i,j}
.
\label{eq_splitting_Pi_N_en_G_moins_a}
\end{equation}
Using $e^x = 1+x\int_0^1 e^{tx}dx$ with $x=\frac{\bar\eta_a}{N}\sum_{i\neq a}\bar\eta_ig_{a,i}$, 
we find:
\begin{align}
\mathcal Z^N_g\nu^N_{g}(\bar\eta_a\bar\eta_J) 
= 
\nu^N_{1/2}\bigg(e^{2G_{\{a\}^c}}\bar\eta_J \bar\eta_a\Big[1+ \frac{\bar\eta_a}{N}\sum_{i\neq a}\bar\eta_i g_{a,i} \int_0^1 \exp\Big[\frac{t\bar\eta_a}{N}\sum_{i\neq a}\bar\eta_ig_{a,i}\Big]dt\Big]\bigg)
.
\label{eq_expansion_recursion_Wick}
\end{align}
Under the product measure $\nu^N_{1/2}$, 
$\bar\eta_a$ is independent of $\bar\eta_J, G_{\{a\}^c}$ (and has average 0). 
It follows that $\nu^N_{1/2}\big(\bar\eta_a\bar\eta_J\exp[2G_{\{a\}^c}]\big)$ vanishes. 
Since also $(\bar\eta_a)^2 = \sigma$,~\eqref{eq_expansion_recursion_Wick} becomes, 
reconstructing $2\Pi^N(g)$ from $2G_{\{a\}^c}$ through~\eqref{eq_splitting_Pi_N_en_G_moins_a}:
\begin{align}
\nu^N_{g}(\bar\eta_a\bar\eta_J) 
= 
\sigma \nu^N_{g}\bigg(\frac{\bar\eta_J }{N}\sum_{i\neq a}\bar\eta_i g_{a,i} \int_0^1  \exp\Big[\frac{(t-1)\bar\eta_a}{N}\sum_{i\neq a}\bar\eta_ig_{a,i}\Big]\, dt \bigg)
.
\label{eq_to_prove_wick_theorem_1}
\end{align}
To obtain~\eqref{eq_to_prove_wick_theorem_0}, 
we further expand the exponential in~\eqref{eq_to_prove_wick_theorem_1}, 
using the following formula: 
for each $t\in[0,1]$,
\begin{align}
\exp&\Big[\frac{(t-1)\bar\eta_a}{N}\sum_{i\neq a}\bar\eta_ig_{a,i}\Big] 
= 
1+ T^1_t(\eta) + T^2_t(\eta),
\end{align}
with:
\begin{align}
T^1_t(\eta) 
&:= 
\sum_{p=1}^{2n} \frac{1}{p!}\big((t-1)\bar\eta_a\big)^p \Big(\frac{1}{N}\sum_{i\neq a}\bar\eta_i g_{a,i}\Big)^p \nonumber\\
T^2_t(\eta) 
&:= 
\int_0^1 \frac{s^{2n}}{(2n)!}\big((t-1)\bar\eta_a\big)^{2n+1} \Big(\frac{1}{N}\sum_{i\neq a}\bar\eta_i g_{a,i}\Big)^{2n+1} \exp\Big[\frac{s(t-1)\bar\eta_a}{N}\sum_{i\neq a}\bar\eta_ig_{a,i}\Big]ds.\label{eq_to_prove_wick_theorem_2}
\end{align}
Injecting this decomposition in~\eqref{eq_to_prove_wick_theorem_1} yields:
\begin{align}
\nu^N_{g}(\bar\eta_a\bar\eta_J) 
&= 
\Big[\frac{\sigma}{N}g^N\nu^N_{g}\big(\bar\eta_\cdot\bar\eta_J\big)\Big](a) 
+\sigma \nu^N_{g}\bigg(\int_0^1  \frac{\bar\eta_J }{N}\sum_{i\neq a}\bar\eta_i g_{a,i}\big(T^1_t(\eta) + T^2_t(\eta)\Big)\, dt\bigg)
.
\end{align}
The first term in the right-hand side above is exactly the first term in the right-hand side of~\eqref{eq_to_prove_wick_theorem_0}. 
We thus need to prove that the contributions of $T_t^1,T_t^2$ are of order $O(N^{-n-1})$ uniformly on $J,a$. 

Both $T^1_t$ and $T^2_t$ are readily estimated through Lemma~\ref{lemm_bound_correlations}. 
For $T^1_t$, 
one has for each $t\in[0,1]$ and each $1\leq p \leq 2n$:
\begin{equation}
\nu^N_{g}\bigg(\frac{\bar\eta_J}{N}\sum_{i\neq a}\bar\eta_i g_{a,i}\cdot \frac{1}{p!}\big((t-1)\bar\eta_a\big)^p \Big(\frac{1}{N}\sum_{i\neq a}\bar\eta_i g_{a,i}\Big)^p\bigg) = \begin{cases}
O(N^{-n-(p+1)/2})\quad &\text{if }p\text{ is odd},\\
O(N^{-n-p/2})\quad &\text{if }p\text{ is even,}
\end{cases}\label{eq_to_prove_wick_theorem_3}
\end{equation}
and the estimate above is uniform in $t\in[0,1]$, $a\in\T_N$ and sets $J$ with cardinal $2n-1$. 

For $T^2_t$, 
bounding $\bar\eta_a,\bar\eta_J$ by $1$ and the exponential by $e^{\|g\|_\infty}$, 
one similarly concludes on the existence of $C_{2n+1}>0$ such that:
\begin{align}
\bigg|\nu^N_{g}\bigg[\bar\eta_J&\frac{\bar\eta_a}{N}\sum_{i\neq a}\bar\eta_i g_{a,i}\int_0^1 T^2_t(\eta)\, dt\bigg]\bigg|
\nonumber\\
&\qquad \leq 
C_{2n+1} e^{\|g\|_\infty}\nu^N_{g}\bigg[\Big( \frac{1}{N}\sum_{i\neq a}\bar\eta_i g_{a,i}\Big)^{2n+2}\bigg] 
= 
O(N^{-n-1})
.
\end{align}
The error term is again uniform on $t,a,J$. 
This concludes the proof of \eqref{eq_to_prove_wick_theorem_0}:
\begin{equation}
\nu^N_{g}(\bar\eta_a\bar\eta_J) = \sigma\big[N^{-1}g^N\nu^N_{g}(\bar\eta_\cdot\bar\eta_J)\big](a) + \tilde e^{J}_N(a),\label{eq_to_prove_wick_theorem_4}
\end{equation}
where $\tilde e^{J}_N$ satisfies $\sup_{|J|=2n-1}\sup_{i\notin J}|\tilde e^J_N(i)| = O(N^{-n-1})$ and is defined as follows:
\begin{align}
\tilde e^J_N(a) 
&:= 
{\bf 1}_{J^c}(a)
\int_0^1 \nu^N_{g}\bigg[\bar\eta_J\frac{\bar\eta_a}{N}\sum_{i\neq a}\bar\eta_i g_{a,i} \Big(T^1_t(\eta) + T^2_t(\eta)\Big)\bigg] \,dt
.
\end{align}
\end{proof}
%
\begin{proof}[Proof of Lemma~\ref{lemm_LU}]
Let $N_0\in\N_{\geq 1}$ and $(A_N)_{N\geq N_0}\in\mathcal M$ be a family of invertible matrices. 
Then $(A_N(i,j))_{1\leq i,j\leq n}$ is also invertible for each $n\leq N$. 
This implies (see Theorem 4.3.1 in~\cite{Ciarlet1989}) the existence of a lower triangular matrix $L_N$ with diagonal equal to $1$, 
and an upper triangular matrix $U_N$ such that:
\begin{equation}
A_N 
= 
L_NU_N
,\qquad 
N\geq N_0
.
\end{equation}
$L_N$ is invertible by assumption, 
thus $U_N$ is invertible by invertibility of $A_N$. 
The claim of Lemma~\ref{lemm_LU} therefore boils down to proving that $(U_N^{-1}L_N^{-1})_N \in\mathcal M$. 
Let us first prove that $(L_N)_N,(U_N)_N\in\mathcal M$. 
The coefficients of these matrices can be computed explicitly: 
for each $N\geq N_0$ and each $1\leq i \leq 2N-1$,
\begin{align}
U_N(i,j) 
&= 
A_N(i,j) - \sum_{\ell=1}^{i-1}L_N(i,\ell) U_N(\ell,j)\quad \text{for }j\geq i,\\
L_N(j,i) 
&= 
\frac{1}{U_N(i,i)}\Big(A_N(j,i) - \sum_{\ell=1}^{i-1}L_N(j,\ell)U_N(\ell,i)\Big)\quad \text{for }j>i.
\end{align}
In particular, one can check that $(L_N)_N,(U_N)_N$ indeed belong to $\mathcal M$. 

Let us now check that $(L_N^{-1})_N, (U_N^{-1})_N$ also belong to $\mathcal M$. 
The inverse $L_N^{-1}$ of $L_N$ is also lower triangular, 
and can be computed straightforwardly: for $N\geq N_0$,
\begin{equation}
\forall i\geq j,\qquad [L_N^{-1}](i,j) 
= 
\frac{1}{A_N(i,i)}\Big(\delta_{i,j}-\sum_{\ell=1}^{i-1}L_N(i,\ell)[L_N^{-1}](\ell,j)\Big)
,
\end{equation}
where:
\begin{equation}
\delta_{i,j} 
:= 
\begin{cases}
1\quad &\text{if }i=j,\\
0\quad &\text{otherwise}.
\end{cases}
\end{equation}
The same result holds for the upper-triangular $U_N$. 
A recursion thus gives $(L_N^{-1})_N,(U_N^{-1})_N\in\mathcal M$. 
It remains to notice that $\mathcal M$ is stable under matrix product: 
$(B_N)_N,(C_N)_N\in\mathcal M$ implies $(B_NC_N)_N\in\mathcal M$. 
This concludes the proof of Lemma~\ref{lemm_LU}.
\end{proof}
The following proposition gives exponential concentration results under $\nu^N_{g}$. 
These are useful when applying the entropy inequality.
\begin{prop}\label{prop_concentration_exponentielle}
Assume that $g$ satisfies the hypotheses of Lemma \ref{lemm_bound_correlations}. 
Let $J\subset \Z$ contain $0$, $n\in\N$ and $\phi_n: \T^{n}_N\rightarrow\R$. 
We assume $N$ to be large enough that $J$ can be viewed as a subset of $\T_N$. 
Define the functions:
\begin{align}
X^{n,J}_{\phi_n}(\eta) 
:= 
\sum_{(i_1,...,i_n)\in (\T_N)^{n}}\phi_n(i_1,...,i_n)\Big(\prod_{j\in J}\bar\eta_{i_1+j}\Big) \bar\eta_{i_2}...\bar\eta_{i_n}
,\qquad
\eta\in\Omega_N
.
\label{eq_def_U_n_J}
\end{align}
Let $W^{n,J}_{\phi_n} := N^{-(n-1)}X^{n,J}_{\phi_n}$ if $n\geq 2$, and $W^{1,J}_{\phi_1} := N^{-1/2}X^{0,J}_{\phi_1}$.

Then, for any $n\geq 0$, there are constants $\gamma^{n,J},C^{n,J}>0$ depending on $n,J$ but independent of $\phi_n,N$, 
such that:
\begin{align}
\forall N\in\N_{\geq 1},\forall \gamma<\gamma^{n,J},
\qquad
\log \nu^N_{g}\Big[\exp\Big[\frac{\gamma |W^{n,J}_{\phi_n}|}{\sup_{N}\|\phi_n\|_{\infty}}\Big]\Big]
\leq 
C^{1,J}{\bf 1}_{n=1}+ {\bf 1}_{n\geq 2}\frac{C^{n,J}}{N^{\frac{n-2}{2}}}
.
\end{align}
\end{prop}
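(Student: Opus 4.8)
The plan is to establish the exponential moment bound in Proposition~\ref{prop_concentration_exponentielle} by expanding the exponential, computing the resulting moments of the centered products $W^{n,J}_{\phi_n}$ under $\nu^N_g$ via the approximate Wick theorem (Proposition~\ref{prop_Wick_theorem}), and bounding the combinatorial sum by a convergent series for $\gamma$ small enough. First I would reduce to the case where $\phi_n$ is replaced by $\phi_n/\sup_N\|\phi_n\|_\infty$, so one may assume $\|\phi_n\|_\infty\leq 1$ uniformly in $N$; then it suffices to bound $\log\nu^N_g[\exp(\gamma|W^{n,J}_{\phi_n}|)]$ by the stated quantities for $\gamma<\gamma^{n,J}$. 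Using $e^{|x|}\leq e^x+e^{-x}$ and replacing $\phi_n$ by $-\phi_n$ if necessary, it is enough to control $\nu^N_g[\exp(\gamma W^{n,J}_{\phi_n})]=\sum_{k\geq 0}\frac{\gamma^k}{k!}\nu^N_g[(W^{n,J}_{\phi_n})^k]$.

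The core estimate is then on the $k$-th moment $\nu^N_g[(W^{n,J}_{\phi_n})^k]$. Writing $W^{n,J}_{\phi_n}$ in terms of $\bar\eta$'s, the $k$-th power is a sum over $kn$ indices (with the $|J|$-fold product at the ``head'' of each block), and by the odd-moment vanishing and Proposition~\ref{prop_Wick_theorem}, only contributions where all $\bar\eta$-indices are paired survive, each pairing contributing a product of entries of $(\sigma^{-1}I_N-N^{-1}g^N)^{-1}$, up to an error $O(N^{-kn-k})$ from the $e^J_N$ terms (negligible after summing). By Lemma~\ref{lemm_LU} (or the bounds~\eqref{eq_consequence_LU}), diagonal entries of this inverse are $O_N(1)$ and off-diagonal entries are $O(N^{-1})$; since $\bar\eta_i^2=\sigma$ one also uses $(\bar\eta_i)^{2p}=\sigma^p$ to collapse repeated head indices. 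Counting: for each pairing, the number of ``free'' summation indices that can range over $\T_N$ is controlled, and each such free index contributes a factor $N$ but is accompanied by an off-diagonal $O(N^{-1})$ factor, so the net power of $N$ is what produces the $N^{-(n-2)/2}$ (for $n\geq 3$), $N^0$ (for $n\in\{1,2\}$) scaling after dividing by $N^{n-1}$ (or $N^{1/2}$) as in the definition of $W^{n,J}_{\phi_n}$. The precise bookkeeping here is essentially the same as in the proofs of Lemma~\ref{lemm_bound_correlations} and the related estimate~\eqref{eq_bound_exp_moments_sec_2}; one obtains $|\nu^N_g[(W^{n,J}_{\phi_n})^k]|\leq (C^{n,J})^k k!\,(N^{-(n-2)/2})^{\lceil \text{something}\rceil}$ with a factorial growth in $k$ matching the $\frac{\gamma^k}{k!}$ prefactor, so the series converges for $\gamma<\gamma^{n,J}:=1/C^{n,J}$ and sums to $1+O(C^{n,J}\gamma)$ in the $n\in\{1,2\}$ case and $1+O(N^{-(n-2)/2})$ for $n\geq 3$. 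Taking logarithms gives the claim.

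The main obstacle I anticipate is the combinatorial control of the moments for general $J$ and general $n$, specifically showing that the factorial growth $k!$ in $\nu^N_g[(W^{n,J}_{\phi_n})^k]$ is not beaten by extra factors coming from: (i) the number of pairings of $kn$ points, which is of order $(kn)!/((kn/2)!2^{kn/2})\sim (Cn)^{kn/2}k^{kn/2}$, and (ii) the fact that each head carries $|J|$ factors $\bar\eta_{i_1+j}$ which, when two heads coincide or partially overlap, can create diagonal-like contributions that do not gain the $N^{-1}$ factor. One must argue that such ``self-overlap'' configurations are sufficiently rare (they cost summation indices) that the total is still $O((C^{n,J})^k k!)$ with $C^{n,J}$ depending only on $n$ and $|J|$ (through the boundedness of $g$, hence of the inverse-matrix entries) but not on $N$ or $\phi_n$. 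This is where the translation-by-$J$ structure of the head must be used carefully: fixing $i_1$ fixes all of $i_1+j$, $j\in J$, so each block still contributes only one free ``head'' index, and the cross-block pairings among heads and tails are handled by the same trace-of-powers argument used in Lemma~\ref{lemm_bound_correlations}. Once this uniform-in-$N$, uniform-in-$\phi_n$ combinatorial bound is in hand, the rest is a routine geometric/exponential series summation.
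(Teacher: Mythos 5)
Your plan is a genuinely different route from the paper's. The paper's proof of Proposition~\ref{prop_concentration_exponentielle} is a short reduction: it writes $\nu^N_g[e^{\gamma|W|/\sup\|\phi\|_\infty}]\leq 1+\int_0^\infty e^t\,\nu^N_g(\gamma|W|/\sup\|\phi\|_\infty>t)\,dt$, applies H\"older with exponents $1+\epsilon_g$, $1+\epsilon_g^{-1}$ (for $\epsilon_g>0$ chosen so that $(1+\epsilon_g)\lambda_{\max}(\sigma g_+)<1$) to bound the tail probability under $\nu^N_g$ by $(\mathcal Z^N_{(1+\epsilon_g)g})^{1/(1+\epsilon_g)}\nu^N_{1/2}(\cdot>t)^{\epsilon_g/(1+\epsilon_g)}$, with $\sup_N\mathcal Z^N_{(1+\epsilon_g)g}<\infty$ supplied by Lemma~\ref{lemm_bound_correlations}; the required tail bounds under the product measure $\nu^N_{1/2}$ are then cited from Theorem A.3 and Corollary A.4 of \cite{Correlations2022}. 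You instead propose to expand $e^{\gamma W^{n,J}_{\phi_n}}$ and bound the $k$-th moment directly under $\nu^N_g$ via the approximate Wick theorem.

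This could in principle work, but as written it leaves two genuine gaps. First, the key combinatorial claim $\big|\nu^N_g[(W^{n,J}_{\phi_n})^k]\big|\leq (C^{n,J})^k\,k!$ (with the correct $N$-power) is precisely what has to be proved, and you do not carry it through. You correctly identify the obstruction --- the $(kn-1)!!\sim C^k k!$ pairings, self-overlap of heads, the $|J|$ factors at each head --- but the references you give for the bookkeeping are Lemma~\ref{lemm_bound_correlations} (which treats only the partition function, i.e.\ $W$ replaced by $2\Pi^N(g)$ with the same kernel $g$, a much more rigid situation because the trace-of-powers structure is then controlled by the eigenvalue assumption on $g$) and~\eqref{eq_bound_exp_moments_sec_2} --- the latter is simply a restatement of Proposition~\ref{prop_concentration_exponentielle} itself, so that citation is circular.

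Second, invoking Proposition~\ref{prop_Wick_theorem} inside the moment expansion requires its error bound to hold with constants uniform in the order. The remainder $e^I_N$ is $O(N^{-m-1})$ for a set $I$ of cardinal $2m$, but the implied constant depends on $m$ (for example through the factor $C_{2m+1}e^{\|g\|_\infty}$ appearing in the $T^2_t$ estimate), and Proposition~\ref{prop_Wick_theorem} gives no control as $m\to\infty$. In your expansion $m$ grows with $k$, so you would need a uniform version of the Wick estimate or an argument, as in the paper's proof of Lemma~\ref{lemm_bound_correlations}, that all the errors aggregate into a convergent series. Until both points are filled in, the argument is a plausible sketch rather than a proof; the paper sidesteps both by outsourcing the product-measure combinatorics to \cite{Correlations2022}.
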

\begin{proof}
The proof can be found in Theorem A.3 and Corollary A.4 in~\cite{Correlations2022}. 
It is written for negative kernels only but this is just used to reduce to the case $g=0$. 
The assumption that $\sigma g_+$ has largest eigenvalue $\lambda_{\max}$ strictly below $1$ similarly enables one to reduce to this case. 
Indeed, for $\epsilon_g>0$ such that $(1+\epsilon_g)\lambda_{\max}<1$, 
$\sup_N\mathcal Z^N_{(1+\epsilon_g)g}<\infty$ by Lemma~\ref{lemm_bound_correlations}. 
The identity $\E[X]=\int_0^\infty\Prob(X>t)\, dt$ for a non-negative random variable $X$ and a change of variable thus give:
\begin{align}
\nu^N_g\Big[\exp\Big[\frac{\gamma |W^{n,J}_{\phi_n}|}{\sup_{N}\|\phi_n\|_{\infty}}\Big]\Big]
&\leq 
1+
\int_0^\infty e^{t}\nu^N_g\Big(\frac{\gamma |W^{n,J}_{\phi_n}|}{\sup_{N}\|\phi_n\|_{\infty}}>t\Big)
\, dt
\nonumber\\
&\leq 
1+
\big(\mathcal Z^N_{(1+\epsilon_g)g}\big)^{\frac{1}{1+\epsilon_g}}\int_0^\infty e^{t}\nu^N_{1/2}\Big(\frac{\gamma |W^{n,J}_{\phi_n}|}{\sup_{N}\|\phi_n\|_{\infty}}>t\Big)^{\frac{\epsilon_g}{1+\epsilon_g}}
\, dt
.
\end{align}
\end{proof}
\subsection{Concentration for the uniform measure}\label{app_concentration_uniform_measure}
Recall that $\Omega_{N,m}$ is the set of configurations with $m$ particles on the torus $\T_N$ ($0\leq m\leq N$). 
Recall also that $U_m$ denotes the uniform measure on $\Omega_{N,m}$. 
Concentration results under $U_{m}$ are given in Lemma~\ref{lemm_exp_mom_uniform}, 
relying on the following logarithmic Sobolev inequality.
\begin{lemm}[Theorem 4 in~\cite{LeeYau_LSI_RW1998}]\label{lemm_LSI_SSEP}
For $m\in\{0,...,N\}$, 
let $D^m_{ex}$ denote the Dirichlet form of symmetric simple exclusion with $m$ particles: 
\begin{align}
D^m_{ex}(g)
&=
\frac{1}{2}U_m\Big[\sum_{i\in\T_N} \big[g(\eta^{i,i+1})-g(\eta)]^2\Big]
,\qquad 
g:\Omega_{N,m}\rightarrow\R
.
\end{align}
There is $C_{LS}>0$ independent of $m,N$ such that, for any density $f$ for $U_m$:
\begin{equation}
U_m[f\log f]
\leq 
C_{LS}N^2 D^m_{ex}(\sqrt{f})
.
\end{equation}
\end{lemm}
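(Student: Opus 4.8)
The statement is Theorem 4 of~\cite{LeeYau_LSI_RW1998}, modulo passing from an interval to a cycle, so the plan is essentially a reduction followed by a citation; I also sketch the structure of the argument in~\cite{LeeYau_LSI_RW1998} for context. For the reduction: $U_m$ is literally the same probability measure on $\Omega_{N,m}$ whether the $N$ sites are arranged in a cycle $\T_N$ or in a path $\{0,\dots,N-1\}$, while the cyclic Dirichlet form $D^m_{ex}$ differs from the path Dirichlet form only by the addition of the single non-negative term coming from the edge $\{N-1,0\}$. Hence $D^m_{ex}(g)\geq D^{m,\,\mathrm{path}}_{ex}(g)$ for all $g:\Omega_{N,m}\to\R$, and the logarithmic Sobolev inequality on the path — which holds with a constant of order $N^2$, uniformly in $m$, by~\cite{LeeYau_LSI_RW1998} — immediately yields the claim with the same $C_{LS}$.

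The proof of the path inequality in~\cite{LeeYau_LSI_RW1998} proceeds by a recursion on the system size, in the spirit of Yau's martingale method. Writing $L(N)$ for the optimal constant such that $U_m(f\log f)\leq L(N)\,D^{m,\,\mathrm{path}}_{ex}(\sqrt f)$ holds for every $0\leq m\leq N$ and every density $f$, the goal is $L(N)=O(N^2)$. One cuts the path into two sub-paths $A,B$ of roughly half the length, lets $N_A$ denote the number of particles in $A$, and uses the entropy chain rule together with sub-additivity of entropy under the conditional product decomposition $U_m(\cdot\,|\,N_A=k)=U^A_k\otimes U^B_{m-k}$ to bound $U_m(f\log f)$ by the sum of: the conditional entropies inside $A$ and inside $B$, controlled by the inductive hypothesis at scale $\approx N/2$; and the entropy of the single macroscopic variable $N_A$, whose law under $U_m$ is hypergeometric and log-concave and therefore obeys a one-dimensional logarithmic Sobolev inequality with an $O(1)$ constant.

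The main obstacle is this last term: once the one-dimensional inequality is applied it becomes a Dirichlet form in which a particle is transported across the $A$--$B$ interface, and one must dominate it by $D^{m,\,\mathrm{path}}_{ex}(\sqrt f)$ times a factor that grows only polynomially in $N$ while staying \emph{uniform in $m\in\{0,\dots,N\}$} — in particular does not deteriorate as the density $m/N$ approaches $0$ or $1$. This is a moving-particle estimate: one bounds the cost of moving a particle between arbitrary sites of $A$ and $B$, averaged under the canonical measure, in terms of the nearest-neighbour Dirichlet form, and it relies on the uniform order-$N^{-2}$ spectral gap of the symmetric exclusion process together with an equivalence-of-ensembles control relating $U^A_k\otimes U^B_{m-k}$ to $U_m$ across the two scales. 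Feeding these bounds back into the recursion closes it and gives $L(N)=O(N^2)$, after which one takes $C_{LS}$ to be the resulting constant.
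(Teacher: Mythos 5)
The paper offers no proof of this lemma — it is stated as a direct citation of Theorem 4 in Lee--Yau, which is exactly what you do. Your observation that the cyclic Dirichlet form $D^m_{ex}$ dominates the path Dirichlet form term-by-term (so the interval LSI of Lee--Yau transfers immediately to the torus with the same constant) is correct and fills in the one reduction the paper leaves implicit; the sketch of the Lee--Yau recursion is accurate but not something the paper itself supplies.
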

For $\phi:\T_N\rightarrow\R$, 
define its discrete gradient at point $i\in\T_N$ by:
\begin{equation}
\partial^N\phi(i)
=
N[\phi(i+1)-\phi(i)]
.
\end{equation}
The log-Sobolev inequality provides concentration results for Lipschitz functions.
\begin{lemm}\label{lemm_exp_mom_uniform}
Let $\phi:\T_N\rightarrow\R$ be a Lipschitz function, 
in the sense that:
\begin{equation}
\sup_N\|\partial^N\phi\|_\infty
<
\infty
.
\end{equation}
Similarly to~\eqref{prop_concentration_exponentielle}, 
define:
\begin{equation}
W_m (\eta)
=
W^{1,\{0\}}_{\phi,m}(\eta)
:= 
\frac{1}{N^{1/2}}\sum_{i\in\T_N}\phi(i)\bar\eta^m_i
.
\end{equation}
There is then $\gamma_0>0$ independent of $m,\phi$ such that, for any $\gamma<\gamma_0$:
\begin{equation}
U_m\bigg(\exp \Big[\frac{\gamma (W_m)^2}{\|\partial^N\phi\|^2_\infty}\Big]\bigg)
\leq 
3
.
\label{eq_exp_moment_uniform}
\end{equation}
\end{lemm}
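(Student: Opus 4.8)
The plan is to reduce~\eqref{eq_exp_moment_uniform} to a sub-Gaussian tail bound for $W_m$ that is uniform in $m\in\{0,\dots,N\}$ and in the Lipschitz function $\phi$, normalised by $\|\partial^N\phi\|_\infty$. Once we have a bound of the form $U_m(|W_m|\geq t\|\partial^N\phi\|_\infty)\leq 2e^{-c t^2}$ for some $c>0$ independent of $m,N,\phi$, the claimed exponential moment bound follows by the standard identity $U_m(e^{\gamma (W_m)^2/\|\partial^N\phi\|_\infty^2}) = 1+\gamma\int_0^\infty e^{\gamma t^2} U_m(|W_m|\geq t\|\partial^N\phi\|_\infty)\, dt$, which is finite and bounded by $3$ as soon as $\gamma<\gamma_0:=c/2$ (after a crude estimate of the resulting Gaussian integral). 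So the real content is the uniform sub-Gaussian bound.

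The sub-Gaussian bound is obtained from Herbst's argument applied to the log-Sobolev inequality of Lemma~\ref{lemm_LSI_SSEP}. First I would record that $W_m$ is a Lipschitz function of $\eta$ with respect to the exchange dynamics: swapping the occupations at sites $i$ and $i+1$ changes $W_m$ by $N^{-1/2}(\phi(i+1)-\phi(i))(\bar\eta_i-\bar\eta_{i+1}) = N^{-3/2}\partial^N\phi(i)(\bar\eta_i-\bar\eta_{i+1})$, whose absolute value is at most $N^{-3/2}\|\partial^N\phi\|_\infty$. Hence for any $u:\Omega_{N,m}\to\R$,
\begin{equation}
D^m_{ex}\big(e^{\theta W_m/2}\big)
=
\frac{1}{2}U_m\Big[\sum_{i\in\T_N}\big(e^{\theta W_m(\eta^{i,i+1})/2}-e^{\theta W_m(\eta)/2}\big)^2\Big]
\leq
\frac{\theta^2}{4}\cdot\frac{\|\partial^N\phi\|_\infty^2}{N^{3}}\sum_{i\in\T_N} U_m\big[e^{\theta W_m}\big]\cdot C
,
\end{equation}
using $(e^a-e^b)^2\leq \tfrac14(a-b)^2(e^a+e^b)^2$ together with $e^{\theta W_m(\eta^{i,i+1})}\leq e^{C|\theta|\|\partial^N\phi\|_\infty N^{-3/2}}e^{\theta W_m(\eta)}$; since the per-step increment is $O(N^{-3/2})$ and there are $N$ terms, the sum over $i$ contributes a factor $N$ and the prefactor $e^{C|\theta|\|\partial^N\phi\|_\infty N^{-3/2}}$ is bounded for bounded $\theta$. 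Plugging into Lemma~\ref{lemm_LSI_SSEP} with $f=e^{\theta W_m}/U_m(e^{\theta W_m})$ gives the differential inequality $\theta H'(\theta)-H(\theta)\leq C'\theta^2\|\partial^N\phi\|_\infty^2 H(\theta)$ where $H(\theta)=\log U_m(e^{\theta W_m})$ (the factor $N^2$ from the log-Sobolev constant cancels the $N^{-3}\cdot N=N^{-2}$ from the Dirichlet form, which is the point of the diffusive scaling). Since $U_m(W_m)=0$, $H(\theta)=O(\theta^2)$ near $0$, and dividing by $\theta^2$ and integrating yields $H(\theta)\leq C''\theta^2\|\partial^N\phi\|_\infty^2$ for all $\theta\in\R$, uniformly in $m,N,\phi$. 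A Chernoff bound then gives the desired uniform sub-Gaussian tail, and the exponential-moment bound~\eqref{eq_exp_moment_uniform} follows as explained above, with $\gamma_0$ depending only on the numerical constant $C''$ and on $C_{LS}$.

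The main obstacle is making the Herbst computation genuinely uniform in $m$: the log-Sobolev constant $C_{LS}N^2$ of Lemma~\ref{lemm_LSI_SSEP} is already uniform in $m$, which is exactly what we need, but one must be careful that the Lipschitz-type estimate controlling $D^m_{ex}(e^{\theta W_m/2})$ in terms of $U_m(e^{\theta W_m})$ does not hide an $m$- or $N$-dependent constant — it does not, because each nearest-neighbour swap moves $W_m$ by at most $N^{-3/2}\|\partial^N\phi\|_\infty$ regardless of $m$ and of $\eta$. A minor secondary point is handling the boundary cases $m\in\{0,N\}$, where $\Omega_{N,m}$ is a single configuration, $W_m$ is deterministically $0$ (since $\bar\eta^m_i\equiv 0$), and~\eqref{eq_exp_moment_uniform} holds trivially. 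Everything else — the truncation of the exponential moment integral, the passage from tail bound to exponential moment — is routine.
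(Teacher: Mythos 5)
Your proposal is correct and takes essentially the same route as the paper: both obtain a uniform sub-Gaussian bound on $W_m/\|\partial^N\phi\|_\infty$ by applying the Herbst argument to the log-Sobolev inequality of Lemma~\ref{lemm_LSI_SSEP} (exploiting that each nearest-neighbour swap moves $W_m$ by at most $N^{-3/2}\|\partial^N\phi\|_\infty$, so the $N^{-2}$ from the Dirichlet form cancels the $N^2$ in the log-Sobolev constant), and then convert this into the exponential moment of $(W_m)^2$ via a Chernoff tail bound and a layer-cake integration. The paper merely cites Ledoux for the Herbst step where you spell it out, but the content is identical.
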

\begin{proof}
Note that, for any $i\in\T_N$, 
one has:
\begin{equation}
W_m(\eta^{i,i+1}) - W_m(\eta)
=
(\eta_i-\eta_{i+1})\frac{\partial^N\phi(i)}{N^{3/2}}
.
\end{equation}
The Herbst argument (see e.g. Proposition 2.3 in Ledoux's course~\cite{LedouxConcentrationMeasureLogarithmic1999}) then gives, 
for some $C>0$ independent of $m,\phi$:
\begin{equation}
U_m\bigg(\exp \Big[\frac{\theta W_m}{\|\partial^N\phi\|_\infty}\Big]\bigg)
\leq 
e^{C\theta^2/2}
,
\qquad 
\theta\in\R
.
\end{equation}
The identity $\E[e^X] \leq 1 + \int_0^\infty e^t\Prob(X>t)\, dt$ applied to $X=\gamma (W_m^2)/\|\partial^N\phi\|_\infty^2$ for $\gamma$ small enough depending only on $C$ and a Chernov bound conclude the proof.
\end{proof}
\section{Integration by parts formula}\label{sec_IPP}
Let $\tilde \lambda\in\R$ and $\tilde h\in C^\infty_D(\T^2)$ be symmetric. 
In this section, 
we provide an integration by parts formula under $\nu^N_{\tilde h}$ and $U_{m,\tilde h}$ for $0\leq m\leq N$. 
Let $f:\Omega_N\rightarrow\R$, 
and define $\Gamma^{i,i+1}_{\tilde h,\tilde\lambda}(f)$ for $i\in\T_N$ as the quantity:
\begin{equation}
\Gamma^{i,i+1}_{\tilde h,\tilde\lambda}(f)(\eta) 
:= 
\frac{1}{2}c_{\tilde h,\tilde\lambda}(\eta,\eta^{i,i+1})\big[\nabla_{i,i+1}f(\eta)\big]^2
,
\end{equation}
with:
\begin{equation}
\nabla_{i,i+1} f(\eta) = f(\eta^{i,i+1})-f(\eta),
\qquad
\eta\in\Omega_N
.
\end{equation}
\begin{lemm}{(Integration by parts under $\nu^N_{\tilde h}$)}\label{lemm_IPP}
Let $i\in\T_N$, let $f$ be a density for 
$\nu^N_{\tilde h}$ and let $u:\Omega_N\rightarrow\R$ satisfy $u(\eta^{i,i+1}) = u(\eta)$, 
i.e. $u$ is invariant under a jump between sites $i,i+1$. 
There is then a constant $C=C(\tilde h,\tilde \lambda)>0$ such that, 
for any $\alpha>0$:
\begin{align}
\nu^N_{\tilde h}\big( fu (\bar\eta_{i+1}-\bar\eta_i)\big)
&\leq
\alpha N^2\nu^N_{\tilde h}\big( \Gamma^{i,i+1}_{\tilde h,\tilde \lambda}(\sqrt{f})\big)  + \frac{C}{\alpha N^2} \nu^N_{\tilde h}\big(f|u|^2\big)
\nonumber\\
&\quad 
+ \nu^N_{\tilde h}\bigg( fu(\bar\eta_{i+1}-\bar\eta_i) \Big(\exp \Big[-\frac{2(\bar\eta_{i+1}-\bar\eta_i)}{N} B^{\tilde h}_i\Big]-1\Big)\bigg)
.
\end{align}
\end{lemm}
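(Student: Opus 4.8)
The plan is to prove the integration by parts formula of Lemma~\ref{lemm_IPP} by a direct computation exploiting the explicit Radon--Nikodym structure of $\nu^N_{\tilde h}$ under the map $\eta\mapsto\eta^{i,i+1}$, followed by a Cauchy--Schwarz (or Young) splitting of one of the resulting terms against the carr\'e du champ. First I would fix $i\in\T_N$ and note the elementary algebraic identity: for any function $F$ on $\Omega_N$,
\begin{equation}
F(\eta^{i,i+1})-F(\eta) = -\bigl(\eta_{i+1}-\eta_i\bigr)\bigl[F(\eta)-F(\eta^{i,i+1})\bigr]\cdot\text{(sign bookkeeping)},
\end{equation}
and more usefully that $\bar\eta_{i+1}-\bar\eta_i = -(\bar\eta_{i+1}^{i,i+1}-\bar\eta_i^{i,i+1})$, i.e. the observable $\bar\eta_{i+1}-\bar\eta_i$ is \emph{odd} under the exchange at $(i,i+1)$, whereas $u$ is \emph{even} by hypothesis. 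The plan is then to symmetrise the expectation $\nu^N_{\tilde h}(fu(\bar\eta_{i+1}-\bar\eta_i))$ by performing the change of variables $\eta\mapsto\eta^{i,i+1}$ in half of it, which replaces $f(\eta)$ by $f(\eta^{i,i+1})$, leaves $u$ unchanged, flips the sign of $\bar\eta_{i+1}-\bar\eta_i$, and produces the Jacobian factor $\nu^N_{\tilde h}(\eta^{i,i+1})/\nu^N_{\tilde h}(\eta) = \exp[-2(\eta_{i+1}-\eta_i)B^{\tilde h}_i/N]$ (recall~\eqref{eq_def_B_g_x}, noting $\nu^N_{\tilde h}\propto e^{2\Pi^N(\tilde h)}\nu^N_{1/2}$ and $\nu^N_{1/2}$ is exchange-invariant). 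This yields
\begin{equation}
\nu^N_{\tilde h}\bigl(fu(\bar\eta_{i+1}-\bar\eta_i)\bigr)
=\tfrac12\nu^N_{\tilde h}\Bigl(u(\bar\eta_{i+1}-\bar\eta_i)\bigl[f(\eta)-f(\eta^{i,i+1})e^{-2(\bar\eta_{i+1}-\bar\eta_i)B^{\tilde h}_i/N}\bigr]\Bigr).
\end{equation}

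Next I would isolate the "pure gradient" piece by writing $f(\eta)-f(\eta^{i,i+1})e^{-2(\bar\eta_{i+1}-\bar\eta_i)B^{\tilde h}_i/N} = [f(\eta)-f(\eta^{i,i+1})] + f(\eta^{i,i+1})(1-e^{-2(\bar\eta_{i+1}-\bar\eta_i)B^{\tilde h}_i/N})$. The first bracket, paired with $u(\bar\eta_{i+1}-\bar\eta_i)$, is a genuine discrete gradient $\nabla_{i,i+1}(\text{something})$ up to the factor $u$ which is invariant; using $f-f(\eta^{i,i+1}) = (\sqrt f-\sqrt{f(\eta^{i,i+1})})(\sqrt f+\sqrt{f(\eta^{i,i+1})})$ and Young's inequality with parameter $\alpha$, together with the fact that the jump rate $c_{\tilde h,\tilde\lambda}(\eta,\eta^{i,i+1})$ is bounded above and below by positive constants depending only on $\tilde h,\tilde\lambda$ (since $\tilde h$ is bounded and we work at $N\geq1$), one extracts the term $\alpha N^2\nu^N_{\tilde h}(\Gamma^{i,i+1}_{\tilde h,\tilde\lambda}(\sqrt f))$ and a remainder $\tfrac{C}{\alpha N^2}\nu^N_{\tilde h}(f|u|^2)$ — here the $N^2$ appears because $(\bar\eta_{i+1}-\bar\eta_i)^2\le 1$ while we pay $N^2$ to match the diffusive scaling of $\Gamma$, and the boundedness of the rates lets us pass between $\Gamma^{i,i+1}(\sqrt f)$ and $(\nabla_{i,i+1}\sqrt f)^2$. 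The second bracket, $\tfrac12\nu^N_{\tilde h}(u(\bar\eta_{i+1}-\bar\eta_i)f(\eta^{i,i+1})(1-e^{-2(\bar\eta_{i+1}-\bar\eta_i)B^{\tilde h}_i/N}))$, is handled by undoing the change of variables $\eta\mapsto\eta^{i,i+1}$ once more on this single term: this sends $f(\eta^{i,i+1})\mapsto f(\eta)$, flips the sign of $\bar\eta_{i+1}-\bar\eta_i$ again, reintroduces the Jacobian, and after combining with what remains produces exactly the last displayed term $\nu^N_{\tilde h}(fu(\bar\eta_{i+1}-\bar\eta_i)(e^{-2(\bar\eta_{i+1}-\bar\eta_i)B^{\tilde h}_i/N}-1))$ of the lemma — one must track the factor of $\tfrac12$ and the sign so that the coefficient comes out to exactly $1$, which is a short bookkeeping check.

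The main obstacle, such as it is, is purely organisational: getting the symmetrisation signs and the factors of $\tfrac12$ to align so that precisely the three terms in the statement survive, with the clean coefficient $1$ in front of the last term and $\alpha$, $C/\alpha$ in front of the first two. There is no analytic difficulty — the jump rates are bounded, $B^{\tilde h}_i$ is bounded uniformly in $i,N,\eta$ (as recorded in Section~\ref{sec_entropy_estimate}), and the only inequality used is Young's $2ab\le \alpha a^2+\alpha^{-1}b^2$. I would also remark that the identical argument applies verbatim with $U_{m,\tilde h}$ in place of $\nu^N_{\tilde h}$, since $U_{m,\tilde h}$ is likewise of the form (const)$\cdot e^{2\Pi^N(\tilde h)}$ times the uniform measure on $\Omega_{N,m}$, which is exchange-invariant, and the exchange at $(i,i+1)$ preserves the particle number; this gives Remark~\ref{rmk_IPP_uniform_measure} at no extra cost. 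Finally, one should note that the constant $C$ depends on $\tilde h,\tilde\lambda$ only through $\|\tilde h\|_\infty$ and $\sup_i\|B^{\tilde h}_i\|_\infty$ and the lower bound on the rates, hence is uniform in $N$, as claimed.
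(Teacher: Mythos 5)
Your proposal is correct and follows essentially the same route as the paper: symmetrise $\nu^N_{\tilde h}(fu(\bar\eta_{i+1}-\bar\eta_i))$ via the change of variables $\eta\mapsto\eta^{i,i+1}$, split the Radon--Nikodym factor as $e^x = 1 + (e^x - 1)$, apply Young's inequality with the $\sqrt f$ factorisation to extract the carr\'e du champ, and keep the $(e^x-1)$ term as-is after undoing the change of variables. One small imprecision: the rate $c_{\tilde h,\tilde\lambda}(\eta,\eta^{i,i+1})$ is \emph{not} bounded below by a positive constant on all of $\Omega_N$ (it vanishes when $\eta_i=\eta_{i+1}$); the correct statement is $c_{\tilde h,\tilde\lambda}\geq c_0\, c(\eta,\eta^{i,i+1})$, which is what the paper uses, and it suffices because the gradient $\nabla_{i,i+1}f$ and the factor $\bar\eta_{i+1}-\bar\eta_i$ both vanish precisely when $c=0$.
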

\begin{rmk}\label{rmk_IPP_uniform_measure}
The claim of the lemma also holds if $\nu^N_{\tilde h}$ is replaced by the uniform measure $U_{m}$ on configurations with $0\leq m\leq N$ particles, 
with the same proof.
\demo
\end{rmk}
\begin{proof}
A direct computation using the fact that $\eta\mapsto\eta^{i,i+1}$ is bijective gives:
\begin{equation}
\nu^N_{\tilde h}\big( fu (\bar\eta_{i+1}-\bar\eta_i)\big) 
=
\frac{1}{2}\sum_{\eta\in\Omega_N}\nu^N_{\tilde h}(\eta)u(\eta) (\bar\eta_{i+1}-\bar\eta_i)\Big[f(\eta) - \frac{\nu^N_{\tilde h}(\eta^{i,i+1})}{\nu^N_{\tilde h}(\eta)}f(\eta^{i,i+1})\Big]
\end{equation}
and, for each $\eta\in\Omega_N$:
\begin{equation}
\frac{\nu^N_{\tilde h}(\eta^{i,i+1})}{\nu^N_{\tilde h}(\eta)}
=
\exp \Big[-\frac{2(\bar\eta_{i+1}-\bar\eta_i)}{N} B^{\tilde h}_i\Big],
\qquad 
B^{\tilde h}_i 
= 
\frac{1}{2N}\sum_{j\notin\{i,i+1\} }\bar\eta_j \partial^{N}_1 \tilde h_{i,j}
.
\end{equation}
Thus, writing $e^x = 1 + e^x-1$ for $x = -2N^{-1}(\bar\eta_{i+1}-\bar\eta_i)B^{\tilde h}_i$ and again changing variables between $\eta$ and $\eta^{i,i+1}$:
\begin{align}
\nu^N_{\tilde h}\big( fu (\bar\eta_{i+1}-\bar\eta_i)\big)
\leq
\nu^N_{\tilde h}\big( |\nabla_{i,i+1} f||u| \big)
+ 
\nu^N_{\tilde h}\bigg( fu(\bar\eta_{i+1}-\bar\eta_i) \Big(e^{-\frac{2(\bar\eta_{i+1}-\bar\eta_i)}{N} B^{\tilde h}_i}-1\Big)\bigg)
.
\end{align}
Note that jump rates are bounded below:
\begin{equation}
c_{\tilde h,\tilde \lambda}(\eta,\eta_{i+1})
\geq 
c_0 c(\eta,\eta_{i+1}),
\qquad 
\eta\in\Omega_N,i\in\T_N
.
\end{equation}
The claim then follows from  
the lower bound on the jump rates, 
the fact that $\sup_{N,i}|B^{\tilde h}_i|<\infty$ 
and the inequality $|ab|\leq \frac{\kappa}{2}a^2 + \frac{1}{2\kappa}b^2$ applied to $\kappa=2\alpha N^2/c_0$, 
$a = \sqrt{f(\eta^{i,i+1})}-\sqrt{f(\eta)}$ and $b = u(\sqrt{f(\eta^{i,i+1})}+\sqrt{f(\eta)})$.
\end{proof}
\section{On some partial differential equations involving the bias $h$}\label{app_BFP}
\subsection{Existence and regularity of the bias $h$}
In this section, we prove Proposition \ref{prop_solving_PDE_h}, 
i.e. the existence and properties of $h\in C(\T)\cap C^\infty([0,1])$ solving:
\begin{equation}
\begin{cases}
&\displaystyle{h''(x) -\frac{\sigma}{2} \int_{\T} h'(x-y)h'(y)\, dy =0\quad} \text{for }x\in(0,1),
\\
&\displaystyle{h'(0_+)-h'(1_-) = 2\lambda(\lambda+2E).}
\end{cases}
\label{eq_ODE_sur_h_appendix}
\end{equation}
Define:
\begin{equation}
\mathcal H
:= 
\Big\{\psi \in \mathbb H^1(\T): \int_{\T} \psi(x)\, dx=0\Big\}
.
\end{equation}
Then $\mathcal H$ equipped with the norm $\psi\mapsto\|\psi'\|_2$ is a Hilbert space. 
Upon integrating~\eqref{eq_ODE_sur_h_appendix} against a test function $f\in C^\infty(\T)$ and integrating by parts,
~\eqref{eq_ODE_sur_h_appendix} becomes:
\begin{equation}
\int_{\T} h(x)f''(x)\, dx -2\lambda(\lambda+2E)f(0) -  \frac{\sigma}{2}\int_{\T^2}f''(x)h(x-y)h(y)\, dx\,  dy
=
0
.
\label{eq_weak_ODE_h_appendix}
\end{equation}
\begin{prop}\label{prop_existence_regularity_h}
Recall the definition of sub-criticality in Proposition~\ref{prop_solving_PDE_h}. 
There is a unique family $(h_{\lambda,E})_{\lambda,E\text{ sub-critical}}$ of solutions of~\eqref{eq_ODE_sur_h} in $\mathcal H$. 
These solutions are in $C^0(\T)\cap C^\infty([0,1])$ and:
\begin{equation}
(\lambda,E)\mapsto \|h_{\lambda,E}\|_{2}\text{ is a continuous function vanishing on the line }\lambda=0
.
\end{equation}
For each sub-critical $\lambda,E$, the Fourier series of $h_{\lambda,E}$ reads:
\begin{equation}
\forall x\in\T,\qquad 
h(x) 
= 
\frac{\sqrt{2}}{\sigma} \sum_{\ell\geq 1}\Big(1-\Big[1+\frac{\sigma\lambda(\lambda+2E)}{\pi^2\ell^2}\Big]^{1/2}\Big)\sqrt{2}\cos(2\pi\ell x) 
.
\label{eq_explicit_formula_h}
\end{equation}
Moreover, the kernel $(x,y)\mapsto h_{\lambda,E}(x-y)$ is positive if $\lambda(\lambda+2E)\leq 0$, 
negative otherwise, 
with eigenvalues given by its Fourier coefficients.
\end{prop}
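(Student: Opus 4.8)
The plan is to prove Proposition~\ref{prop_existence_regularity_h} by exploiting the translation invariance of the problem to diagonalise everything in the Fourier basis. First I would reformulate~\eqref{eq_weak_ODE_h_appendix}: since a solution sought in $\mathcal H$ has mean zero, expand $h = \sum_{\ell\neq 0} c_\ell e^{2\iota\pi\ell x}$ (equivalently in the real basis $(e_\ell)_{\ell\geq 1}$ of~\eqref{eq_eigenbasis_laplacien}). The convolution term $\int_\T h'(x-y)h'(y)\,dy$ has Fourier coefficients $(2\iota\pi\ell)^2 c_\ell^{(2)}$ where $c_\ell^{(2)}$ is the $\ell$-th Fourier coefficient of the convolution $h\star h$; but $h\star h$ has coefficients $c_\ell^2$ (with the normalisation $\widehat{f\star g}=\hat f\hat g$), so the bulk equation $h'' = \tfrac{\sigma}{2}h'\star h'$ becomes, mode by mode for $\ell\neq 0$,
\begin{equation}
-4\pi^2\ell^2 c_\ell = \frac{\sigma}{2}(-4\pi^2\ell^2)^2\,?\,,
\label{eq_plan_fourier}
\end{equation}
which I must be careful to set up correctly: the cleaner route is to note $h\star h$ appears through its second derivative, so writing $H := h'\star h'$ one has $\widehat H_\ell = \widehat{h'}_\ell\,\widehat{h'}_\ell = (2\iota\pi\ell)^2 c_\ell^2 \cdot(\text{sign bookkeeping})$. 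The upshot, after the jump condition $h'(0_+)-h'(1_-)=2\lambda(\lambda+2E)$ is encoded, is the scalar quadratic relation $\tfrac{\sigma}{2}c_\ell^2 - c_\ell + (\text{const involving }\lambda(\lambda+2E)/\pi^2\ell^2) = 0$ for each $\ell\geq 1$, whose two roots are $c_\ell^{\pm} = \sigma^{-1}\bigl(1 \pm [1 + \sigma\lambda(\lambda+2E)/\pi^2\ell^2]^{1/2}\bigr)$.

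Second, I would use the selection criterion to pin down the solution. For each $\ell$ the quadratic has two roots; the ``$+$'' root blows up as $\ell\to\infty$ and does not give an $\mathbb H^1$ function, while the ``$-$'' root behaves like $-\tfrac{\lambda(\lambda+2E)}{2\pi^2\ell^2}$, hence is square-summable after multiplication by $\ell$, so $h\in\mathcal H$. More importantly, continuity in $(\lambda,E)$ together with vanishing at $\lambda=0$ forces the ``$-$'' branch for \emph{every} $\ell$ simultaneously: at $\lambda=0$ one needs $c_\ell=0$, and only $c_\ell^-\to 0$ as $\lambda(\lambda+2E)\to 0$. This gives uniqueness within the continuous family and the explicit formula~\eqref{eq_explicit_formula_h}; sub-criticality $\sigma\lambda(\lambda+2E)>-\pi^2$ is exactly what makes $1 + \sigma\lambda(\lambda+2E)/\pi^2\ell^2 > 0$ for all $\ell\geq 1$ (the worst case being $\ell=1$), so the square roots are real and the series converges.

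Third, I would establish regularity and the spectral properties. From $c_\ell^- = O(\ell^{-2})$ one gets $h\in C^0(\T)$ by Dirichlet's normal convergence theorem; on the open interval $(0,1)$ the equation $h'' = \tfrac{\sigma}{2}h'\star h'$ bootstraps: $h\in C^0$ gives $h'\star h'\in C^0$ hence $h\in C^2$, and iterating gives $h\in C^\infty([0,1])$ (the only obstruction to global smoothness is the prescribed jump of $h'$ across the diagonal at $x\in\{0,1\}$). That $\int_\T h=0$ is built into the construction ($c_0=0$). For the kernel $(x,y)\mapsto h(x-y)$: a translation-invariant kernel on $\T$ is diagonalised by $(e_\ell)$ with eigenvalues its Fourier coefficients; since $c_\ell^- > 0$ iff $\lambda(\lambda+2E)<0$ and $c_\ell^- < 0$ iff $\lambda(\lambda+2E)>0$, the kernel is positive resp. negative in those regimes, and~\eqref{eq_critical_line_as_eigenvalue} follows by checking that the largest eigenvalue of $\sigma h$ is $\sigma c_1^- $ (when $\lambda(\lambda+2E)\leq 0$, where eigenvalues are positive and decreasing in $\ell$) which is $<1$ precisely under sub-criticality, while when $\lambda(\lambda+2E)>0$ all eigenvalues are negative so the leading one is trivially $<1$.

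The main obstacle I anticipate is purely bookkeeping rather than conceptual: getting the signs and the factor conventions exactly right when passing from the weak formulation~\eqref{eq_weak_ODE_h_appendix} with its boundary term $-2\lambda(\lambda+2E)f(0)$ to the per-mode quadratic, since the jump condition on $h'$ enters as a distributional source $2\lambda(\lambda+2E)\,\delta_0$ in $h''$ and one must track how the convolution $h'\star h'$ interacts with that. I would handle this by working directly with the Fourier series ansatz, plugging $h=\sum c_\ell e_\ell$ into~\eqref{eq_weak_ODE_h_appendix} against $f = e_k$, and reading off the relation $-4\pi^2 k^2 c_k - 2\lambda(\lambda+2E)\langle e_k,\delta_0\rangle - \tfrac{\sigma}{2}(-4\pi^2k^2)c_k^2 \cdot(\text{normalisation}) = 0$, then verifying the claimed roots satisfy it. The non-uniqueness without~\eqref{eq_continuity_assumption_h} is then transparent: any choice of branch $\pm$ per mode (subject to $\ell^{-2}$-type decay, which in fact forces ``$-$'' for all large $\ell$ but leaves finitely many free) yields a solution, and the remark after the proposition's statement in the main text records this.
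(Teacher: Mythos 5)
Your proposal is correct and follows essentially the same route as the paper: Fourier-diagonalise~\eqref{eq_weak_ODE_h_appendix}, reduce to the per-mode quadratic $\tfrac{\sigma}{2}c_\ell^2 - c_\ell - \lambda(\lambda+2E)/(2\pi^2\ell^2)=0$, select the ``$-$'' branch by $\mathbb H^1$-membership plus continuity at $\lambda=0$, read off positivity/negativity of the kernel from the sign of $c_\ell^-$, and bootstrap regularity on $(0,1)$. One small imprecision: your bootstrap begins ``$h\in C^0$ gives $h'\star h'\in C^0$,'' but continuity of $h$ alone does not make sense of $h'$; the correct starting point is $h\in\mathcal H\subset\mathbb H^1(\T)$ (equivalently $c_\ell(h)=O(\ell^{-2})$), which gives $h'\in\mathbb L^2(\T)$ and hence $h'\star h'\in C^0(\T)$ — the paper instead bootstraps in the scale $\mathbb H^n((0,1))$ and finishes with Sobolev embedding, but the two arguments are the same in substance.
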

\begin{proof}
We look for a solution $h$ to~\eqref{eq_weak_ODE_h_appendix} in $\mathbb L^2(\T)$ by computing its Fourier decomposition. 
For $\ell\in\Z$, let $c_\ell(h) = \int_{\T} h(x)e^{-2\iota\pi\ell x}\, dx$, with $\iota^2=-1$. 
We look for $h$ with $c_0(h)=0$ by assumption. 
Note that, for any $\psi,\tilde \psi\in\mathbb L^2(\T)$, 
one has:
\begin{equation}
c_\ell\Big(\int_{\T} \psi(x-y)\tilde\psi(y)\, dy\Big) 
=
c_{\ell}(\psi)c_\ell(\tilde \psi)
.
\end{equation}
Taking $f=e^{-2\iota\pi\ell\cdot}$ for $\ell\neq 0$,  
Equation~\eqref{eq_weak_ODE_h_appendix} becomes:
\begin{equation}
-(2\pi\ell)^2 c_\ell(h) - 2\lambda(\lambda+2E) + \frac{\sigma}{2}(2\pi\ell)^2 c_\ell(h)^2
= 
0
.
\end{equation}
This gives:
\begin{equation}
c_\ell(h)
=
\frac{1}{\sigma}\Big(1+\eta_\ell \sqrt{1+\frac{\sigma\lambda(\lambda+2E)}{\pi^2\ell^2}}\Big),
\qquad 
\eta_\ell\in\{-1,1\} 
.
\end{equation}
Note that $h\in\mathbb L^2(\T)\supset\mathcal H$ only if all $\eta_\ell$ are equal to $-1$ for $\ell$ large enough. 
In addition, we must have $\eta_\ell=-1$ for any $\ell\neq 1$ in order for $h$ to vanish on the line $\lambda=0$. 
This gives existence and uniqueness of a family $(h_{\lambda,E})$ of solutions to~\eqref{eq_weak_ODE_h_appendix} in $\mathcal H$ such that $(\lambda,E)\mapsto\|h_{\lambda,E}\|_2$ is continuous and vanishes on the line $\lambda=0$. 
Note further that $c_\ell(h_{\lambda,E})=o(\ell^{-2})$, 
thus $h_{\lambda,E}\in\mathbb H^1(\T)\subset C^0(\T)$ by Dirichlet's normal convergence theorem and~\eqref{eq_explicit_formula_h} holds pointwise. 
Moreover, the Fourier coefficients $c_\ell(h_{\lambda,E})$ are the eigenvalues of the kernel operator $h_{\lambda,E}$ all have the same sign, 
the sign of $-\lambda(\lambda+2E)$. 

It remains to observe that $h_{\lambda,E}\in C^\infty([0,1])$. 
Indeed, defining $h_{\lambda,E}''$ on $(0,1)$ through~\eqref{eq_ODE_sur_h_appendix} gives $h_{\lambda,E}''\in\mathbb H^1((0,1))$, 
thus $h_{\lambda,E}'\in\mathbb H^2((0,1))$, 
with $\mathbb H^n((0,1))$ the usual Sobolev space of functions with $n$ weak derivatives in $\mathbb L^2((0,1))$. 
Iterating we get $h_{\lambda,E}\in\mathbb H^n((0,1))$ for any $n\in\N$, 
thus $h_{\lambda,E}\in C^\infty([0,1])$ by Sobolev embedding (Theorem 4.12 in \cite{Adams2003}).
\end{proof}

\subsection{Forwards and backwards Fokker-Planck equations}
\label{app_sub_BFP}
Here, we establish well-posedness and regularity for solutions of certain Fokker-Planck equations. 
These are used to characterise limiting fluctuations in Section~\ref{sec_martingale_problem}. \\
For a symmetric $\tilde h\in C^\infty_D(\T^2)$, 
recall that $\mathcal L_{\tilde h}$ is the operator acting on $f\in C^\infty(\T)$ according to:
\begin{align}
\mathcal L_{\tilde h}f(x)
&= 
f''(x) + \sigma\int_{\T}\partial_2 \tilde h(x,y)f'(y)dy 
\nonumber\\
&=
f''(x)- \sigma m_{\tilde h} f(x) - \sigma \int_{\T}\partial^2_2 \tilde h(x,y)f(y)\, dy
,\qquad
x\in\T
,
\label{eq_def_L_y_h_lambda_macro_apppendix}
\end{align}
with $m_{\tilde h}\in C^\infty(\T)$ the function:
\begin{equation}
m_{\tilde h} (x)
:= 
\big(\partial_1 - \partial_2\big) \tilde h(x_-,x)
,\quad 
x\in\T
.
\end{equation}
For $f\in C^\infty(\T)$ and $t\geq 0$, consider the following forwards and backwards Fokker-Planck equations:
\begin{equation}
\begin{cases}
\partial_s u_s - \mathcal L_{\tilde h}u_s = 0\quad \text{for }s\leq t,\\
u_0 = f,
\end{cases}
\qquad 
\begin{cases}
\partial_s v_s + \mathcal L_{\tilde h}v_s = 0\quad \text{for }s\leq t,\\
v_t = f.
\end{cases}
\label{eq_BFP_equations}
\end{equation}
\begin{prop}\label{prop_existence_sol_BFP}
Let $f\in C^\infty(\T)$ and $t\geq 0$. 
Then~\eqref{eq_BFP_equations} admits solutions $u,v$ in $C^\infty([0,t]\times \T)$.
\end{prop}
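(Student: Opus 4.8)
\textbf{Proof proposal for Proposition \ref{prop_existence_sol_BFP}.}

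The plan is to treat \eqref{eq_BFP_equations} as a parabolic perturbation of the heat equation on $\T$ and obtain smooth solutions by a combination of semigroup/duality arguments and elliptic regularity. First I would record the structure of $\mathcal L_{\tilde h}$: from \eqref{eq_def_L_y_h_lambda_macro_apppendix} we have $\mathcal L_{\tilde h} = \Delta - \sigma m_{\tilde h}\,\mathrm{id} - \sigma K$, where $K$ is the integral operator with kernel $\partial_2^2\tilde h(x,y)$. Since $\tilde h\in C^\infty_D(\T^2)$, the kernel $\partial_2^2\tilde h$ is bounded and smooth away from the diagonal, so $K$ maps $\mathbb L^2(\T)$ (indeed $C^0(\T)$) boundedly into $C^0(\T)$, and more generally $K$ is a bounded operator on each Sobolev space $\mathbb H^k(\T)$ because differentiating under the integral sign in $x$ only hits $\partial_2^2\tilde h(x,\cdot)$, which is again an $\mathbb L^\infty$ kernel (the diagonal singularity of $C^\infty_D$ kernels is mild — the one-sided derivatives exist and are bounded). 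Thus $\mathcal L_{\tilde h}$ is a bounded perturbation, by a smoothing-in-$x$ operator, of the self-adjoint operator $\Delta$ with compact resolvent.

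Next I would construct the solution semigroup. The operator $\mathcal L_{\tilde h}$ generates a strongly continuous semigroup $(e^{s\mathcal L_{\tilde h}})_{s\ge 0}$ on $\mathbb L^2(\T)$: write $e^{s\mathcal L_{\tilde h}}$ via the Duhamel/perturbation series $e^{s\Delta} + \int_0^s e^{(s-r)\Delta}(-\sigma m_{\tilde h} - \sigma K)e^{r\mathcal L_{\tilde h}}\,dr$, which converges in operator norm since $m_{\tilde h}$ and $K$ are bounded on $\mathbb L^2(\T)$ and $\|e^{s\Delta}\|\le 1$; iterating gives a norm-convergent series bounded by $e^{Cs}$ for $C$ depending on $\|m_{\tilde h}\|_\infty, \|K\|$. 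Setting $u_s := e^{s\mathcal L_{\tilde h}}f$ solves the forward equation in \eqref{eq_BFP_equations}, and $v_s := e^{(t-s)\mathcal L_{\tilde h}}f$ solves the backward equation, both as $\mathbb L^2$-valued solutions. For smoothness I would use parabolic bootstrap: the heat semigroup $e^{s\Delta}$ is smoothing, mapping $\mathbb L^2(\T)$ into $\bigcap_k \mathbb H^k(\T)=C^\infty(\T)$ for any $s>0$ with quantitative bounds $\|e^{s\Delta}g\|_{\mathbb H^{k+2}}\le C_k s^{-1}\|g\|_{\mathbb H^k}$; feeding this into the Duhamel formula and using that $-\sigma m_{\tilde h}-\sigma K$ maps $\mathbb H^k\to\mathbb H^k$ boundedly, one shows inductively that $u_s,v_s\in\mathbb H^k(\T)$ for every $k$ and every $s>0$, and when the initial/terminal datum $f$ is already in $C^\infty(\T)$ one gets the bound uniformly down to $s=0$ (no loss, since no singular prefactor is needed when $f$ is smooth: estimate $\|\mathcal L_{\tilde h}^j f\|_{\mathbb L^2}$ directly using that $\mathcal L_{\tilde h}$ preserves $C^\infty(\T)$, giving $u_s = \sum_j \frac{s^j}{j!}\mathcal L_{\tilde h}^j f$ convergent in every $\mathbb H^k$). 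Joint smoothness in $(s,x)$ then follows because $\partial_s u_s = \mathcal L_{\tilde h}u_s$ is itself smooth in $x$ and the same argument applies to all $s$-derivatives, so $u\in C^\infty([0,t]\times\T)$; likewise for $v$.

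The main obstacle, and the step I would spend the most care on, is controlling the action of the integral operator $K$ with kernel $\partial_2^2\tilde h(x,y)$ on Sobolev spaces, precisely because $\tilde h\in C^\infty_D(\T^2)$ only, so $\partial_2^2\tilde h$ is \emph{not} continuous across the diagonal $D$ — it has a jump. One must check that $Kg(x)=\int_\T \partial_2^2\tilde h(x,y)g(y)\,dy$ still defines a smooth function of $x$ when $g$ is smooth (or merely $\mathbb L^2$): differentiating in $x$ produces $\int_\T \partial_1^j\partial_2^2\tilde h(x,y)g(y)\,dy$, and each such kernel, though discontinuous on $D$, is bounded (being $C^\infty$ on $\{x\le y\}$ and $\{x\ge y\}$ with bounded one-sided limits on $D$), hence the integral depends continuously — in fact smoothly, by dominated convergence applied to the difference quotients — on $x$ since the diagonal has measure zero. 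I would phrase this as a short lemma: an operator with a kernel in $C^\infty_D(\T^2)$ (or any finite-order $x$-derivative thereof) maps $\mathbb L^2(\T)$ into $C^\infty(\T)$ with all seminorms controlled by $\sup$-norms of the relevant one-sided derivatives of the kernel. Granting this, the perturbation series and bootstrap go through verbatim and the proposition follows.
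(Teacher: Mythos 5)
Your proof follows essentially the same route as the paper: decompose $\mathcal L_{\tilde h} = \Delta - \sigma m_{\tilde h}\,\mathrm{id} - \sigma K$ with $K$ a bounded (in fact smoothing) integral operator, observe that the perturbation $-\sigma m_{\tilde h}\,\mathrm{id}-\sigma K$ is bounded on each Sobolev space $\mathbb H^k(\T)$, and invoke bounded-perturbation theory to get a $C_0$-semigroup on each $\mathbb H^k$ — which, for $f\in\bigcap_k\mathbb H^k = C^\infty(\T)$, yields $u_\cdot, v_\cdot \in C^\infty([0,t]\times\T)$. The paper cites the bounded perturbation theorem in Engel--Nagel; you reprove it via the Duhamel/perturbation series, which is the standard underlying proof, so this is not a genuinely different route. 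Your extra care with the $C^\infty_D$ kernel (one-sided derivatives, jump across the diagonal, diagonal has measure zero) is welcome — it is the one point the paper states without justification — and your treatment of it is correct.

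One step is wrong, though inessential: the claim that $u_s = \sum_j \frac{s^j}{j!}\mathcal L_{\tilde h}^j f$ converges in $\mathbb H^k$ for $f\in C^\infty(\T)$. This Taylor series would require analytic control on $\|\mathcal L_{\tilde h}^j f\|$, i.e.\ roughly $\|\Delta^j f\|\lesssim C^j j!$, which holds for analytic $f$ but fails for generic $f\in C^\infty(\T)$ (take $f$ with Fourier coefficients $\hat f(\ell) = e^{-\sqrt{|\ell|}}$: smooth but not analytic, and $\sum_j \frac{s^j}{j!}(2\pi\ell)^{2j}\hat f(\ell) = e^{(2\pi\ell)^2 s - \sqrt{|\ell|}}$ is not summable in $\ell$ for any $s>0$). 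The Taylor series is your route to uniform bounds as $s\downarrow 0$, so the gap must be filled; the fix is simply to run your Duhamel construction directly on $\mathbb H^k$ for each $k$ (not just on $\mathbb L^2$) — the perturbation is bounded there, so you get a $C_0$-semigroup on $\mathbb H^k$ and hence $\sup_{s\le t}\|u_s\|_{\mathbb H^k}<\infty$ without any smoothing estimate or singular prefactor. This is precisely what the paper does by invoking the perturbation theorem on each $\mathbb H^\ell$. With that substitution your argument is complete.
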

\begin{proof}
Let $f\in C^\infty(\T)$. 
The Laplacian operator $\Delta$ generates a strongly continuous semi-group on each of the Sobolev spaces $\mathbb H^\ell(\T)$ of functions with $\ell$ weak derivatives in $\mathbb L^2(\T)$ ($\ell\in\N$). 
On the other hand, $\tilde h\in C^\infty_D(\T^2)$ means that
the operator $f\mapsto m_{\tilde h}f -\sigma \partial^2_2\tilde hf$ is bounded in $\mathbb H^\ell(\T)$. 
By Theorem III.1.3 in \cite{Engel2000}, 
it follows that $\mathcal L_{\tilde h}$ generates a strongly continuous semi-group $(\mathcal P^*_{\tilde h}(s))_{s\geq 0}$ on each $\mathbb H^\ell(\T)$ ($\ell\in\N$). 
In particular, $(u_s)_{s\leq t} := (\mathcal P^\infty_{\tilde h}(s)f)_{s\leq t}$ is in $C^\infty([0,t]\times \T)$ and solves the first equation in \eqref{eq_BFP_equations}. 
Setting $v_s := u_{t-s}$ for $s\leq t$ yields a solution of the second equation in \eqref{eq_BFP_equations} with the desired regularity.
\end{proof}

\bibliographystyle{abbrv}
\bibliography{all_refsbibtex}

\end{document}